\documentclass[11pt, reqno]{amsart}

\usepackage[utf8]{inputenc}
\usepackage[T1]{fontenc}
\usepackage[english]{babel}
\usepackage{booktabs}

\usepackage{amsmath,amssymb,amsthm,mathtools}
\usepackage{bm} 

\usepackage[dvipsnames]{xcolor} 
\usepackage{microtype}
\usepackage{comment}

\usepackage{stmaryrd}

\usepackage{mathrsfs}
\usepackage[scr=boondoxo]{mathalfa}

\DeclareMathOperator{\op}{op}

\numberwithin{equation}{section}

\usepackage{bm}

\usepackage[colorlinks=true,linkcolor=blue,citecolor=blue, urlcolor=blue]{hyperref}

\makeatletter
\def\enddoc@text{} 
\makeatother

\newcommand{\contactcard}[3]{%
  \begin{minipage}[t]{0.43\textwidth}
    \raggedright\footnotesize
    \textbf{#1}\par
    #2\par
    \smallskip
    e-mail: \href{mailto:#3}{#3}
  \end{minipage}%
}

\usepackage[capitalize,nameinlink]{cleveref}
\renewcommand{\P}{\mathbb{P}}
\newcommand{\E}{\mathbb{E}}
\newcommand{\Var}{{\rm Var}}
\newcommand{\Cov}{\text{Cov}}
\newcommand{\cN}{\mathcal{N}}

\newcommand{\R}{\mathbb{R}}

\renewcommand{\S}{\mathbb{S}}

\newcommand{\<}{\langle}
\renewcommand{\>}{\rangle}

\newcommand{\diag}{\text{diag}}

\newcommand{\grad}{\nabla}
\def\sT{{\mathsf T}}

\DeclareMathOperator*{\Diag}{Diag}

\theoremstyle{plain}
\newtheorem{theorem}{Theorem}
\newtheorem*{theorem*}{Theorem}
\newtheorem{lemma}{Lemma}

\newtheorem{assumption}{Assumption}
\newtheorem{definition}{Definition}

\newtheorem{proposition}{Proposition}

\newtheorem{claim}{Claim}

\newtheorem{corollary}{Corollary}

\newtheorem{remark}{Remark}[section]
\usepackage[textsize=tiny]{todonotes}

\DeclareSymbolFont{rsfs}{U}{rsfs}{m}{n}
\DeclareSymbolFontAlphabet{\mathscrsfs}{rsfs}

\def\bA{{\boldsymbol A}}

\def\bV{{\boldsymbol V}}
\def\bW{{\boldsymbol W}}

\makeatletter
\g@addto@macro\endtheorem{\vspace{1em}}
\makeatother

\def\de{{\rm d}}
\def\Tr{{\rm tr}}

\def\de{{\rm d}}

\def\cG{{\mathcal G}}

\def\cP{{\mathcal P}}

\def\cF{{\mathscr F}}

\def\cG{{\mathcal G}}

\def\cP{{\mathcal P}}

\def\reals{{\mathbb R}}

\def\naturals{{\mathbb N}}

\def\reals{{\mathbb R}}

\def\naturals{{\mathbb N}}

\def\cD{{\mathscr D}}

\def\cF{{\mathscr F}}

\def\de{{\rm d}}

\def\bA{{\boldsymbol A}}

\def\cM{{\mathcal M}}

\def\diag{{\rm diag}}

\newcommand{\ud}{\mathrm{d}}

\def\diag{\mathrm{diag}}

\def\Cov{{\rm Cov}}

\newcommand{\Exp}{\mathsf{Exp}}
\newcommand{\Hess}{\mathsf{Hess}}
\newcommand{\dive}{\operatorname{div}}

\renewcommand{\leq}{\leqslant}
\renewcommand{\geq}{\geqslant}
\renewcommand{\le}{\leqslant}
\renewcommand{\ge}{\geqslant}

\usepackage{upgreek}

\newcommand{\proj}{\boldsymbol{\mathsf{P}}^\perp}

\usepackage{caption}
\title{Homogenized Transformers}
\author{Hugo Koubbi}
\address{CEREMADE, UMR 7534, Université Paris Dauphine PSL, 75775 Paris Cedex 16, France}
\email{hugo.koubbi@dauphine.psl.eu}
\author{Borjan Geshkovski}
\address{Laboratoire Jacques Louis-Lions, Inria \& Sorbonne Université, 75005 Paris, France}
\email{borjan.geshkovski@inria.fr}
\author{Philippe Rigollet}
\address{Department of Mathematics, Massachusetts Institute of Technology, 77 Massachusetts Ave, Cambridge, Massachusetts 02139}
\email{rigollet@mit.edu}

\date{\today}

\begin{document}

\begin{abstract}
We study a random model of deep multi-head self-attention in which the weights are resampled independently across layers and heads, as at initialization of training. Viewing depth as a time variable, the residual stream defines a discrete-time interacting particle system on the unit sphere. We prove that, under suitable joint scalings of the depth, the residual step size, and the number of heads, this dynamics admits a nontrivial homogenized limit. Depending on the scaling, the limit is either deterministic or stochastic with common noise; in the mean-field regime, the latter leads to a stochastic nonlinear Fokker--Planck equation for the conditional law of a representative token. In the Gaussian setting, the limiting drift vanishes, making the homogenized dynamics explicit enough to study representation collapse. This yields quantitative trade-offs between dimension, context length, and temperature, and identifies regimes in which clustering can be mitigated.
\end{abstract}

\maketitle

\tableofcontents

\section{Introduction}

Transformers~\cite{vaswani2017attention} have become the dominant architecture in modern deep learning, underpinning large language models and a wide range of sequence modeling tasks. A striking feature of their empirical success is the systematic improvement in performance with increasing scale. The study of how model size, dataset size, and compute should be jointly increased to optimize performance has given rise to the literature on \emph{scaling laws}~\cite{kaplan2020scaling}, which documents remarkably robust power-law relationships between loss and scale.

While this body of work establishes regularities in performance as a function of scale, it leaves largely open a complementary theoretical question: 

\medskip

\begin{center}  
\textit{  What  mathematical structure emerges when a transformer is taken to an appropriate large-scale limit?  }
\end{center}

\medskip
In particular, beyond empirical power laws, one may ask how architectural hyperparameters such as depth, width, number of attention heads, residual scaling, and weight initialization must be jointly scaled so that the model admits a non-degenerate and analytically tractable limiting description.

In this work, we propose a scaling regime under which a sequence of transformer models with random weights converges, in a suitable sense, to a continuous limiting object. The limit is neither trivial nor singular: rather, it captures the cumulative effect of depth, attention, and residual dynamics through a (stochastic) differential equation.

In turn, we use this limiting framework to study \emph{representation collapse}—a phenomenon closely related to oversmoothing and rank collapse, in which token representations become increasingly clustered or low-dimensional as depth grows, ultimately degrading expressivity. Empirical and theoretical studies have documented such degeneration effects in transformers and related architectures, where deep self-attention layers drive representations toward low-rank or clustered configurations~\cite{dong2021attention,jing2022understanding}. Understanding when and how such collapse occurs and how it depends on architectural scaling remains an open theoretical question.

A line of rigorous mathematical work has begun to formalize these phenomena. In a seminal line of  work,~\cite{geshkovski2024emergence,geshkovski2025mathematical} study this clustering phenomenon in simplified regimes with fixed weights across layers, by interpreting transformers as interacting particle systems: they show that long-time (depth) dynamics lead to clustering behavior of token representations, providing a mathematical support for representation collapse and low-rank clustering. This picture was refined in~\cite{bruno2025multiscale, burger2025analysis} to account for more general models but still ones that share weights across layers.
More recently, this line of inquiry was extended to the case where only value matrices vary randomly from layer to layer. In this case, the limiting token dynamics converge to a stochastic interacting system on the sphere~\cite{fedorov2026clustering}.

\medskip
\paragraph{\bf Our contributions.}
Our starting point is a multi-head, attention-only transformer\footnote{Adding multi-layer perceptron (MLP) blocks constitutes a straightforward technical extension with limited conceptual novelty. We comment on its effects in Remark \ref{rem: mlp}.} with post-layer normalization. To model heterogeneity across both depth and heads, we assume that the key, query, and value matrices are independent and identically distributed (i.i.d.) across layers and heads. Strictly speaking, this assumption is valid only at initialization, since training induces correlations between weights at different depths.

Our primary motivation for this choice is to reflect the structural variability present in practical architectures: distinct layers and heads are parameterized independently, and there is no a priori reason to expect identical operators to be applied repeatedly along depth. In contrast to prior theoretical works that assume tied weights across layers—leading to deterministic iterated maps, our setting allows each layer to implement a genuinely different transformation. As a consequence, the depth dynamics are no longer described by repeated application of a fixed operator, and the mechanisms underlying representation collapse are \emph{a priori} unclear. In particular, when the layer-to-layer evolution is heterogeneous and potentially chaotic, it is not obvious whether the clustering phenomena observed in tied-weight models should persist. One of our main findings is that, perhaps surprisingly, clustering and rank collapse can still arise under appropriate scaling, even in this fully heterogeneous regime.

A secondary motivation concerns initialization. Since independence across layers holds exactly at initialization, the large-depth limit we analyze may be interpreted as describing the intrinsic signal propagation properties of the architecture prior to training. This perspective isolates the role of architectural scaling and weight initialization in shaping representation dynamics, and clarifies which collapse phenomena are structural consequences of depth itself, rather than artifacts of optimization or weight sharing.

Our main tool is a limiting \emph{homogenized} transformer model given by a stochastic differential equation (SDE) on $(\S^{d-1})^n$ driven by a \emph{common} cylindrical noise; see Theorem~\ref{thm:weak_error_clean}. Homogenized transformers distill the complex interaction of random weight matrices across layers into the sum of a common drift and a noise term that is independent across layers. This results in a mathematically tractable object to study oversmoothing and guide weight initialization; see Theorems~\ref{thm:clustering_random_init}, \ref{thm:clustering_small_beta} and \ref{thm:large_beta_meta}.

\medskip
\paragraph{\bf Related work.}
On the specific question of hyperparameter scaling and initialization, our results connect to several recent approaches. We briefly review the most relevant contributions here and return to a detailed comparison after presenting our main results.
Signal propagation at initialization is analyzed in \cite{noci2022signal}, where \emph{rank collapse}—a clustering phenomenon in which token representations concentrate in a low-dimensional subspace as they propagate through layers—is identified as a central obstruction in deep Transformers. Appropriate hyperparameter scalings are derived to mitigate this degeneracy.
The contraction properties of a single attention block are studied in \cite{cowsik2025geometric}, where a phase transition in the forward dynamics is exhibited. A corresponding phase transition for gradient scaling in the backward pass is also identified, leading to the prescription of initializing Transformers at the intersection of the two critical regimes—the so-called \emph{edge of chaos}. Empirical results corroborate this principle.
More recently, \cite{bordelon2024depthwise, dey2025don, chizat2025hidden, chaintron2026resnets} propose initialization scalings designed to escape the \emph{lazy} regime, in which parameters move negligibly during training and the model behaves effectively as a random feature method. This perspective is closely connected to mean-field, NTK, and $\mu$P scaling theories \cite{chizat2025hidden, jacot2018ntk, yang2021mup}, which formalize the distinction between linearized and feature-learning training dynamics.

Our perspective is different. We focus exclusively on the forward pass as a structural lens on deep attention architectures. Avoiding oversmoothing or rank collapse at initialization is a prerequisite for meaningful feature learning in deep attention stacks, even though the classical \emph{feature learning vs.\ lazy} dichotomy s concerns training dynamics and can, in principle, arise independently of rank collapse.

Beyond concrete prescriptions for initialization scaling, our results clarify how signals propagate through deep stacks of inhomogeneous attention layers. In particular, we show that an effective drift driving tokens toward low-dimensional representations can emerge purely from diversity across layers, even in the absence of explicit contraction at the single-layer level. Furthermore, our homogenization results indicate that Transformer models with tied weights across layers provide a principled approximation to trained deep Transformers, offering theoretical support for this commonly used simplification.

\section{Main results}

In this section, we introduce our model of random attention-only Transformers and informally derive the \emph{homogenized transformer} as an appropriate scaling limit. We state our main results while deliberately suppressing certain technical definitions and assumptions in order to highlight the core ideas and mechanisms. Complete formulations and detailed derivations are deferred to the subsequent sections.

\subsection{Setup}

We study encoder, attention-only Transformers in which attention weights are i.i.d. across heads and layers. Since training dynamics is not our focus, we collapse the usual query–key pair into a single matrix $\bA = Q^\top K$ and write the parameters of head $h$ in layer $\ell$ as
\[
\bm\theta_h^\ell = (\bV_h^\ell, \bA_h^\ell)
\;\overset{\mathrm{i.i.d.}}{\sim}\;
\uprho_* \in \mathcal{P}(\Uptheta),
\qquad
\Uptheta = (\R^{d \times d})^2,
\]
for $\ell \in [L]$ and $h \in [H]$, where $L$ is the depth and $H$ the number of heads.

Token embeddings $x_i^\ell \in \mathbb{S}^{d-1}$ evolve from one layer to the next according according to the update rule
\begin{equation}\label{eq:update_tokens}
x_i^{\ell+1}
=
\mathsf{N} \left(
x_i^\ell
+
\frac{\eta}{H}
\sum_{h=1}^H
B_{\bm\theta_h^\ell} \left[\mu_{X^\ell}\right](x_i^\ell)
\right),
\end{equation}
where $\mathsf{N}(x) = x/\|x\|$ is a normalization layer and $\eta>0$ is the residual scale, and
\[
\mu_{X^\ell}
=
\frac{1}{n}
\sum_{k=1}^n
\delta_{x_k^\ell}
\]
is the empirical measure of tokens at layer~$\ell$. The attention-induced velocity field is
\begin{equation}\label{EQ:VELOCITY_FIELD_SELF_ATTENTION}
B_{\bm\theta}[\mu](x)
=
\frac{1}{\mathscr{Z}_{\bA}[\mu](x)}
\int
e^{\beta \langle \bA x, y\rangle}
\bV y \, \mu(\de y),
\end{equation}
with normalizing constant
\[
\mathscr{Z}_{\bA}[\mu](x)
=
\int
e^{\beta \langle \bA x, y\rangle}
\, \mu(\de y).
\]

\subsection{Homogenization}

Homogenization shows that, under the appropriate scaling, the discrete multi-head, multi-layer random-attention updates converge to a simple effective \emph{drift--diffusion} dynamics on the sphere: the mean attention field produces a deterministic drift, while randomness averages into a (possibly common) stochastic forcing. Below we give an informal derivation, state the scaling regimes, and present the rigorous limit.

To isolate the mechanism, in this section we ignore the spherical projection and work in $\R^d$ where the update rule~\eqref{eq:update_tokens} becomes
\[
x_i^{\ell+1}
=
x_i^\ell
+
\frac{\eta}{H}
\sum_{h=1}^H
B_{\bm\theta_h^\ell}[\mu_{X^\ell}](x_i^\ell).
\]

\subsubsection{Drift–fluctuation decomposition}
Write each head as the sum of its expected value and a centered random vector (r.v.):
\[
B_{\bm\theta_h^\ell}[\mu](x)
=
b_{\uprho_*}[\mu](x)
+
\xi_{\bm\theta_h^\ell}[\mu](x),
\qquad
b_{\uprho_*}[\mu](x):=\E_{\bm\theta\sim\uprho_*}B_{\bm\theta}[\mu](x).
\]
Averaging over heads gives
\[
x_i^{\ell+1}
=
x_i^\ell
+
\eta\, b_{\uprho_*}[\mu_{X^\ell}](x_i^\ell)
+
\frac{\eta}{H}
\sum_{h=1}^H
\xi_{\bm\theta_h^\ell}[\mu_{X^\ell}](x_i^\ell).
\]
The last term is an average of i.i.d.\ centered r.v. that we write as
\[
\sum_{h=1}^H
\xi_{\bm\theta_h^\ell}[\mu_{X^\ell}](x_i^\ell)
\approx
\sqrt{H}\,\tilde\sigma[\mu_{X^\ell}](x_i^\ell)\,\zeta^\ell,
\]
where $\zeta^\ell$ a is a centered and isotropic random vector and $\tilde \sigma$ is a volatility matrix that satisfies for any $i,j=1, \ldots, n$
$$
\tilde \sigma[\mu_{X^\ell}](x_i^\ell) \tilde \sigma[\mu_{X^\ell}(x_j^\ell)]^\top = \E_{\bm\theta\sim\uprho_*}\left[ \xi_{\bm\theta_h^\ell}[\mu_{X^\ell}](x_i^\ell)\xi_{\bm\theta_h^\ell}[\mu_{X^\ell}](x_j^\ell)^\top\right].
$$
For $H$ large enough, we can think of $\zeta^\ell$ as Gaussian from the Central Limit Theorem (CLT) but the argument outlined below is also valid for $H=1$. 
Indeed, Gaussianity will emerge from a CLT applied to a sum across \emph{layers} rather than heads. 

With this notation, the update takes the suggestive form
\[
x_i^{\ell+1}
=
x_i^\ell
+
\eta\, b_{\uprho_*}[\mu_{X^\ell}](x_i^\ell)
+
\frac{\eta}{\sqrt{H}}
\tilde\sigma[\mu_{X^\ell}](x_i^\ell)\,\zeta^\ell.
\]

\subsubsection{Formal derivation of the scaling limit}

We now think of the layers as a time discretization with step-size $\eta$  of a continuous time process $\{X(t)\}_{t\ge 0}$. The above display yields
$$
x_i(t+\eta)
=
x_i(t)
+
\eta\, b_{\uprho_*}[\mu_{X(t)}](x_i(t))
+
\frac{\eta}{\sqrt{H}}
\tilde\sigma[\mu_{X(t)}](x_i(t))\,\zeta(t) ,
$$
When passing to the limit $\eta \to 0$, to obtain a non-degenerate Gaussian white noise, we need to rescale the stochastic term to be of order $\sqrt{\eta}$. To that end, we will establish a uniform variance bound $\upsigma^2$ for the resulting process in \eqref{eq: defining.alpha}, and define
\begin{equation*}\label{eq:alpha_def_short}
\alpha:=\frac{\eta\upsigma^2}{H},
\end{equation*}
so that
$$
x_i(t+\eta) = x_i(t)
+ \eta\, b_{\uprho_*}[\mu_{X(t)}](x_i(t)) + \sqrt{\alpha} \frac{\tilde\sigma[\mu_{X(t)}](x_i(t))}{\upsigma}\,\sqrt{\eta}\zeta(t),
$$
These iterations correspond to a time discretization of the system of stochastic differential equations (SDE)
\begin{equation}
    \label{eq:SDE_noproj}
    \ud x_i(t)  =  b_{\uprho_*}[\mu_{X(t)}](x_i(t)) \ud t 
+  \frac{\tilde\sigma[\mu_{X(t)}](x_i(t))}{\upsigma} \sqrt{\alpha}  \ud \mathsf{W}(t)
\end{equation}
for $i\in[n]$, where $\{\mathsf{W}(t)\}_{t\geq0}$ is a Wiener process common to all particles.

We introduce the \emph{macroscopic time}
\[
t_L \coloneqq \eta L,
\]
which corresponds to $L$ discrete iterations with step size $\eta$.
Over the interval $[0,t_L]$, the drift accumulates to order $t_L$,
while the stochastic term accumulates to order $\sqrt{t_L\alpha}$.
Comparing these two contributions yields the following three limiting regimes:
\medskip

{
\begin{center}
\begin{tabular}{c|c|c|c}
    \textbf{Regime} & \textbf{Scaling} & \textbf{Noise behavior} & \textbf{Limit} \\
    \midrule
    Ballistic & $t_L \alpha = o(1)$ & Noise vanishes & ODE \\
    Diffusive & $t_L \alpha = \Theta(1)$ & Noise persists (order one) & SDE \\
    Super-diffusive & $t_L \alpha \gg 1$ & Noise dominates & Noise
\end{tabular}
\end{center}}
\medskip

\noindent
As we shall see, in the presence of the drift term this approximation
remains valid up to macroscopic times $t_L = O(1)$; see Theorem~\ref{thm:weak_error_clean} below.
In contrast, in the absence of drift, such as in the centered  case treated
in Section~\ref{sec:gaussian}, we only need $t_L\alpha=O(1)$ and the approximation can remain valid even for $t_L \to \infty$.
The special case $t_L = \Theta(1)$, which arises when $\eta = 1/L$, 
serves as a central and particularly illustrative example.

\begin{figure}[h]
    \centering
     \includegraphics[scale=0.35]{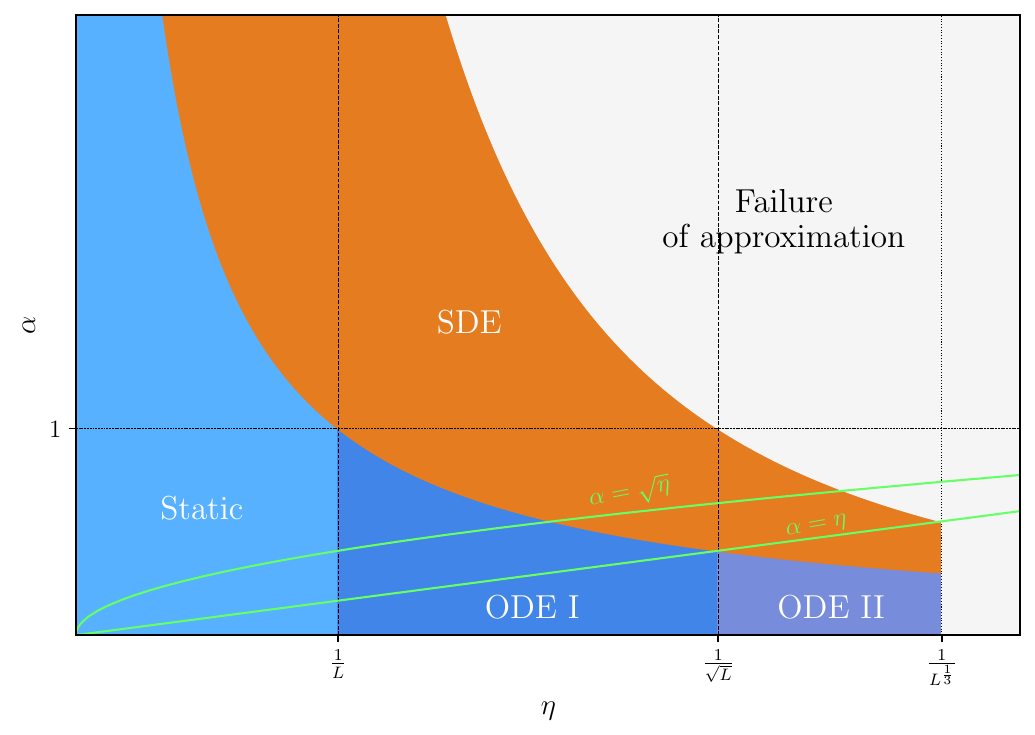}
     \hspace{0.1cm}
     \includegraphics[scale=0.35]{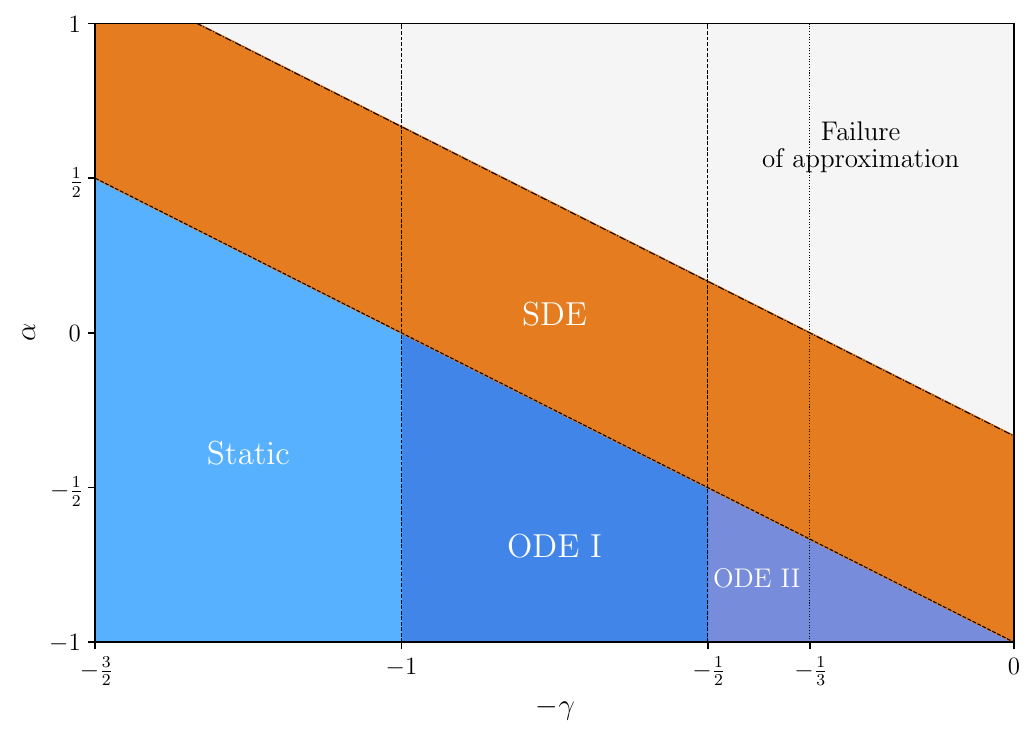}
    \caption{Qualitative phase diagrams in the $(\eta,\alpha)$-plane for fixed depth $L$ (left), together with the same picture in logarithmic coordinates (right), where straight lines correspond to power-law scalings of $\eta$ and $\alpha$ with respect to $L$. 
    Different regions correspond to the rigorous regimes obtained from Theorem~\ref{thm:weak_error_clean}. Specifically, ``ODE~I'' (Corollary~\ref{cor:ode1}) yields \eqref{eq: deterministic} in the ballistic regime $\alpha\eta L=o(1)$ and $\eta^2L=o(1)$; ``ODE~II'' (Corollary~\ref{cor:ode2}) yields \eqref{eq: deterministic.modified} in the refined deterministic regime $\alpha\eta L=o(1)$ and $\eta^3L=o(1)$; and ``SDE'' (Corollary~\ref{cor:SDE}) yields the homogenized SDE \eqref{eq:SDE_ito_clean} in the diffusive regime $\alpha\eta L=O(1)$ and $\eta^3L=o(1)$. The region labelled ``Static'' corresponds to $\eta L=o(1)$, so that no non-trivial macroscopic evolution is seen on the time scale $t_L=\eta L$, whereas ``Failure of approximation'' indicates scalings not covered by the present weak-approximation argument. We study the qualitative behavior of the homogenized model in the setting of centered Gaussian weights, for which only the diffusive regime yields non-stationary evolution---see \eqref{eq:Diffusive_gaussian_case}.}
    \label{fig: phase-diags}
\end{figure}

The projection operator $\mathsf{N}(\cdot)$ entails an It\^o correction to the drift and the SDE~\eqref{eq:SDE_noproj} becomes

\begin{align*}
\de x_i(t)= \Bigg(\proj_{x_i(t)} &b_{\uprho_*}[\mu_{X(t)}](x_i(t)) - \frac{\alpha}{2\upsigma^2}
\E\left\|\proj_{x_i(t)}\tilde\sigma[\mu_{X(t)}](x_i(t))\zeta^1 \right\|^2 x_i(t) \Bigg) \de t \\
&+ \frac{\sqrt{\alpha} }{\upsigma}\proj_{x_i(t)}
\tilde\sigma[\mu_{X(t)}](x_i(t))\de \mathsf{W}(t)\,,
\end{align*}
where  $\proj_x=I_d-xx^\top$ denotes the projection onto the tangent space of the sphere $\S^{d-1}$ at $x$. This process corresponds to the \emph{stochastic modified equation} introduced by~\cite{li2017sme, li2019sme} as the scaling limit of stochastic gradient descent. Following~\cite{gess2024stochastic, gess2024rsgd, gess2025conservative}, we study the \emph{stochastic modified flow}, a different SDE with the same time marginals but that lends itself more readily to mean-field limits as $n \to \infty$.

To obtain the stochastic modified flow, we rewrite the diffusion term in a form that reflects its origin from the fluctuations $\xi_{\bm\theta}$. Recall that the covariance structure of the vector $\tilde\sigma[\mu](x)\,\zeta^1$ is given by
\[
\int_{\Uptheta}
\xi_{\bm\theta}[\mu](x)\,\xi_{\bm\theta}[\mu](x)^\top
\,\uprho_*(\de\bm\theta).
\]
Equivalently, this Gaussian can be represented as a stochastic integral against a cylindrical Wiener process.

More precisely, let $(\mathsf W(\cdot, t))_{t\geq0}$ be a cylindrical Wiener process on $L^2(\uprho_*)$; see Section~\ref{sec: cylindrical.wiener} for details. Then one can write
\[
\tilde\sigma[\mu](x)\,\de \mathsf W(t)
\;\equiv\;
\int_{\Uptheta}
\xi_{\bm\theta}[\mu](x)
\,\mathsf W(\de\bm\theta,\de t),
\]
in the sense that both sides define Gaussian processes with the same covariance. Using this representation, we arrive at the limiting system of It\^o SDEs
\begin{align}
\de x_i(t)
=
\proj_{x_i(t)}
\Bigg(
&b_{\uprho_*}[\mu(t)](x_i(t))\,\de t
+
\frac{\sqrt{\alpha} }{\upsigma}
\int_{\Uptheta}
\xi_{\bm\theta}[\mu(t)](x_i(t))
\,\mathsf W(\de\bm\theta,\de t)
\Bigg) \nonumber \\
&
-\frac{\alpha}{2\upsigma^2}
x_i(t)
\int_{\Uptheta}
\left\|
\proj_{x_i(t)}
\xi_{\bm\theta}[\mu(t)](x_i(t))
\right\|^2
\uprho_*(\de\bm\theta)\,\de t\,  \label{eq: first.sde}
\end{align}
for all $i\in[n]$.

\subsubsection{Approximation results}

We are now in a position to state our rigorous approximation results under moment conditions on the weight distribution~$\uprho_*$.

\begin{assumption}\label{ass:high_order_short}
Whenever $(\bA,\bV)\sim\uprho_*$, we assume that $\bA-\E\bA=\bW\bW'^\top$ is independent from $\bV$. Moreover, there exists $C>0$ such that $\{(\bV-\E\bV)_{ij}\}_{ij}$ and $\{(\bW)_{ij},(\bW')_{ij}\}_{ij}$ are independent subGaussian with variance proxy $C\sigma_{\bV}^2$  and $C\sigma_{\bA}^2$ respectively.
\end{assumption}

Consider the piecewise constant process $X^\eta(t)\coloneqq X^{\lfloor t/\eta\rfloor}\in(\S^{d-1})^n$ obtained by interpolating the discrete updates in \eqref{eq:update_tokens}, as well as the limiting object $X(t)=(x_1(t),\dots,x_n(t)) \in (\S^{d-1})^n$ as the solution of the system of It\^o SDEs~\eqref{eq: first.sde}.
We have the following error bound at time $T=t_L=\eta L$. 

\begin{theorem}\label{thm:weak_error_clean}
Under Assumption~\ref{ass:high_order_short}, for any $\varphi\in C^4((\S^{d-1})^n)$ the following approximation holds uniformly until macroscopic time $t_L=\eta L$:
\[
\sup_{t \in [0, t_L]}\big|
\E\varphi(X(t))-\E\varphi(X^\eta(t))
\big|
\le
C e^{Ct_L}\,\eta (t_L+1)\,\max(1,\alpha),
\]
where $C\geq 1$ depends on $\|\varphi\|_{C^4}$ but not on $\eta,\alpha,L$.
\end{theorem}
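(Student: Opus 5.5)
The proof will follow the standard weak-error scheme for stochastic modified equations (Talay--Tubaro / Milstein), adapted to the sphere. Let $\mathcal L$ denote the generator of the SDE system \eqref{eq: first.sde} on $(\S^{d-1})^n$. For fixed horizon $t\le t_L$, set $u(s,X)\coloneqq \E[\varphi(X(t))\mid X(s)=X]$, which solves the backward Kolmogorov equation $\partial_s u+\mathcal L u=0$ with $u(t,\cdot)=\varphi$. We then write
\[
\E\varphi(X^\eta(t))-\E\varphi(X(t)) \;=\; \sum_{\ell=0}^{\lfloor t/\eta\rfloor-1}\E\Big[u\big((\ell+1)\eta,X^{\ell+1}\big)-u\big(\ell\eta,X^{\ell}\big)\Big] + (\text{boundary term of size } O(\eta)).
\]
This telescopes the error into a sum of one-step local errors. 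Each summand compares one step of the discrete map \eqref{eq:update_tokens} with the semigroup $e^{\eta\mathcal L}$ started at the same point $X^\ell$. The final bound then comes from showing that each local error is $O(\eta^2\max(1,\alpha))$ times $\|u\|_{C^4}$, and summing over $L=t_L/\eta$ layers.

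\textbf{Step 1: regularity of $u$.} We need $\sup_{s\le t}\|u(s,\cdot)\|_{C^4}\le Ce^{Ct_L}\|\varphi\|_{C^4}$. The coefficients of \eqref{eq: first.sde} are the drift $\proj b_{\uprho_*}$, the Itô correction, and the diffusion field $\xi_{\bm\theta}$ viewed as an $L^2(\uprho_*)$-valued map. These are smooth functions on the compact manifold $(\S^{d-1})^n$. Assumption~\ref{ass:high_order_short} ensures, through subGaussian moments of $\bA$ and $\bV$, that all their derivatives up to order four have finite $L^2(\uprho_*)$ norms. The softmax derivatives produce polynomial factors in $\beta\|\bA\|$ multiplying $\|\bV\|$, and these are integrable because of independence and subGaussianity. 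Differentiating the flow four times and applying Grönwall then gives the exponential-in-$t_L$ bound. The diffusion contributes constants of order $\alpha$ to the Grönwall rate, which are absorbed into $C$ via $\max(1,\alpha)$.

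\textbf{Step 2: moment matching of one step.} Write $\Delta=X^{\ell+1}-X^\ell$ and Taylor expand $u(\ell\eta+\eta,X^\ell+\Delta)$ to fourth order in $\Delta$ in ambient coordinates, then restrict to the sphere. On the discrete side, expand $\mathsf N(x+\eta v)=x+\eta\proj_x v-\tfrac{\eta^2}{2}\|\proj_x v\|^2x+O(\eta^3|v|^3)$ with $v=H^{-1}\sum_h B_{\bm\theta_h^\ell}$. Split $v=b+\bar\xi$, where $\E\bar\xi=0$ and $\E\bar\xi\bar\xi^\top=H^{-1}\E\xi\xi^\top=\frac{\alpha}{\eta\upsigma^2}\E\xi\xi^\top$. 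We then check:
\begin{itemize}
\item the first conditional moment of $\Delta$ matches $\eta$ times the drift of \eqref{eq: first.sde}, including the Itô term $-\frac{\alpha}{2\upsigma^2}x\E\|\proj\xi\|^2$, up to errors of order $O(\eta^2(1+\alpha))$;
\item the second moment matches $\eta\cdot\frac{\alpha}{\upsigma^2}\E[\proj\xi\otimes\proj\xi]$ across all pairs $(i,j)$, which is where the common noise appears, plus $\eta^2 b\otimes b$;
\item the third and fourth moments are $O(\eta^2\max(1,\alpha)^2)$.
\end{itemize}
The SDE side is handled by expanding $e^{\eta\mathcal L}u=u+\eta\mathcal L u+\tfrac{\eta^2}{2}\mathcal L^2u+\dots$ with the same precision. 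The $\eta^2 b\otimes b$ and $\tfrac{\eta^2}{2}\mathcal L^2$ terms are each individually $O(\eta^2)$, so no cancellation between them is needed at this order.

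\textbf{Step 3: summation and main obstacle.} Summing $L$ local errors of size $Ce^{Ct_L}\eta^2\max(1,\alpha)$ gives $Ce^{Ct_L}\eta\,t_L\max(1,\alpha)$. The $+1$ in $(t_L+1)$ comes from the boundary and interpolation term. The main obstacle is the higher moments in Step 2. Because $\bar\xi$ has size $\sqrt{\alpha/\eta}$, naive terms like $\eta^3\E|v|^3$ in the normalization expansion, or $\eta^4\E|\bar\xi|^4$ in the Taylor remainder, scale as $\eta^{3/2}\alpha^{3/2}$ or $\eta^2\alpha^2$. These are not obviously $O(\eta^2\max(1,\alpha))$.

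My first attempt would exploit the fact that the remainder terms are powers of $\eta\bar\xi$, and that $\eta|\bar\xi|$ is essentially bounded by a constant because $\|B_{\bm\theta}\|\le\|\bV\|$ and $\mathsf N$ keeps points on the sphere. I would bound $|\Delta|\le 2$ deterministically and use subGaussian tails to control $\E[|\eta\bar\xi|^k]\lesssim(\eta^2\E|\bar\xi|^2)^{}\cdot\sup|\eta\bar\xi|^{k-2}\lesssim \eta\alpha\cdot\eta$. The point is to use only the second moment, paying one factor of $\alpha$. Odd centered terms vanish or pair with a factor $\eta b$. If this crude argument leaves a residual $\alpha^2$ factor, I would use the independence across heads to compute the third and fourth cumulants of $\bar\xi$ exactly. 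These scale as $H^{-2}$ and $H^{-3}$, which should yield the claimed $\max(1,\alpha)$.
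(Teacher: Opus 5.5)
\textbf{Verdict: genuine gap.} Your architecture is the paper's: telescoping against the backward Kolmogorov function with a $C^4$ bound of size $Ce^{Ct}$, plus a one-step comparison. Comparing directly with \eqref{eq: first.sde} at local order $\eta^2\max(1,\alpha)$ is a harmless variant of the paper's route, which compares with the modified equation \eqref{eq:SDE_ito_clean} at local order $\eta^2\max(\eta,\alpha)$ and then absorbs the $O(\eta)$ drift $-\frac{\eta}{2}\nabla_b b$.

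The gap is exactly the step you flag, and neither of your fixes closes it.
\begin{itemize}
\item \emph{Crude bound.} $\eta^2\E|\bar\xi|^2$ is of order $\eta^2\upsigma^2/H=\eta\alpha$, and $\eta|\bar\xi|$ is neither $O(\eta)$ nor bounded, since $\bV$ is unbounded; only $|\Delta|\le 2$ is bounded. What the argument actually gives is $\E|\Delta|^3\le 2\E|\Delta|^2\lesssim \eta^2\|b\|^2+\eta\alpha$. This is the same order as the leading diffusion term, so after $t_L/\eta$ layers the error is $O(t_L\alpha)$ and nothing converges. The extra factor $\eta$ in ``$\eta\alpha\cdot\eta$'' is unjustified.
\item \emph{Cumulants.} $\E\bar\xi^{\otimes 3}=H^{-2}\E\xi^{\otimes 3}$ gives a third-order term of size $\eta^3H^{-2}\upsigma^3=\eta^{3/2}\alpha^{3/2}H^{-1/2}$. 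There is no gain at $H=1$, which the theorem allows. Moreover $\eta^4\E|\bar\xi|^4\ge\eta^4(\E|\bar\xi|^2)^2$ contains a Gaussian part of order $\eta^2\alpha^2$ for every $H$.
\end{itemize}

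The missing input is that $\alpha$ is not a free parameter. The paper expands only to second order. It bounds the third-order Lagrange remainder by $\eta^3\E\|b(x)+\frac1H\sum_h G(x,\bm\theta_h)\|^3$. Rosenthal's inequality over heads and the $L^3$ bound $\|G\|_{L^3(\uprho_*)}\le C_1\upsigma$ of Lemma~\ref{lem:Kernel_regularity}(2) turn this into $\lesssim\eta^3\|b\|^3+\eta^{3/2}\alpha^{3/2}$. The passage to $C\eta^2\max(\eta,\alpha)$ then uses, implicitly, $\alpha/\eta=\upsigma^2/H\le\upsigma^2$. Hence $\eta^{3/2}\alpha^{3/2}=\eta^2\alpha\sqrt{\alpha/\eta}\le\upsigma\,\eta^2\alpha$, with $C$ depending on $\upsigma$.

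The same relation $\alpha\le\upsigma^2\eta$ is what justifies your two other absorptions, which are false as written for large $\alpha$:
\begin{itemize}
\item a Gr\"onwall rate $C(1+\alpha)$ gives $e^{C(1+\alpha)t_L}$, not $\max(1,\alpha)e^{Ct_L}$;
\item $\tfrac{\eta^2}{2}\mathcal L^2u$ is $O(\eta^2(1+\alpha)^2)$, not $O(\eta^2)$.
\end{itemize}

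This dependence cannot be engineered away. Take $H=1$ and skewed subGaussian entries of $\bV$. The one-step third-order term $\tfrac{\eta^3}{6}\E\,D^3(\varphi\circ\mathsf N)(x)[\xi,\xi,\xi]$ is then generically of exact order $\eta^{3/2}\alpha^{3/2}$ and has no counterpart in the diffusion. So with $\alpha\asymp 1$ the weak error is of order $\sqrt\eta$, in line with the classical fact that weak order one needs the third moments of the increments to be $O(\eta^2)$. Once you invoke $\alpha\le\upsigma^2\eta$ together with the $L^3$ bound, your Step~2 closes with a third-order Taylor expansion, and no fourth-order or cumulant analysis is needed.
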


Thus, as $\eta\to0$ and $L\to\infty$ with macroscopic time $t_L=\eta L$ fixed, the discrete attention dynamics \eqref{eq:update_tokens} \emph{homogenize} to a drift--diffusion particle system on the sphere. Full definitions and proofs appear in Sections~\ref{sec:main_result} and~\ref{sec: proof.main}. 

In fact, the proof of Theorem~\ref{thm:weak_error_clean}  shows that the discrete chain is approximated by \eqref{eq:SDE_ito_clean}---which differs from \eqref{eq: first.sde} only by an additional modified drift which is uniformly of order $O(\eta)$---at an improved rate $C e^{Ct_L}\eta (t_L+1)\max(\eta,\alpha)$, and the statement on the more illustrative homogenized model is a consequence thereof.

The derivation of Theorem~\ref{thm:weak_error_clean} draws upon seminal work on the martingale problem \cite{ethier2009markov,stroock2007multidimensional} and developments in stochastic modified equations \cite{li2017sme}. 
More specifically, our proof follows the argument of \cite{gess2024rsgd} with the main technical novelty arising from the presence of a common cylindrical noise.

\begin{remark}[Multi-layer perceptrons] \label{rem: mlp}
We expect the same homogenization argument to naturally extend to updates of the form
\begin{equation*}
x_i^{\ell+1}
=
\mathsf{N}\left(
x_i^\ell
+
\frac{\eta}{H}\sum_{h=1}^H B_{\bm\theta_h^\ell}[\mu_{X^\ell}](x_i^\ell)
+
\lambda_\eta\,M_{\bm\phi^\ell}(x_i^\ell)
\right),
\end{equation*}
where $M_{\bm\phi}(x)=U\psi(Wx)$ with \(\psi\) a smooth activation applied component-wise, and
\[
\bm\phi^\ell=(U^\ell,W^\ell)\overset{\mathrm{i.i.d.}}{\sim}\varrho_* \in \mathcal{P}(\Upphi),
\qquad
\Upphi:=\R^{d\times m}\times \R^{m\times d}.
\]
Indeed, set
\[
m_{\varrho_*}(x)\coloneqq\int_{\Upphi} M_{\bm\phi}(x)\,\varrho_*(\de\bm\phi),
\qquad
\zeta_{\bm\phi}(x)\coloneqq M_{\bm\phi}(x)-m_{\varrho_*}(x),
\]
and
\[
\uptau^2\coloneqq
\max_{X\in(\S^{d-1})^n}
\int_{\Upphi}
\sum_{i=1}^n
\left\|
\proj_{x_i}\zeta_{\bm\phi}(x_i)
\right\|^2
\,\varrho_*(\de\bm\phi).
\]
Then the two relevant scales on the macroscopic time variable \(t=\eta\ell\) are $\lambda_\eta/\eta$ and $\lambda_\eta^2\uptau^2/\eta$.
The first one controls the deterministic contribution of the MLP, whereas the second one controls its fluctuations. In particular:
\begin{itemize}
    \item if \(\lambda_\eta=\kappa\eta\), then the MLP contributes only an additional drift resulting in the homogenized model
    \begin{align*}
    \de x_i(t)
    =\proj_{x_i(t)}
    \Bigg(
    \Big(
    &b_{\uprho_*}[\mu(t)](x_i(t))
    +
    \kappa m_{\varrho_*}(x_i(t))
    \Big)\de t
    \\
    &+
    \frac{\sqrt{\alpha}}{\upsigma}
    \int_{\Uptheta}
    \xi_{\bm\theta}[\mu(t)](x_i(t))
    \,\mathsf W(\de\bm\theta,\de t)\\
    &-\frac{\alpha}{2\upsigma^2}
    x_i(t)
    \int_{\Uptheta}
    \left\|
    \proj_{x_i(t)}
    \xi_{\bm\theta}[\mu(t)](x_i(t))
    \right\|^2
\uprho_*(\de\bm\theta)\,\de t.
\end{align*}
    \smallskip
    \item if \(m_{\varrho_*}\equiv0\) and \(\lambda_\eta^2\uptau^2/\eta\to\alpha_{\mathrm{mlp}}\in(0,\infty)\), then the MLP contributes an additional common-noise term, resulting in the homogenized model
    \begin{align*}
\de x_i(t)
=\proj_{x_i(t)}
\Bigg(b_{\uprho_*}[\mu(t)]&(x_i(t))\de t+
\frac{\sqrt{\alpha}}{\upsigma}
\int_{\Uptheta}
\xi_{\bm\theta}[\mu(t)](x_i(t))
\,\mathsf W(\de\bm\theta,\de t)\\
&+
\frac{\sqrt{\alpha_{\mathrm{mlp}}}}{\uptau}
\int_{\Upphi}
\zeta_{\bm\phi}(x_i(t))
\,\widetilde{\mathsf W}(\de\bm\phi,\de t)
\Bigg)\\
&-\frac{\alpha}{2\upsigma^2}
x_i(t)
\int_{\Uptheta}
\left\|
\proj_{x_i(t)}
\xi_{\bm\theta}[\mu(t)](x_i(t))
\right\|^2
\uprho_*(\de\bm\theta)\,\de t\\
&-\frac{\alpha_{\mathrm{mlp}}}{2\uptau^2}
x_i(t)
\int_{\Upphi}
\left\|
\proj_{x_i(t)}
\zeta_{\bm\phi}(x_i(t))
\right\|^2
\varrho_*(\de\bm\phi)\,\de t,
\end{align*}
where \(\mathsf W,\widetilde{\mathsf W}\) are independent cylindrical Wiener processes. The effective noise level would now be measured by \(t_L(\alpha+\alpha_{\mathrm{mlp}})\). 
\end{itemize}
\end{remark}

\subsection{The Gaussian case}
\label{sec:gaussian}
We can obtain sharper results when we assume that the weight matrices are exactly Gaussian rather than subGaussian. In this section, we let ${\uprho_*}\in \cP(\Uptheta)$ be such that
\begin{equation} \label{eq: tformers.at.initialization}\tag{G}
\bV_{i,j}\overset{\textrm{i.i.d.}}{\sim}\mathcal{N}(0,\sigma_{\bV}^{2}),\quad
\bA=\bW\bW'^{\sT},\quad
\bW_{i,j},\bW'_{i,j}\overset{\textrm{i.i.d.}}{\sim}\mathcal{N}(0,\sigma_{\bA}^{2}).
\end{equation}
\subsubsection{Approximation error}
Assumption~\eqref{eq: tformers.at.initialization}
has two important implications:
\begin{itemize}
    \item[(i)] The scaling parameter $\alpha$, rigorously defined in \eqref{eq:Alpha_Sec2} can be computed explicitly as
    $$
        \alpha = \frac{\eta}{H}\sigma_{\bV}^2 (d-1).
    $$
    This suggests the standard scaling \(\sigma_{\bV}^2=1/d\), which we adopt henceforth. We keep $\sigma_{\bA}^2$ general though we note that the same choice  $\sigma_{\bA}^2=1/d$ is common in practice for random Gaussian initialization \cite{he2015delving,vaswani2017attention,yang2021mup}.
    \smallskip 
    
   \item[(ii)] Because $\bV$ is centered, the drift vanishes:
\( b_{\uprho_*}[\mu] \equiv 0 \).
As a result, the diffusive regime, \( \alpha \eta L = \Theta(1),\)
exhibits non-trivial  dynamics that are driven entirely by random fluctuations. By contrast, in absence of random fluctuations, when $\alpha \eta L \to 0$, the ballistic regime becomes static in this setting.
\end{itemize}
As a consequence, the  term $ b_{\uprho_*}$ vanishes and only the It\^o correction remains in  the drift. After a time rescaling by $\upsigma$, the relevant homogenized model to analyze becomes
\begin{align}\label{eq:Diffusive_gaussian_case}
   \de x_{i}(t)=\sqrt{\alpha}\proj_{x_i(t)}\int_{\Uptheta}B_{\bm{\theta}}&[\mu(t)](x_i(t))\mathsf{W}(\de \bm{\theta},\de t)\nonumber\\
   & -\frac{\alpha}{2}x_{i}(t)\int_{\Uptheta}\left\|\proj_{x_i(t)}B_{\bm{\theta}}[\mu(t)](x_i(t))\right\|^{2}\uprho_*(\de \bm{\theta}),
\end{align}
where $B_{\bm\theta}[\mu](x)$ is the attention field defined in \eqref{EQ:VELOCITY_FIELD_SELF_ATTENTION}.
An adaptation of the proof of Theorem \ref{thm:weak_error_clean} to this case is presented below. Note that the approximation holds until a macroscopic time $t_L$ that can be of order $1/\alpha \gg 1$. 

\begin{corollary}\label{cor:weak_error_centered} 
Under \eqref{eq: tformers.at.initialization}, for any $\varphi\in C^4((\S^{d-1})^n)$ the following approximation holds uniformly until macroscopic time $t_L=\eta L$:
\[
\sup_{t \in [0, t_L]}\big|
\E\varphi(X(t))-\E\varphi(X^\eta(t))
\big|
\le
C e^{Ct_L\alpha }\,\eta (t_L+1)\,\max(\eta,\alpha),
\]
where $C\geq 1$ depends on $\|\varphi\|_{C^4}$ but not on $\eta,\alpha,L$.
\end{corollary}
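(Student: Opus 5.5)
The plan is to rerun the weak-error argument behind Theorem~\ref{thm:weak_error_clean}: a Lindeberg-type telescoping in the spirit of \cite{li2017sme,gess2024rsgd}, built on the backward Kolmogorov equation. The difference is that every constant is tracked as a function of $\alpha$, using two facts. First, $b_{\uprho_*}\equiv 0$ under \eqref{eq: tformers.at.initialization}. Second, in the Gaussian case the generator of \eqref{eq:Diffusive_gaussian_case} is exactly $\alpha\mathcal{L}$, where $\mathcal{L}$ is a fixed second-order operator on $(\S^{d-1})^n$ with smooth coefficients independent of $\eta,\alpha$. The strategy is therefore to rescale time by $\alpha$, so that the exponential growth $e^{Ct}$ from Theorem~\ref{thm:weak_error_clean} becomes $e^{C\alpha t}$.

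\textbf{Step 1 (regularity of the Kolmogorov semigroup).} Set $u(t,X)=\E\varphi(X(t)\mid X(0)=X)$, which solves $\partial_t u=\alpha\mathcal{L}u$. The coefficients of $\mathcal{L}$ are smooth in $X$ on the compact manifold $(\S^{d-1})^n$; this uses the smoothness of $B_{\bm\theta}$ in $x$ and $\mu$, together with Gaussian moments of $(\bA,\bV)$ that control derivatives of the softmax. Hence $v(s,X)=u(s/\alpha,X)$ solves $\partial_s v=\mathcal{L}v$, and the standard derivative estimates for flows of SDEs give $\|v(s)\|_{C^4}\le Ce^{Cs}\|\varphi\|_{C^4}$. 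It follows that $\|u(t)\|_{C^4}\le Ce^{C\alpha t}$. This is exactly where the improvement over Theorem~\ref{thm:weak_error_clean} comes from: there, the $O(1)$ drift $b_{\uprho_*}$ prevents this rescaling.

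\textbf{Step 2 (one-step local error).} Expand $\varphi$ along one discrete step $X^{\ell}\mapsto X^{\ell+1}=\mathsf N(X^\ell+\eta H^{-1}\sum_h B_{\bm\theta_h^\ell})$, and compare with $u(\eta,\cdot)=\varphi+\eta\alpha\mathcal L\varphi+O(\eta^2\alpha^2\|\varphi\|_{C^4})$. Expanding $\mathsf N$ to second order gives increments of the form $\proj_x(\eta\bar\xi)+\tfrac{\eta^2}{2}(\cdots)$, where $\bar\xi$ is the head average, which is centered with $\E\|\bar\xi\|^2=O(\alpha/\eta)$. The first moment of the increment is then $O(\eta\alpha)$ (an It\^o-type correction) plus a genuinely $O(\eta^2)$ normalization remainder. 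The second moment matches $\eta\alpha\,a(X)$ exactly. Third and fourth moments are $O(\eta^3 H^{-1}+\eta^3\alpha)$, which is $\le C\eta^2\max(\eta,\alpha)$ after using $\alpha\asymp\eta/H$. Gaussianity enters here: $\E\bV=0$ kills every odd-order term containing a single $\bV$, and the Gaussian moment identities give the remainder in this product form. The target local estimate is
\[
\bigl|\E\,g(X^{\ell+1})-\E\,(e^{\eta\alpha\mathcal L}g)(X^{\ell})\bigr|\le C\eta^2\max(\eta,\alpha)\|g\|_{C^4},
\]
uniformly in the conditioning. The main obstacle is ensuring that every surviving term carries at least one factor $\alpha$ or $\eta$ beyond $\eta^2$. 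The delicate point is the $\eta^2$ term coming from $\E\|\bar\xi\|^2$ via $\mathsf N$, which is of size $\eta\alpha$. It must be shown to be exactly the Itô correction already contained in $\mathcal L$, not an error term, and this needs the precise matching of the radial part.

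\textbf{Step 3 (telescoping).} Write the global error at $t=\eta k$ as
\[
\E\varphi(X^\eta(t))-\E\varphi(X(t))=\sum_{\ell<k}\E\bigl[u(t-\eta(\ell+1),X^{\ell+1})-u(t-\eta\ell,X^{\ell})\bigr].
\]
Apply Step 2 with $g=u(t-\eta(\ell+1),\cdot)$ and the bound from Step 1. This yields
\[
\sum_\ell C\eta^2\max(\eta,\alpha)e^{C\alpha t_L}\le Ce^{C\alpha t_L}\eta\,t_L\max(\eta,\alpha).
\]
Non-grid times $t$ in $[\eta k,\eta(k+1))$ are handled by one more short-time estimate, which costs $C\eta\alpha\le C\eta\max(\eta,\alpha)$ and produces the $(t_L+1)$ factor.
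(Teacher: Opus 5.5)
Your proposal is the paper's argument: a one-step generator matching with local error $C\eta^2\max(\eta,\alpha)$, the bound $\|\psi^\eta_t\|_{C^4}\le Ce^{C\alpha t}$ because the limiting generator is a pure diffusion of size $O(\alpha)$ (your time rescaling just makes this explicit), the same telescoping, and a short-time Dynkin estimate for off-grid times. The point you flag in Step 2 does close, and the paper passes over it: writing $\mathsf N(x+v)=x+\proj_x v-\langle x,v\rangle\proj_x v-\tfrac12\|\proj_x v\|^2x+O(\|v\|^3)$, the radial term converts $D^2\tilde g$ into the Riemannian Hessian, and the tangential cross term has zero mean because $\E[\bV m(\bV m)^\sT]=\sigma_{\bV}^2\|m\|^2 I_d$ (this comes from isotropy of $\bV$, not from oddness), so no unmatched $\eta^2$ term survives in the first moment.
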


The proof is in Section \ref{proof: gaussian.corollary}.

\subsubsection{Asymptotics of the stochastic modified flow}
By rescaling time, we can, without loss of generality, henceforth set $\alpha=1$.
To get a handle on \eqref{eq:Diffusive_gaussian_case} it is helpful to first consider the case $\upbeta=0$, in which case \eqref{eq:Diffusive_gaussian_case} simplifies to  
\begin{equation}\label{eq:decomposition_beta=0_ito}
\de x_i(t)=-\frac{d-1}{2}\kappa(t) x_i(t) \de t+\sqrt{\kappa(t)}\proj_{x_i(t)}\de B(t),
\end{equation}
where $\kappa(t)\coloneqq d^{-1}\|\int x\mu(t,\de x)\|^2$ and $B(t)$ is a Brownian motion in $\mathbb{R}^{d}$.  
Applying the Itô formula, we find
\begin{align*}
\de \<x_i(t),x_j(t)\> &= \kappa(t)d\left(1-\<x_i(t),x_j(t)\>+\frac{\<x_i(t),x_j(t)\>^{2}+\<x_i(t),x_j(t)\>-2}{d}\right) \\  &
    +\sqrt{\kappa(t)}\left(\left\langle \proj_{x_i(t)}\de B(t),x_j(t)\right\rangle+\left\langle \proj_{x_j(t)}\de B(t),x_i(t)\right\rangle\right).
    \end{align*}
A direct computation shows that the quadratic variation vanishes as $d\to\infty$: 
\[
\left[\sqrt{\kappa(\cdot)}\left(\left\langle \proj_{x_i(\cdot)}\de B(\cdot),x_j(\cdot)\right\rangle+\left\langle \proj_{x_j(\cdot)}\de B(\cdot),x_i(\cdot)\right\rangle\right)\right]_t\lesssim \int_{0}^{t}\kappa(s)\de s \lesssim \frac{t}{d}.
\]
As the quadratic variation controls the magnitude of the martingale part of the process (this follows by virtue of the Burkholder-Davis-Gundy inequality, for example), as $d\rightarrow \infty$, the inner products of \eqref{eq:decomposition_beta=0_ito} evolve as 
\[
\frac{\de}{\de t} \<x_i(t),x_j(t)\> = \kappa(t)d(1-\<x_i(t),x_j(t)\>) +o(1)
\]
Summing across indices, we find
\begin{equation*}
    \frac{\de}{\de t}\left\|\int x\mu(t, \de x)\right\|^{2}=\left\|\int x \mu(t,\de x)\right\|^{2}\left(1-\left\|\int x \mu(t,\de x)\right\|^{2}\right) +o(1).
\end{equation*}
This is a logistic ODE for $u(t)=d\kappa(t)$ with an explicit solution that converges exponentially fast to $1$ as $t\to\infty$---see \eqref{eq: logistic.sol}.

We thus use \eqref{eq: tformers.at.initialization} to derive explicit equations describing representation collapse by taking scaling limits in terms of the context length $n$, the inverse temperature $\upbeta$, and the dimension $d$.
This is in line with a recurring theme in statistical physics, wherein one replaces 
high-dimensional microscopic dynamics by  few macroscopic \emph{order
parameters}, and then studies their effective dynamics \cite{saad1995line,benarous2024cpa}.  
The appearance of the simplifying equation for the inner products when $\upbeta=0$ is not accidental. Generally in the setting of \eqref{eq:Diffusive_gaussian_case}, the natural order parameter is the Gram matrix
\begin{equation*}
R(t)=
\begin{bmatrix}
\langle x_1(t), x_1(t)\rangle &\ldots &\langle x_1(t), x_n(t)\rangle\\
\vdots & \ddots & \vdots \\
\langle x_n(t), x_1(t)\rangle & \ldots & \langle x_n(t), x_n(t)\rangle 
\end{bmatrix},
\end{equation*}
since---by rotational invariance and Gaussian conditioning---both the drift and the conditional covariance can be expressed as functions of \(R(t)\).
This yields a closed ("self-consistent") stochastic dynamics on \(R(t)\), but it remains analytically intractable. One idea would be to study the evolution of its expectation \(\E R(t)\) but the latter is not closed because of the nonlinear drift. To circumvent this limitation, we study three specific regimes for which useful statistics admit a closed evolution, summarized below:
\medskip

{\small
\begin{center}
    \begin{tabular}{c|c|c|c}
        \textbf{Data regime} &  \textbf{Scaling} & \textbf{Behavior} &
        \textbf{Reference} \\
        \midrule
        simplex & $\frac{\upbeta^2 n}{d}=O(1)$ \& $\frac{\log n}{d}=o(1)$ & $\dot{u} = u(1-u)(1-f(u))$ &   Theorem \ref{thm:clustering_random_init} \\
       hemisphere & $\upbeta = O(d^{-\frac12})$ & $\dot{u} = u(1-u)$ & Theorem \ref{thm:clustering_small_beta}  \\
       mean-field & $d\upbeta^{-\frac12}=o(1)$ & slow motion & Theorem \ref{thm:large_beta_meta} 
    \end{tabular}
    \end{center}
}
\medskip

\begin{remark}
    The common-noise mechanism behind Theorems \ref{thm:clustering_random_init}–\ref{thm:large_beta_meta} is related in spirit to the literature on synchronization by noise \cite{flandoli2017synchronization, cranston2016weak}. (See \cite{engel2026random} for a recent application in a machine learning context.) 
    We stress, however, that Theorems \ref{thm:clustering_random_init}–\ref{thm:clustering_small_beta} concern alignment of overlaps rather than pathwise synchronization. Theorem \ref{thm:large_beta_meta} studies the corresponding common-noise two-point motion.
\end{remark}

\subsubsection{Simplex data \& low temperature}

We begin by considering "simplex-like"  configurations, a standard setting in the literature \cite{cowsik2025geometric,chen2025criticalattentionscalinglongcontext,giorlandino2025two}. This simplification reduces the dynamics to a single parameter, making the analysis highly tractable. Despite its simplicity, the simplex model remains strongly predictive of several key practical performance metrics, including those related to weight initialization~\cite{olmo20242} and context length generalization~\cite{bai2023qwen, nakanishi2025ssmax, puvvada2025swangpt}.

Assume that the Gram matrix at initial time is $R(0)=(1-\gamma(0))I_n+\gamma(0)\mathbf 1\mathbf 1^\top$. In the absence of a stochastic term, by permutation equivariance of attention with Gaussian matrices~\cite{cowsik2025geometric},  $R(t)$ remains of the form $R(t)=(1-\gamma(t))I_n+\gamma(t)\mathbf{1}\mathbf{1}^{\top}$ for all $t$. In particular, the dynamics of the whole system can be summarized by the scalar $\gamma(t)$. In our case, despite the presence of a stochastic term, this statement remains approximately true as summarized in the next theorem.

\begin{theorem} \label{thm:clustering_random_init}
Fix $d, n\geq2$ and $\upbeta>0$, and assume $\langle x_i(0),x_j(0)\rangle = \gamma_0$ for some $\gamma_0\in(-1/(n-1), 1)$.  For any configuration $(x_1,\ldots,x_n)\in(\S^{d-1})^n$ with $\langle x_i, x_j\rangle=\gamma$ for $i\neq j$, define  
\begin{equation*} \label{eq:pi_simplex_def_selfcontained}
\pi_{i\to k}^{\bA}(\gamma)
\coloneqq
\frac{e^{\upbeta\langle \bA x_i, x_k \rangle}}
{\sum_{\ell=1}^n e^{\upbeta\langle \bA x_i, x_\ell\rangle}}.
\end{equation*}
Then 
\begin{align} \label{eq:f_g_def_selfcontained}
f(\gamma)
&\coloneqq \E\,\sum_{k=1}^n \left(\pi_{i\to k}^{\bA}(\gamma)\right)^2,
\nonumber\\
g(\gamma)
&\coloneqq \E\,\sum_{k=1}^n \pi_{i\to k}^{\bA}(\gamma)\,\pi_{j\to k}^{\bA}(\gamma),
\qquad (i\ne j),
\end{align}
are well-defined (i.e. independent of the particular simplex configuration and of the choice of indices),
and satisfy $0\le f(\gamma)\le 1$ and $0\le g(\gamma)\le 1$. Let
\begin{equation}\label{eq:Phi_def_selfcontained}
b(\gamma)
\coloneqq \gamma+(1-\gamma) g(\gamma)
-\gamma\left(\gamma+(1-\gamma) f(\gamma)\right),
\end{equation}
and $\gamma\in C^1(\R_{\geq0})$ be the solution of the Cauchy problem
\begin{equation} \label{eq:gamma_ODE_selfcontained}
\begin{cases}
\dot\gamma(t)=b(\gamma(t)) \\
\gamma(0)=\gamma_0.
\end{cases}
\end{equation}
Then there exists $C>0$ such that for every $T>0$ and every $\delta\in(0,1)$, with probability at least $1-\delta$, the solution to \eqref{eq:Diffusive_gaussian_case} satisfies
\begin{equation*}
\sup_{t\in[0,T]}\max_{i\ne j}|\langle x_i(t), x_j(t)\rangle-\gamma(t)|
\le e^{CT} 
\sqrt{\frac{8T}{d} \log\left(\frac{2n^2}{\delta}\right)}.
\end{equation*}
Moreover $C=O(1+d\sigma_{\bA}^{4}\upbeta^{2}n).$ 
\end{theorem}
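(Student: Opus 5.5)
The plan is to track the Gram matrix $R(t)=(\langle x_i(t),x_j(t)\rangle)_{ij}$ directly with It\^o's formula applied to \eqref{eq:Diffusive_gaussian_case} (with $\alpha=1$). Then compare its drift with the scalar ODE \eqref{eq:gamma_ODE_selfcontained}, control the martingale part by an exponential inequality plus a union bound over the $n^2$ pairs, and close with Gr\"onwall.

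\textbf{Step 1: It\^o computation.} Under \eqref{eq: tformers.at.initialization}, write $B_{\bm\theta}[\mu](x_i)=\bV m_i^{\bA}$, where $m_i^{\bA}:=\sum_k \pi^{\bA}_{i\to k}x_k$. Since $\bV$ is independent of $\bA$ with i.i.d.\ $\mathcal N(0,1/d)$ entries,
\[
\E_{\bV}\big[\bV m_i (\bV m_j)^\top\big]=\tfrac1d\langle m_i,m_j\rangle I_d .
\]
Hence the cross-variation of the noises of $x_i$ and $x_j$ is $\tfrac1d\E_{\bA}\langle m_i^{\bA},m_j^{\bA}\rangle\,\proj_{x_i}\proj_{x_j}$. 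Its trace against the identity gives $\tfrac1d\E\langle m_i,m_j\rangle(d-2+R_{ij}^2)$. The It\^o correction contributes $-\tfrac{d-1}{2d}R_{ij}\big(\E\|m_i\|^2+\E\|m_j\|^2\big)$. Thus
\[
\de R_{ij}=F_{ij}(R)\,\de t+\de M_{ij},
\]
where $F_{ij}$ depends on the configuration only through $R$, by rotational invariance and Gaussian conditioning on $\bA$. On a simplex configuration $R=(1-\gamma)I+\gamma\mathbf 1\mathbf 1^\top$ we have
\[
\langle m_i,m_j\rangle=\gamma+(1-\gamma)\textstyle\sum_k\pi_{i\to k}\pi_{j\to k}, \qquad \|m_i\|^2=\gamma+(1-\gamma)\sum_k\pi_{i\to k}^2 .
\]
Taking expectations gives $F_{ij}=b(\gamma)+O(1/d)$. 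The well-definedness and the bounds $0\le f,g\le1$ follow from permutation and rotation invariance of the law of $\bA$, together with $\pi$ being a probability vector. The $O(1/d)$ remainder will be absorbed into the final bound, since $T/d\lesssim\sqrt{T/d}$ in the relevant range, or into the constant.

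\textbf{Step 2: martingale control.} The martingale $M_{ij}$ has integrand $\langle\proj_{x_i}\bV m_i,x_j\rangle+\langle\proj_{x_j}\bV m_j,x_i\rangle$ against the cylindrical noise. Its quadratic variation rate is at most $\tfrac1d\E(\|m_i\|+\|m_j\|)^2\le 4/d$, because $\|m_i\|\le1$. The exponential martingale (Bernstein) inequality then gives
\[
\P\big(\sup_{t\le T}|M_{ij}(t)|>r\big)\le 2e^{-r^2d/(8T)}.
\]
A union bound over at most $n^2$ pairs shows that with probability $1-\delta$ we have $\sup_{t\le T}\max_{i\ne j}|M_{ij}(t)|\le\sqrt{\tfrac{8T}{d}\log\tfrac{2n^2}{\delta}}$.

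\textbf{Step 3: Lipschitz bound and Gr\"onwall (main obstacle).} Set
\[
e(t)=\max_{i\ne j}|R_{ij}(t)-\gamma(t)|,\qquad \bar R(t)=(1-\gamma(t))I+\gamma(t)\mathbf 1\mathbf 1^\top .
\]
Then $F_{ij}(R)-b(\gamma)=[F_{ij}(R)-F_{ij}(\bar R)]+O(1/d)$. So I need $F$ to be Lipschitz in the entrywise max-norm with constant $C=O(1+d\sigma_{\bA}^4\upbeta^2 n)$, and this is the hard part. The polynomial pieces of $F$ are trivially $O(1)$-Lipschitz. For the softmax pieces, I would differentiate $\E_{\bA}\big[\pi^{\bA}_{i\to k}\pi^{\bA}_{j\to l}\big]$ with respect to $R$. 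The plan is to condition on $\bW'$, so that the scores $\upbeta\langle\bA x_i,x_k\rangle$ form a Gaussian vector whose covariance is a bilinear function of $R$ (with $\|\bW'^\top x\|^2\approx d\sigma_{\bA}^2$). Then use Gaussian interpolation (Stein), so that derivatives fall on the softmax. Derivatives of softmax are bounded by products of $\pi$'s, and summing over $k$ costs at most a factor $n$. Each entry of $R$ enters the score covariance with a factor $\upbeta^2\sigma_{\bA}^4 d$. I expect that bookkeeping sums over $k,l$ to give the claimed $d\sigma_{\bA}^4\upbeta^2 n$.

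With this in hand, on the good event
\[
e(t)\le C\int_0^t e(s)\,\de s+\sqrt{\tfrac{8T}{d}\log\tfrac{2n^2}{\delta}}+O(T/d),
\]
and since $e(0)=0$, Gr\"onwall yields the stated bound after enlarging $C$.
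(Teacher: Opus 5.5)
Your architecture matches the paper's: It\^o's formula for $R_{ij}$, reduction of the drift to a function of the Gram matrix, a Lipschitz estimate around the simplex line (Lemma~\ref{lem:Gram_stability}), Bernstein plus a union bound for $M_{ij}$, and Gr\"onwall. Steps~1--2 are fine.

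\textbf{The gap is in Step~3, where the real work lies.} Conditionally on $\bW'$, the scores $\upbeta\langle\bA x_i,x_k\rangle=\upbeta\langle\bW^\top x_k,\bW'^\top x_i\rangle$ are Gaussian with covariance $\upbeta^2\sigma_{\bA}^2R_{kl}\langle\bW'^\top x_i,\bW'^\top x_j\rangle$. This depends on the configuration through $\bW'^\top X$, not through $R$ alone. So at fixed $\bW'$ there is no covariance path $R\mapsto\Sigma$ along which to compare a configuration with Gram matrix $R$ to the simplex one. Replacing $\langle\bW'^\top x_i,\bW'^\top x_j\rangle$ by $d\sigma_{\bA}^2R_{ij}$ is an approximation with fluctuations of order $\sqrt d\,\sigma_{\bA}^2$, and your sketch does not control them.

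The paper avoids this by not conditioning. Set $h_k=\bW^\top x_k$ and $g_i=\bW'^\top x_i$. Then $(h,g)\in\R^{2nd}$ is exactly centered Gaussian with covariance $\mathrm{diag}(\sigma_{\bA}^2R\otimes I_d,\ \sigma_{\bA}^2R\otimes I_d)$, and the softmax products are a fixed smooth function $f(h,g)$. Gaussian interpolation between the covariances for $\bar R$ and $R$ pairs both Hessians $\nabla_h^2 f$ and $\nabla_g^2 f$ with $\sigma_{\bA}^2(R-\bar R)\otimes I_d$. The softmax-derivative identities bound the $h$-part by $\upbeta^2\sum_m\|g_m\|^2$ and the $g$-part by $\upbeta^2(\|h_i\|^2+\|h_j\|^2)$. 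Their expectations are $nd\sigma_{\bA}^2$ and $2d\sigma_{\bA}^2$, which is where $L=O(1+d\sigma_{\bA}^4\upbeta^2 n)$ comes from. A singular $R$, possible when $n>d$, is handled by adding $\epsilon I$. Alternatively, you can keep your conditioning and add a second interpolation in $g$.

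\textbf{On the $O(1/d)$ term, you are right and the paper is not.} With the drift of Lemma~\ref{lem:Ito_formula}, on $R=(1-\gamma)I+\gamma\mathbf 1\mathbf 1^\top$ one gets $\frac{d-2+\gamma^2}{d}(\gamma+(1-\gamma)g)-\frac{d-1}{d}\gamma(\gamma+(1-\gamma)f)=b(\gamma)+O(1/d)$. Lemma~\ref{lem:drift_on_simplex_selfcontained} instead asserts exact equality.

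Your absorption of this term is too quick, though. Gr\"onwall yields $e^{LT}\bigl(\max_{i\ne j}\sup_t|M_{ij}|+cT/d\bigr)$, and $cT/d$ is not dominated by $(e^{CT}-1)\sqrt{T/d}$ as $T\to 0$. So neither ``$T/d\lesssim\sqrt{T/d}$'' nor enlarging $C$ alone gives the prefactor-free bound. One way to close it: union-bound only over the $\binom{n}{2}$ distinct pairs, which leaves a factor-$2$ slack in probability, and enlarge $C$ by $O(1/d)$. This absorbs the drift error in the small-$T$ regime where $\sqrt{(8T/d)\log(2n^2/\delta)}$ is small. Otherwise the bound should carry an extra $e^{CT}cT/d$ term.
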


The proof can be found in {Section  \ref{proof: simplex}}. The simplex symmetry forces the interaction terms to be essentially governed by the Gram matrix, so the stochastic drift is close to an explicit deterministic one; a  Lipschitz stability estimate around this simplex manifold then propagates small errors in the Gram matrix, while standard martingale concentration bounds control the accumulated noise. 

\begin{figure}[h]
    \centering
\includegraphics[scale=0.57]{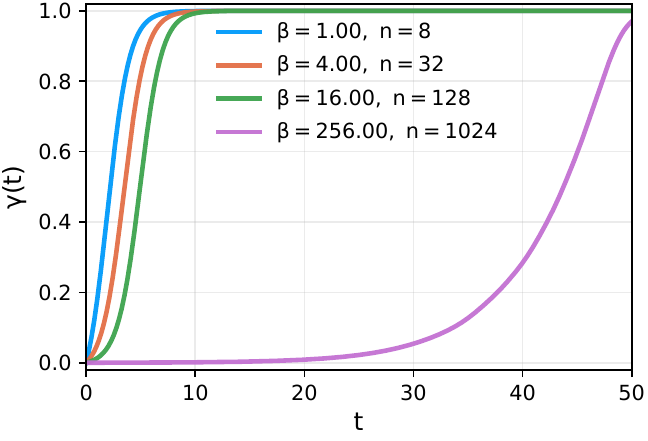}
\includegraphics[scale=0.57]{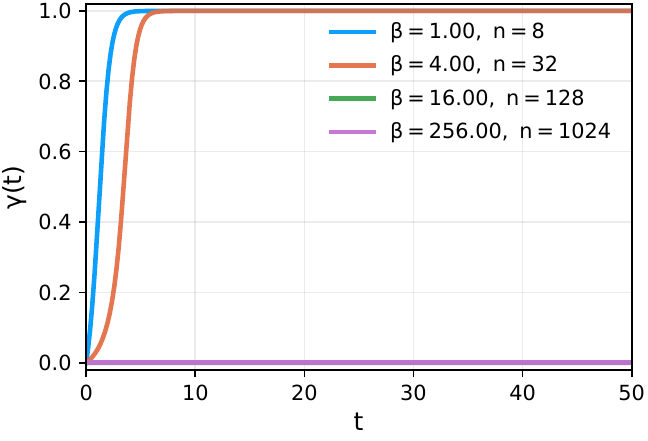}
    \caption{Comparison between the curve $\gamma$ in Theorem \ref{thm:clustering_random_init} ({\bf left}) and that of \cite[Theorem 2.13]{geshkovski2025mathematical} for $d=128$ ({\bf right}).}
\label{fig: logistic}
\end{figure}

Theorem \ref{thm:clustering_random_init} can be seen as a stochastic extension of \cite[Theorem 2.13]{geshkovski2025mathematical}. While the right hand side is not precisely the same, it leads to the same long-time behavior.
Indeed, assume for simplicity that $\gamma_0\in(0,1)$.
Since $n\ge2$, for every realization of $\bA$ and every $\gamma<1$ we have
$\pi_{i\to k}^{\bA}(\gamma)\in(0,1)$ and $\sum_{k=1}^n\pi_{i\to k}^{\bA}(\gamma)=1$, hence
\[
\sum_{k=1}^n \bigl(\pi_{i\to k}^{\bA}(\gamma)\bigr)^2 < 1.
\]
Taking expectations yields $f(\gamma)<1$ for all $\gamma<1$. Together with $g(\gamma)\ge 0$,
this implies that for every $\gamma\in(0,1)$,
\[
b(\gamma)
=(1-\gamma)\bigl(\gamma+g(\gamma)-\gamma f(\gamma)\bigr)
\ge \gamma(1-\gamma)\bigl(1-f(\gamma)\bigr)
>0.
\]
Therefore the solution $\gamma(\cdot)$ to \eqref{eq:gamma_ODE_selfcontained} is strictly
increasing, remains in $(0,1)$, and satisfies $\gamma(t)\nearrow 1$ as $t\to\infty$. 

Furthermore, when $\upbeta$ is large, attention for token $i$ concentrates on a single index $k^{\star}_i$,
so $f(\gamma)\approx 1$.
Moreover, $g(\gamma)\approx \mathbb{P}(k_i^\star=k_j^\star)$, which is typically of order $1/n$ because of independence of attention weights.
Consequently the drift $b(\gamma)\approx O((1-\gamma)/n )$ becomes small, leading to slow evolution of $\gamma(t)$, as seen in Figure \ref{fig: logistic}.

\begin{remark}
In the proof of Theorem \ref{thm:weak_error_clean}, we defined \(\alpha\) via the worst-case bound in \eqref{eq:Alpha_Sec2},
using the uniform estimate \(\mathbb{E}\|\xi_{\bm\theta}[\mu_X](\cdot)\|^2 \le \upsigma^2\) 
over all configurations \(X\in(\S^{d-1})^n\).
This choice is convenient for a global weak-error estimate, but it can be overly pessimistic.
In the present   simplex setting one may instead hope to characterize a configuration-dependent typical noise level by removing the supremum and studying \(\int_{0}^{T}\mathbb{E}\|G(X(t),\bm\theta)\|^2\de t\). We leave such a refinement for future work.
\end{remark}

\subsubsection{High temperature \& dimension limit}

We now relax the simplex assumption on the initial configuration, and do so in the high temperature regime.

\begin{theorem} \label{thm:clustering_small_beta}
Fix $d, n\ge 2$ and $\upbeta\in[0,1]$, and assume that $\langle x_i(0),x_j(0)\rangle\ge \gamma$ for $i\neq j$ and $\gamma\in(0, 1)$. Define the empirical mean
\begin{equation*} \label{eq:def_m_smallbeta}
m(t)\coloneqq\frac{1}{n^2}\sum_{i=1}^n\sum_{j=1}^n \langle x_i(t),x_j(t)\rangle.
\end{equation*}
Let $u$ be the unique solution to the  Cauchy problem for the logistic equation
\begin{equation} \label{eq:logistic_u_smallbeta}
\begin{cases}
\dot{u}(t)=u(t)(1-u(t)) \\
u(0)=m(0).
\end{cases}
\end{equation}
Then there exists a universal constant $C>0$ such that for every every $T>0$ and every $\delta\in(0,1)$, with probability at least $1-\delta$, the solution to \eqref{eq:Diffusive_gaussian_case} satisfies
\begin{equation}\label{eq:smallbeta_main_bound}
\sup_{t\in[0,T]}\left|m(t)-u(t)\right|
\le Ce^{CT}\max\left(\frac{1}{d},\,d^{2}\sigma_{\bA}^{4}\upbeta^{2}\right)
+ C\sqrt{\frac{T}{d}\log\left(\frac{4}{\delta}\right)}.
\end{equation}
\end{theorem}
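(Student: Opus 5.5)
We compare the dynamics \eqref{eq:Diffusive_gaussian_case} (with $\alpha=1$) to its $\upbeta=0$ counterpart \eqref{eq:decomposition_beta=0_ito} and then repeat the heuristic computation from the start of this subsection, now keeping track of the error terms.

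\textbf{Step 1: Itô formula for the overlaps.} Write $\rho_{ij}(t)=\langle x_i(t),x_j(t)\rangle$. Apply Itô's formula to $\rho_{ij}$ using the cylindrical representation. Because $\bV$ is Gaussian and independent of $\bA$, we can first condition on $\bA$ and integrate over $\bV$. This gives an exact covariance structure
\[
\E_{\bV}\bigl[B_{\bm\theta}[\mu](x)B_{\bm\theta}[\mu](y)^\top\bigr]=\sigma_{\bV}^2\,\bigl\langle \bar y_{\bA}(x),\bar y_{\bA}(y)\bigr\rangle I_d,
\]
where $\bar y_{\bA}(x)=\int y\,\pi^{\bA}_x(\de y)$ is the attention-weighted mean. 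With $\sigma_{\bV}^2=1/d$, the drift of $\rho_{ij}$ becomes
\[
\frac{d-1}{d}\,\E_{\bA}\bigl\langle \bar y_{\bA}(x_i),\bar y_{\bA}(x_j)\bigr\rangle\,(1-\rho_{ij})
\]
plus terms of relative size $O(1/d)$, coming from the projections and from the $\|\proj B\|^2$ correction. The martingale part has quadratic variation bounded by $C\,\E_{\bA}\|\bar y_{\bA}\|^4/d\le C/d$.

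\textbf{Step 2: Remove $\upbeta$.} At $\upbeta=0$ we have $\bar y_{\bA}(x)=\bar x:=\int y\,\mu(\de y)$, so $\E_{\bA}\langle\bar y(x_i),\bar y(x_j)\rangle=\|\bar x\|^2=m$. For $\upbeta\le 1$, linearize the softmax: $\bar y_{\bA}(x)-\bar x=\upbeta\,\Cov_\mu(y)\bA x+O(\upbeta^2\|\bA\|^2)$. The first-order term is linear in $\bA$ and centered, since $\E\bA=0$. Its contribution to $\E_{\bA}\langle\bar y(x_i),\bar y(x_j)\rangle$ therefore cancels, except for the cross term and the quadratic term. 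Both are bounded by $C\upbeta^2\E\|\bA\|_{op}^2\le C\upbeta^2 d^2\sigma_{\bA}^4$, up to norm conventions. I would obtain the bound uniformly over configurations via sub-Gaussian moments of $\bW\bW'^\top$.

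\textbf{Step 3: Close the equation and use Grönwall.} Average the drift of Step 1 over $i,j$:
\[
\de m=m(1-m)\,\de t+E(t)\,\de t+\de M(t),\qquad |E(t)|\le C\max\bigl(1/d,\,d^2\sigma_{\bA}^4\upbeta^2\bigr),\qquad \langle M\rangle_T\le CT/d.
\]
The logistic vector field is $3$-Lipschitz on $[-1,1]$, and $m\in[0,1]$. Hence, with $e=m-u$,
\[
|e(t)|\le \int_0^t\bigl(3|e|+|E|\bigr)\,\de s+\sup_{s\le t}|M(s)|.
\]
For the martingale term, an exponential martingale (Bernstein/Freedman) inequality with bounded quadratic variation gives $\sup_{t\le T}|M(t)|\le\sqrt{(CT/d)\log(4/\delta)}$ with probability at least $1-\delta$. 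Grönwall then yields \eqref{eq:smallbeta_main_bound}. To keep the noise term without an $e^{CT}$ factor, as stated, I would apply Grönwall to $e-M$ rather than to $e$. This uses the contractive structure of the logistic flow near its attractor. Alternatively, I would accept $e^{CT}$ on both terms and then absorb it by adjusting $C$.

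\textbf{Main obstacle.} The hardest part is Step 2: getting error terms that are uniform in the configuration and truly of order $d^2\sigma_{\bA}^4\upbeta^2$. This requires controlling the softmax remainder, including $e^{\upbeta\|\bA\|}$ tails, by sub-Gaussian moment bounds, and verifying that the first-order term cancels in expectation. The hypothesis $\rho_{ij}(0)\ge\gamma>0$ is presumably used only to ensure $m$ stays in the basin where the comparison is meaningful.
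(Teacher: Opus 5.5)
Your Steps 1--3 follow the paper's proof closely. The shared steps are: It\^o's formula for $R_{ij}$ with $\E[\bV^\top M\bV]=\frac{1}{d}\mathrm{tr}(M)I_d$; the exact $\upbeta=0$ computation (the paper gets $g_0(X)-m(1-m)=\frac{m}{d}(q-2+m)$ with $q=\frac{1}{n^2}\sum_{i,j}R_{ij}^2$); a Taylor expansion of $\mathsf m_{\upbeta,\bA}[\mu](x)$ in $\upbeta$ whose linear term has zero mean by the symmetry $\bA\mapsto-\bA$; and Gr\"onwall plus an exponential martingale bound with $\langle\widetilde M\rangle_T\le 4T/d$.

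The step that fails is your removal of $e^{CT}$ from the noise term. With $\tilde e=e-M$ you have $\tilde e(t)=\int_0^t(f(m)-f(u)+E)\,\de s$ and $|f(m)-f(u)|\le|\tilde e|+|M|$. Gr\"onwall then gives $|\tilde e(T)|\le Te^{T}(\sup_{s\le T}|M(s)|+\sup|E|)$, so the exponential comes straight back. Contraction does not help: $f'(u)=1-2u>0$ on $[0,\frac12)$, and $u(0)=m(0)$ can be as small as about $\gamma$, so $u$ spends a time of order $\log(1/\gamma)$ in the expanding region. Nor can $e^{CT}$ be absorbed into a universal $C$, since $T$ is arbitrary (fix $T$ large and let $d\to\infty$).

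The paper's proof has exactly the same defect. Its Gr\"onwall bound \eqref{eq:gronwall_e_clean} carries $e^{T}$ in front of $\sup_{t\le T}|\widetilde M(t)|$, and this factor is silently dropped in the last line. So your argument, like the paper's, proves \eqref{eq:smallbeta_main_bound} only with $e^{CT}$ multiplying the martingale term as well. The stated form needs a genuinely time-uniform stability estimate. One candidate route is a variation-of-constants argument along the explicit flow $\Phi_t(v)=v/(v+(1-v)e^{-t})$, for which $\sup_{t\ge0}\partial_v\Phi_t(v)\le\max(1,\frac{1}{2v})$, combined with a stopping time keeping $m$ away from $0$. That is the only place the hypothesis $\langle x_i(0),x_j(0)\rangle\ge\gamma$ could enter (the paper's proof never uses it), and it would naturally give a $\gamma$-dependent rather than universal constant.

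Two smaller points. First, in Step 1 the exact drift $\cD_{ij}$ of Lemma~\ref{lem:Ito_formula} contains the diagonal quantities $s_\mu(x_i)=\E\|\mathsf m_{\upbeta,\bA}[\mu](x_i)\|^2$. Replacing them by the cross term $s_\mu(x_i,x_j)$ costs $O(\upbeta^2\E\|\bA\|^2)$, not $O(1/d)$, so Step 2 must be applied to the diagonal terms too, as the paper does. Second, the $e^{\upbeta\|\bA\|}$ tails you anticipate never arise. Normalization of the softmax weights gives $|\partial_\upbeta w_k|\le 2\|\bA\|w_k$, hence $\|\partial_\upbeta\mathsf m_{\upbeta,\bA}\|\le 2\|\bA\|$ and $\|\partial_\upbeta^2\mathsf m_{\upbeta,\bA}\|\le C\|\bA\|^2$ pointwise. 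Expanding $\langle \mathsf m(x),\mathsf m(x')\rangle-\|\bar x\|^2=\langle\bar x,(\mathsf m(x)-\bar x)+(\mathsf m(x')-\bar x)\rangle+\langle \mathsf m(x)-\bar x,\mathsf m(x')-\bar x\rangle$ then yields $C\upbeta^2\E\|\bA\|^2$ directly. This avoids the $\upbeta^3\E\|\bA\|^3$ and $\upbeta^4\E\|\bA\|^4$ terms that a naive product of the linearization and its remainder would produce.
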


The proof can be found in {Section  \ref{proof: small.beta}}, and is a perturbative argument around the case $\upbeta=0$ discussed around \eqref{eq:decomposition_beta=0_ito}.
Solving \eqref{eq:logistic_u_smallbeta} explicitly yields
\begin{equation} \label{eq: logistic.sol}
u(t)=\frac{u(0)}{u(0)+(1-u(0))e^{-t}},
\end{equation}
so $u(t)\to 1$ as $t\to\infty$. 
As such, for $\beta$ small, the model, even with random weights, is a perturbation of the original Kuramoto model \cite{geshkovski2025mathematical}.

\begin{figure}[h]
    \centering
    \includegraphics[scale=0.55]{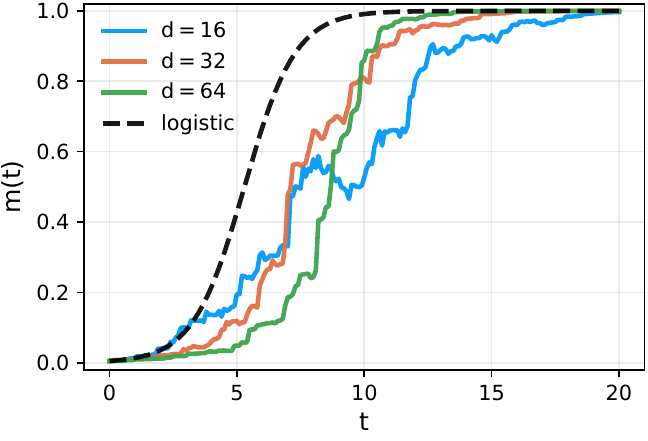}
    \includegraphics[scale=0.55]{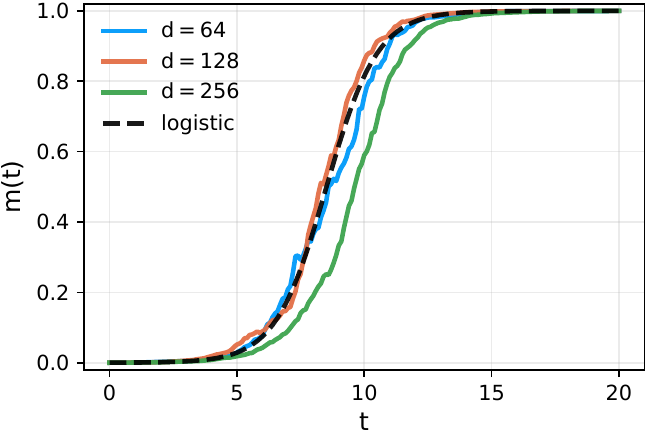}
    \caption{Theorem \ref{thm:clustering_small_beta} asserts that the second moment approaches the solution of the logistic equation \eqref{eq:logistic_u_smallbeta} when $d$ is large and $\upbeta$ is small. \textbf{Left:} 
    $n=200$ and $\upbeta=0.05$. \textbf{Right:} $n=5000$ and $\upbeta=0.001$.}
    \label{fig:placeholder}
\end{figure}

\subsubsection{Slow motion}

We conclude with the study of the limit $n\to\infty$, $\upbeta\to\infty$ and $d\to\infty$. In the mean-field limit $n\to\infty$, one needs to make sense of the limiting object. 
The empirical measure $\mu_{X(t)}$ of $X(t)=(x_i(t))_{i\in[n]}$ can be shown to converge, as $n\to\infty$, conditionally on the noise, to a probability measure $\mu(t)$ which solves a stochastic Fokker-Planck equation---this is done in Theorem \ref{thm:PoC_wellposedness}. 

In Lagrangian words, a particle $x(t)\in\S^{d-1}$ with law $\mu(t)$ satisfies the conditional McKean-Vlasov equation 
\begin{equation} \label{eq:non_linear_SDE_cylindrical}
\de x(t)=\proj_{x(t)}\int_{\Uptheta}B_{\bm{\theta}}[\mu(t)](x(t))\;\mathsf{W}(\de \bm{\theta}, \de t).
\end{equation}
We work in the following setting.

\begin{assumption} \label{ass:low-temperature}
For every $t\geq 0$, $\mu(t)\ll \upsigma_{d}$ where $\upsigma_{d}$ is the uniform measure on $\S^{d-1}$. Moreover $\rho(t)\coloneqq \frac{\de\mu(t)}{\de\upsigma_{d}}$ satisfies $\rho(t,\cdot)\in C^1(\S^{d-1})$ and 
\[
0<\rho_{\min}\le \rho(t, \cdot) \le \rho_{\max},\qquad\|\nabla \rho(t,\cdot)\|_{L^\infty(\S^{d-1})}\le L
\]
for all $t\geq0$, 
where $\rho_{\min}$ and $\rho_{\max}$ are independent of $d$ and $\upbeta$.
\end{assumption}

At initial time, this can be seen as a perturbative regime around the uniform distribution.
It is plausible that this assumption can be imposed solely on the initial density $\rho(0,\cdot)$ and then propagated over time by using the setting of stochastic flows of Kunita \cite{kunita1990stochastic}. Proving this would, however, require separate regularity analysis of the induced flow which is beyond the scope of the present paper.

\begin{theorem} \label{thm:large_beta_meta}
Under Assumption \ref{ass:low-temperature}, 
consider $x^{(1)}$ and $x^{(2)}$ two solutions to \eqref{eq:non_linear_SDE_cylindrical} with 
initial data 
$x^{(1)}(0)$ and $x^{(2)}(0)$ drawn independently from $\mu(0)$. 
Then there exists a universal constant $C>0$ such that for every $T>0$ and every
$\delta\in(0,1)$ and $t\ge 0$, with probability at least $1-\delta$,
\begin{align*} 
    \sup_{t\in[0,T]}\left|\left\<x^{(1)}(t),x^{(2)}(t)\right\>-\left\<x^{(1)}(0),x^{(2)}(0)\right\>\right|\leq \frac{T}{d}+C\sqrt{\frac{T}{d} \log\left(\frac{1}{\delta}\right)}+r(\upbeta,d),
\end{align*}
where $|r(\upbeta,d)|=O(d^{-\frac32}+\upbeta^{-\frac12}).$ 

Furthermore, if $\upbeta=\upbeta(d)$ is such that $d \upbeta^{-\frac12}=o(1)$, then
\begin{equation*}
    \left(\left\langle x^{(1)}(td), x^{(2)}(td)\right\rangle\right)_{t\in[0,T]} \,\,\overset{\mathsf{Law}}{\underset{d\rightarrow\infty}{\Longrightarrow}}\,\, (R_\infty(t))_{t\in[0,T]}
\end{equation*}
where $R_\infty(t)$ is the unique solution to the SDE
\begin{equation} \label{eq: sde.logistic}
\begin{cases}
    \de u(t)=-u(t)\left(1-u(t)^{2}\right)\de t+\sqrt{2}\left(1-u(t)^{2}\right)\de B(t) \\
    u(0)=0,
\end{cases}
\end{equation}
and $B(t)$ is the one-dimensional Brownian motion.
\end{theorem}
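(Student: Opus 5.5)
Let $u(t)=\langle x^{(1)}(t),x^{(2)}(t)\rangle$. We apply Itô's formula to $u$ using the cylindrical representation \eqref{eq:non_linear_SDE_cylindrical}, with the Itô correction understood as in \eqref{eq:Diffusive_gaussian_case} with $\alpha=1$. Write $v_a\coloneqq B_{\bm\theta}[\mu(t)](x^{(a)})$. The dynamics of $u$ then splits into three pieces.

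The first piece is the drift from the Itô correction of each particle:
\[
-\tfrac12 u\,\E_{\bm\theta}\bigl(\|\proj_{x^{(1)}}v_1\|^2+\|\proj_{x^{(2)}}v_2\|^2\bigr).
\]
The second is a cross-variation drift, $\E_{\bm\theta}\langle \proj_{x^{(1)}}v_1,\proj_{x^{(2)}}v_2\rangle$. The third is a martingale $M(t)$ with
\[
\de[M]_t=\E_{\bm\theta}\bigl(\langle \proj_{x^{(1)}}v_1,x^{(2)}\rangle+\langle \proj_{x^{(2)}}v_2,x^{(1)}\rangle\bigr)^2\de t .
\]
Since $\bV$ is independent of $\bA$ with i.i.d.\ $\mathcal N(0,1/d)$ entries, we can integrate $\bV$ out exactly. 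Writing $m_a\coloneqq\int y\,\pi^{\bA}_{x^{(a)}}(\de y)$ for the attention barycenter, all three coefficients become expectations over $\bA$ of explicit quadratic forms. For instance $\E_{\bV}\|\proj_x\bV m\|^2=\tfrac{d-1}{d}\|m\|^2$, and $\E_{\bV}\langle\proj_{x}\bV m_1,\proj_{y}\bV m_2\rangle=\tfrac1d\langle m_1,m_2\rangle\,\Tr(\proj_x\proj_y)$, where $\Tr(\proj_x\proj_y)=d-2+u^2$.

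The key estimate is how the attention barycenters behave at large $\upbeta$ under Assumption~\ref{ass:low-temperature}. For fixed $\bA=\bW\bW'^\top$, the Gibbs measure $\pi^{\bA}_x(\de y)\propto e^{\upbeta\langle \bA x,y\rangle}\rho(y)\upsigma_d(\de y)$ concentrates, by a Laplace/von Mises-type argument, around the direction $\mathsf N(\bA x)$ at angular scale $(\upbeta\|\bA x\|)^{-1/2}$. The density bounds and the bound $\|\nabla\rho\|_\infty\le L$ ensure that $\rho$ only perturbs this at order $\upbeta^{-1/2}$. We expect $m_a=\mathsf N(\bA x^{(a)})+O(\upbeta^{-1/2})$, up to a deterministic scalar factor close to $1$ in the regime $d\ll\upbeta^{1/2}$, with errors controlled in expectation using the sub-Gaussian tails of $\|\bA x\|$.

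Given this, we compute the coefficients using the Gaussian structure of $\bA$ in high dimension:
\begin{itemize}
\item $\E\|m_a\|^2\approx 1$;
\item $\E\langle m_1,m_2\rangle\approx\E\langle \mathsf N(\bA x^{(1)}),\mathsf N(\bA x^{(2)})\rangle$, which for the rank-structured Gaussian $\bA$ is $u+O(d^{-1})$ to leading order.
\end{itemize}
Plugging these in, the drift of $u$ is
\[
-\tfrac{d-1}{d}u+\tfrac{d-2+u^2}{d}(u+\dots)=O(1/d),
\]
because the leading order-one terms cancel, and $\de[M]_t=O(1/d)$. This gives the first claim: integrating the drift bound yields the $T/d$ term, a Bernstein/BDG exponential martingale inequality bounds $\sup_{t\le T}|M(t)|$ by $C\sqrt{(T/d)\log(1/\delta)}$, and the Laplace and high-dimensional errors are collected into $r(\upbeta,d)=O(d^{-3/2}+\upbeta^{-1/2})$.

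For the second claim we speed time up by $d$. After rescaling, the drift and the quadratic variation of $u(td)$ must be computed at order exactly $1/d$ before the speed-up; the hypothesis $d\upbeta^{-1/2}=o(1)$ makes the Laplace error negligible after multiplying by $d$. We expect the following limits:
\begin{itemize}
\item the $1/d$-coefficient of the drift tends to $-u(1-u^2)$, coming from the $u^2-2$ trace correction combined with the $O(1/d)$ correction to $\E\langle m_1,m_2\rangle$;
\item the rescaled quadratic variation tends to $2(1-u^2)^2$, since each $\langle\proj_x\bV m,y\rangle$ has variance $\tfrac1d\|\proj_x y\|^2=\tfrac{1-u^2}{d}$ and the two terms are asymptotically perfectly correlated.
\end{itemize}
The initial value is $u(0)\to0$, because independent draws from a density comparable to uniform are asymptotically orthogonal. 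With these coefficients in hand, tightness, identification of the limit through the martingale problem, and uniqueness for the SDE \eqref{eq: sde.logistic}, whose coefficients are smooth and keep $[-1,1]$ invariant, give convergence in law.

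The main obstacle is the Laplace step at second order. We need $\E_{\bA}\langle m_1,m_2\rangle$ and the related covariances expanded uniformly to $O(1/d)$ precision, with remainders $o(1/d)$ when $d\upbeta^{-1/2}\to0$, for a non-uniform density $\rho$ and a non-isotropic random $\bA=\bW\bW'^\top$. The first approach to try is to condition on $\bA x^{(1)},\bA x^{(2)}$, which are jointly Gaussian given $\bW'$, and to use Gaussian concentration of $\|\bW'^\top x\|$ to reduce to explicit Gaussian-angle computations.
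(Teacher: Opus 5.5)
Your outline follows the paper's argument almost step for step: It\^o's formula for the overlap, integrating out $\bV$, Laplace's method to replace $m_a$ by $\mathsf N(\bA x^{(a)})$, a $1/d$ expansion of $\E\langle\mathsf N(\bA x),\mathsf N(\bA y)\rangle$, an exponential martingale bound, then acceleration by $d$ and the martingale problem. The step that fails is the identification of the rescaled drift.

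Write $\E\langle m_1,m_2\rangle=u+c\,(u^3-u)/d+o(1/d)$ and $\E\|m_a\|^2=1+o(1/d)$; the latter is where $d\upbeta^{-1/2}=o(1)$ is used. Then $d$ times the drift of $u$ equals
\[
(d-2+u^2)\Bigl(u+\tfrac{c\,(u^3-u)}{d}\Bigr)-(d-1)u=-(1+c)\,u(1-u^2)+O(d^{-1}).
\]
The trace correction alone already gives $-u(1-u^2)$, so the stated SDE requires $c=0$. Your attribution of the limit to ``the trace correction combined with the $O(1/d)$ correction'' is therefore inconsistent as soon as that correction is nonzero.

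It is nonzero. For standard Gaussian $g,h\in\R^d$ with correlation $r$,
\[
\E\langle\mathsf N(g),\mathsf N(h)\rangle=r-\frac{r(1-r^2)}{2d}+O(d^{-2}),
\]
which is Lemma~\ref{lem:delta_method}, i.e.\ $c=1/2$. For $\bA=\bW\bW'^\top$, use the reduction you propose and condition on $\bW'$. The conditional correlation is then the cosine of $\bW'^\top x$ and $\bW'^\top y$, whose own mean is $r-r(1-r^2)/(2d)$. So the correction is applied twice, giving $c=1$. Carried out, your plan yields drift $-2u(1-u^2)$ with diffusion $\sqrt2(1-u^2)$. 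Rescaling time multiplies drift and quadratic variation by the same factor, so this is not equivalent to the stated SDE.

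The paper's proof contains the same slip. Substituting \eqref{eq:kernel_expansion} into the drift gives $-\frac{3}{2d}(R-R^3)$, not the $-\frac1d(R-R^3)$ written in \eqref{eq:mean_field_ito}. Moreover, the matrix $\Sigma$ in Lemma~\ref{lem:delta_method} is the covariance for a single Gaussian matrix; for the product, the entries involving $\|\bA x\|^2,\|\bA y\|^2$ double. So the second claim, as stated, does not follow from this computation. What the route actually supports is a limit with drift $-(1+c)u(1-u^2)$, where $c$ is fixed by the law of $\bA$. The first claim is unaffected, since $\sup_{|u|\le1}2|u|(1-u^2)<1$ still gives the $T/d$ term.

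Separately, your reason for the quadratic variation is wrong, although the value $2(1-u^2)^2$ is right. The two terms each have variance about $(1-u^2)/d$, but their covariance is
\[
\tfrac1d\langle m_1,m_2\rangle\langle\proj_{x^{(1)}}x^{(2)},\proj_{x^{(2)}}x^{(1)}\rangle\approx-\frac{u^2(1-u^2)}{d},
\]
i.e.\ correlation $-u^2$ rather than $1$. Perfect correlation would give $4(1-u^2)/d$. Your remark that $u(0)\to0$ because $\rho$ is bounded above and below is a correct detail the paper leaves implicit.
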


The proof can be found in {Section  \ref{proof: mean.field}}. The result can be readily transferred to the particle system by a mean-field limit result such as Theorem \ref{thm:PoC_wellposedness}, up to standard error terms.

Note that Theorem \ref{thm:large_beta_meta} shows that, on the original time scale, particles move very little up to times
\(T\sim d\), so the system exhibits a slow  transient.
This is reminiscent of metastability phenomena studied for self-attention dynamics
\cite{geshkovski2024dynamic,bruno2024emergence,bruno2025multiscale}, where the empirical measure can spend long times
near structured configurations before undergoing a qualitative change. However, starting from a near-uniform initialization, the dynamics does not
display a rapid deterministic escape from the vicinity of the uniform distribution, in contrast with \cite{bruno2024emergence}. 

Theorem \ref{thm:large_beta_meta} is also analogous in spirit to the Bronsard--Kohn energy method for slow motion, which gives lower bounds on the time needed to leave a neighborhood of a slow manifold in gradient flows  \cite{bronsard1990slowness, otto2007slow}. The analogy should be understood at the level of the conclusion rather than the proof: here we do not use an underlying energy--dissipation structure, and the lower bound on the escape time follows directly from Theorem~\ref{thm:large_beta_meta}. 

The long-time behavior of the limiting logistic SDE is treated separately in Section~\ref{sec: sde.logistic}.

\subsection{Notation} \label{note:probabilistic_setting}

Whenever convenient we use $A\lesssim_S B$ to imply $A\leq CB$ for $C>0$ depending only on $S$.
We take $(\cF_t)$ to be the usual augmentation of the filtration generated by 
$\{x_1,\ldots,x_n\}\cup\{\beta^k(s): 0\le s\le t, k\ge1\}$. We define the common-noise filtration by $$
\cF_t^{\mathsf W,0}\coloneqq\sigma(\beta^k(s): 0\le s\le t,\ k\ge1),
$$ and we designate by $\cF_t^{\mathsf W}$ the usual augmentation of $\cF_t^{\mathsf W,0}$.

\subsection*{Acknowledgments}

B.G. was supported by a Sorbonne Emergences grant, and a gift from Google. P.R. was supported by
NSF grants DMS-2022448, CCF-2106377, and a gift from Apple.

\part{The homogenized model}

\section{Preliminaries} \label{sec:main_result}

Fix $n\ge 2$. 
For $X=(x_1,\dots,x_n)\in(\S^{d-1})^n$ write $\mu_X$ for the empirical measure.
Define 
$$
b(X)_i\coloneqq\proj_{x_i} b_{{\uprho_*}}[\mu_X](x_i)
$$ 
and
\begin{equation}\label{eq:G_def}
G(X,\bm\theta)_i\coloneqq\proj_{x_i}\,\xi_{\bm{\theta}}[\mu_X](x_i)
\end{equation}
for $i\in[n]$. 

\subsection{Defining $\alpha$} \label{sec: defining.alpha}

Looking at the increments \eqref{eq:update_tokens} and applying Taylor-Lagrange we have
\begin{equation*}
    x_i^{\ell+1} - x_i^{\ell} = \eta \proj_{x_i^\ell} b_{\uprho_*}[\mu_{X^\ell}](x_i^\ell) +  \frac{\eta}{H}\sum_{h=1}^H\proj_{x_i^\ell} \xi_{\bm{\theta}^\ell_h}[\mu_{X^\ell}](x_i^{\ell}) + O(\eta^2).
\end{equation*}
To construct a continuous-time object, we shall follow the stochastic modified equations framework of \cite{li2017sme, li2019sme}, readapted in \cite{gess2024rsgd, gess2024stochastic, gess2025conservative}. 
To this end, we want the fluctuations to be multiplied by $\sqrt\eta$. We compute the variance of the second term
\begin{align} \label{eq: defining.alpha}
    \E \left\|\frac{1}{\sqrt{H}}\sum_{h=1}^H\proj_{x_i^\ell} \xi_{\bm{\theta}^\ell_h}[\mu_{X^\ell}](x_i^{\ell})\right\|^2 &= \frac{1}{H}\sum_{h=1}^H\E\left\|\proj_{x_i^\ell} \xi_{\bm{\theta}^\ell_h}[\mu_{X^\ell}](x_i^{\ell})\right\|^2\nonumber\\
    &\leq \max_{X\in(\S^{d-1})^n}\max_{j\in[n]} \E\left\|\proj_{x_j}\xi_{\bm\theta}[\mu_X](x_j)\right\|^2 .
\end{align}
Now, define $\upsigma^{2}$ as the upper bound in \eqref{eq: defining.alpha}. 
In this way, we can naturally set 
\begin{equation}\label{eq:Alpha_Sec2}
    \alpha\coloneqq \frac{\eta\upsigma^{2}}{H}.
\end{equation}
Assumption \ref{ass:high_order_short} crucially implies the following. 
 
\begin{lemma}\label{lem:Kernel_regularity}
Under Assumption \ref{ass:high_order_short}, there exist $C_1,C_2>0$ independent of $\alpha, \eta, L$ such that
\begin{enumerate}
\item $b\in C^4((\S^{d-1})^n;\mathsf{T}(\S^{d-1})^n)$.
\smallskip
\item We have
\begin{equation*}
    \upsigma \coloneqq \max_{X\in(\S^{d-1})^n}\|G(X,\cdot)\|_{L^2({\uprho_*})}<\infty
\end{equation*}
and 
\begin{equation*}
     \underset{X \in (\S^{d-1})^{n}}{\max}\|G(X,\cdot)\|_{L^3({\uprho_*})}\leq C_1\upsigma .
\end{equation*}
\smallskip 
\item For ${\uprho_*}$-a.e.\ $\bm{\theta}$, $G(\cdot,\bm{\theta})\in C^5((\S^{d-1})^n;\mathsf{T}(\S^{d-1})^n)$ and
\begin{equation*}
    \left\|\Big\|G(\cdot, \bm\theta)\Big\|_{C^5} \right\|_{L^2({\uprho_*})} + \left\|\Big\|\nabla_{G(\cdot,\bm{\theta})}G(\cdot,\bm{\theta})\Big\|_{C^5}\right\|_{L^2({\uprho_*})} \leq C_2 \upsigma .
\end{equation*}
\end{enumerate}
Moreover we can choose $C_1=O(1+\frac{\sigma_{\bA}\|\E\bV\|_{\mathrm{op}}\upbeta}{\sigma_{\bV}})$ and $C_2=O(1+\upbeta^{10}\sigma_{\bA}^{10}d^5)$.
\end{lemma}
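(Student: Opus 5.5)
The plan is to write the attention field explicitly and get everything from smoothness of the softmax map plus Gaussian-type moment bounds. For fixed $\bm\theta=(\bV,\bA)$ and $X\in(\S^{d-1})^n$,
\[
B_{\bm\theta}[\mu_X](x_i)=\bV\sum_{k=1}^n \pi_{i\to k}^{\bA}(X)\,x_k ,\qquad \pi^{\bA}_{i\to k}(X)=\frac{e^{\upbeta\langle \bA x_i,x_k\rangle}}{\sum_\ell e^{\upbeta\langle \bA x_i,x_\ell\rangle}} .
\]
This is real-analytic in $X$ on a neighbourhood of the compact manifold $(\S^{d-1})^n$, and so is the projection $\proj_{x_i}$. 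Hence $G(\cdot,\bm\theta)$ is smooth for every $\bm\theta$, and the only issue is integrability in $\bm\theta$.

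First I will bound derivatives of the softmax weights. Each derivative of order $m$ of $\pi^{\bA}$ with respect to $X$ is a polynomial in the $\pi$'s, which lie in $[0,1]$, times $m$ factors of the form $\upbeta \bA$ applied to unit vectors. This gives $|\partial^m\pi|\lesssim_m (1+\upbeta\|\bA\|_{\mathrm{op}})^m$ uniformly in $X$. Moreover
\[
\|B_{\bm\theta}\|_{C^m}\lesssim \|\bV\|_{\mathrm{op}}\,(1+\upbeta\|\bA\|_{\mathrm{op}})^m .
\]
The same bound holds for the centred field $\xi_{\bm\theta}=B_{\bm\theta}-\E B_{\bm\theta}$, with $\|\bV\|_{\mathrm{op}}$ replaced by $\|\bV\|_{\mathrm{op}}+\E\|\bV\|_{\mathrm{op}}(\cdots)$.

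For item (1) I will differentiate $b=\E B$ under the expectation. Dominated convergence applies because, by Assumption \ref{ass:high_order_short}, $\|\bV\|$ and $\|\bW\|,\|\bW'\|$ are subGaussian and independent, so all moments of $\|\bV\|(1+\upbeta\|\bW\|\|\bW'\|)^4$ are finite.

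For items (2) and (3) the key point is that constants must be relative to $\upsigma$, not absolute. I will use independence of $\bV$ and $\bA$ and write
\[
\xi_{\bm\theta}=(\bV-\E\bV)\,m^{\bA}(X)+\E\bV\,\bigl(m^{\bA}(X)-\E m^{\bA}(X)\bigr),\qquad m^{\bA}(X)=\sum_k\pi_{i\to k}^{\bA}x_k .
\]
\begin{itemize}
\item \emph{First term.} Its $L^2$ and $L^3$ norms are comparable by Gaussian hypercontractivity: conditionally on $\bA$ it is a linear subGaussian vector in the entries of $\bV-\E\bV$. This yields an $L^3$ bound by a constant times the $L^2$ norm, of order $\sigma_{\bV}$.
\item \emph{Second term.} Since $m^{\bA}$ is Lipschitz in $\bA$ with constant $\lesssim\upbeta$, it fluctuates at scale $\upbeta\sigma_{\bA}$. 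This is the source of the factor $\sigma_{\bA}\|\E\bV\|\upbeta/\sigma_{\bV}$ in $C_1$, once we compare with $\upsigma\gtrsim\sigma_{\bV}$.
\end{itemize}

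The lower bound $\upsigma\gtrsim\sigma_{\bV}$ needs $\max_X\E\|\proj_{x_i}(\bV-\E\bV)m\|^2\gtrsim\sigma_{\bV}^2$. I will check this at a collapsed configuration, $x_k=x$ for all $k$, where $m=x$ and the variance is $\sigma_{\bV}^2(d-1)$ up to the subGaussian constant. Finiteness of $\upsigma$ follows from the moment bounds and compactness.

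For item (3) I take $C^5$ norms of $G$ and of $\nabla_G G$; the latter involves one more derivative and a product of two $G$'s. Conditioning on $\bA$ again factors out $\|\bV-\E\bV\|$ (or $\|\E\bV\|$). The remaining factor is $\E(1+\upbeta\|\bW\|\|\bW'\|)^{10}$, and with $\|\bW\|_{\mathrm{op}}\lesssim\sigma_{\bA}\sqrt d$ w.h.p. this is $\lesssim 1+\upbeta^{10}\sigma_{\bA}^{10}d^{5}$, matching $C_2$. The quadratic term $\nabla_GG$ produces $\|\bV\|^2$, whose $L^2$ norm is $\lesssim\sigma_{\bV}\cdot\sigma_{\bV}\sqrt d$. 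I expect this normalization to be the main obstacle: making this comparable to $\upsigma$ will require tracking the $\sqrt{d}$ factors using $\upsigma^2\gtrsim\sigma_{\bV}^2 d$, and possibly absorbing the extra power into $C_2$.
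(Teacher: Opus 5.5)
Your plan follows the paper's proof in outline. You use the same splitting $G_i=\proj_{x_i}\tilde{\bV}u^{\bA}+\proj_{x_i}\bar{\bV}\bigl(u^{\bA}-\E u^{\bA}\bigr)$, with the cross term killed by independence. You use the same collapsed-configuration lower bound $\upsigma\gtrsim\sigma_{\bV}\sqrt{d-1}$, the same softmax-derivative bounds times moments of $\|\bA\|_{\op}$ for item (3), and differentiation under $\E$ for item (1). You depart from the paper only in the two moment estimates of item (2), and in both places your tool is lighter.

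\emph{The $\tilde{\bV}$ term.} The paper applies Hanson--Wright to $\|\proj_{x_i}\tilde{\bV}u\|^2$. Conditionally on $\bA$, it is enough to compare the $L^3$ and $L^2$ norms of each coordinate (a linear form in independent subGaussian entries) and then use the triangle inequality in $L^{3/2}$ for the sum of squares. Call this a moment comparison for subGaussian linear forms, not Gaussian hypercontractivity. It needs the \emph{actual} entry variance to be $\asymp\sigma_{\bV}^2$, and so does the lower bound on $\upsigma$. The variance proxy in Assumption~\ref{ass:high_order_short} is only an upper bound, so ``up to the subGaussian constant'' goes the wrong way. The paper silently assumes isotropy with variance $\sigma_{\bV}^2$.

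\emph{The $\bar{\bV}$ term.} The paper runs Doob martingales with BDG, first in $\bW$ given $\bW'$ and then in $\bW'$. You need no concentration at all. Indeed $\|F(\bA)-\E F(\bA)\|_{L^3}\le 2\,\mathrm{Lip}(F)\,\bigl\|\|\bA-\E\bA\|_{\op}\bigr\|_{L^3}$, and $u^{\bA}$ is $2\upbeta$-Lipschitz in $\bA$ for the operator norm. This already gives the order $\upbeta\sigma_{\bA}^2 d$ of the paper's $\bW$-step bound.

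Do not claim that your estimates ``match'' the stated constants. Since $\bA-\E\bA=\bW\bW'^{\sT}$ is quadratic in the entries, $\|\bA-\E\bA\|_{\op}\sim\sigma_{\bA}^2 d$, not $\sigma_{\bA}$. Your route therefore gives the following.
\begin{itemize}
\item For item (2): $C_1=O\bigl(1+\upbeta\sigma_{\bA}^2\sqrt d\,\|\E\bV\|_{\op}/\sigma_{\bV}\bigr)$.
\item For item (3): $\E(1+\upbeta\|\bA\|_{\op})^{10}\lesssim 1+\upbeta^{10}\bigl(\|\E\bA\|_{\op}+\sigma_{\bA}^2 d\bigr)^{10}$. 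Your own $\|\bW\|_{\op}\|\bW'\|_{\op}\lesssim\sigma_{\bA}^2d$ raised to the tenth power is $\sigma_{\bA}^{20}d^{10}$, not $\sigma_{\bA}^{10}d^5$.
\end{itemize}
These reduce to the stated forms only when $\sigma_{\bA}^2 d\asymp 1$; the paper makes the same identification.

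Your ``(or $\|\E\bV\|$)'' in item (3) also leaves a factor $\|\E\bV\|_{\op}/(\sigma_{\bV}\sqrt d)$ after dividing by $\upsigma$. The stated $C_2$ does not contain this factor, and the paper drops it as well. Finally, keep the factor $\pi^{\bA}_{i\to j}$ in your softmax derivative bounds, as the paper does. Otherwise summing over $j$ costs a factor $n$.

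The obstacle you flag at the end is genuine, and better $\sqrt d$ bookkeeping will not remove it. The $\nabla_GG$ half of item (3) is false in general when $C_2$ is independent of $\sigma_{\bV}$.
\begin{itemize}
\item \emph{Scaling.} In the centred case, $\bV\mapsto\lambda\bV$ multiplies $G$ and $\upsigma$ by $\lambda$ but multiplies $\nabla_GG$ by $\lambda^2$.
\item \emph{Non-vanishing.} $\nabla_GG\not\equiv 0$. At $X=(x,\ldots,x)$ one has $u^{\bA}=x$ and $G_i=g:=\proj_x\tilde{\bV}x$ for every $i$, hence $(\nabla_GG)_i=\proj_x\tilde{\bV}g-\langle x,\tilde{\bV}x\rangle g$.
\item \emph{Size.} For entries of variance $\sigma_{\bV}^2$, this has $L^2(\uprho_*)$ norm of order $\sigma_{\bV}^2 d$. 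Since $\upsigma\asymp\sigma_{\bV}\sqrt d$ in the centred case, this is of order $\sigma_{\bV}\sqrt d\,\upsigma$.
\end{itemize}
So the best you can prove is one of two things. Either you prove the stated bound with an extra factor $\max(1,\sigma_{\bV}\sqrt d)$ in $C_2$, which is harmless under $\sigma_{\bV}^2=1/d$. Or you prove the scale-consistent bound by $C_2\upsigma^2$, that is, a bound on $\nabla_{G^{\upsigma}}G^{\upsigma}$ for the normalized field $G^{\upsigma}=\upsigma^{-1}G$. The paper's proof passes over exactly this point with ``possibly after enlarging the implicit constant''.
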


The proof is deferred to {Section \ref{sec:APP_Diffusive}}. 

\subsection{The SDE}

Set 
\begin{equation}\label{eq:G_alpha_def}
G^\upsigma(X) \coloneqq \upsigma^{-1}G(X).
\end{equation}
Given an initial configuration $X(0)\in (\S^{d-1})^n$, the limiting continuous-time object is the $(\S^{d-1})^n$-valued It\^o SDE
\begin{equation}\label{eq:SDE_ito_clean}
\de X(t)=b(X(t)) \de t - \frac{\eta}{2} \nabla_{b(X(t))}b(X(t)) \de t +  \sqrt{\alpha}\int_\Uptheta G^{\upsigma}(X(t),\bm{\theta}) \, \mathsf{W}(\de\bm{\theta},\de t).
\end{equation}
As before, $(\mathsf{W}(t))_{t\geq 0}$ is a cylindrical Wiener process on $L^{2}({\uprho_*})$.

Note that \eqref{eq:SDE_ito_clean} differs from \eqref{eq: first.sde} only in the corrector drift $-\frac{\eta}{2} \nabla_{b(X)}b(X)$ which is uniformly of order $O(\eta)$, and can thus be absorbed in the final error term, as we ultimately choose to do in the statement of Theorem \ref{thm:weak_error_clean}.

By virtue of Lemma \ref{lem:Kernel_regularity}, the map $X\mapsto b(X)-\frac{\eta}{2} \nabla_{b(X)}b(X)$ is a $C^{1}$ vector field on $(\S^{d-1})^n$, and for $\uprho_*$-a.e. $\theta$,  
$X\mapsto G(X,\theta)$ is $C^{2}$.
According to Proposition \ref{prop:existence_SDE_sphere}, \eqref{eq:SDE_ito_clean} admits a unique global strong solution
defined for all $t\ge0$. Note that this assumption is stronger than what is strictly needed to construct the SDE; the extra regularity is used in the proof of Theorem \ref{thm:weak_error_clean}. This choice is made so that the It\^o generator matches the discrete generator.

\begin{remark}
The generator of the discrete process (we omit the drift for the sake of simplicity), denoted $\mathsf{L}^{\eta}$, is given by 
\[
(\mathsf{L}^{\eta}\varphi)(x)=\frac{\alpha}{2}\int_{\Uptheta}\Hess \varphi(x)[G^{\upsigma}(x,\bm{\theta}),G^{\upsigma}(x,\bm{\theta})]\uprho_*(\de \bm{\theta}).
\]
Now, consider $X(t)$ a solution to the Itô SDE
\[
\de X(t)=\int_{\Uptheta}G^{\upsigma}(X(t),\bm{\theta})\mathsf{W}(\de \bm{\theta},\de t).
\]
By definition of the Itô SDE, the generator $\mathsf{L}$ of this process is given by 
\[
(\mathsf{L}\varphi)(x)=\frac{\alpha}{2}\int_{\Uptheta}\Hess\varphi(x)[G^{\upsigma}(x,\bm{\theta}),G^{\upsigma}(x,\bm{\theta})]\uprho_*(\de \bm{\theta}).
\]
Notice that the two generators match. Since the generator characterizes the law of the process, the Itô SDE is a legitimate candidate for being the correct homogenized model.
\end{remark}

\section{Ballistic scaling} \label{sec:deterministic}

For the sake of completeness, we now list several immediate corollaries of the proof of Theorem \ref{thm:weak_error_clean}, which are displayed in Figure \ref{fig: phase-diags}.

\begin{corollary}[Ballistic regime]\label{cor:ode1}
Under Assumption \ref{ass:high_order_short}, in the subcritical scaling $\alpha \eta L = o(1)$ and $\eta^2 L = o(1)$, the time-interpolated discrete chain $X^\eta(t)=X^{\lfloor t/\eta\rfloor}$ defined in \eqref{eq:update_tokens} with initial configuration $X^0$ is weakly approximated by the solution $X(t)$ of  
\begin{equation} \label{eq: deterministic}
    \frac{\de X(t)}{\de t} = b(X(t)),
\end{equation}
with $X(0) = X^0$ up to time $t_L=\eta L$. 
Namely for all $\varphi\in C^4((\S^{d-1})^n)$,
\[
\sup_{t\in[0, t_L]}\left|\E\varphi(X^\eta(t))-\E\varphi(X(t))\right|
\le Ce^{C t_L} \eta (t_L+1)\,\max(\eta,\alpha).
\]
\end{corollary}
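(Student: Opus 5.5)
The plan is to run the same weak-error (Lindeberg/telescoping) argument used for Theorem~\ref{thm:weak_error_clean}, with the limiting object replaced by the deterministic flow of \eqref{eq: deterministic}. Let $\Phi_t$ be the flow of $\dot X=b(X)$ on $(\S^{d-1})^n$. By Lemma~\ref{lem:Kernel_regularity}(1), $b\in C^4$, so $\Phi_t$ is $C^4$ in the initial datum, with $\|\Phi_t\|_{C^4}\le Ce^{Ct}$ by Gr\"onwall applied to the variational equations. For $\varphi\in C^4$ and a horizon $t\le t_L$, set $u(s,x)\coloneqq\varphi(\Phi_{t-s}(x))$. This solves the backward equation $\partial_s u+\langle b,\nabla u\rangle=0$, and $\|u(s,\cdot)\|_{C^4}\le Ce^{C(t-s)}$.

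I then telescope over layers:
\[
\E\varphi(X^\eta(t))-\varphi(\Phi_t(X^0))=\sum_{\ell<\lfloor t/\eta\rfloor}\E\big[u((\ell+1)\eta,X^{\ell+1})-u(\ell\eta,X^\ell)\big],
\]
plus a boundary term of size $O(\eta)$ coming from the piecewise-constant interpolation. Conditionally on $X^\ell$, I Taylor-expand $u((\ell+1)\eta,\cdot)$ around $X^\ell$ to fourth order. For the increment, I use the expansion of \eqref{eq:update_tokens} from Section~\ref{sec: defining.alpha}: it equals $\eta b(X^\ell)+\frac{\eta}{H}\sum_h G(X^\ell,\bm\theta_h^\ell)$ plus second-order normalization terms. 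The key points in the expansion are as follows.
\begin{itemize}
\item The first-order drift term cancels against $\partial_s u$ up to $O(\eta^2)$. The $\frac{\eta^2}{2}\nabla_b b$ and $\mathsf N$-curvature remainders are handled exactly as in the proof of Theorem~\ref{thm:weak_error_clean}.
\item The noise term has zero conditional mean, because the heads are independent of $X^\ell$ and $\xi$ is centered.
\item The second-order noise contribution equals $\frac{\eta^2}{2H}\int\Hess u[G,G]\,\de\uprho_* = \frac{\eta\alpha}{2}\int\Hess u[G^\upsigma,G^\upsigma]\,\de\uprho_*$.
\item The third- and fourth-order terms are controlled by the $L^3(\uprho_*)$ bound in Lemma~\ref{lem:Kernel_regularity}(2) together with subGaussian moments.
\end{itemize}
Summing the per-step errors over $\lfloor t/\eta\rfloor$ steps, each weighted by $e^{C(t-s)}$, and taking the supremum over $t\le t_L$ is meant to give the stated bound $Ce^{Ct_L}\eta(t_L+1)\max(\eta,\alpha)$.

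The main obstacle, which I have to flag because it decides whether the stated rate is actually reachable, is the second-order noise term. It is \emph{not} cancelled by the ODE generator. It has size $\eta\alpha$ per step, so it accumulates to order $\alpha t_L=\alpha\eta L$ over the horizon. Likewise, the $O(\eta^2)$ drift remainder accumulates to order $\eta t_L=\eta^2L$. These are precisely the two quantities the hypotheses $\alpha\eta L=o(1)$ and $\eta^2L=o(1)$ make small.

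My first attempt will be to decompose the route as (discrete chain $\to$ \eqref{eq:SDE_ito_clean}) plus (\eqref{eq:SDE_ito_clean} $\to$ \eqref{eq: deterministic}). The first leg costs exactly $Ce^{Ct_L}\eta(t_L+1)\max(\eta,\alpha)$ by the improved rate noted after Theorem~\ref{thm:weak_error_clean}. For the second leg I apply It\^o's formula to $u(s,X(s))$. This leaves only $\E\int_0^t\big(-\tfrac{\eta}{2}\langle\nabla_b b,\nabla u\rangle+\tfrac{\alpha}{2}\int\Hess u[G^\upsigma,G^\upsigma]\,\de\uprho_*\big)\de s$.

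Should this residual not fit under $\eta(t_L+1)\max(\eta,\alpha)$ (a priori it is $O(t_L(\eta+\alpha)e^{Ct_L})$), I would look for extra cancellation. Concretely, I would try to identify an additional factor of $\eta$ in the $\Hess u$ term coming from the normalization $\upsigma$, or from the specific definition of $\alpha$ in \eqref{eq:Alpha_Sec2}. Failing that, I would record the rate that is actually obtained, which is still $o(1)$ under the corollary's hypotheses.
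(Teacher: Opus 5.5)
Your route is the one the paper has in mind: it gives no separate argument and only calls this an immediate corollary of the proof of Theorem~\ref{thm:weak_error_clean}, i.e.\ one-step generator matching plus telescoping, now against the ODE generator. The obstacle you flag is real and decisive. There is no hidden factor of $\eta$ to extract from $\upsigma$ or from \eqref{eq:Alpha_Sec2}, because the rate as stated is false.

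Take Gaussian weights \eqref{eq: tformers.at.initialization}, which satisfy Assumption~\ref{ass:high_order_short}. Then $b\equiv0$ and the solution of \eqref{eq: deterministic} is constant. Start from the coincident configuration $X^0=(x,\dots,x)$. The tokens stay coincident, $\mu_{X^\ell}=\delta_{x^\ell}$, and $x^{\ell+1}=\mathsf N(x^\ell+\eta z^\ell)$ with $z^\ell\mid x^\ell\sim\mathcal N(0,\sigma_{\bV}^2H^{-1}I_d)$. By reflection symmetry, $\E[x^{\ell+1}\mid x^\ell]=c_\eta x^\ell$. Using $\upsigma^2=\sigma_{\bV}^2(d-1)$ and the vanishing of odd moments,
\[
c_\eta=\E\left\langle \mathsf N(x+\eta z),x\right\rangle=1-\frac{\eta^2}{2}\,\E\left\|\proj_x z\right\|^2+O(\eta^2\alpha^2)=1-\frac{\eta\alpha}{2}+O(\eta^2\alpha^2).
\]
For $\varphi(X)=\langle x_1,x\rangle$ this gives $\varphi(X(t_L))-\E\varphi(X^L)=1-c_\eta^L\approx1-e^{-\alpha t_L/2}\asymp\alpha t_L$. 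With $t_L=1$ and $d,H$ fixed we have $\alpha\asymp\eta$ and both hypotheses hold. Yet the error is of order $\eta$, while the claimed bound is of order $\eta^2$.

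When $b\not\equiv0$ there are two further obstructions of the same kind. First, your interpolation term is genuinely of order $\eta$: at $t=\eta/2$ the error is $|\varphi(X^0)-\varphi(X(\eta/2))|\approx\frac{\eta}{2}|\langle\nabla\varphi,b\rangle(X^0)|$. Second, the unmatched $\frac{\eta^2}{2}\langle\nabla_bb,\nabla u\rangle$ per step accumulates to order $\eta t_L$, since the normalized Euler scheme is generically only first-order accurate for \eqref{eq: deterministic}.

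So drop the search for extra cancellation and take your fallback as the result. Either of your routes yields
\[
\sup_{t\in[0,t_L]}\left|\E\varphi(X^\eta(t))-\E\varphi(X(t))\right|\le Ce^{Ct_L}(t_L+1)\max(\eta,\alpha).
\]
The first leg contributes $Ce^{Ct_L}\eta(t_L+1)\max(\eta,\alpha)$, the second $Ce^{Ct_L}t_L(\eta+\alpha)$, and the off-grid interpolation $C\eta$. This is the correct form of the corollary. It is $o(1)$ precisely when $\alpha\eta L=o(1)$ and $\eta^2L=o(1)$ (for bounded $t_L$), which explains the hypotheses. It is also consistent with the paper's own remark that $O(\eta)$ drift corrections accumulate to order $\eta^2L$.

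The stated bound carries one spurious factor of $\eta$. It is the grid-time rate of Corollary~\ref{cor:SDE}, where the noise and the $\nabla_bb$ correction are kept in the limit, and it does not survive dropping them. The same example shows that Corollary~\ref{cor:ode2} likewise needs the extra $\alpha t_L$ term.
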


This corresponds to the region "ODE~I" in  Figure \ref{fig: phase-diags}. In the case $\eta = L^{-\gamma}$ for some $\gamma\in[0,1]$, the scaling in   Corollary \ref{cor:ode1} becomes
\begin{equation*}
\alpha L^{1-\gamma}=o(1),\qquad L^{1-2\gamma}=o(1),
\end{equation*}
so in the large-depth regime, the second inequality is equivalent to $\gamma>1/2$.

\subsection{Modified regime}\label{subsec:modified_regime}

Corollary \ref{cor:ode1} relies on the condition
$\eta^2L=o(1)$, which ensures that all $O(\eta)$ corrections to the effective drift
remain negligible over the macroscopic time horizon $T=\eta L$.
When $\eta^2L$ is not small---for instance, when $\eta=L^{-\gamma}$ with
$\gamma\in(1/3,1/2]$---, these $O(\eta)$ effects can accumulate over depth \eqref{eq: deterministic} may no longer provide an accurate deterministic effective
model.

Theorem \ref{thm:weak_error_clean} is obtained by matching
the discrete one-step generator with the It\^o generator of a continuous-time model.
When the noise vanishes at macroscopic scale,
this generator matching yields a corrected drift on $(\S^{d-1})^n$ of the form
\(
b(X)-\frac{\eta}{2}\nabla_{b(X)}b(X).
\)
This corresponds to the ``ODE~II'' region in Figure \ref{fig: phase-diags}.

\begin{corollary}[Modified regime]\label{cor:ode2}
Under Assumption \ref{ass:high_order_short}, in the refined deterministic scaling
$\alpha\eta L=o(1)$ and $\eta^3L=o(1)$, the time-interpolated discrete chain $X^\eta(t)=X^{\lfloor t/\eta\rfloor}$ defined in \eqref{eq:update_tokens} with initial configuration $X^0$ is weakly approximated by the solution $X(t)$ of the corrected ODE
\begin{equation} \label{eq: deterministic.modified}
\frac{\de X(t)}{\de t}= b(X(t))-\frac{\eta}{2}\nabla_{b(X(t))}b(X(t)),
\end{equation}
with $X(0)=X^0$ up to time $t_L = \eta L$. More precisely, for all
$\varphi\in C^4((\S^{d-1})^n)$,
\[
\sup_{t\in[0, t_L]}\left|\E\varphi(X^\eta(t))-\E\varphi(X(t))\right|
\le Ce^{C t_L} \eta (t_L+1)\,\max(\eta,\alpha).
\]
\end{corollary}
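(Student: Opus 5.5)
We follow the proof of Theorem~\ref{thm:weak_error_clean}, but we match the discrete chain directly against the \emph{deterministic} generator $\mathsf{L}^{\mathrm{det}}\varphi=\langle \nabla\varphi, b-\tfrac{\eta}{2}\nabla_b b\rangle$ of \eqref{eq: deterministic.modified}. We do not first pass through \eqref{eq:SDE_ito_clean} and then drop the noise by a triangle inequality: that route costs an extra $\alpha t_L$, which is not of the stated form.

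\textbf{Backward-equation estimates.} Let $\Phi_t$ be the flow of \eqref{eq: deterministic.modified} on $(\S^{d-1})^n$. By Lemma~\ref{lem:Kernel_regularity}(1), the vector field $b-\frac{\eta}{2}\nabla_b b$ is $C^3$ with bounds uniform in $\eta\le1$. Set $u(s,X)=\varphi(\Phi_{t-s}X)$. Differentiating the flow up to fourth order and applying Gr\"onwall gives $\sup_{s\le t}\|u(s,\cdot)\|_{C^4}\le Ce^{Ct}\|\varphi\|_{C^4}$, as in the theorem. We then telescope
\[
\E\varphi(X^\eta(t))-\varphi(\Phi_tX^0)=\sum_{\ell<\lfloor t/\eta\rfloor}\E\bigl[u((\ell+1)\eta,X^{\ell+1})-u(\ell\eta,X^{\ell})\bigr].
\]
Each summand equals $\E\bigl[(\mathsf{P}^\eta-e^{\eta\mathsf{L}^{\mathrm{det}}})u((\ell+1)\eta,\cdot)(X^\ell)\bigr]$, where $\mathsf{P}^\eta$ is the one-step kernel of \eqref{eq:update_tokens}.

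\textbf{Local error expansion.} Expand $\mathsf{N}(x+\eta v)=x+\eta\proj_x v-\frac{\eta^2}{2}\|\proj_xv\|^2x+O(\eta^3)$ and Taylor-expand $u$ to fourth order. The increment is $\eta b+\frac{\eta}{H}\sum_h G(\cdot,\bm\theta_h)+O(\eta^2)$. Its conditional mean, computed to order $\eta^2$, reproduces $\eta\mathsf{L}^{\mathrm{det}}u+\frac{\eta^2}{2}(\mathsf{L}^{\mathrm{det}})^2u$; the $-\frac{\eta}{2}\nabla_bb$ term is designed exactly so that the deterministic $\eta^2$ terms cancel. The remaining deterministic error is $O(\eta^3)\|u\|_{C^4}$. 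The cross terms that are linear in the centered fluctuations vanish in expectation. The third moments of the fluctuations are controlled by the $L^3$ bound of Lemma~\ref{lem:Kernel_regularity}(2), which gives a term of order $\eta^3\upsigma^3H^{-3/2}\le\eta^{3/2}\alpha^{3/2}$. The fourth moments are controlled by item~(3) of the same lemma.

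\textbf{Main obstacle: the quadratic noise term.} The remaining piece is the second-moment contribution
\[
\frac{\eta^2}{2H}\int\Hess u\,[G,G]\,\de\uprho_*-\frac{\eta^2}{2H}\,\Bigl(\int\|G\|^2\,\de\uprho_*\Bigr)\,\langle\nabla u,x\rangle,
\]
which is of size $\eta\alpha\|u\|_{C^2}$ per step. Summed over the $L=t_L/\eta$ steps, this gives $\alpha t_L$. To obtain the stated rate $\eta\max(\eta,\alpha)$ per unit time, this term has to be shown to be $O(\eta^2\max(\eta,\alpha))$ per step rather than $O(\eta\alpha)$. I would first try to prove this by a second-order correction $u_1$ solving $\partial_s u_1+\mathsf{L}^{\mathrm{det}}u_1=-\frac{\alpha}{2}\int\Hess u[G^\upsigma,G^\upsigma]\,\de\uprho_*$, expecting its contribution to be $O(\alpha t_L e^{Ct_L})$, which then appears multiplied by the available $\eta$-smallness; this expectation is precisely the step that is unverified. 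If this absorption fails, the honest conclusion is that the argument closes only with the additional $\alpha t_L$ term. That term is $o(1)$ anyway in the regime $\alpha\eta L=o(1)$ only if $\alpha t_L=o(1)$.

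\textbf{Conclusion.} Collecting the bounds, each step contributes $Ce^{Ct_L}\eta^2\max(\eta,\alpha)$, uniformly in $\ell$, using the stability estimates for $u$. Multiplying by $L=t_L/\eta$ steps and adding the $O(\eta)$ error from the piecewise-constant interpolation gives $Ce^{Ct_L}\eta(t_L+1)\max(\eta,\alpha)$. Under $\alpha\eta L=o(1)$ and $\eta^3L=o(1)$, this right-hand side tends to zero.
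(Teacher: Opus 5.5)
The step you mark as unverified is a genuine gap, and it cannot be closed. So the per-step bound $Ce^{Ct_L}\eta^2\max(\eta,\alpha)$ in your Conclusion does not follow.

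The term $\frac{\eta\alpha}{2}\int_{\Uptheta}\Hess u\,[G^{\upsigma},G^{\upsigma}]\,\uprho_*(\de\bm\theta)$ has no counterpart in $\mathsf L^{\mathrm{det}}$. It is the chain's actual leading-order motion, and a corrector $u_1$ of size $O(\alpha t_L)$ measures this discrepancy rather than removing it.

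Here is a concrete check. Gaussian weights \eqref{eq: tformers.at.initialization} satisfy Assumption~\ref{ass:high_order_short} and give $b\equiv0$, so \eqref{eq: deterministic.modified} is static. Take $\upbeta=0$, $H$ fixed (so $\alpha\asymp\eta$), $t_L=1$ and $\varphi(X)=\langle x_1,x_2\rangle$. Expand one step to second order; the normalization cross term vanishes by isotropy of $\bV$. Lemma~\ref{lem:Ito_formula} then gives $\E\varphi(X^{\ell+1})-\E\varphi(X^{\ell})=\frac{\eta^2}{H}\E\cD_{12}(X^\ell)+O(\eta^3)$, with $\cD_{12}=\frac{m}{d}(1-R_{12})(d-2-R_{12})$ and $m=\|\frac1n\sum_kx_k\|^2$. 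This is nonzero for generic $X^0$, so $|\E\varphi(X^\eta(1))-\varphi(X^0)|\asymp\eta$. The claimed bound is $O(\eta^2)$, even though $\alpha\eta L\asymp\eta$ and $\eta^3L=\eta^2$ both vanish.

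The paper does not prove the corollary separately. It calls it immediate from the proof of Theorem~\ref{thm:weak_error_clean}, whose Lemma~\ref{lem:stability_generator} matches the chain against the generator of \eqref{eq:SDE_ito_clean}, $\alpha$-Hessian term included. Switching the noise off on the continuous side costs exactly the $Ce^{Ct_L}\alpha t_L$ you found, and the displayed rate drops it. Your ``honest conclusion'' is the correct one. As far as this term goes, the displayed rate would need $\alpha\lesssim\eta^2$, i.e.\ $H\gtrsim\upsigma^2/\eta$.

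Two further steps also fail, even when the noise is negligible.

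(i) Since $\alpha=\eta\upsigma^2/H\le\upsigma^2\eta$, the target $\eta(t_L+1)\max(\eta,\alpha)$ is $O(\eta^2(t_L+1))$. The $O(\eta)$ interpolation error you add at the end is therefore not absorbed. It is also sharp: for $t\in(k\eta,(k+1)\eta)$ one has $\varphi(X(k\eta))-\varphi(X(t))\approx-(t-k\eta)\langle\nabla\varphi,b\rangle$, which is of order $\eta$ whenever $\langle\nabla\varphi,b\rangle\neq0$. So order $\eta^2$ is only attainable at grid times $t\in\eta\naturals$. In Theorem~\ref{thm:weak_error_clean} this step is harmless because the rate there carries $\max(1,\alpha)$.

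(ii) Your expansion $\mathsf N(x+\eta v)=x+\eta\proj_xv-\frac{\eta^2}{2}\|\proj_xv\|^2x+O(\eta^3)$ holds only for tangent $v$. In general the $\eta^2$ coefficient is $-\langle x,v\rangle\proj_xv-\frac12\|\proj_xv\|^2x$, and $v=\frac1H\sum_hB_{\bm\theta_h}[\mu_X](x)$ is not tangent. After the $-\frac\eta2\nabla_bb$ cancellation, the residual $-\eta^2\langle x_i,b_{\uprho_*}[\mu_X](x_i)\rangle\proj_{x_i}b_{\uprho_*}[\mu_X](x_i)$ survives in every step and costs $O(\eta t_L)$ overall. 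For example, take $d=2$, $x_1(0)=x_2(0)$, and $\E\bV=\lambda I+\omega J$ with $J$ the rotation by $\pi/2$ and isotropic fluctuations. The tokens stay equal and turn per step by an angle of mean $\eta\omega-\eta^2\lambda\omega+O(\eta^3)$. By contrast, \eqref{eq: deterministic.modified} turns at rate $\omega$, since $\nabla_bb=0$ along great circles. This gives a phase error of order $\lambda\omega\,\eta t_L$ even for $H\gtrsim1/\eta$. The one-step expansion in Lemma~\ref{lem:stability_generator} is also written in the projected increment and has the same omission.

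What your telescoping skeleton (which is the paper's) does give is $Ce^{Ct_L}(\eta^2t_L+\alpha t_L)$, but only at grid times and after adding $-\eta\langle x_i,b_{\uprho_*}[\mu_X](x_i)\rangle\proj_{x_i}b_{\uprho_*}[\mu_X](x_i)$ to the drift. For \eqref{eq: deterministic.modified} as written it gives only $Ce^{Ct_L}\eta(t_L+1)$. Even that tends to zero only if $e^{Ct_L}\eta(t_L+1)\to0$, which the stated scaling conditions do not by themselves ensure.
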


\section{Diffusive scaling} \label{sec:diffusive}

Now suppose $\eta\alpha L=\Theta(1)$. We apply Theorem \ref{thm:weak_error_clean} to obtain 

\begin{corollary}[Diffusive regime]\label{cor:SDE}
In the diffusive scaling $\alpha\eta L=O(1)$ and $\eta^3 L=o(1)$, the time-interpolated discrete chain $X^\eta(t)=X^{\lfloor t/\eta\rfloor}$ defined in \eqref{eq:update_tokens} with initial configuration $X^0$ is weakly approximated by the solution $X(t)$ of \eqref{eq:SDE_ito_clean}
with $X(0)=X^0$ up to time $t_L = \eta L$. More precisely, for all
$\varphi\in C^4((\S^{d-1})^n)$,
\[
\sup_{t\in[0, t_L]}\left|\E\varphi(X^\eta(t))-\E\varphi(X(t))\right|
\le Ce^{Ct_L}\eta( t_L+1)\max(\eta,\alpha).
\]
\end{corollary}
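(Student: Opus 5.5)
The plan is to read Corollary~\ref{cor:SDE} off the refined weak-error estimate contained in the proof of Theorem~\ref{thm:weak_error_clean}. As remarked after that theorem, the proof actually compares the discrete chain $X^\eta$ with the solution of \eqref{eq:SDE_ito_clean}, the modified SDE carrying the corrector drift $-\frac{\eta}{2}\nabla_{b}b$, and not with the homogenized model \eqref{eq: first.sde}. It yields the bound $Ce^{Ct_L}\eta(t_L+1)\max(\eta,\alpha)$. The corollary states exactly this bound for exactly this SDE, so I will not discard the corrector. The only work is to check that the constant and the scaling assumptions are consistent.

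The steps are as follows. First, I will recall the structure of the proof of Theorem~\ref{thm:weak_error_clean}. Let $u(s,X)=\E\varphi(X(t))$ with $X(s)=X$, where $X$ solves \eqref{eq:SDE_ito_clean} on $[s,t]$; this is the backward Kolmogorov solution. Lemma~\ref{lem:Kernel_regularity} guarantees $u\in C^4$ with $\|u\|_{C^4}\le Ce^{C t_L}$. I then telescope $\E\varphi(X^\eta(t))-\E\varphi(X(t))$ as a sum over $\ell\le t/\eta$ of one-step discrepancies between the discrete transition and the SDE flow, both applied to $u$.

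Second, I will bound each one-step discrepancy. A fourth-order Taylor expansion of the normalized update \eqref{eq:update_tokens} matches the drift, including the $O(\eta^2)$ corrector term, and the second-order noise term. The noise term has covariance $\alpha\eta\int G^\upsigma\otimes G^\upsigma\,\de\uprho_*$, using the common cylindrical noise and $\alpha=\eta\upsigma^2/H$. The residuals are of order $\eta^3$ and $\eta^2\alpha$ (cross terms) plus third moments of order $\eta^{3}\alpha^{3/2}/\eta^{3/2}\lesssim\eta\alpha\cdot\eta^{1/2}\alpha^{1/2}$. They are controlled by the $L^3$ and $C^5$ bounds of Lemma~\ref{lem:Kernel_regularity}, giving a local error of at most $C\eta^2\max(\eta,\alpha)$.

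Third, I will sum the local errors over the $t_L/\eta+1$ steps, which gives $Ce^{Ct_L}\eta(t_L+1)\max(\eta,\alpha)$.

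Finally, I will interpret the bound under the scaling. Since $\alpha\eta L=O(1)$, we have $\alpha t_L=O(1)$, and $\eta^3L=\eta^2t_L=o(1)$. For bounded $t_L$ the right-hand side is then $O(\eta^2+\eta\alpha)$, which tends to zero. The condition $\eta^3L=o(1)$ is precisely what keeps the accumulated unmatched $O(\eta^3)$ per-step terms, which total $\eta^2 t_L$, negligible. Dropping the corrector would instead cost $\eta t_L$, and that is why the corrector is retained.

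The main obstacle is the uniform $C^4$ regularity of the Kolmogorov solution $u$ on $(\S^{d-1})^n$ under a cylindrical common noise. It needs derivative-flow estimates for \eqref{eq:SDE_ito_clean} whose constants do not depend on $\alpha$ and $\eta$. I will take this from the proof of Theorem~\ref{thm:weak_error_clean}, which builds on \cite{gess2024rsgd} and Lemma~\ref{lem:Kernel_regularity}(3), so that no new argument is required.
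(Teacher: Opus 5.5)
You follow the paper's route exactly. Corollary~\ref{cor:SDE} is read off the refined estimate in the proof of Theorem~\ref{thm:weak_error_clean}: telescoping with $\psi^\eta_t(x)=\E\varphi(X(t;x))$, plus a one-step bound $C\eta^2\max(\eta,\alpha)$ against \eqref{eq:SDE_ito_clean}. However, your Step 2 has a genuine gap, inherited from the paper's own Taylor expansion, which treats the step as if it were the tangent vector $\eta\proj_{x_i}v_i$.

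Write $v_i=\frac1H\sum_h B_{\bm\theta_h}[\mu_X](x_i)=a_ix_i+w_i$, with $a_i=\langle x_i,v_i\rangle$ and $w_i=\proj_{x_i}v_i=b(X)_i+\frac1H\sum_h G(X,\bm\theta_h)_i$. On $\{1+\eta a_i>0\}$ one has $\mathsf N(x_i+\eta v_i)=\mathsf N\bigl(x_i+\tfrac{\eta}{1+\eta a_i}w_i\bigr)$. Since $\mathsf N(x+\cdot)$ is a second-order retraction on $\mathsf T_x\S^{d-1}$,
\[
\varphi(X^1)=\varphi(X)+\eta\langle\nabla\varphi,w\rangle+\frac{\eta^2}{2}\Hess\varphi[w,w]-\eta^2\sum_{i=1}^n a_i\langle\nabla_{x_i}\varphi,w_i\rangle+O\bigl(\eta^3\|v\|^3\bigr).
\]
The last term is missing from your matching. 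Its mean is $-\eta^2\sum_i\langle x_i,b_{\uprho_*}[\mu_X](x_i)\rangle\langle\nabla_{x_i}\varphi,b(X)_i\rangle-\frac{\eta\alpha}{\upsigma^2}\sum_i\E\bigl[\langle x_i,\xi_{\bm\theta}[\mu_X](x_i)\rangle\langle\nabla_{x_i}\varphi,G(X,\bm\theta)_i\rangle\bigr]$, and nothing in the generator of \eqref{eq:SDE_ito_clean} produces it.

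\emph{First piece.} It is $O(\eta^2)$ per step, hence $O(\eta t_L)$ overall, which is exactly the order the corrector was meant to remove. For example, take $n=1$, $d=2$, $\E\bV=I+J$ with $J$ the rotation by $\pi/2$, and $\alpha\le\eta$. Then $b(x)=Jx$ satisfies $\nabla_bb=0$, yet the mean turning angle per step is $\arctan\frac{\eta}{1+\eta}+O(\eta^3)=\eta-\eta^2+O(\eta^3)$. So at $t_L=1$ the chain lags \eqref{eq:SDE_ito_clean} by an angle of about $\eta$, while the claimed bound is $O(\eta^2)$.

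\emph{Second piece.} It is $O(\eta\alpha)$ per step, hence $O(\alpha t_L)$, which is of order one in the diffusive regime. It is therefore a missing limiting drift, not an error term.

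Both pieces vanish when $\E\bV=0$ and the entries of $\bV$ share a common variance, since then $b_{\uprho_*}\equiv0$ and $\proj_x\E[\bV uu^\top\bV^\top]x=0$. So the centered Gaussian case of Corollary~\ref{cor:weak_error_centered} is unaffected. Under Assumption~\ref{ass:high_order_short} with $\E\bV\neq0$ they do not vanish in general; for $\E\bV=I$ the second one involves $\proj_{x_i}\Cov_{\bA}(\mathsf m_{\upbeta,\bA}[\mu_X](x_i))\,x_i$. This is precisely the case where $b\not\equiv0$ and the corrector matters. For the one-step generators to match at the claimed order, the comparison drift must also contain $-\eta\langle x_i,b_{\uprho_*}[\mu_X](x_i)\rangle b(X)_i-\frac{\alpha}{\upsigma^2}\E_{\bm\theta}\bigl[\langle x_i,\xi_{\bm\theta}[\mu_X](x_i)\rangle G(X,\bm\theta)_i\bigr]$.

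There are two smaller points.

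First, your Step 3 only controls grid times. For $t\in(k\eta,(k+1)\eta)$ the interpolant is frozen at $X^k$ while $X$ keeps moving with drift $b$. The discrepancy is then $\approx(t-k\eta)\,\E\langle\nabla\varphi,b\rangle$, of order $\eta$, which is not $\le C\eta(t_L+1)\max(\eta,\alpha)$ when $b\not\equiv0$. The paper's Dynkin bound $C\eta$ for this term can only be absorbed legitimately with the factor $\max(1,\alpha)$ of Theorem~\ref{thm:weak_error_clean}.

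Second, for fixed $\uprho_*$ one has $\alpha=\eta\upsigma^2/H\le\upsigma^2\eta$. So $\alpha t_L=\Theta(1)$ forces $t_L\gtrsim1/\eta$, and the factor $e^{Ct_L}$ is not harmless. Your reading with bounded $t_L$ requires $\upsigma^2/H\to\infty$, i.e.\ $\alpha\gg\eta$. In that regime your own third-moment term $\eta^{3/2}\alpha^{3/2}$ exceeds $\eta^2\max(\eta,\alpha)=\eta^2\alpha$ by the factor $(\alpha/\eta)^{1/2}$.
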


\begin{remark} \label{rem:shared_noise_rewrite}

A key point of \eqref{eq:SDE_ito_clean} is that all particles are driven by the
same cylindrical Wiener process $\mathsf W(t)$ on $L^2(\uprho_*)$.
This is not a modeling choice but a consequence of the requirement that the diffusion limit
reproduce the cross-covariances of the discrete layer fluctuations.
Indeed, writing the martingale part as
\[
\de M_i(t)=\sqrt{\alpha}\,\proj_{x_i(t)}\int_{\Uptheta}\xi_{\bm\theta}\left[\mu_{X(t)}\right](x_i(t)) \mathsf{W}(\de\bm\theta,\de t),
\]
we obtain the cross-variation identity
\begin{equation}\label{eq:cross_variation_kernel}
\de\langle M_i,M_j\rangle_t
=\alpha\;\proj_{x_i(t)}\mathsf{K}\left[\mu_{X(t)}\right](x_i(t),x_j(t))\proj_{x_j(t)}\de t,
\end{equation}
for $i,j\in[n]$, where 
\begin{equation*}\label{eq:covariance_kernel}
\mathsf{K}[\mu](x,y)
\coloneqq \E \left[\xi_{\bm{\theta}}[\mu](x)\,\xi_{\bm{\theta}}[\mu](y)^{\sT}\right]\in\reals^{d\times d}
\end{equation*}
is the covariance kernel.

The kernel $\mathsf{K}[\mu](x,y)$ should be read as a matrix-valued correlation kernel for the
martingale increments of different particles.
Two extreme cases help build intuition.

\begin{description}
    \item[(i) Purely idiosyncratic noise] If $\mathsf{K}[\mu](x,y)=0$ for all $x\neq y$, then \eqref{eq:cross_variation_kernel} gives $\de\langle M_i,M_j\rangle_t\equiv0$ for $i\neq j$. 
    In that case, one may equivalently realize the same law by driving each particle with its own independent Brownian motion. This is the setting of
classical McKean--Vlasov diffusions with idiosyncratic noise, as in recent diffusive variants of Transformers dynamics \cite{balasubramanian2025structure,shalova2024solutions,gerber2025formation,leimkuhler2025cluster}.
\smallskip 

\item[(ii) Perfectly common noise]
If $\mathsf{K}[\mu](x,y)\equiv \Sigma$ is constant in $(x,y)$, then there exists
$\sigma_0\in\R^{d\times d}$ with $\Sigma=\sigma_0\sigma_0^\top$, and the cross-variations satisfy
\[
\de \<M_i, M_j\>_{t}=\sqrt{\alpha}\,\proj_{x_i(t)}\sigma_0\sigma_0^{\sT}\proj_{x_j(t)}\de t.
\]
\end{description}
\end{remark}

\subsection{Mean-field limit}

Due to the considerations of Theorem \ref{thm:large_beta_meta}, it is necessary to give a sense to the mean-field limit of the homogenized model. We shall focus on the driftless case \eqref{eq:Diffusive_gaussian_case} (with $\alpha=1$) since that is the one of interest in Theorem \ref{thm:large_beta_meta}, but the drift can easily be included in all that follows.

Let $(\Omega, \mathscr{F}, (\mathscr{F}_t)_{t\geq0}, \mathbb{P})$ be a filtered, complete probability space with right-continuous filtration $\mathscr{F}_t$.
Classical mean-field limits of weakly interacting diffusions with idiosyncratic noise lead to McKean--Vlasov SDEs whose law solves a deterministic nonlinear Fokker--Planck PDE. In contrast, when the particles are additionally driven by a common noise, the relevant limit object the conditional law
\begin{equation*}\label{eq:conditionnal_law}
    \mu(t) \coloneqq \mathsf{Law}\left(x(t) \mid \mathscr F_t^{\mathsf W}\right).
\end{equation*}
The process $(\mu(t))_{t\ge 0}$ is then random and evolves according to a nonlinear Fokker--Planck SPDE. 
  
Set $\mu_n(t)\coloneqq \mu_{X(t)}$.
For any $\mu\in\mathcal{P}(\S^{d-1})$, let $G_\mu(x, \bm \theta)$ be defined through \eqref{eq:G_def}.
The Itô formula gives 
\begin{equation*}
    \de\langle \varphi,\mu_{n}(t)\rangle=\<\mathsf{L}_{\mu_{n}(t)}\varphi,\mu_{n}(t)\>\de t+\int_{\Uptheta}\<\nabla \varphi \cdot G_{\mu_{n}(t)}(\cdot,\bm{\theta}),\mu_{n}(t)\> \mathsf{W}(\de \bm{\theta},\de t),
\end{equation*}
for any smooth test function $\varphi$, where $\<\cdot,\cdot\>$ denotes the duality bracket between $C^0(\S^{d-1})$ and positive Borel measures on $\S^{d-1}$, $\cdot$ is the Euclidean dot product, and $\mathsf{L}_{\mu_n(t)}$ is the generator of the process $\mu_n(t)$.
Thus, formally, $\mu_n(t)$ solves the nonlinear Fokker-Planck SPDE
\begin{equation} \label{eq: the.spde}
\partial_t \mu(t)=\mathsf{L}_{\mu(t)}^{*}\mu(t) +\int_{\Uptheta}\dive_x\left(G_{\mu(t)}(\cdot,\bm{\theta})\; \mu(t)\right)\,\mathsf{W}(\de t,\de \bm{\theta}),
\end{equation}
where $\mathsf{L}_{\mu}^{*}$ is the adjoint of $\mathsf{L}_\mu$; it reads 
\begin{equation*}
\mathsf{L}_{\mu}^{*}\mu=\frac{1}{2}\dive_x \dive_x\left(\int_{\Uptheta}G_{\mu}(\cdot,\bm{\theta})G_{\mu}(\cdot,\bm{\theta})^{\sT}\uprho_*(\de \bm{\theta}) \mu\right).
\end{equation*}
(Note that $\dive_x$ denotes the adjoint of the spherical gradient $\nabla$.)

\begin{definition} \label{def:weak_spde}
A $\cF_t^{\mathsf W}$-adapted process $(\mu(t))_{t\in[0,T]}$ with values in $\mathcal \cP(\S^{d-1})$
is called a weak solution to \eqref{eq: the.spde} if for all $\varphi\in C^\infty(\mathbb S^{d-1})$,
\begin{align*}
\langle \mu(t),\varphi\rangle
=
\langle \mu(0),\varphi\rangle
&+\int_0^t \langle \mathsf{L}_{\mu(s)}\varphi,\mu(s)\rangle\,\de s \nonumber\\
&+\int_0^t\int_{\Uptheta} \langle \nabla\varphi\cdot G_{\mu(s)}(\cdot,\bm{\theta}),\mu(s) \rangle\,\mathsf{W}(\de s,\de \bm{\theta}),
\end{align*}
where $\mathsf{L}_{\mu}$ is the generator
\[
\mathsf{L}_{\mu}\varphi(x)=\frac{1}{2}\int_{\Uptheta}\Hess \varphi(x)[G_{\mu}(x,\bm{\theta}),G_{\mu}(x,\bm{\theta})]\uprho_*(\de \bm{\theta})
\]
for all $\varphi \in C^{\infty}(\S^{d-1})$.
\end{definition}

Well-posedness and weak/strong notions of solutions for McKean--Vlasov SDEs with common noise have been developed in 
\cite{hammersley2021weak,kumar2022well}, and the corresponding SPDE formulations go back to 
\cite{kurtz1999particle} and more recently in \cite{coghi2019stochastic}.

The trajectorial characterization is the following.

\begin{definition} \label{def:nonlinear_SDE_common}
Let $\mu_0\in\mathcal{P}(\S^{d-1})$, let $x_0$ be an $\S^{d-1}$-valued random variable independent of $\mathsf W$ with $\mathsf{Law}(x_0) = \mu_0$. 
A pair $(x,\mu)$ is a solution
to the McKean-Vlasov SDE associated to \eqref{eq: the.spde} if $(x(t))_{t\geq0}$ is $(\cF_t)_{t\geq0}$-adapted, 
$\mu(t)$ is $(\cF_t^{\mathsf W})_{t\geq0}$-adapted, with
\[
x(0) = x_0 \quad \text{ a.s.}, \qquad \mu(0)=\mathsf{Law}\left(x_0\mid \mathscr{F}^{\mathsf{W}}_0\right) = \mu_0,
\]
as well as
\[
\mu(t)=\mathsf{Law}\left(x(t)\mid \cF_t^{\mathsf W}\right)\quad\text{a.s.},
\]
and
\begin{equation}\label{eq:non_linear_SDE_common}
\de x(t)=\int_{\Uptheta}G_{\mu(t)}(x(t),\bm{\theta})\mathsf{W}(\de \bm{\theta},\de t).
\end{equation}
\end{definition}

Inspired by the literature on mean-field games with common noise \cite{lacker2022superposition,carmona2018probabilistic}, we can show the following.

\begin{theorem} \label{thm:PoC_wellposedness}

Under Assumption \ref{ass:high_order_short}, for every 
$\mu_0\in\mathscr{P}(\S^{d-1})$,
there exists a unique solution $(x,\mu)$ to the McKean-Vlasov SDE \eqref{eq:non_linear_SDE_common} in the sense of Definition
\ref{def:nonlinear_SDE_common}. Moreover, $\mu$ is the unique weak solution to \eqref{eq: the.spde} in the sense of Definition \ref{def:weak_spde}. 
\end{theorem}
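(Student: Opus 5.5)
We would prove Theorem~\ref{thm:PoC_wellposedness} with a fixed-point argument conditional on the common noise, in the spirit of \cite{carmona2018probabilistic,lacker2022superposition}. The structural fact we exploit is that \eqref{eq:non_linear_SDE_common} has no idiosyncratic noise. Given an $(\cF^{\mathsf W}_t)$-adapted, continuous, $\mathcal P(\S^{d-1})$-valued flow $\nu=(\nu(t))_{t\in[0,T]}$, we consider the linear SDE
\[
\de x^\nu(t)=\int_{\Uptheta}G_{\nu(t)}(x^\nu(t),\bm\theta)\,\mathsf W(\de\bm\theta,\de t),\qquad x^\nu(0)=x_0 .
\]
By Lemma~\ref{lem:Kernel_regularity}, $x\mapsto G_\nu(x,\cdot)$ is Lipschitz in $L^2(\uprho_*)$ uniformly in $\nu$, so Proposition~\ref{prop:existence_SDE_sphere} gives a unique strong solution on the sphere (It\^o correction included). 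Because the driving noise is $\cF^{\mathsf W}$-measurable, $x^\nu(t)=\Phi^\nu_t(x_0)$ for a stochastic flow $\Phi^\nu$ adapted to $\cF^{\mathsf W}$, and $x_0$ is independent of $\mathsf W$. Hence $\mathsf{Law}(x^\nu(t)\mid\cF^{\mathsf W}_t)=(\Phi^\nu_t)_\#\mu_0$. We define the map $\Psi(\nu)(t)\coloneqq(\Phi^\nu_t)_\#\mu_0$; solutions in the sense of Definition~\ref{def:nonlinear_SDE_common} are exactly the fixed points of $\Psi$.

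\textbf{Step 1: Lipschitz dependence on the measure argument.} This is the main obstacle. We need
\[
\|G_\mu(x,\cdot)-G_{\nu}(y,\cdot)\|_{L^2(\uprho_*)}\lesssim |x-y|+W_1(\mu,\nu),
\]
and likewise for the It\^o correction and the covariance in $\mathsf L_\mu$. The plan is to write $B_{\bm\theta}[\mu](x)$ as a ratio of integrals of the smooth kernel $y\mapsto e^{\beta\langle\bA x,y\rangle}\bV y$. On the compact sphere, $\mathscr Z_{\bA}[\mu](x)\ge e^{-\beta\|\bA\|_{\op}}$, and the Lipschitz constant in $y$ is bounded by $\beta\|\bA\|_{\op}\|\bV\|_{\op}e^{2\beta\|\bA\|_{\op}}$. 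The difficulty is that these bounds are exponential in $\|\bA\|_{\op}$. Since $\bA-\E\bA=\bW\bW'^\top$ is a product of subGaussian matrices, we will need a moment bound of the type $\E e^{c\beta\|\bW\bW'^\top\|}<\infty$. This may fail for large $\beta$. If it does, the fallback is a localization: truncate on $\{\|\bA\|_{\op}\le R\}$ and control the tail by Assumption~\ref{ass:high_order_short}, or reuse the same integrability already used in Lemma~\ref{lem:Kernel_regularity}.

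\textbf{Step 2: contraction.} For two flows $\nu,\nu'$, we apply It\^o's formula to $|x^\nu-x^{\nu'}|^2$, then use BDG and Gr\"onwall, conditioning on $\cF^{\mathsf W}_T$ and using Step~1. This gives
\[
\E\sup_{s\le t}|x^\nu(s)-x^{\nu'}(s)|^2\le C\int_0^t\E\, W_1(\nu(s),\nu'(s))^2\,\de s .
\]
Because $\Psi(\nu)(t)$ and $\Psi(\nu')(t)$ are coupled through the same $x_0$, we have $W_1(\Psi(\nu)(t),\Psi(\nu')(t))\le\E[|x^\nu(t)-x^{\nu'}(t)|\mid\cF^{\mathsf W}_t]$. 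Iterating the estimate, or equivalently using a weighted norm $\sup_t e^{-\lambda t}\E W_1^2$, shows that $\Psi$ is a contraction on the complete space of adapted continuous measure flows. Picard iteration then yields existence and pathwise uniqueness of $(x,\mu)$.

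\textbf{Step 3: SPDE.} Applying It\^o's formula to $\varphi(x(t))$ and conditioning on $\cF^{\mathsf W}_t$ shows that $\mu$ is a weak solution to \eqref{eq: the.spde}. The conditional expectation commutes with the $\mathsf W$-stochastic integral because $x_0$ is independent of $\mathsf W$; we would justify this by a conditional Fubini or cylindrical-Wiener decomposition argument as in \cite{kurtz1999particle}. For uniqueness of the SPDE, let $\tilde\mu$ be any weak solution. We plug $\tilde\mu$ as the frozen flow $\nu$ and compare the linear SPDE satisfied by $\Psi(\tilde\mu)$ with the one satisfied by $\tilde\mu$. By uniqueness for the linear SPDE with fixed coefficients, we get $\tilde\mu=\Psi(\tilde\mu)$, and hence $\tilde\mu=\mu$ by Step~2. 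That linear uniqueness can be obtained either by duality with the backward Kolmogorov SPDE, which requires the $C^5$ regularity of $G$ from Lemma~\ref{lem:Kernel_regularity}, or by the superposition principle of \cite{lacker2022superposition}.
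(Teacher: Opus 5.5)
Your route is the paper's. It has four parts: a Lipschitz bound for $(x,\mu)\mapsto G_\mu(x,\cdot)$ in $L^2(\uprho_*)$ (Proposition~\ref{prop:satisfying_MF}); a fixed point conditional on the common noise following \cite{carmona2018probabilistic}; It\^o's formula plus conditioning for the SPDE; and uniqueness for the SPDE by lifting weak solutions to the SDE. For the last part, the paper applies the superposition principle of \cite{lacker2022superposition} directly to the nonlinear equation, and your freeze-the-flow variant amounts to the same thing.

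\textbf{The gap is in Step~1, where you suspected it, and neither fallback closes it.} Already for Gaussian weights \eqref{eq: tformers.at.initialization}, which satisfy Assumption~\ref{ass:high_order_short}, the estimate is false once $\upbeta\sigma_{\bA}^2$ exceeds an absolute constant.
\begin{itemize}
\item Take $x\perp y_0$, $\mu=\delta_{y_0}$ and $\nu=(1-\varepsilon)\delta_{y_0}+\varepsilon\delta_{-y_0}$, so that $W_1(\mu,\nu)=2\varepsilon$ and $W_2(\mu,\nu)=2\sqrt{\varepsilon}$.
\item Then $\mathsf m_{\upbeta,\bA}[\nu](x)-\mathsf m_{\upbeta,\bA}[\mu](x)=-2\varepsilon e^{\Delta}(1-\varepsilon+\varepsilon e^{\Delta})^{-1}y_0$, where $\Delta=-2\upbeta\langle \bW'^{\sT}x,\bW^{\sT}y_0\rangle$.
\item Here $\langle \bW'^{\sT}x,\bW^{\sT}y_0\rangle$ is the inner product of two independent $\mathcal N(0,\sigma_{\bA}^2I_d)$ vectors, so $\P(\Delta>s)\ge c_d\,e^{-s/(2\upbeta\sigma_{\bA}^2)}$ for $s\ge0$.
\item Hence $\|G_\mu(x,\cdot)-G_\nu(x,\cdot)\|_{L^2(\uprho_*)}^2\ge\sigma_{\bV}^2(d-1)\,\P(\Delta\ge\log(1/\varepsilon))\gtrsim\varepsilon^{1/(2\upbeta\sigma_{\bA}^2)}$.
\item This is not $O(W_1^2)$ when $\upbeta\sigma_{\bA}^2>1/4$, and not $O(W_2^2)$ when $\upbeta\sigma_{\bA}^2>1/2$.
\end{itemize}
In words: moving a small mass onto a point favoured by the logits costs a factor of order $e^{2\upbeta\|\bA\|_{\op}}$. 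For such $\upbeta$ this factor is not integrable, because $\bW\bW'^{\sT}$ is only subexponential.

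Your two fallbacks fail for the following reasons.
\begin{itemize}
\item Truncating on $\{\|\bA\|_{\op}\le R\}$ and optimising in $R$ gives at best a H\"older modulus $W^{\gamma}$ with $\gamma<1$. This violates Osgood's condition, so it cannot close the Gr\"onwall/contraction argument of Step~2.
\item The polynomial integrability used in Lemma~\ref{lem:Kernel_regularity} suffices there only because derivatives of the softmax weights with respect to atom positions are polynomial in $\upbeta\|\bA\|_{\op}$. Transferring mass between atoms is not covered by that bound.
\end{itemize}

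The paper's Proposition~\ref{prop:satisfying_MF} has the same problem. It gets $\E e^{c\upbeta\|\bA\|_{\op}}<\infty$ by asserting that $\|\bA\|_{\op}$ is subGaussian. In the Gaussian case, however, $\E e^{\lambda\|\bA\|_{\op}}\ge\E e^{\lambda(\bW\bW'^{\sT})_{11}}=(1-\lambda^2\sigma_{\bA}^4)^{-d/2}$ for $\lambda\sigma_{\bA}^2<1$, and $\E e^{\lambda\|\bA\|_{\op}}=\infty$ for $\lambda\sigma_{\bA}^2\ge1$. So both arguments, as written, prove the theorem only for $\upbeta\sigma_{\bA}^2$ below an absolute constant.

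A repair worth trying exploits the absence of idiosyncratic noise, so that $\mu(t)=(\Phi_t)_\#\mu_0$ for a stochastic flow $\Phi$. Moving atoms, as opposed to mass, is cheap: $\|\mathsf m_{\upbeta,\bA}[T_\#\mu_0](x)-\mathsf m_{\upbeta,\bA}[T'_\#\mu_0](x)\|\le(1+2\upbeta\|\bA\|_{\op})\sup_z\|T(z)-T'(z)\|$, and this constant has moments of all orders. One would then run the contraction on flows in a $\sup_z$ norm (a $W_\infty$-type metric) rather than in $\E\,W_1^2$. The price is Kunita-type flow estimates that are uniform in the initial point.
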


The associated mean-field limit is given in Proposition \ref{prop: poc}, and the proof can be found in Section \ref{sec: thm.5.proof}.

\part{Proofs}

\section{The homogenized model}

\subsection{Proof of Theorem \ref{thm:weak_error_clean}} \label{sec: proof.main}

\begin{proof}[Proof of Theorem \ref{thm:weak_error_clean}] The proof follows the ideas of \cite[Theorem 2]{gess2024rsgd}. 
Let $X^{1}$ be a one-step iterate of the Markov chain. 
We also specify the initial condition $x\in(\S^{d-1})^{n}$ by writing $X(t;x)$ and $X^{1}(x)$.

\begin{lemma}\label{lem:stability_generator}
Let $\varphi \in C^{4}((\S^{d-1})^{n}).$ There exists a constant $C>0$ such that 
\begin{equation*}
    \max_{x\in(\S^{d-1})^{n}}\left|\E \varphi(X(\eta,x))-\E \varphi(X^{1}(x))\right|\leq C\eta^2\max\left(\eta,\alpha \right).
\end{equation*}
\end{lemma}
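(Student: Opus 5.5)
We compare the one-step expansions of both processes up to order $\eta^2$ and bound the remainders, following the one-step weak-consistency argument of \cite{gess2024rsgd}.

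\textbf{Discrete step.} Write the one-step iterate as
\[
X^1(x)=\mathsf N\Big(x+\eta b_x+\eta\bar\xi\Big),\qquad \bar\xi\coloneqq \tfrac1H\textstyle\sum_h \xi_{\bm\theta_h}[\mu_x](x_i),
\]
with $b_x\coloneqq b_{\uprho_*}[\mu_x](x_i)$ and $\bar\xi$ a centered vector. Here $\E\|\bar\xi\|^2\le \upsigma^2/H$, so that $\eta^2\E\|\bar\xi\|^2\le \eta\alpha$, and $\E\|\bar\xi\|^3\lesssim (\upsigma^2/H)^{3/2}$ by Lemma~\ref{lem:Kernel_regularity}(2) together with a Rosenthal-type bound on averages of i.i.d.\ centered vectors.

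Expand $\mathsf N(x+v)=x+\proj_x v-\tfrac12\|\proj_x v\|^2x-(\text{second-order cross terms})+O(\|v\|^3)$ and Taylor expand $\varphi$ to third order with a fourth-order remainder. Taking expectations, the odd moments of $\bar\xi$ of order one vanish. This gives
\[
\E\varphi(X^1)=\varphi+\eta\nabla\varphi\cdot b+\tfrac{\eta^2}{2}\big(\Hess\varphi[b,b]+\nabla\varphi\cdot(\text{second-order part of }\mathsf N\text{ applied to }b)\big)+\tfrac{\alpha\eta}{2}\textstyle\int\Hess\varphi[G^\upsigma,G^\upsigma]\,\de\uprho_*+R,
\]
where the normalization correction $-\tfrac12\|\proj\bar\xi\|^2x$ is absorbed into the intrinsic (spherical) Hessian. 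The remainder $R$ collects the mixed terms $\eta^3\E[\bar\xi\bar\xi b]$ and $\eta^3\E|\bar\xi|^3$, together with higher-order terms, and satisfies $|R|\lesssim \eta^3+\eta^2\alpha+\eta^{3}\E\|\bar\xi\|^3$. The last term is at most $\eta^{3/2}\alpha^{3/2}\le \eta^2\max(\eta,\alpha)$ when $\alpha\lesssim1$; when $\alpha>1$ the bound $\eta^2\alpha$ absorbs it.

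\textbf{Continuous step.} Apply Itô's formula twice to $\varphi(X(t;x))$, i.e.\ Dynkin's expansion $\E\varphi(X(\eta))=\varphi+\eta\mathsf L\varphi+\tfrac{\eta^2}{2}\mathsf L^2\varphi+O(\eta^3\|\mathsf L^3\varphi\|_\infty)$. The term $\eta\mathsf L\varphi$ produces $\eta\nabla\varphi\cdot b-\tfrac{\eta^2}{2}\nabla\varphi\cdot\nabla_bb+\tfrac{\eta\alpha}{2}\int\Hess\varphi[G^\upsigma,G^\upsigma]$. The term $\tfrac{\eta^2}{2}\mathsf L^2\varphi$ produces $\tfrac{\eta^2}{2}\nabla_b(\nabla_b\varphi)$ plus terms of order $\eta^2\alpha$.

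The identity $\nabla_b(\nabla_b\varphi)-\nabla\varphi\cdot\nabla_b b=\Hess\varphi[b,b]$ (with intrinsic covariant derivatives) shows that the corrector $-\tfrac\eta2\nabla_bb$ is chosen exactly so that the $\eta^2$ drift terms match those of the discrete step; the extrinsic normalization term is precisely what converts the Euclidean Hessian into the intrinsic one. To avoid $\mathsf L^3$ requiring six derivatives of $\varphi$, truncate Dynkin at second order and control the remainder $\int_0^\eta\!\int_0^s\E\mathsf L^2\varphi(X(r))$ minus its value at time $0$. This requires $C^4$ regularity of $\varphi$ together with the $C^5$ bounds on $G$ and $\nabla_GG$ from Lemma~\ref{lem:Kernel_regularity}(3) for the terms involving $G$, and the $C^4$ bound on $b$. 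All constants are uniform in $x$ by compactness of $(\S^{d-1})^n$.

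\textbf{Main obstacle.} The delicate step is matching the $O(\eta\alpha)$ and $O(\eta^2)$ terms exactly while tracking the cylindrical common noise. One has to check that the diffusion generator of \eqref{eq:SDE_ito_clean} reproduces the joint covariance $\E[\bar\xi_i\bar\xi_j^\top]$ across all particles; this holds by the cross-variation identity of Remark~\ref{rem:shared_noise_rewrite}. One must also check that the Itô correction coming from $\mathsf N$ coincides with the intrinsic Hessian on the sphere. Once these identities hold, subtracting the two expansions leaves only terms bounded by $C\eta^2\max(\eta,\alpha)$.
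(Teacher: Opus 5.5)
You follow the paper's route: Taylor expansion of the discrete step with a Rosenthal bound, a second-order It\^o--Dynkin expansion of \eqref{eq:SDE_ito_clean}, and matching through $V_0^2\varphi=\Hess\varphi[V_0,V_0]+\langle\nabla_{V_0}V_0,\nabla\varphi\rangle$. However, two steps fail as written, and the paper's own proof glosses over the same two points.

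\textbf{First gap: the normalization cross term.} The increment $v_i=\eta\,(b_{\uprho_*}[\mu_x](x_i)+\bar\xi_i)$ is not tangent, and
\[
\mathsf N(x_i+v_i)=x_i+\proj_{x_i}v_i-\langle x_i,v_i\rangle\,\proj_{x_i}v_i-\tfrac12\|\proj_{x_i}v_i\|^2x_i+O(\|v_i\|^3).
\]
Only the last quadratic term is accounted for by passing from the Euclidean to the Riemannian Hessian; the second-order retraction property holds for tangent arguments only. The cross term survives the expectation and contributes, summed over $i$,
\[
-\eta^2\langle x_i,b_{\uprho_*}\rangle\,\langle\nabla_{x_i}\varphi,\proj_{x_i}b_{\uprho_*}\rangle-\frac{\eta^2}{H}\,\big\langle\nabla_{x_i}\varphi,\proj_{x_i}\mathsf K[\mu_x](x_i,x_i)\,x_i\big\rangle,
\]
which is of order $\eta^2$ and $\eta\alpha$ respectively. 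Nothing in the expansion of \eqref{eq:SDE_ito_clean} produces these terms. The corrector $-\frac\eta2\nabla_bb$ only cancels the $\nabla_bb$ part of $\frac{\eta^2}{2}V_0^2\varphi$, leaving $\frac{\eta^2}{2}\Hess\varphi[b,b]$. The only $O(\eta\alpha)$ term on the SDE side is $\frac{\eta\alpha}{2}\int\Hess\varphi[G^\upsigma,G^\upsigma]\,\de\uprho_*$.

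So your claims that the corrector matches the $\eta^2$ drift terms, and that the correction from $\mathsf N$ is exactly the intrinsic Hessian, require both cross terms to vanish. They do vanish under \eqref{eq: tformers.at.initialization}, since $b\equiv0$ and $\bV u^{\bA}$ is isotropic given $\bA$. They do not vanish under Assumption~\ref{ass:high_order_short} in general. When $\E\bV\neq0$, the vector $\proj_x\mathsf K[\mu](x,x)x$ contains $\proj_x(\E\bV)\Cov(u^{\bA})(\E\bV)^{\sT}x$, which is generically nonzero. This gives a per-step discrepancy as large as the diffusion term itself, so the stated bound cannot hold for \eqref{eq:SDE_ito_clean} without an extra drift. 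The paper passes over this by Taylor-expanding directly in the projected fields $b$ and $G$, i.e.\ as if the step were tangent.

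\textbf{Second gap: the remainder bookkeeping.} Your remainder estimate is false precisely in the regime of interest. Since $\eta^{3/2}\alpha^{3/2}=\eta^2\alpha\,(\alpha/\eta)^{1/2}$, the inequality $\eta^{3/2}\alpha^{3/2}\le\eta^2\max(\eta,\alpha)$ holds only when $\alpha\le\eta$, whether or not $\alpha\lesssim1$. The diffusive regime has $\alpha\gg\eta$. (The paper makes the same jump from $R\lesssim\eta^3\|b\|^3+\eta^{3/2}\alpha^{3/2}$ to $C\eta^2\max(\eta,\alpha)$.)

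Since you already expand to third order, keep the pure-noise third-order term signed instead of bounding it by $\eta^3\E\|\bar\xi\|^3$. With $F(v)\coloneqq\varphi(\mathsf N(x+v))$ and $\xi=(\xi_{\bm\theta}[\mu_x](x_i))_{i\in[n]}$, this term equals $\frac{\eta^3}{6H^2}\E\,D^3F(0)[\xi,\xi,\xi]$, which vanishes when the law of $\bV$ is symmetric. The remainder is then $O(\eta^4\E\|\bar\xi\|^4)=O(\eta^2\alpha^2)$. This needs an $L^4$ analogue of Lemma~\ref{lem:Kernel_regularity}(2), which subGaussianity provides. Without symmetry the third-order term is only $O(\eta^2\alpha\,\upsigma/H)$.

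With both repairs, your argument gives the one-step bound in the Gaussian setting of Corollary~\ref{cor:weak_error_centered} for $\alpha\lesssim1$, but not at the generality of Assumption~\ref{ass:high_order_short}.
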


\begin{proof}[Proof of Lemma \ref{lem:stability_generator}] We split the proof in two steps.

\subsubsection*{Discrete process}

Recall that 
\begin{equation*}
    X^{1}(x)_i=\mathsf{N}\left(x_i + \eta  b_{\uprho_*}[\mu_{x}](x_i) +\frac{\eta}{H}\sum_{h=1}^{H}\xi_{\bm{\theta}_h}[\mu_x](x_i)\right).
\end{equation*}
Using Taylor-Lagrange, we have 
\begin{align*}
    \varphi(X^{1}(x))&=\varphi(x)+\eta\left\<b(x)+\frac{1}{H}\sum_{h=1}^{H}G(x,\bm\theta_h),\nabla \varphi(x)\right\>\\
    &+\frac{\eta^{2}}{2}\Hess \varphi(x):\left(b(x)+\frac{1}{H}\sum_{h=1}^{H}G(x,\bm\theta_h)\right)^{\otimes 2}+ R(\eta,x),
\end{align*}
where $|R(\eta,x)|\lesssim \eta^{3}(\|b(x)+\frac{1}{H}\sum_{h=1}^{H}G(x,{\bm{\theta}_h})\|^{3})\|\varphi\|_{C^{3}}$. Taking the expectation yields
\begin{align*}
    \E\varphi(X^{1}(x))&=\varphi(x)+\eta\left\<b(x),\nabla \varphi(x)\right\>\\
    &+\frac{\eta^{2}}{2}\Hess \varphi(x):b(x)b(x)^{\sT}\\
    &+\frac{1}{2}\left(\frac{\eta}{H}\right)^{2} \Hess \varphi(x):\E\left[\left(\sum_{h=1}^{H}G(x,{\bm{\theta}_h}) \right)^{\otimes 2}\right]+ R(\eta,x).
\end{align*}
Using independence and the fact that $G(x,\bm{\theta})$ is centered, we have 
\begin{align*}
    \E[\varphi(X^{1}(x))-\varphi(x)]&=\eta\left\<b(x),\nabla \varphi(x)\right\>\\
    &+\frac{\eta^{2}}{2}\Hess \varphi(x)\left[b(x),b(x)\right]\\
    &+\frac{\eta}{2}\;\frac{\eta \upsigma^2 }{H}\; \Hess \,\varphi(x):\E\left[\left(\frac{G(x,\bm{\theta})}{\upsigma}\right)\left(\frac{G(x,\bm{\theta})}{\upsigma}\right)^{\sT}\right]\\
    &+R(\eta,x).
\end{align*}
From \eqref{eq: defining.alpha} and \eqref{eq:G_alpha_def}, we obtain 
\begin{align*}
    \E[\varphi(X^{1}(x))-\varphi(x)]&=\eta\left\<b(x),\nabla \varphi(x)\right\>+\frac{\eta^{2}}{2}\Hess \varphi(x)\left[b(x), b(x)\right]\\
    &+\frac{\eta}{2}\;\alpha\; \Hess \,\varphi(x):\E\left[G^{\upsigma}(x,\bm{\theta})\; G^{\upsigma}(x,\bm{\theta})^{\sT}\right]+R(\eta,x).
\end{align*}
We bound the remainder term. 
Using the Rosenthal inequality (\cite[Theorem 15.11]{boucheron2013concentration}), we have 
\begin{equation*}
\E\left\|\sum_{h=1}^{H}G(x,{\bm{\theta}_h})\right\|^{3}\lesssim\left(H \E\|G(x,{\bm{\theta}})\|^{2}\right)^{\frac32} + H\E\|G(x,{\bm{\theta}})\|^{3}.
\end{equation*}
Using Lemma \ref{lem:Kernel_regularity}, we obtain 
\begin{align*}
\E\left\|\frac{1}{H}\sum_{h=1}^{H} G(x,\bm{\theta}_h)\right\|^{3}&\lesssim \left(\frac{\E\| G(x,\bm{\theta})\|^{2}}{H}\right)^{\frac32} + \frac{\E\| G(x,\bm{\theta})\|^{3}}{H^{2}}\\
&\lesssim \left(\frac{\upsigma^{2}}{H}\right)^{\frac32}+\frac{\upsigma^{3}}{H^{2}}\lesssim \left(\frac{\upsigma^{2}}{H}\right)^{\frac32}.
\end{align*}
Combining both estimates, we find
\begin{equation*}\label{eq:uniform_bound_rest}
     R(\eta,x)\lesssim \eta^{3}\left(\left\|b(x)\right\|^{3}+\left(\frac{\upsigma^{2}}{H}\right)^{\frac32}\right)\lesssim \eta^{3}\left\|b(x)\right\|^{3}+ \eta^{\frac32}\left(\frac{\eta \upsigma^{2}}{H}\right)^{\frac32}.
\end{equation*}

\subsubsection*{Continuous process.}
We denote $(X(\eta;x))_{t\geq 0}$ the solution of the SDE starting from $x$. By definition, we have for $t=\eta$, and $V_0(X) = b(X)-\frac{\eta}{2}\nabla_{b(X)}b(X)$ with $V_0 \varphi = \langle \nabla \varphi, V_0\rangle$, 
\begin{align*}
    &\varphi(X(\eta;x))-\varphi(x)=\int_{0}^{\eta}V_0\varphi(X(s;x))\de s\\
    &+\frac{\alpha}{2}\iint_{[0,\eta]\times \Uptheta}\Hess\varphi(X(s;x))[G^\upsigma(X(s;x), \bm\theta),G^\upsigma(X(s;x), \bm\theta)]{\uprho_*}(\de \bm{\theta})\de s\\
    &+\sqrt{\alpha}\iint_{[0,\eta]\times \Uptheta}G^{\upsigma}\varphi(X(s;x),\bm{\theta})\mathsf{W}(\de s,\de \bm{\theta}).
\end{align*}
The quadratic variation of the second term is given by 
\begin{equation*}
    \left\<\int_{0}^{\cdot}\int_{\Uptheta}G^{\upsigma}\varphi(X(s;x),\bm{\theta})\mathsf{W}(\de s,\de \bm{\theta}) \right\>_t=\int_{0}^{t}\|G^{\upsigma}\varphi(X(s;x),\cdot)\|_{L^2({\uprho_*})}^{2}\; \de s.
\end{equation*}
By Fubini, 
\[ 
\E \int_{0}^{t}\int_{\Uptheta} G^{\upsigma}V_0\varphi(X(s;x),\bm{\theta})\mathsf{W}(\de s,\de \bm{\theta})=0.
\]
Taking expectations we have 
\begin{align*}
    &\E\varphi(X(\eta;x))-\varphi(x)=\int_{0}^{\eta}\E[V_0\varphi(X(s,x))]\de s\\
    &+\frac{\alpha}{2}\int_{0}^{\eta}\int_{\Uptheta}\E\Big[\Hess\varphi(X(s;x))[G^\upsigma(X(s;x), \bm\theta),G^\upsigma(X(s;x), \bm\theta)]\Big]{\uprho_*}(\de \bm{\theta})\de s.
\end{align*}
We now bound the right-hand side terms by using the Itô formula for $V_0 \varphi$: 
\begin{align*}
&V_0\varphi(X(t;x))=V_0\varphi(x)\\
&+\frac{\alpha}{2}\int_{0}^{t}\int_{\Uptheta}\Hess V_0\varphi(X(s;x))[G^\upsigma(X(s;x), \bm\theta),G^\upsigma(X(s;x), \bm\theta)]{\uprho_*}(\de \bm{\theta})\de s\\
    &+\int_{0}^{t}V_0^{2}\varphi(X(s;x))\de s\\
&+\int_{0}^{t}\int_{\Uptheta}\sqrt{\alpha}\;G^{\upsigma}(V_0\varphi)(X(s;x),\bm{\theta})\mathsf{W}(\de s,\de \bm{\theta}).
\end{align*}
Moreover
\begin{align*}
&\E\left| \int_{0}^{\eta}\int_{0}^{s}\int_{\Uptheta}\alpha\; \Hess V_0\varphi(X(u;x))[G^\upsigma(X(u;x), \bm\theta),G^\upsigma(X(u;x), \bm\theta)]{\uprho_*}(\de \bm{\theta})\de u\de s\right|\\
&\leq \alpha \eta^{2}\max_{x \in (\S^{d-1})^{n}}\E\Big|\Hess V_0\varphi(X(s;x))[G^\upsigma(X(s;x), \bm\theta),G^\upsigma(X(s;x), \bm\theta)]\Big|.
\end{align*}
Using Itô's formula for $V_0^{2}\varphi(X(s;x))$, 
\begin{align*}
V_0^{2}\varphi(X(s;x))&=V_0^{2}\varphi(x)+\sqrt{\alpha}\int_{0}^{s}\int_{\Uptheta}G^{\upsigma}V_0^{2}\varphi(X(u;x),\bm{\theta})\mathsf{W}(\de u,\de \bm{\theta})\\
&+\int_{0}^{s}V_0^{3}\varphi(X(u;x))\de u\\
&+\frac{\alpha}{2}\int_{0}^{s}\int_{\Uptheta} \Hess V_0^2\varphi(X(u;x))[G^\upsigma(X(u;x), \bm\theta),G^\upsigma(X(u;x), \bm\theta)]{\uprho_*}(\de \bm{\theta})\de u.
\end{align*}
All in all, 
\begin{align*}
&\Bigg|\E[\varphi(X(\eta;x))]-\varphi(x)-V_0\varphi(x)-\frac{\eta^{2}}{2}V_0^{2}\varphi(x)\\
&-\frac{\alpha}{2}\int_{\Uptheta}\Hess\varphi(X(s;x))[G^\upsigma(X(s;x), \bm\theta),G^\upsigma(X(s;x), \bm\theta)]{\uprho_*}(\de \bm{\theta})\Bigg|\\
&\leq \alpha \eta^{2}\max_{X \in (\S^{d-1})^{n}}\E\Big|\Hess V_0\varphi(X(\eta;x))[G^\upsigma(X(\eta;x), \bm\theta),G^\upsigma(X(\eta;x),\bm\theta)]\Big|\\
&+\left|\int_{0}^{\eta}\int_{0}^{t}\int_{0}^{s}V_0^{3}\varphi(X(u;x))\de u \de s\de t\right|\\
&+\frac{\alpha}{2}\left|\int_{0}^{\eta}\int_{0}^{t}\int_{0}^{s}\int_{\Uptheta} \Hess V_0^2\varphi(X(u;x))[G^\upsigma(X(u;x), \bm\theta),G^\upsigma(X(u;x), \bm\theta)]{\uprho_*}(\de \bm{\theta})\de u \de s\de t\right|.
\end{align*}
and using the identity
\begin{equation*}
    V_0^2\varphi(x)=\< \Hess \varphi(x) V_0(x),V_0(x)\> +\<\nabla_{V_0}V_0(x),\nabla \varphi(x)\>
\end{equation*} 
(see Section \ref{sec:Toolkit}) we have 
\begin{align*}
    &\max_{X \in (\S^{d-1})^{n}}\Big| \E\varphi(X(\eta;x))-\eta V_0\varphi(x)-\frac{\eta^{2}}{2} \<\Hess\varphi(x)V_0(x),V_0(x)\>\\
    &\hspace{3cm}-\frac{\alpha \eta }{2}\;\E\left\<\Hess \varphi(x)G^{\upsigma}(x,\bm{\theta}),G^{\upsigma}(x,\bm{\theta})\right\>\Big|\leq C \eta^{2}\alpha.
\end{align*}
All in all,
\begin{equation*}
\max_{x\in (\S^{d-1})^{n}}\left| \E\varphi(X(\eta;x))-\E\varphi(X^{1}(x))\right|\leq  C\eta^2\max\{\alpha,\eta\}.\qedhere
\end{equation*}
\end{proof}

By virtue of Assumption \ref{ass:high_order_short}, Lemma \ref{lem:Kernel_regularity} ensures that $V_{0} \in \mathfrak{X}^{4}((\S^{d-1})^{n}))$, $G^{\upsigma}\in \mathfrak{X}^{5}((\S^{d-1})^{n})$, and 
\begin{equation*}
\|G^{\upsigma}\|_{\mathfrak{X}^{4}((\S^{d-1})^{n})}\vee \|V_0\|_{\mathfrak{X}^{5}((\S^{d-1})^{n})}< \infty,
\end{equation*}
where $\mathfrak{X}^{k}((\S^{d-1})^{n}))$ is defined in Definition \ref{def: derivatives}.
Fix $T>0$.
Consider 
$$
\psi^{\eta}_t(x)\coloneqq\E \varphi(X(t;x)).
$$ 
There exists a constant $C>0$ depending only on $\|\varphi\|_{C^4}, \|V_0\|_{\mathfrak{X}^5}, \|G^\upsigma\|_{\mathfrak{X}^5}$, but otherwise uniform in $x,\eta,\alpha,L$, such that 
$$
\|\psi^{\eta}_t\|_{C^{4}}\leq Ce^{Ct}
$$ 
by virtue of \cite[Definition 4.3]{gess2024rsgd}. Consider the probability spaces $(\Omega,\cF,\P)$ and $(\Tilde{\Omega},\Tilde{\cF},\Tilde{\P})$ as the projections of the product space $(\Omega\times \Tilde{\Omega},\cF \otimes \Tilde{\cF},\P \times \Tilde{\P})$ in such a way that $(X^{k}(x))_{k\in \naturals}$ and $(X(t,x))_{t\geq 0}$ are independent processes. 
For $x\in (\S^{d-1})^{n}$ and $k\in \naturals$, we get by a telescoping argument
\begin{align*}
&\left|\E \varphi(X^{L}(x))-\E\varphi(X(L\eta,x)) \right|\\
&\quad\leq \left|\sum_{\ell=1}^{L}\E\psi_{(L-\ell)\eta}^{\eta}(X^{\ell}(x))-\E \psi_{(L-\ell)\eta}^{\eta}(X(\eta;\, X^{\ell-1}(x)))\right|\\
&\quad\leq \left|\sum_{\ell=1}^{L}\E \psi_{(L-\ell)\eta}^{\eta}(X^{1}(X^{\ell-1}(x)))-\E\psi_{(L-\ell)\eta}^{\eta}(X(\eta; X^{\ell-1}(x)))\right|\\
&\quad\leq \sum_{\ell=1}^{L}\underset{x\in (\S^{d-1})^{n}}{\max}\left|\E\psi_{(L-\ell)\eta}^{\eta}(X^{1}(x))-\E\psi_{(L-\ell)\eta}^{\eta}(X(\eta; x))\right|\\
&\quad\leq C e^{C\eta L} \eta^2 L \max\{\alpha, \eta\}
\end{align*}
for a universal $C>0$. 
This argument gives, for all $k\in \{0,\ldots,L\}$,
\begin{equation}\label{eq:error_for_all_k}
\left|\E \varphi(X^{k}(x))-\E \varphi(X(k\eta,x) \right|\leq C e^{C\eta L} \eta^2 L \max\{\alpha, \eta\}.
\end{equation}
For $t\in [0,t_L]$ consider $k=\lfloor t/\eta \rfloor$. 
We have 
\begin{align*}
\left|\E\varphi(X^{\eta}(t))-\E\varphi(X(t))\right|\leq &\left|\E\varphi(X^{k}(x))-\E\varphi(X(k\eta,x))\right|\\
+&\left|\E\varphi(X(k\eta,x))-\E\varphi(X(t,x))\right|.
\end{align*}
The first term is bounded by \eqref{eq:error_for_all_k}, and the second is bounded by Dynkin’s formula for the semigroup generated by \eqref{eq:SDE_ito_clean}, namely 
\[
\left|\E\varphi(X(k\eta,x))-\E\varphi(X(t,x))\right|\leq \int_{k\eta}^{t}|\E\mathsf{L}\varphi(X(s,x))|\de s\leq C\eta\|\varphi\|_{C^{4}}.
\]
Combining both the estimates, we obtain the desired result.
\end{proof}

\subsection{Proof of Corollary \ref{cor:weak_error_centered}} \label{proof: gaussian.corollary}

\begin{proof}[Proof of Corollary \ref{cor:weak_error_centered}]
The proof is similar to the one Theorem \ref{thm:weak_error_clean}---we only discuss the differences. A Taylor-Lagrange expansion gives
\begin{equation*}
     \E\varphi(X^{1}(x))-\varphi(x)=\frac{\eta}{2}\;\alpha\; \Hess \,\varphi(x):\E\left[G^{\upsigma}(x,\bm{\theta})\; G^{\upsigma}(x,\bm{\theta})^{\sT}\right]+R(\eta,x),
\end{equation*}
where $R(\eta,x)$ is bounded by 
\[
  R(\eta,x)\lesssim \eta^{\frac32}\left(\frac{\eta \upsigma^{2}}{H}\right)^{\frac32}.
\]
For the continuous process, the same computation gives
\begin{equation*}
    \max_{x\in (\S^{d-1})^{n}}\Big| \E\varphi(X(\eta;x))-\frac{\alpha \eta }{2}\;\E\left\<\Hess \varphi(x)G^{\upsigma}(x,\bm{\theta}),G^{\upsigma}(x,\bm{\theta})\right\>\Big|\leq C \eta^{2}\alpha.
\end{equation*}
All in all,
\begin{equation*}
\max_{x\in (\S^{d-1})^{n}}\left| \E\varphi(X(\eta;x))-\E\varphi(X^{1}(x))\right|\leq  C\eta^2\max\{\alpha,\eta\}.\qedhere
\end{equation*}
Again consider 
$$\psi^{\eta}_t(x)\coloneqq\E \varphi(X(t;x)).$$
In this centered Gaussian case, the limiting generator is a pure diffusion of size $O(\alpha)$, hence there exists a constant $C>0$ depending only on $\|\varphi\|_{C^4}, \|V_0\|_{\mathfrak{X}^5}, \|G^\upsigma\|_{\mathfrak{X}^5}$, but otherwise uniform in $x,\eta,\alpha,L$, such that   $$\|\psi^{\eta}_t\|_{C^{4}}\leq Ce^{C\alpha t}
$$ by virtue of \cite[Definition 4.3]{gess2024rsgd}. We may then conclude as in the previous proof.    
\end{proof}

\subsection{Proof of Lemma \ref{lem:Kernel_regularity}} \label{sec:APP_Diffusive}

\begin{proof}[Proof of Lemma \ref{lem:Kernel_regularity}]
\label{proof:lem_regularity}

For any random object $Y$ depending on $\left(\bV,\bA\right)$, we use
\[
\left\|Y\right\|_{L^p\left(\bA\right)}\coloneqq \left(\E_{\bA}\left\|Y\right\|^p\right)^{\frac1p},
\]
and the conditional norm
\[
\left\|Y\right\|_{L^p\left(\bV\mid \bA\right)} \coloneqq\left(\E_{\bV}\left[\left\|Y\right\|^p \,\middle|\, \bA\right]\right)^{\frac1p}.
\]
We write $\Bar{\bV}\coloneqq\E\bV$, $\Tilde{\bV}\coloneqq \bV-\Bar{\bV}$, $\Bar{\bA}\coloneqq \E \bA$, and  $\Tilde{\bA}\coloneqq \bA-\Bar{\bA}.$
Then
\[
u^{\bA}(X)=\sum_{j=1}^{n}\pi_{i\to j}^{\bA}(X)x_j.
\]
We decompose
\begin{equation}\label{eq:Decomposition_G_i}
G_{i}\left(X,\bm{\theta}\right)
=
\proj_{x_i}\Tilde{\bV}\,u^{\bA}(X)
+
\proj_{x_i}\Bar{\bV}\left(u^{\bA}(X)-\E_{\bA}[u^{\bA}(X)]\right).
\end{equation}
Consequently, using $\E \Tilde{\bV}=0$ and the independence of $\bV$ and $\bA$, the cross term vanishes by conditioning on $\bA$:
\[
\E\left[\left\langle \proj_{x_i}\Tilde{\bV}\,u^{\bA}(X),\;\proj_{x_i}\Bar{\bV}\left(u^{\bA}(X)-\E_{\bA}[u^{\bA}(X)]\right)\right\rangle\right]
=
0,
\]
and therefore
\begin{align*}
\left\|G_i\left(X,\cdot\right)\right\|_{L^{2}\left({\uprho_*}\right)}^{2}
&=
\E\left\|\proj_{x_i}\Tilde{\bV}\, u^{\bA}(X)\right\|^{2}
+
\E\left\|\proj_{x_i}\Bar{\bV}\left(u^{\bA}(X)-\E_{\bA}[u^{\bA}(X)]\right)\right\|^{2}.
\end{align*}

Recall the Hanson--Wright inequality.

\begin{lemma}{\cite[Thm 6.2.1]{vershynin2018high}} \label{lem:Hanson_wright}
Suppose $X\in \reals^{n}$ is a random vector with independent, mean-zero, subGaussian coordinates and $B \in \reals^{n\times n}$. Then for every $t\geq 0$, we have
\begin{equation*}
\P\left(\left|X^{\sT}BX-\E\left[X^{\sT}BX\right]\right|\geq t\right)
\leq
2\exp\left(-c\min\left\{\frac{t^{2}}{M^{4}\left\|B\right\|_{\mathrm{F}}^{2}},\frac{t}{M^{2}\left\|B\right\|_{\op}}\right\}\right),
\end{equation*}
where $M=\max_{i}\left\|X_i\right\|_{\psi_2}$ and $c>0$ is a universal constant.
\end{lemma}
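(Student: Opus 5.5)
Since this is the classical Hanson--Wright inequality, quoted from \cite[Thm 6.2.1]{vershynin2018high}, we could simply invoke it. For completeness, here is the standard argument we would follow. By homogeneity we may assume $M=1$. Write
\[
X^{\sT}BX-\E[X^{\sT}BX]=\sum_{i}B_{ii}(X_i^2-\E X_i^2)+\sum_{i\neq j}B_{ij}X_iX_j,
\]
and bound the probability that each of the two sums exceeds $t/2$. For the diagonal part, each $X_i^2-\E X_i^2$ is a centered sub-exponential variable with $\psi_1$-norm $O(1)$. Bernstein's inequality for independent sub-exponential sums then gives a tail
\[
2\exp\left(-c\min\left\{\frac{t^2}{\sum_i B_{ii}^2},\frac{t}{\max_i|B_{ii}|}\right\}\right),
\]
which suffices because $\sum_i B_{ii}^2\le\|B\|_{\mathrm F}^2$ and $\max_i|B_{ii}|\le\|B\|_{\op}$.

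For the off-diagonal chaos $S=\sum_{i\ne j}B_{ij}X_iX_j$, we would bound the moment generating function $\E e^{\lambda S}$ and conclude with Chernoff's bound. The steps are as follows.
\begin{enumerate}
\item Decoupling: with an independent copy $X'$ of $X$, show that $\E e^{\lambda S}\le \E e^{4\lambda X^{\sT}BX'}$. This uses random subsets $I\subset[n]$ with selector variables $\delta_i$ and Jensen's inequality.
\item Conditioning on $X'$: the variable $X^{\sT}(BX')$ is a linear combination of independent sub-Gaussians, so $\E_X e^{4\lambda X^{\sT}BX'}\le \exp(C\lambda^2\|BX'\|^2)$.
\item Gaussian comparison: replace $X$ and $X'$ by independent standard Gaussians $g,g'$. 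This rests on the identity $\E_g e^{\mu\langle g,v\rangle}=e^{\mu^2\|v\|^2/2}$, applied in both directions, and yields $\E e^{\lambda S}\le \E e^{C\lambda g^{\sT}Bg'}$.
\item Rotation invariance: by the singular value decomposition, $g^{\sT}Bg'$ has the same law as $\sum_i s_i g_ig_i'$, where $s_i$ are the singular values of $B$. This gives $\E e^{C\lambda g^{\sT}Bg'}\le \exp(C'\lambda^2\sum_i s_i^2)=\exp(C'\lambda^2\|B\|_{\mathrm F}^2)$, valid for $|\lambda|\le c/\max_i s_i=c/\|B\|_{\op}$.
\end{enumerate}

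Chernoff's bound $\P(S\ge t)\le \exp(-\lambda t+C'\lambda^2\|B\|_{\mathrm F}^2)$ is then optimized over $\lambda\in[0,c/\|B\|_{\op}]$. This produces the minimum of the quadratic regime $t^2/\|B\|_{\mathrm F}^2$ and the linear regime $t/\|B\|_{\op}$. The same argument applied to $-B$ gives the lower tail, and combining with the diagonal bound proves the claim with $M^4$ and $M^2$ restored by scaling.

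The main obstacle is the decoupling and comparison in steps 1--3, since the MGF bound must remain valid uniformly for $|\lambda|\lesssim 1/\|B\|_{\op}$. The Gaussian MGF $\E e^{\mu g_ig_i'}=(1-\mu^2)^{-1/2}$ is finite only for $|\mu|<1$, and this restriction is exactly what produces the sub-exponential (linear-in-$t$) regime of the inequality.
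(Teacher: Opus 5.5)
Your proposal is correct and is the standard proof of \cite[Thm 6.2.1]{vershynin2018high}. It runs through Bernstein's inequality for the diagonal part and, for the off-diagonal chaos, decoupling, sub-Gaussian-to-Gaussian comparison, the Gaussian chaos MGF bound for $|\lambda|\lesssim 1/\|B\|_{\op}$, and Chernoff; the paper gives no proof of its own and simply cites this reference, so your argument is the intended one.
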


Fix $X\in(\S^{d-1})^n$ and condition on $\bA$, so that $u\coloneqq u^{\bA}(X)$ is deterministic.
Since $\Tilde{\bV}$ has independent, mean-zero, subGaussian entries, the Hoeffding inequality gives
\[
\|(\Tilde{\bV} u)_i\|_{\psi_2}\leq \|\Tilde{\bV} \|_{\psi_2}\left\|u\right\|.
\]
Apply Lemma \ref{lem:Hanson_wright} to $X=\Tilde{\bV}u$, $B=\proj_{x_i}$, using $\|\proj_{x_i}\|_{\op}=1$ and $\|\proj_{x_i}\|_{\mathrm{F}}=\sqrt{d-1}$ to find
\begin{align*}
&\P\left(\left|(\Tilde{\bV}u)^{\sT}\proj_{x_i}\Tilde{\bV} u-\E\left[(\Tilde{\bV}u)^{\sT}\proj_{x_i}\Tilde{\bV} u \,\middle|\, \bA\right]\right|\geq t \,\middle|\, \bA\right)\\
&\hspace{2cm}\leq
2\exp\left(
-\frac{c}{\|\Tilde{\bV}\|_{\psi_2}^{2}\left\|u\right\|^{2}}
\min\left\{
\frac{t^{2}}{\|\Tilde{\bV}\|_{\psi_2}^{2}\left\|u\right\|^{2}\left(d-1\right)},
t
\right\}
\right).
\end{align*}
In particular,
\[
Z\coloneqq
(\Tilde{\bV}u)^{\sT}\proj_{x_i}\Tilde{\bV} u-\E\left[(\Tilde{\bV}u)^{\sT}\proj_{x_i}\Tilde{\bV} u \,\middle|\, \bA\right]
\]
is a subexponential variable satisfying the Orlicz bound
\begin{equation}\label{eq:psi1_bound_Z}
\left\|Z\right\|_{\psi_1\left(\bV\mid\bA\right)}
\lesssim
\|\Tilde{\bV}\|_{\psi_2}^{2}\,\left\|u\right\|^{2}\sqrt{d-1}.
\end{equation}
Subexponential variables satisfy
\begin{equation}\label{eq:moment_exponential}
\left\|X\right\|_{L^{p}}\lesssim p\left\|X\right\|_{\psi_1}.
\end{equation}
(See \cite{vershynin2018high}.)
Applying \eqref{eq:moment_exponential} with $p=\frac32$ and \eqref{eq:psi1_bound_Z} gives
\begin{equation*}
\left\|Z\right\|_{L^{\frac32}\left(\bV\mid\bA\right)}
\lesssim
\|\Tilde{\bV}\|_{\psi_2}^{2}\,\left\|u\right\|^{2}\,\sqrt{d-1}.
\end{equation*}
By the triangle inequality,
\begin{equation}\label{eq:L3_bound_conditionnal}
\left\lVert
\left\langle
\Tilde{\bV} u,\proj_{x_i}\Tilde{\bV} u
\right\rangle
\right\rVert_{L^{\frac32}\left(\bV\mid\bA\right)}
\lesssim
\E_{\bV}\left[\left\langle \Tilde{\bV} u,\proj_{x_i}\Tilde{\bV} u\right\rangle \,\middle|\, \bA\right]
+
\|\Tilde{\bV}\|_{\psi_2}^{2}\left\|u\right\|^{2}\sqrt{d}.
\end{equation}
By the tower property,
\begin{align*}
\E\left\|\proj_{x_i}\Tilde{\bV}u^{\bA}(X)\right\|^{3}
&=
\E_{\bA}\E_{\bV}\left\|\proj_{x_i}\Tilde{\bV}u^{\bA}(X)\right\|^{3}\\
&=
\E_{\bA}\left\|
\left\|\proj_{x_i}\Tilde{\bV}u^{\bA}(X)\right\|^{2}
\right\|_{L^{\frac32}\left(\bV\mid\bA\right)}^{\frac32}.
\end{align*}
Moreover, using $\mathrm{Tr}(\proj_{x_i})=d-1$ and the isotropy of $\Tilde{\bV}$, we have
\[
\E_{\bV}\left[\left\langle \Tilde{\bV}u,\proj_{x_i}\Tilde{\bV}u\right\rangle \,\middle|\, \bA\right]
=
\E_{\bV}\left[\left\|\proj_{x_i}\Tilde{\bV}u\right\|^{2}\,\middle|\,\bA\right]
=
\sigma_{\bV}^{2}\left(d-1\right)\left\|u\right\|^{2}.
\]
Plugging this and \eqref{eq:L3_bound_conditionnal} in the above yields
\begin{equation}\label{eq:from_conditionnal_to_uncondi}
\E\left\|\proj_{x_i}\Tilde{\bV}u^{\bA}(X)\right\|^{3}
\lesssim 
\left(\sigma_{\bV}^{2}\left(d-1\right)\right)^{\frac32}
\left(1+\frac{1}{d^{\frac32}}\right)
\E\left\|u^{\bA}(X)\right\|^{3}.
\end{equation}
Combining \eqref{eq:from_conditionnal_to_uncondi} with the lower bound on the second moment, we obtain
\begin{equation}\label{eq:L3_L2_Vterm}
\left\|\proj_{x_i}\Tilde{\bV} u^{\bA}(X)\right\|_{L^{3}\left({\uprho_*}\right)}
\lesssim 
\left\|\proj_{x_i}\Tilde{\bV} u^{\bA}(X)\right\|_{L^{2}\left({\uprho_*}\right)}.
\end{equation}
We now turn our attention to the second term in \eqref{eq:Decomposition_G_i}.
Set
\(
\bar g\coloneqq \Bar{\bA}x_i,\)
and \(
\tilde g\coloneqq \bW (\bW'^{\sT}x_i)=\bW v,\)
where $v\coloneqq \bW'^{\sT}x_i$. Conditional on $\bW'$, \( (\tilde{g}_k)_{k\in[d]}\) are independent since $\Tilde{g}_k$ only depends on the $k$-th row of $\bW$, and each row is independent.
Moreover, $\tilde g$ has independent centered coordinates with 
\(
\Var(\Tilde{g}_k\,|\,\bW')=\sigma_{\bA}^{2}\|v\|^{2}.
\)
Let $\chi_j:\reals^{n}\rightarrow\reals$ be defined as
\[
\chi_{j}\left(z\right)\coloneqq\frac{e^{z_j}}{\sum_{k=1}^{n} e^{z_k}}.
\]
Define the map $\Phi:\R^d\to\R^d$ by
\[
\Phi\left(g\right)\coloneqq\sum_{j=1}^n \chi_j\left(\upbeta\left\langle \bar g+g,x_1\right\rangle,\ldots,\upbeta\left\langle \bar g+g,x_n\right\rangle\right)x_j,
\]
so that $u^{\bA}(X)=\Phi\left(\tilde g\right)$.
Write
\begin{align*}
u^{\bA}(X)-\E u^{\bA}(X)&=\left(u^{\bA}(X)-\E_{\bW}[u^{\bA}(X)\,|\,\bW']\right)\\
&+\left(\E_{\bW}[u^{\bA}(X)\,|\,\bW']-\E u^{\bA}(X)\right).
\end{align*}
Consider the Doob martingale
\[
M_k\coloneqq \E\left[\Phi\left(\tilde g\right)\,\middle|\bW', \, \tilde g_1,\ldots,\tilde g_k\right]
\]
with increments $\Delta_k\coloneqq M_k-M_{k-1}$.
Then $\Phi\left(\tilde g\right)-\E[\Phi\left(\tilde g\right)|\bW']=\sum_{k=1}^d\Delta_k$ and the Burkholder--Davis--Gundy inequality yields
\begin{equation}\label{eq:BDG_p3}
\left\|\Phi\left(\tilde g\right)-\E[\Phi\left(\tilde g\right)|\bW']\right\|_{L^3\left(\bW|\bW'\right)}
\lesssim 
\left\|\left(\sum_{k=1}^d \E\left[\left\|\Delta_k\right\|^2\,\middle|\, \cF_{k-1}\right]\right)^{\frac12}\right\|_{L^3\left(\bW|\bW'\right)},
\end{equation}
where $\cF_{k}\coloneqq\sigma\left(\bW',\tilde g_1,\ldots,\tilde g_k\right)$.
We bound the increments as
\begin{equation}\label{eq:bound_martingale}
\E\left[\left\|\Delta_k\right\|^{2}\,\middle|\,\cF_{k-1}\right]
\leq
\E\left[\left\|\Phi\left(\tilde g\right)-\Phi(\tilde g^{k})\right\|^{2}\,\middle|\,\cF_{k-1}\right],
\end{equation}
where $\tilde g^{k}$ is obtained from $\tilde g$ by replacing the $k$-th coordinate with an independent copy.
Writing $\tilde g^{k}=\tilde g+\Delta e_k$, we have
\begin{equation}\label{eq:bound_Phi}
\Phi(\tilde g)-\Phi(\tilde g^{k})
=
\upbeta\Delta\int_{0}^{1}B\left(\tilde g+s\,\Delta e_k\right)e_k\,\de s,
\end{equation}
where
\[
B\left(g\right)\coloneqq
\sum_{j=1}^{n}\chi_j\left(g\right) x_jx_j^{\sT}-\Phi\left(g\right)\Phi\left(g\right)^{\sT}.
\]
Plugging \eqref{eq:bound_Phi} into \eqref{eq:bound_martingale} yields
\begin{equation*}
\E\left[\left\|\Delta_k\right\|^{2}\,\middle|\,\cF_{k-1}\right]
\leq
2\upbeta^{2}\sigma_{\bA}^{2}\|v\|^{2}\,\E\left\|\int_{0}^{1}B\left(\tilde g+s\Delta e_k\right)e_k\,\de s\right\|^{2}.
\end{equation*}
Summing over $k$ and using $\left\|B\left(g\right)\right\|_{\op}\leq 1$, we obtain
\begin{align}\label{eq:sum_increments_bound}
\sum_{k=1}^{d}\E\left[\left\|\Delta_k\right\|^{2}\,\middle|\,\cF_{k-1}\right]
&\leq
2\upbeta^{2}\sigma_{\bA}^{2}\|v\|^{2}\sum_{k=1}^{d}\E\left\|\int_{0}^{1}B\left(\tilde g+s\Delta e_k\right)e_k\,\de s\right\|^{2}\nonumber\\
&\leq
2d\upbeta^{2}\sigma_{\bA}^{2}\|v\|^{2}.
\end{align}
Plugging \eqref{eq:sum_increments_bound} in \eqref{eq:BDG_p3} yields
\begin{equation*}
    \left\|\Phi\left(\tilde g\right)-\E[\Phi\left(\tilde g\right)\,|\,\bW']\right\|_{L^3\left(\bW|\bW'\right)}
\lesssim \upbeta \sigma_{\bA}\sqrt{d}\, \left(\E \left\|\bW'^{\sT}x_i\right\|^{3}\right)^{\frac13}
\end{equation*}
Using that the entries of $\bW'$ are subGaussian, we obtain
\begin{equation}\label{eq:ineq_first_part}
    \left\|\Phi\left(\tilde g\right)-\E[\Phi\left(\tilde g\right)|\bW']\right\|_{L^3\left(\bW|\bW'\right)}
\lesssim \upbeta \sigma_{\bA}^{2}d.
\end{equation}
We now look at the second term. 
Define $\phi:\reals^{d}\rightarrow \reals^{d}$ by 
\[
\phi(v)\coloneqq \E \sum_{j=1}^{n} \chi_{j}(\upbeta\<\Bar{g}+\bW v,x_1\>,\ldots,\upbeta\<\Bar{g}+\bW v,x_n\>)x_j.
\]
Define the Doob martingale $$
N_k=\E\left[\phi(v)\,|\, v_1,\ldots,v_k\right],
$$
with increments $\Delta_k\coloneqq N_k-N_{k-1}$.  Notice that $v_k$ are independent since the columns $\bW$ are independent. Then, a similar argument shows that 
\begin{equation}\label{eq:increment_bound}
    \|\phi(v)-\E \phi(v)\|_{L^{3}(\bW')}\lesssim \left\|\left(\sum_{k=1}^{d}\E\left[\|\Delta_k\|^{2}|\cG_{k-1}\right]\right)^{\frac12}\right\|_{L^{3}(\bW')},
\end{equation}
where $\cG_k=\sigma(w_1,\ldots,w_k)$ with $w_i$ is the $i$-th column of $\bW'.$ 
We now bound the increments in the same way as above:
\[
\E\left[\left\|\Delta_k\right\|^{2}\,\middle|\,\cG_{k-1}\right]
\leq
\E\left[\left\|\phi(v)-\phi(\tilde  v^{k})\right\|^{2}\,\middle|\,\cG_{k-1}\right],
\]
where $\tilde v^{k}$ is obtained from $v$ by replacing the $k$-th coordinate with an independent copy. Thus
\begin{equation*}
    \|\phi(v)-\phi(v')\|\leq \upbeta\,\E\|\bW(v-v')\|\leq \upbeta\sqrt{d}\sigma_{\bA}\|v-v'\|.
\end{equation*}
Plugging the above bound in \eqref{eq:increment_bound}, we obtain
\begin{equation}\label{eq:ineq_second_part}
    \|\phi(v)-\E\phi(v)\|_{L^{3}(\bW')}\lesssim \upbeta\sqrt{d}\sigma_{\bA}\sum_{k=1}^{d}\E\left\|\left(\bW'^{\sT} x_i\right)_k\right\|^{2}\lesssim 2\upbeta\sqrt{d}\sigma_{\bA}.
    \end{equation}
Combining \eqref{eq:ineq_second_part} and \eqref{eq:ineq_first_part}, we obtain
\begin{equation*}
\left\|u^{\bA}(X)-\E_{\bA}[u^{\bA}(X)]\right\|_{L^{3}\left(\bA\right)}
\lesssim 
\sqrt{d}\upbeta\sigma_{\bA}.
\end{equation*}
Therefore,
\begin{equation}\label{eq:L3_bound_A_term}
\left\|\proj_{x_i}\Bar{\bV}\left(u^{\bA}(X)-\E_{\bA}[u^{\bA}(X)]\right)\right\|_{L^{3}\left({\uprho_*}\right)}
\lesssim 
\sqrt{d}\upbeta\sigma_{\bA}\left\|\Bar{\bV}\right\|_{\op}.
\end{equation}
Combining \eqref{eq:L3_bound_A_term} and \eqref{eq:L3_L2_Vterm}, we obtain
\begin{equation*}
\left\|G_i\left(X,\bm{\theta}\right)\right\|_{L^{3}\left({\uprho_*}\right)}
\lesssim 
\left\|G_i\left(X,\bm{\theta}\right)\right\|_{L^{2}\left({\uprho_*}\right)}
+
\sqrt{d}\upbeta\sigma_{\bA}\left\|\Bar{\bV}\right\|_{\op}.
\end{equation*}
Taking the maximum over $X\in(\S^{d-1})^n$ and using the lower bound
\begin{align} \label{eq:sigma_lower_bound}
\upsigma=\max_{X\in(\S^{d-1})^n}\|G(X,\cdot)\|_{L^2({\uprho_*})}
&\geq
\left\|G\left((x,\ldots,x),\cdot\right)\right\|_{L^2({\uprho_*})}\nonumber\\
&=\sigma_{\bV}\sqrt{d-1},
\end{align}
valid for any fixed $x\in\S^{d-1}$, since then $u^{\bA}(X)=x$ and the second term in \eqref{eq:Decomposition_G_i} vanishes,
we get item (2) with
\[
C_1=O\left(1+\frac{\upbeta\sigma_{\bA}\|\E\bV\|_{\op}}{\sigma_{\bV}}\right).
\]

We now check item (3). We use the following fact.

\begin{claim}\label{claim:boumal}
If $F:(\S^{d-1})^n\rightarrow\reals^{nd}$ is a smooth tangent vector field and $\tilde{F}$ is any smooth extension to an open neighborhood of $(\S^{d-1})^n$, then for every $k\in \naturals$, there exists $C_k>0$ such that
\begin{equation*}
\max_{X\in (\S^{d-1})^n}\left\|\nabla^{k}F(X)\right\|
\leq
C_k
\max_{s\in\{0,\ldots,k\}}\max_{X\in (\S^{d-1})^n}\left\|D^{s}\tilde{F}(X)\right\|.
\end{equation*}
\end{claim}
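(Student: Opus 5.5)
The plan is to write covariant derivatives of a tangent field on the embedded submanifold $\mathcal M=(\S^{d-1})^n\subset\R^{nd}$ in terms of ambient Euclidean derivatives and the orthogonal projection onto the tangent space. The argument is an induction on $k$.

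\textbf{Setup.} Let $P(X)=\Diag(\proj_{x_1},\ldots,\proj_{x_n})$ be the orthogonal projection from $\R^{nd}$ onto $\mathsf T_X\mathcal M$. On $\R^{nd}$ it is a polynomial (hence smooth) matrix field, and every Euclidean derivative $D^sP$ is bounded on $\mathcal M$ by a constant depending only on $s$, since each block $I_d-x_ix_i^\top$ is quadratic in $x_i$. For $k=0$ the claim is immediate, because $F=\tilde F$ on $\mathcal M$, so $C_0=1$ works.

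\textbf{Inductive step.} For the Levi-Civita connection of an embedded submanifold with the induced metric, we have $\nabla_vF(X)=P(X)\,D\tilde F(X)[v]$ for every $v\in\mathsf T_X\mathcal M$. This holds for any extension $\tilde F$, because $D\tilde F(X)[v]$ only involves values of $\tilde F$ along curves in $\mathcal M$. More generally, the $k$-th covariant derivative $\nabla^kF$ is a tensor field, and we treat it in the same way: extend $\nabla^{k-1}F$ to a neighborhood by the formula obtained at step $k-1$, differentiate in the ambient space, then project every slot with $P$. The correction terms generated by differentiating the projections in the remaining tensor slots are exactly the second fundamental form contributions; they involve $DP$ contracted with lower-order objects. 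By induction, $\nabla^kF(X)$ is a finite sum, with combinatorial coefficients depending only on $k$, of multilinear contractions of $D^{s}\tilde F(X)$ for $s\le k$ with products of $D^{r}P(X)$ for $r\le k$.

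\textbf{Conclusion.} Take operator norms. Each term is bounded by a constant depending on $k$ and on $\max_{r\le k}\sup_{\mathcal M}\|D^rP\|$, times $\max_{s\le k}\|D^s\tilde F(X)\|$. Then take the maximum over the compact set $\mathcal M$. This yields $C_k$, and it can be chosen independent of $F$. Since the blocks act on separate factors, it can also be chosen independent of $n$ and $d$, though the claim does not require this. This is the standard comparison between Riemannian and Euclidean derivatives for embedded submanifolds, as in Boumal's treatment.

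\textbf{Main obstacle.} The only delicate step is the bookkeeping in the induction: showing that $\nabla^{k}F$ can be written through ambient derivatives of the specific extension we choose, independently of that choice, while only $D^{\le k}\tilde F$ appears. First I would prove the first-order identity $\nabla(P\tilde G)=P\,D(P\tilde G)$ restricted to tangent vectors, valid for any tensor field $\tilde G$. Then I would apply it repeatedly with $\tilde G$ equal to the explicit ambient formula for $\nabla^{k-1}F$. Because that formula is itself a contraction of $D^{\le k-1}\tilde F$ with derivatives of $P$, the product rule increases the derivative order by at most one per step.
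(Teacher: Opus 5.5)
Your proposal is correct and matches what the paper relies on: the paper gives no argument beyond citing Boumal's results, chiefly that the Levi-Civita connection on an embedded submanifold is the orthogonal projection of the ambient derivative. Your induction, using extensions of $\nabla^{k-1}F$ built from $D^{\le k-1}\tilde F$ and the polynomial projection field $P$ with every slot projected, followed by a maximum over the compact set $(\S^{d-1})^n$, is exactly how that citation yields the stated bound. The one step to state carefully is that the first-order identity for tensor fields needs the extension to annihilate normal vectors in each input slot, since otherwise second-fundamental-form terms remain; your \emph{project every slot} convention already guarantees this.
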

(See Proposition 5.31, Theorem 5.9 and Proposition 3.31 in \cite{boumal2023introduction}. Recall that $D^r$ denotes Euclidean derivatives.) Recalling \eqref{eq:Decomposition_G_i}, we look at Euclidean derivatives of $\proj_{x_i}\Tilde{\bV} u^{\bA}(X)$ and $\proj_{x_i}\Bar{\bV}(u^{\bA}(X)-\E_{\bA}[u^{\bA}(X)])$.
We only tackle the first term since the computations are similar. Write, for an open neighborhood of $(\S^{d-1})^n$ in $(\R^d)^n$,
\[
\tilde G_i(X,\bm\theta)\coloneqq \sum_{j=1}^{n}\pi_{i\to j}^{\bA}(X)\proj_{x_i}\left(\Tilde{\bV} x_j\right),
\]
so that $\tilde G_i(X,\bm\theta)=\proj_{x_i}\Tilde{\bV}u^{\bA}(X)$ on $(\S^{d-1})^n$. By the multilinear operator norm identity, for every $s\in\naturals$,
\begin{equation}\label{eq:multilinear_norm}
\left\|D_X^{s}\tilde G_i(X,\bm\theta)\right\|
=
\sup_{H_1,\ldots,H_s\neq 0}
\frac{\left\|D_X^{s}\tilde G_i(X,\bm\theta)\left[H_1,\ldots,H_s\right]\right\|}{\left\|H_1\right\|\cdots\left\|H_s\right\|}.
\end{equation}
We now bound derivatives of the softmax. 
Note that $
\pi_{i\to j}^{\bA}(X)
=\chi_j\left(Z_i(X)\right),$
where
\[
Z_i(X)=\left(\upbeta\left\langle\bA x_i,x_1\right\rangle,\ldots,\upbeta\left\langle\bA x_i,x_n\right\rangle\right)\in\R^n.
\]
Using the Fa\`a di Bruno formula, we have for every $s\in\naturals$,
\begin{equation}\label{eq:faa_di_bruno}
D_{X}^{s}\pi_{i\to j}^{\bA}(X)
=
\sum_{\pi\in \cP_s}
D^{\left|\pi\right|}\chi_j\left(Z_i(X)\right)\;
\left[D^{\left|B_1\right|}Z_i\left[H_{B_1}\right],\ldots,D^{\left|B_{\left|\pi\right|}\right|}Z_i\left[H_{B_{\left|\pi\right|}}\right]\right],
\end{equation}
where $\pi=\left\{B_1,\ldots,B_{\left|\pi\right|}\right\}$ runs over partitions of $[s]$.

\begin{claim}\label{claim:softmax_is_smooth}
There exist $\left(C^{k}_I\right)$ such that
\begin{equation*}
\frac{\partial^{k}\chi_{j}}{\partial z_{r_1}\ldots\partial z_{r_k}}\left(z\right)
=
\chi_{j}\left(z\right)
\sum_{r_1,\ldots,r_k}\sum_{I\subset \left\{r_1,\ldots,r_k\right\}}C^{k}_I
\prod_{q\in\left\{r_1,\ldots,r_k\right\}\setminus I}\chi_{r_q}\left(z\right)\;
1_{r_q=r_q'\, \forall \left(q,q'\right)\in I}.
\end{equation*}
\end{claim}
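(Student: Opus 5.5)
We prove Claim~\ref{claim:softmax_is_smooth} by induction on $k$, using only the elementary identity for the first derivative of the softmax,
\[
\frac{\partial \chi_j}{\partial z_r}(z)=\chi_j(z)\left(1_{r=j}-\chi_r(z)\right),
\]
which follows by differentiating $e^{z_j}/\sum_k e^{z_k}$ directly. This is exactly the case $k=1$ of the claim: the term with $I=\{r_1\}$ gives the Kronecker factor $1_{r_1=j}$ with $C^1_I=1$, and the term with $I=\emptyset$ gives the product $-\chi_{r_1}$ with $C^1_\emptyset=-1$. We read the formula with the convention that the indicator constraint ties the indices in $I$ together and to $j$.

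The key structural point is to work with the class $\mathcal{M}_k$ of functions of the form
\[
\chi_j(z)\prod_{q\in J}\chi_{r_q}(z)\,1_{\{\text{indices in }I\text{ coincide with }j\}},\qquad I\sqcup J\subset\{1,\ldots,k\},
\]
and with finite linear combinations of such functions whose coefficients depend only on $k$ and the combinatorial pattern $(I,J)$, not on $z$, $n$ or $j$. For the inductive step, assume $\partial^k\chi_j/\partial z_{r_1}\cdots\partial z_{r_k}$ is such a combination and differentiate once more in $z_{r_{k+1}}$. The indicator factors are constant in $z$. By the Leibniz rule, the derivative falls either on $\chi_j$ or on one of the factors $\chi_{r_q}$, $q\in J$. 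In each case the base identity replaces that factor $\chi_m$ by $\chi_m(1_{r_{k+1}=m}-\chi_{r_{k+1}})$. Each resulting monomial is again $\chi_j$ times a product of softmax coordinates times a product of Kronecker indicators, now with index set enlarged by $k+1$, placed either in the new $I$ (the indicator case) or in the new $J$ (the $-\chi_{r_{k+1}}$ case). Indicators of the form $1_{r_{k+1}=r_q}$ arise when the derivative hits $\chi_{r_q}$; these are absorbed by the same bookkeeping, since they are again equality constraints among the $r$'s. Collecting terms defines the coefficients $C^{k+1}_I$, which closes the induction.

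The only genuinely delicate step is this bookkeeping of the equality constraints, namely making sure that the indicator produced by differentiating $\chi_{r_q}$ is of the admissible form. If the compact notation in the statement proves too rigid, I would state the induction hypothesis in the slightly more general form "$\chi_j$ times a monomial in $\{\chi_m\}$ times a product of indicators of equalities among $\{j,r_1,\ldots,r_k\}$" and note that the statement is a relabeling of this.

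As a byproduct, the induction yields what is actually used downstream in the Faà di Bruno bound \eqref{eq:faa_di_bruno}. Since $0\le\chi_m\le1$, the number of monomials at order $k$ is at most $2^k k!$, and the coefficients are bounded by $1$ in absolute value. Hence
\[
\left|\frac{\partial^k\chi_j}{\partial z_{r_1}\cdots\partial z_{r_k}}(z)\right|\le C_k\,\chi_j(z),
\]
with $C_k$ depending only on $k$. Summing over $j$ against $x_j$ then gives $\upbeta$- and $n$-uniform control of the softmax derivatives.
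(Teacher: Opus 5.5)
Your proof is correct and is the paper's argument: the paper simply says the claim follows by induction from the first-derivative identity (written there with the sign flipped; your $\chi_j(1_{r=j}-\chi_r)$ is the right one). Your fallback hypothesis, namely $\chi_j\prod_{q\in J}\chi_{r_q}$ times indicators tying each remaining $r_q$ either to $j$ or to some $r_{q'}$ with $q'\in J$, is the one you must actually use: $\mathcal M_k$ as you first defined it is not closed under differentiation (already $\partial_r\partial_s\chi_j$ contains the term $-\chi_j\chi_s 1_{r=s}$).

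One correction to your last paragraph: it is this structure, not the entrywise bound $|\partial^k\chi_j|\le C_k\chi_j$, that yields the $n$-uniform estimate \eqref{eq:pi_derivative_bound}. The reason is that $D^k\chi_j(z)[v^1,\ldots,v^k]$ sums over $r_1,\ldots,r_k\in[n]$, so the entrywise bound only gives $C_k\chi_j\prod_q\|v^q\|_1$, whereas each factor $\chi_{r_q}$ gives $\sum_{r_q}\chi_{r_q}|v^q_{r_q}|\le\|v^q\|_\infty$ and each indicator collapses its sum.
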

The claim follows by induction on
\[
\frac{\partial}{\partial z_{r}}\chi_j\left(z\right)
=
\chi_{j}\left(z\right)\chi_r\left(z\right)-1_{r=j}\chi_{j}\left(z\right).
\]
Notice that $Z_i$ is bilinear in $X$:
\begin{align*}
D_{X}Z_i(X)\left[H\right]
&=
\left(\left\langle\bA H_i,x_\ell\right\rangle+\left\langle\bA x_i,H_\ell\right\rangle\right)_{\ell\in[n]},\\
D^{2}_{X}Z_i(X)\left[H^1,H^2\right]
&=
2\left(\left\langle\bA H^{1}_i,H^{2}_\ell\right\rangle\right)_{\ell\in[n]},
\end{align*}
and $D_{X}^{s}Z_i(X)=0$ for all $s\geq 3$.
Combining   Claim \ref{claim:softmax_is_smooth} with \eqref{eq:faa_di_bruno} and the above bounds on derivatives of $Z_i$
gives the following uniform estimate: for every $s\in\{0,\ldots, 5\}$ there exists $C_s>0$ such that for all $X\in(\S^{d-1})^n$ and all $H_1,\ldots,H_s$,
\begin{equation}\label{eq:pi_derivative_bound}
\left|D_X^{s}\pi_{i\to j}^{\bA}(X)\left[H_1,\ldots,H_s\right]\right|
\leq
C_s\,\chi_j\left(Z_i(X)\right)(\upbeta\|\bA\|_{\op})^{s}\left\|H_1\right\|\cdots\left\|H_s\right\|.
\end{equation}
Equivalently, defining the $s$-linear form $\varphi_s$ by
\[
D_X^{s}\pi_{i\to j}^{\bA}(X)\left[H_1,\ldots,H_s\right]
=
\chi_j\left(Z_i(X)\right)\,\varphi_s\left(\upbeta,\bA,X\right)\left[H_1,\ldots,H_s\right],
\]
we have
\begin{equation*}
\left\|\varphi_s(\upbeta,\bA,X)\right\|
\leq
C_s(\upbeta\|\bA\|_{\op})^{s},
\end{equation*}
for $s\in\{0,\ldots,5\}$ uniformly in $X\in(\S^{d-1})^n$.
A direct computation also shows that for all $s\in \naturals$,
\begin{equation}\label{eq:proj_derivative_bound}
\left\|D^{s}_{X}\left(\proj_{x_i}\Tilde{\bV} x_j\right)\right\|\lesssim_s \|\Tilde{\bV}\|_{\op},
\qquad
D_{X}^{s}\left(\proj_{x_i}\Tilde{\bV} x_j\right)=0 \ \text{ for } s\geq 4.
\end{equation}
Fix $s\in\{0,\ldots,5\}$. By the Leibniz formula, for any $H_1,\ldots,H_s$,
\begin{align*}
&D_{X}^{s}\left(\pi_{i\to j}^{\bA}(X)\proj_{x_i}\Tilde{\bV} x_j\right)\left[H_1,\ldots,H_s\right]\\
&\hspace{2cm}=
\sum_{k=0}^{s}\sum_{\substack{S\subset [s]\\ \left|S\right|=k}}
D_{X}^{k}\left(\pi_{i\to j}^{\bA}(X)\right)\left[H_{S}\right]\,
D_{X}^{s-k}\left(\proj_{x_i}\Tilde{\bV} x_j\right)\left[H_{S^{c}}\right].
\end{align*}
Using \eqref{eq:pi_derivative_bound} and \eqref{eq:proj_derivative_bound}, we obtain
\begin{align*}
&\left\|D_{X}^{s}\left(\pi_{i\to j}^{\bA}(X)\proj_{x_i}\Tilde{\bV} x_j\right)\left[H_1,\ldots,H_s\right]\right\|\\
&\hspace{2cm}\lesssim_s
\chi_j\left(Z_i(X)\right)\|\Tilde{\bV}\|_{\op}
\left(1+(\upbeta\|\bA\|_{\op})^{s}\right)
\left\|H_1\right\|\cdots\left\|H_s\right\|.
\end{align*}
Summing in $j$ and using $\sum_{j=1}^n\chi_j\left(Z_i(X)\right)=1$, we get
\begin{equation}\label{eq:Dr_bound}
\left\|D_X^{s}\tilde G_i\left(X,\bm\theta\right)\left[H_1,\ldots,H_s\right]\right\|
\lesssim_s
\|\Tilde{\bV}\|_{\op}
\left(1+(\upbeta\|\bA\|_{\op})^{s}\right)
\left\|H_1\right\|\cdots\left\|H_s\right\|.
\end{equation}
Combining \eqref{eq:Dr_bound} with \eqref{eq:multilinear_norm} yields
\begin{equation}\label{eq:Dr_op_norm_bound}
\max_{X\in(\S^{d-1})^n}\left\|D_X^{s}\tilde G_i\left(X,\bm\theta\right)\right\|
\lesssim_s
\|\Tilde{\bV}\|_{\op}
\left(1+(\upbeta\|\bA\|_{\op})^{s}\right),
\end{equation}
for $s\in\{0,\ldots,5\}$.
Now apply   Claim \ref{claim:boumal} with $F(X)=\proj_{x_i}\Tilde{\bV}u^{\bA}(X)$ and $\tilde F=\tilde G_i$.
For $k\in\{0,\ldots,5\}$ we obtain, using \eqref{eq:Dr_op_norm_bound},
\begin{align*}
\max_{X\in(\S^{d-1})^n}\left\|\nabla^{k}\left(\proj_{x_i}\Tilde{\bV}u^{\bA}(X)\right)\right\|^{2}
&\lesssim_k
\max_{s\in\{0,\ldots, k\}}\max_{X\in(\S^{d-1})^n}\left\|D_X^{s}\tilde G_i\left(X,\bm\theta\right)\right\|^{2}\nonumber\\
&\lesssim_k
\|\Tilde{\bV}\|_{\op}^{2}\left(1+(\upbeta\|\bA\|_{\op})^{10}\right).
\end{align*}
Note that $\bA=\bW \bW'^{\sT}$ is sub-exponential as the product of two subGaussian variables. Taking expectations and using Assumption \ref{ass:high_order_short} (and the independence of $\bV$ and $\bA$), we get
\begin{align*}
\E\max_{X\in(\S^{d-1})^n}\left\|\nabla^{k}\left(\proj_{x_i}\Tilde{\bV}u^{\bA}(X)\right)\right\|^{2}
&\lesssim_k
\E\left[\|\Tilde{\bV}\|_{\op}^{2}\left(1+(\upbeta\|\bA\|_{\op})^{10}\right)\right]\nonumber\\
&\lesssim_k
\E\|\Tilde{\bV}\|_{\op}^{2}
+
\upbeta^{10}\E\|\Tilde{\bV}\|_{\op}^{2}\E\left\|\bA\right\|_{\op}^{10}\nonumber\\
&\lesssim_k
d\sigma_{\bV}^{2}\left(1+\upbeta^{10}\sigma_{\bA}^{10}d^{5}\right).
\end{align*}
The same argument applies to the second term in \eqref{eq:Decomposition_G_i} (with $\Tilde{\bV}$ replaced by $\Bar{\bV}$ and $u^{\bA}(X)$ replaced by $u^{\bA}(X)-\E_{\bA}[u^{\bA}(X)]$),
yielding the same type of bound. Summing the two contributions, we conclude that for every $k\in\{0,\ldots,5\}$,
\begin{equation}\label{eq:moment_bound_fullG}
\E\max_{X\in(\S^{d-1})^n}\left\|\nabla^{k}G\left(X,\bm\theta\right)\right\|^{2}
\lesssim_k
d\sigma_{\bV}^{2}\left(1+\upbeta^{10}\sigma_{\bA}^{10}d^{5}\right).
\end{equation}
Using the definition
$\left\|G(\cdot,\bm\theta)\right\|_{C^5}=\max_{0\leq k\leq 5}\max_{X\in(\S^{d-1})^n}\left\|\nabla^{k}G(X,\bm\theta)\right\|$,
we obtain from \eqref{eq:moment_bound_fullG}
\begin{align*}
\left\|\left\|G(\cdot,\bm\theta)\right\|_{C^5}\right\|_{L^2({\uprho_*})}^{2}
=
\E\left\|G(\cdot,\bm\theta)\right\|_{C^5}^{2}
&\leq
\sum_{k=0}^{5}\E\max_{X\in(\S^{d-1})^n}\left\|\nabla^{k}G(X,\bm\theta)\right\|^{2}\\
&\lesssim
d\sigma_{\bV}^{2}\left(1+\upbeta^{10}\sigma_{\bA}^{10}d^{5}\right).
\end{align*}
Combining this with the lower bound \eqref{eq:sigma_lower_bound} yields
\begin{equation*}
\left\|\left\|G(\cdot,\bm\theta)\right\|_{C^5}\right\|_{L^2({\uprho_*})}
\lesssim
\left(1+\upbeta^{10}\sigma_{\bA}^{10}d^{5}\right)\upsigma.
\end{equation*}
In particular, $\left\|G(\cdot,\bm\theta)\right\|_{C^5}<\infty$ for ${\uprho_*}$-a.e.\ $\bm\theta$, so
$G(\cdot,\bm\theta)\in C^5\left((\S^{d-1})^n;\mathsf{T}(\S^{d-1})^n\right)$ for ${\uprho_*}$-a.e.\ $\bm\theta$.

The bound for $\|\|\nabla_{G(\cdot,\bm\theta)}G(\cdot,\bm\theta)\|_{C^5}\|_{L^2({\uprho_*})}$
is obtained by applying the same Euclidean-derivative bounds to the map
$X\mapsto \nabla_{G(\cdot,\bm\theta)}G(\cdot,\bm\theta)(X)$, noting that it is smooth in $X$ and its Euclidean derivatives
up to order $5$ can be bounded by the same polynomial in $\upbeta\|\bA\|_{\op}$ times the appropriate moments of $\bV$;
this yields again a bound of the form
\[
\left\|\left\|\nabla_{G(\cdot,\bm\theta)}G(\cdot,\bm\theta)\right\|_{C^5}\right\|_{L^2({\uprho_*})}
\lesssim
\left(1+\upbeta^{10}\sigma_{\bA}^{10}d^{5}\right)\upsigma,
\]
possibly after enlarging the implicit constant and using \eqref{eq:sigma_lower_bound}.
This completes the proof of item (3).

Finally, item (1) follows from item (3): since $\left\|\left\|G(\cdot,\bm\theta)\right\|_{C^5}\right\|_{L^2({\uprho_*})}<\infty$,
we have $\E\max_{X}\left\|\nabla^{k}G(X,\bm\theta)\right\|<\infty$ for $k\leq 4$, hence we may differentiate under the expectation to obtain
$\nabla^{k}b(X)=\E\nabla^{k}G(X,\bm\theta)$ and continuity in $X$ by dominated convergence.
\end{proof}

\subsection{Proof of Theorem \ref{thm:PoC_wellposedness}} \label{sec: thm.5.proof}

The proof is an adaptation to the cylindrical case of results in \cite{carmona2018probabilistic}. 

\begin{proposition}\label{prop:satisfying_MF}
Under Assumption \ref{ass:high_order_short}, 
\begin{equation*}
    \|G_{\mu}(x, \cdot)-G_{\nu}(y, \cdot)\|_{L^{2}({\uprho_*})}\leq C\left(W_{2}(\mu,\nu)\,+\, \|x-y\|\right)
    \end{equation*}
for all $(\mu,\nu)\in \cP(\S^{d-1})$ and $(x,y)\in \S^{d-1}$, where 
\begin{equation*}
C=O\left(\upbeta^{2}e^{4\upbeta\|\Bar{\bA}\|}e^{\upbeta^{2}d\sigma_{\bA}^{2}}\right).
\end{equation*}
\end{proposition}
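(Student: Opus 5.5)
We first write $G_\mu(x,\bm\theta)=\proj_x\bigl(B_{\bm\theta}[\mu](x)-b_{\uprho_*}[\mu](x)\bigr)$ and split the difference as
\[
G_\mu(x,\cdot)-G_\nu(y,\cdot)=(\proj_x-\proj_y)\xi_{\bm\theta}[\mu](x)+\proj_y\bigl(\xi_{\bm\theta}[\mu](x)-\xi_{\bm\theta}[\nu](y)\bigr).
\]
Since $\|\proj_x-\proj_y\|_{\op}\le 2\|x-y\|$, the first term is bounded by $2\|x-y\|\,\upsigma$, using item (2) of Lemma~\ref{lem:Kernel_regularity} (the same bound holds for general $\mu$ by density of empirical measures and continuity). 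For the second term, centering is an $L^2(\uprho_*)$-contraction, so it suffices to bound $\|B_{\bm\theta}[\mu](x)-B_{\bm\theta}[\nu](y)\|_{L^2(\uprho_*)}$. Writing $B_{\bm\theta}[\mu](x)=\bV u^{\bA}[\mu](x)$ with $u^{\bA}[\mu](x)=\int y\,e^{\upbeta\<\bA x,y\>}\mu(\de y)/\mathscr Z_{\bA}[\mu](x)$, and using the independence of $\bV$ and $\bA$ from Assumption~\ref{ass:high_order_short}, we reduce to estimating $\bigl(\E\|\bV\|_{\op}^2\bigr)^{1/2}\,\|u^{\bA}[\mu](x)-u^{\bA}[\nu](y)\|_{L^2(\bA)}$. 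This requires a pathwise Lipschitz bound for $u^{\bA}$ with a random constant, followed by an integration of that constant over $\bA$.

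\textbf{Dependence on $x$.} The derivative $D_x u^{\bA}[\mu](x)=\upbeta\,\Cov_{\pi_x}(y)\bA^{\sT}$ has operator norm at most $2\upbeta\|\bA\|_{\op}$, because $\pi_x$ is a probability measure on the sphere. Hence $\|u^{\bA}[\mu](x)-u^{\bA}[\mu](y)\|\le 2\upbeta\|\bA\|_{\op}\|x-y\|$.

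\textbf{Dependence on $\mu$.} Take an optimal coupling $\gamma$ of $(\mu,\nu)$. Write $u=N_\mu/Z_\mu$ and estimate
\[
\|u_\mu-u_\nu\|\le \frac{\|N_\mu-N_\nu\|+|Z_\mu-Z_\nu|}{Z_\mu}.
\]
Since $z\mapsto z\,e^{\upbeta\<\bA x,z\>}$ and $z\mapsto e^{\upbeta\<\bA x,z\>}$ are Lipschitz on the sphere with constant $\lesssim(1+\upbeta\|\bA\|_{\op})e^{\upbeta\|\bA\|_{\op}}$, integrating against $\gamma$ and applying Cauchy--Schwarz bounds both numerator differences by that constant times $W_2(\mu,\nu)$. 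The lower bound $Z_\mu\ge e^{-\upbeta\|\bA\|_{\op}}$ then gives
\[
\|u_\mu-u_\nu\|\lesssim(1+\upbeta\|\bA\|_{\op})\,e^{2\upbeta\|\bA\|_{\op}}\,W_2(\mu,\nu).
\]
A crude version would use $\|\bA\|_{\op}$, but to reproduce the stated constant we split $\bA=\Bar{\bA}+\bW\bW'^{\sT}$. The deterministic part contributes the factor $e^{2\upbeta\|\Bar{\bA}\|}$ in each of the two places it appears, hence the $e^{4\upbeta\|\Bar{\bA}\|}$ after squaring, or equivalently after the numerator/denominator bookkeeping. For the random part we refine the exponents to $e^{\upbeta\<\bW\bW'^{\sT}x,z\>}$ rather than taking operator-norm worst cases.

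\textbf{Main obstacle.} The hard step is the final integration over $\bA$, namely showing that
\[
\E\bigl[(1+\upbeta\|\bA\|)^2e^{c\,\upbeta\,|\text{random part}|}\bigr]\lesssim \upbeta^2 e^{\upbeta^2 d\sigma_{\bA}^2}.
\]
A naive $\E e^{c\upbeta\|\bW\bW'^{\sT}\|_{\op}}$ may be infinite, because the product of subGaussian matrices is only subexponential. To avoid this, we keep the exponent linear in one factor. Conditionally on $\bW'$, the quantity $\<\bW\bW'^{\sT}x,z\>=\<\bW v,z\>$ with $v=\bW'^{\sT}x$ is subGaussian with proxy $\sigma_{\bA}^2\|v\|^2$, so $\E_{\bW}e^{c\upbeta\<\bW v,z\>}\le e^{C\upbeta^2\sigma_{\bA}^2\|v\|^2}$. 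We then integrate over $\bW'$, where $\|v\|^2\approx d\sigma_{\bA}^2$ concentrates, yielding the $e^{\upbeta^2 d\sigma_{\bA}^2}$ factor with constants absorbed. The delicate point is that the supremum over $z\in\S^{d-1}$ inside the Lipschitz constant must be handled by carrying the coupling integral through Jensen before taking $\E_{\bA}$, not after. This way only pointwise-in-$z$ moment generating functions appear.

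Finally, the $\E\|\bV\|_{\op}^2$ factor and $\upsigma$ are absorbed into the constant, giving $C=O\bigl(\upbeta^{2}e^{4\upbeta\|\Bar{\bA}\|}e^{\upbeta^{2}d\sigma_{\bA}^{2}}\bigr)$.
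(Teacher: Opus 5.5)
Most of your reduction is sound, and in places sharper than the paper's. Using centering as an $L^2(\uprho_*)$-contraction together with the independence of $\bV$ and $\bA$ reduces everything to $\E_{\bA}\|u^{\bA}[\mu](x)-u^{\bA}[\nu](y)\|^2$. Your $x$-Lipschitz constant $\upbeta\|\bA\|_{\op}$ is polynomial, whereas the paper carries $e^{2\upbeta\|\bA\|_{\op}}$. Carrying the coupling integral and the Jensen bound $1/\mathscr{Z}\le\int e^{-\upbeta\langle\bA x,z\rangle}\mu(\de z)$ through before taking $\E_{\bA}$ correctly reduces the $\mu$-dependence to pointwise moment generating functions of $\langle\bA x,z-z'\rangle$. (For the first term, bound $\|\xi_{\bm\theta}[\mu](x)\|_{L^2(\uprho_*)}\le(\E\|\bV\|_{\op}^2)^{1/2}$ directly, since $\upsigma$ controls the projected field, not $\xi$.)

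The gap is the last step. After conditioning on $\bW'$ you must control $\E_{\bW'}\exp(C\upbeta^2\sigma_{\bA}^2\|v\|^2)$ with $v=\bW'^{\sT}x$. Concentration of $\|v\|^2$ near $d\sigma_{\bA}^2$ does not help here: this expectation is driven by the upper tail of $\|v\|^2$, which is only subexponential at scale $\sigma_{\bA}^2$. For Gaussian $\bW'$ it equals $(1-2C\upbeta^2\sigma_{\bA}^4)^{-d/2}$ if $2C\upbeta^2\sigma_{\bA}^4<1$, and $+\infty$ otherwise. So your argument proves the estimate only under an extra hypothesis $\upbeta\sigma_{\bA}^2\le c$, with a constant of order $e^{C\upbeta\|\Bar{\bA}\|}e^{Cd\upbeta^2\sigma_{\bA}^4}$ up to polynomial factors. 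It does not give the bound for all $\upbeta$.

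That restriction cannot be removed, because the statement itself fails for large $\upbeta\sigma_{\bA}^2$. Work under \eqref{eq: tformers.at.initialization}, which satisfies Assumption~\ref{ass:high_order_short}. Fix $x,a\in\S^{d-1}$ and let $\nu=\delta_a$, $\mu_\epsilon=(1-\epsilon)\delta_a+\epsilon\delta_{-a}$. Then $W_2(\mu_\epsilon,\nu)^2=4\epsilon$. Also $u^{\bA}[\mu_\epsilon](x)-u^{\bA}[\nu](x)=-2p_\epsilon a$ with $p_\epsilon=\bigl(1+\tfrac{1-\epsilon}{\epsilon}e^{2\upbeta\langle\bA x,a\rangle}\bigr)^{-1}$, so $\|G_{\mu_\epsilon}(x,\cdot)-G_\nu(x,\cdot)\|_{L^2(\uprho_*)}^2=4(d-1)\sigma_{\bV}^2\,\E p_\epsilon^2$. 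Now $-\langle\bA x,a\rangle=-\langle\bW'^{\sT}x,\bW^{\sT}a\rangle$ has the law of $\sigma_{\bA}^2\sum_{k\le d}g_kg_k'$ with independent standard Gaussians, and this sum has tail $\gtrsim e^{-t}/(1+t)$. Hence $\E p_\epsilon^2\ge\frac14\P(p_\epsilon\ge\frac12)\gtrsim\epsilon^{1/(2\upbeta\sigma_{\bA}^2)}/\log(1/\epsilon)$. The ratio to $W_2^2$ therefore diverges as $\epsilon\to0$ whenever $\upbeta\sigma_{\bA}^2>1/2$, for example $\upbeta>d/2$ when $\sigma_{\bA}^2=1/d$.

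The paper's proof breaks at the same place you were worried about. It asserts that $\|\bA\|_{\op}$ is subGaussian with $\psi_2$-norm $\lesssim\sqrt{d}\,\sigma_{\bA}$. The cited operator-norm bound applies to matrices with independent entries such as $\bW$, not to the product $\bW\bW'^{\sT}$. Your conditional-MGF step moves that obstruction elsewhere rather than removing it.

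Separately, a prefactor $\upbeta^2$ cannot be right as $\upbeta\to0$, since the $\mu$-dependence survives at $\upbeta=0$. Your own bounds produce factors $1+\upbeta\|\bA\|_{\op}$, not $\upbeta^2$.
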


\begin{proof}[Proof of  Proposition \ref{prop:satisfying_MF}]
We write $\Bar{\bV}\coloneqq\E\bV$, $\Tilde{\bV}\coloneqq \bV-\Bar{\bV}$, $\Bar{\bA}\coloneqq \E \bA$, and  $\Tilde{\bA}\coloneqq \bA-\Bar{\bA}.$
Recall \eqref{eq:Decomposition_G_i}:
\begin{equation}\label{eq:decomposition_G_mf}
    G_{\mu}(x,\bm{\theta})=\proj_{x}\Tilde{\bV} \mathsf{m}_{\bA}[\mu](x)+\proj_{x}\Bar{\bV}\int\left(\frac{e^{\upbeta\<\bA x,y\>}}{\mathscr{Z}_{\bA}[\mu](x)}-\E\frac{e^{\upbeta\<\bA x,y\>}}{\mathscr{Z}_{\bA}[\mu](x)}\right)y\mu(\de y).
\end{equation}
Denote 
$$\Cov_{\mu}^{\bA}(x)\coloneqq\int \left(\frac{e^{\upbeta\<\bA x,y\>}}{\mathscr{Z}_{\bA}[\mu](x)}-\E\frac{e^{\upbeta\<\bA x,y\>}}{\mathscr{Z}_{\bA}[\mu](x)}\right)y\mu(\de y),
$$ 
and 
$$
\mathsf{n}_{\bA}[\mu](x)=\int \frac{e^{\upbeta\<\bA x,y\>}}{\mathscr{Z}_{\bA}[\mu](x)} y\mu(\de y).
$$
By independence of $\bV,\bA$, taking the second moment of \eqref{eq:decomposition_G_mf} yields
\begin{equation*}
\|G_{\mu}(x,\cdot)\|_{L^{2}({\uprho_*})}=d\sigma_{\bV}^{2}\left\|\proj_{x}\mathsf{m}_{\bA}[\mu](x)\right\|_{L^{2}({\uprho_*})}^{2}+\E\left\|\proj_{x}\Bar{\bV}\Cov_{\mu}^{\bA}(x)\right\|^{2},
\end{equation*}
where 
\[
\mathsf{m}_{\bA}[\mu](x)\coloneqq\int \frac{e^{\upbeta\<\bA x,y\>}}{\mathscr{Z}_{\bA}[\mu](x)}y\mu(\de y) = \frac{\mathsf{n}_{\bA}[\mu](x)}{\mathscr{Z}_{\bA}[\mu](x)}.
\]
Then, we have the following identity
\begin{align*}
    \|G_{\mu}(x,\cdot)-G_{\nu}(y,\cdot)\|_{L^{2}({\uprho_*})}^{2}= d\sigma_{\bV}^{2}&\left\|\proj_{x}\mathsf{m}_{\bA}[\mu](x)-\proj_{y}\mathsf{m}_{\bA}[\nu](y)\right\|_{L^{2}({\uprho_*})}^{2}\\
    +&\left\|\proj_{x}\Bar{\bV}\Cov_{\mu}^{\bA}(x)-\proj_{y}\Bar{\bV}\Cov_{\nu}^{\bA}(y)\right\|_{L^{2}({\uprho_*})}^{2}.
\end{align*}
Almost surely $\|\bA\|_{\op}<\infty$, so for all $x\in \S^{d-1}$, $e^{-\upbeta\|\bA\|_{\op}}\leq \mathscr{Z}_{\mu}(x)\leq e^{\upbeta\|\bA\|_{\op}}$.
So we have
\begin{align*}
    \left\|\frac{\mathsf{n}_{\bA}[\mu](x)}{\mathscr{Z}_{\bA}[\mu](x)}-\frac{\mathsf{n}_{\bA}[\nu](x)}{\mathscr{Z}_{\bA}[\nu](x)}\right\|&=\left\|\frac{\mathsf{n}_{\bA}[\mu](x)-\mathsf{n}_{\bA}[\nu](x)}{\mathscr{Z}_{\bA}[\mu](x)}\right\|\\
    &+\left\|\mathsf{n}_{\bA}[\nu](x)\left(\frac{1}{\mathscr{Z}_{\bA}[\mu](x)}-\frac{1}{\mathscr{Z}_{\bA}[\nu](x)}\right)\right\|.
\end{align*}
Focus on the first term on the right-hand side. Using that Lipschitz constant of $y\mapsto e^{\upbeta\<\bA x,y\>}\proj_{x}y$ is bounded by $e^{\upbeta\|\bA\|_{\op}}\|\bV\|_{\op}(1+\upbeta\|\bA\|_{\op})$, and Kantorovich-Rubinstein duality, we have 
\begin{equation*}
    \left\|\mathsf{n}_{\bA}[\mu](x)-\mathsf{n}_{\bA}[\nu](x)\right\|\leq e^{\upbeta\|\bA\|_{\op}}\|\bV\|_{\op}(1+\upbeta\|\bA\|_{\op})W_1(\mu,\nu).
\end{equation*}
It ensues that
\begin{equation*}
\left\|\frac{\mathsf{n}_{\bA}[\mu](x)-\mathsf{n}_{\bA}[\nu](x)}{\mathscr{Z}_{\bA}[\mu](x)}\right\|\leq e^{2\upbeta\|\bA\|_{\op}}\|\bV\|_{\op}(1+\upbeta\|\bA\|_{\op})W_2(\mu,\nu).
\end{equation*}
By the same reasoning, we have 
\begin{equation*}
    \left\|\mathsf{n}_{\bA}[\nu](x)\left(\frac{1}{\mathscr{Z}_{\bA}[\mu](x)}-\frac{1}{\mathscr{Z}_{\bA}[\nu](x)}\right)\right\|\leq \upbeta\|\bV\|_{\op}e^{4\upbeta\|\bA\|_{\op}}W_{2}(\mu,\nu).
\end{equation*}
Combining the two, we end up with 
\begin{equation}\label{eq:Lipschitz_measure}
    \left\|\frac{\mathsf{n}_{\bA}[\mu](x)}{\mathscr{Z}_{\bA}[\mu](x)}-\frac{\mathsf{n}_{\bA}[\nu](x)}{\mathscr{Z}_{\bA}[\nu](x)}\right\|\leq \|\bV\|_{\op}e^{4\upbeta\|\bA\|_{\op}}(1+\upbeta\|\bA\|_{\op}+\upbeta)W_{2}(\mu,\nu).
    \end{equation}
By the same kind of argument, 
\begin{equation}\label{eq:Lipschitz_x}
    \left\|\frac{\mathsf{n}_{\bA}[\nu](x)}{\mathscr{Z}_{\bA}[\nu](x)}-\frac{\mathsf{n}_{\bA}[\nu](y)}{\mathscr{Z}_{\bA}[\nu](y)}\right\| \leq \upbeta\|\bA\|_{\op}e^{2\upbeta\|\bA\|_{\op}} \|x-y\|.
    \end{equation}
    Introduce $C(\bA,\bV,\upbeta)\coloneqq \upbeta(\|\bA\|_{\op}e^{2\upbeta\|\bA\|_{\op}}+\|\bV\|_{\op}e^{4\upbeta\|\bA\|_{\op}}).$ Combining \eqref{eq:Lipschitz_x} and \eqref{eq:Lipschitz_measure}, and using Lipschitz character of $x\mapsto \proj_x$, we obtain
\begin{equation}\label{eq:bound_first_term}
    \left\|\proj_{x}\mathsf{m}_{\bA}[\mu](x)-\proj_{y}\mathsf{m}_{\bA}[\nu](y)\right\|_{L^{2}({\uprho_*})}\leq \E C(\bA,I_d,\upbeta) \left(\|x-y\|+W_{2}(\mu,\nu)\right).
\end{equation}
On the other hand,
\begin{align*}
    &\left\|\proj_x\Bar{\bV}\Cov_\mu^{\bA}(x)-\proj_y\Bar{\bV}\Cov_\mu^{\bA}(y)\right\|_{L^{2}({\uprho_*})}\\
    &\hspace{2cm}\leq \left\|\proj_x\Bar{\bV}\mathsf{m}_{\bA}[\mu](x)-\proj_y \Bar{\bV}\mathsf{m}_{\bA}[\nu](y)\right\|_{L^{2}({\uprho_*})}\\
    &\hspace{2.5cm}+\left\|\E_{\bA}[\proj_x\Bar{\bV}\mathsf{m}_{\bA}[\mu](x)-\proj_y \Bar{\bV}\mathsf{m}_{\bA}[\nu](y)]\right\|_{L^{2}({\uprho_*})}\\
    &\hspace{2cm}\leq 2\left\|\proj_x\Bar{\bV}\mathsf{m}_{\bA}[\mu](x)-\proj_y \Bar{\bV}\mathsf{m}_{\bA}[\nu](y)\right\|_{L^{2}({\uprho_*})}.
\end{align*}
All in all,
\begin{equation}\label{eq:Lipschitz_en_cov}
   \left\|\proj_x\Bar{\bV}\Cov_\mu^{\bA}(x)-\proj_y\Bar{\bV}\Cov_\mu^{\bA}(y)\right\|_{L^{2}({\uprho_*})}\leq \E K(\Bar{\bV},\bA,\upbeta)\left(\|x-y\|+W_{2}(\mu,\nu)\right),
\end{equation}
where 
\begin{equation*}
K(\Bar{\bV},\bA,\upbeta)\coloneqq \upbeta\left(\|\bA\|_{\op}e^{2\upbeta\|\bA\|_{\op}}+\|\Bar{\bV}\|_{\op}e^{4\upbeta\|\bA\|_{\op}}\right).
\end{equation*}
Combining \eqref{eq:bound_first_term} and \eqref{eq:Lipschitz_en_cov}, we deduce 
\begin{align*}
\|G_{\mu}(x,\cdot)&-G_{\nu}(y,\cdot)\|_{L^{2}({\uprho_*})}^{2}\\
&\leq \left(d\sigma_{\bV}^{2}\E C(\bA,I_d,\upbeta)+\E K(\Bar{\bV},\bA,\upbeta)\right)(\|x-y\|+W_{2}(\mu,\nu)).
\end{align*}
Using \cite[Theorem 4.4.5]{vershynin2018high}, we know that $\|\bA\|_{\op}$ is subGaussian with $\|\,\|\bA\|_{\op}\,\|_{\psi_2}\lesssim \sqrt{d}\sigma_{\bA}$, so
\[
\E e^{c\upbeta \|\bA\|_{\op}}\leq e^{c\upbeta \|\Bar{\bA}\|_{\op}}e^{c^{2}\upbeta^{2}d\sigma_{\bA}^{2}}.
\]
From the above bounds, and using $\upsigma^2\gtrsim d \sigma_{\bV}^{2}$ and  $\|\Hat{\bV}\|_{\op}\lesssim d\sigma_{\bV}^{2}$, we deduce 
\begin{align*}
   \|G_{\mu}(x,\cdot)&-G_{\nu}(y,\cdot)\|_{L^{2}({\uprho_*})}^{2}\\
   &\leq \left(d\sigma_{\bV}^{2}+\|\Bar{\bV}\|_{\op}\right)\upbeta^{2}e^{4\upbeta\|\Bar{\bA}\|_{\mathrm{op}}}e^{c^{2}\upbeta^{2}d\sigma_{\bA}^{2}}\,(\|x-y\|+W_{2}(\mu,\nu))\\
   &\lesssim \upbeta^{2}e^{4\upbeta\|\Bar{\bA}\|_{\mathrm{op}}}e^{c^{2}\upbeta^{2}d\sigma_{\bA}^{2}} \upsigma^2 \left(\|x-y\|+W_{2}(\mu,\nu)\right).\qedhere
\end{align*}
\end{proof}

\begin{proposition}\label{prop:mckean_vlasov_common_noise}

Let $\mu_0\in\mathcal{P}(\S^{d-1})$ and let $x_0$ be $\cF_0$-measurable, independent of $\mathsf{W}$, with law $\mu_0$.
Under Assumption \ref{ass:high_order_short}, there exists a unique adapted continuous process $x$ such that, with $\mu(t) = \mathsf{Law}(x(t) \mid \mathscr{F}^{\mathsf{W}}_t)$, the pair $(x,\mu)$ solves \eqref{eq:non_linear_SDE_common}
on $[0, T]$ in the sense of Definition \ref{def:nonlinear_SDE_common}.
\end{proposition}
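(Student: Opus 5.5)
A Picard fixed-point argument on the space of $(\cF^{\mathsf W}_t)$-adapted measure flows gives existence and uniqueness, driven by the Lipschitz estimate of Proposition~\ref{prop:satisfying_MF}. This follows the strategy of \cite[Vol.~II, Ch.~2]{carmona2018probabilistic}; the only new feature is that the noise is cylindrical rather than finite-dimensional.

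\emph{Frozen problem.} Let $\mathcal{M}_T$ be the set of continuous, $(\cF^{\mathsf W}_t)$-adapted processes $\nu=(\nu(t))_{t\in[0,T]}$ with values in $\cP(\S^{d-1})$. Fix $\nu\in\mathcal M_T$ and consider the linear SDE
\[
\de x^\nu(t)=\int_{\Uptheta}G_{\nu(t)}(x^\nu(t),\bm\theta)\,\mathsf W(\de\bm\theta,\de t),\qquad x^\nu(0)=x_0 .
\]
Its coefficient is random but adapted. By Proposition~\ref{prop:satisfying_MF} it is Lipschitz in $x$ in $L^2(\uprho_*)$ with a deterministic constant, and Lemma~\ref{lem:Kernel_regularity} makes it bounded. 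To solve the equation, expand $\mathsf W=\sum_k e_k\beta^k$ in an orthonormal basis $(e_k)$ of $L^2(\uprho_*)$, so the diffusion term becomes $\sum_k\langle G_{\nu(t)}(x,\cdot),e_k\rangle\,\de\beta^k$. The map $G\mapsto(\langle G,e_k\rangle)_k$ is Hilbert--Schmidt with norm $\|G\|_{L^2(\uprho_*)}$. Standard Hilbert-space SDE theory with Lipschitz coefficients then gives a unique strong solution, which stays on $\S^{d-1}$ because $G$ is tangent (arguing as in Proposition~\ref{prop:existence_SDE_sphere}). Define $\Phi(\nu)(t):=\mathsf{Law}(x^\nu(t)\mid\cF^{\mathsf W}_t)$. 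Since $x_0$ is independent of $\mathsf W$ and $x^\nu$ is a measurable functional of $(x_0,\mathsf W)$, the conditional law can be computed by integrating over $x_0$ with the path of $\mathsf W$ frozen. Hence $\Phi(\nu)$ is $\cF^{\mathsf W}_t$-adapted, and it is continuous in $t$.

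\emph{Contraction.} Take $\nu,\nu'\in\mathcal M_T$ and drive both frozen problems with the same $x_0$ and the same $\mathsf W$. By It\^o's isometry for cylindrical integrals and Proposition~\ref{prop:satisfying_MF},
\[
\E\sup_{s\le t}\|x^\nu(s)-x^{\nu'}(s)\|^2\le 4C^2\!\int_0^t\!\Big(\E\|x^\nu(s)-x^{\nu'}(s)\|^2+\E\,W_2(\nu(s),\nu'(s))^2\Big)\de s,
\]
using Doob's inequality for the supremum. The synchronous coupling of $x^\nu$ and $x^{\nu'}$ yields, conditionally on $\cF^{\mathsf W}_t$,
\[
W_2(\Phi(\nu)(t),\Phi(\nu')(t))^2\le\E\big[\|x^\nu(t)-x^{\nu'}(t)\|^2\,\big|\,\cF^{\mathsf W}_t\big].
\]
Gronwall's inequality then gives
\[
\E\,W_2(\Phi\nu(t),\Phi\nu'(t))^2\le K\int_0^t\E\,W_2(\nu(s),\nu'(s))^2\,\de s.
\]
Iterating, $\Phi^k$ is a contraction for the metric $\sup_t\E\,W_2^2$ once $k$ is large, with factor $(KT)^k/k!$; alternatively, use an exponentially weighted norm. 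The space $\mathcal M_T$ is complete under this metric because $\S^{d-1}$ is compact, so $W_2$ is bounded and metrizes weak convergence. This yields a unique fixed point $\mu$, and $(x^\mu,\mu)$ solves \eqref{eq:non_linear_SDE_common} in the sense of Definition~\ref{def:nonlinear_SDE_common}. Any other solution $(x',\mu')$ has $\mu'=\Phi(\mu')$, because $x'$ is the unique solution of the frozen problem with $\nu=\mu'$. Hence $\mu'=\mu$ and then $x'=x$.

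\emph{Main obstacle.} The delicate step is making $\Phi$ well defined. One must check that the conditional law given $\cF^{\mathsf W}_t$ coincides with the conditional law given the whole noise $\cF^{\mathsf W}_T$. This is the compatibility/immersion property: increments of $\mathsf W$ after time $t$ are independent of $\cF_t$. One must also handle measurability of $t\mapsto G_{\nu(t)}$ as an $L^2(\uprho_*)$-valued integrand. I would treat both by working on a product space $\Omega^0\times\Omega^1$ carrying $\mathsf W$ and $x_0$ separately, and taking conditional laws as $\omega^0$-sections of the law over $\Omega^1$. This is exactly the device of \cite{carmona2018probabilistic}, and the cylindrical noise enters only through the Hilbert--Schmidt isometry above.
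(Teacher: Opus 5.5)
Your proposal is correct and follows the same route as the paper. The paper also relies on the fixed-point argument of \cite[Proposition 2.8, Volume II]{carmona2018probabilistic}, driven by the Lipschitz bound of Proposition~\ref{prop:satisfying_MF}, Burkholder--Davis--Gundy, the conditional coupling inequality $W_2(\mu(s),\mu'(s))^2\le\E[\|x(s)-x'(s)\|^2\mid\cF^{\mathsf W}_s]$ and Gr\"onwall. You simply make explicit the frozen problem, the map $\Phi$ and the compatibility issue, all of which the paper leaves to the reference.

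Two small repairs are needed, neither of which changes the argument:
\begin{itemize}
\item Tangency of $G$ alone does not keep an ambient It\^o solution on $\S^{d-1}$, because the quadratic-variation term $\|G_{\nu(t)}(x,\cdot)\|^2_{L^2(\uprho_*)}\,\de t$ in $\de\|x\|^2$ does not vanish. The frozen equation must therefore carry the correction drift $-\tfrac12\|G_{\nu(t)}(x,\cdot)\|_{L^2(\uprho_*)}^2\,x\,\de t$, as in \eqref{eq:ambient_form_clean}. This drift is Lipschitz in $(x,\nu)$, so it only adds a harmless term to your contraction estimate.
\item Path continuity is not preserved under limits in $\sup_t\E\,W_2^2$, so completeness of $\mathcal M_T$ as you defined it can fail. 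Either drop continuity from $\mathcal M_T$ (the fixed point $\Phi(\mu)$ is continuous anyway), or use the metric $\E\sup_t W_2^2$ instead.
\end{itemize}
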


\begin{proof}[Proof of  Proposition \ref{prop:mckean_vlasov_common_noise}]
The proof is based on a fixed-point argument and is given in \cite[Proposition 2.8, Volume II]{carmona2018probabilistic}. We briefly explain the adaptation. Fix $T>0$.
Let $(x,\mu)$ and $(x',\mu')$ be two solutions on the same space, driven by the same common noise $\mathsf W$, with the same initial condition.
Consider the martingale 
\begin{equation}\label{eq:M_t_mart_MF}
M_t\coloneqq \sqrt{\alpha}\int_{0}^{t}\int_{\Uptheta}\left(G_{\mu(s)}(x(s);\bm{\theta})-G_{\mu'(s)}(x'(s);\bm{\theta})\right)\,\mathsf{W}(\de s,\de \bm{\theta}).
\end{equation}
Applying the Burkholder--Davis--Gundy inequality for $\R^{d}$-valued martingales to \eqref{eq:M_t_mart_MF}, we have
\[
\E \sup_{s\in[0, t]}\|M_s\|^2
\lesssim\alpha\E\int_0^t \left\|G_{\mu(s)}(x(s);\cdot)-G_{\mu'(s)}(x'(s);\cdot)\right\|_{L^2({\uprho_*})}^2\,\de s.
\]
Now use the coupling bound for conditional laws:
since $\mu(s)=\mathsf{Law}(x(s)\mid \cF_s^{\mathsf{W}})$ and $\mu'(s)=\mathsf{Law}(x'(s)\mid \cF_s^{\mathsf{W}})$, the conditional joint law
of $(x(s),x'(s))$ given $\cF_s^{\mathsf{W}}$ is a coupling of $(\mu(s),\mu'(s))$, hence
\[
W_2(\mu(s),\mu'(s))^2
\le
\E\left[\|x(s)-x'(s)\|^2\,\middle|\,\cF_s^{\mathsf{W}}\right]\quad\text{a.s. in } s.
\]
The right-hand side term can be bounded as
\begin{align*}
\E\sup_{s\in[0,t]}\|x(s)-x'(s)\|^{2}
&\leq \E\sup_{0\leq s\leq t}\|M_s\|^{2}+C(T)\int_{0}^{t}\E\|x(s)-x'(s)\|^2\,\de s\\
&\lesssim_T \E\int_{0}^{t}\left\|G_{\mu(s)}(x(s);\cdot)-G_{\mu'(s)}(x'(s);\cdot)\right\|_{L^{2}({\uprho_*})}^{2}\,\de s\\
&\hspace{1.1cm}+\int_{0}^{t}\E\|x(s)-x'(s)\|^2\,\de s\\
&\lesssim_T \E \int_{0}^{t}\left\|G_{\mu(s)}(x(s);\cdot)-G_{\mu(s)}(x'(s);\cdot)\right\|^{2}_{L^{2}({\uprho_*})}\\
&\hspace{1.15cm}+\left\|G_{\mu(s)}(x'(s);\cdot)-G_{\mu'(s)}(x'(s);\cdot)\right\|^{2}_{L^{2}({\uprho_*})}\,\de s\\
&\hspace{1.1cm}+\int_{0}^{t}\E\|x(s)-x'(s)\|^2\,\de s\\
&\lesssim_T \E \int_{0}^{t}\left(\|x(s)-x'(s)\|^{2}+W_2(\mu(s),\mu'(s))^{2}\right)\de s\\
&\lesssim_T \E \int_{0}^{t}\|x(s)-x'(s)\|^{2}\de s,
\end{align*}
where we used the regularity of $G$ with respect to $\mu$ and $x$. The end of the argument is similar to the proof of \cite[Proposition 2.8, Volume II]{carmona2018probabilistic}.
\end{proof}

\begin{proposition} \label{prop: prop.3}
    Let $(x, \mu)$ be a solution of \eqref{eq:non_linear_SDE_common} in the sense of Definition \ref{def:nonlinear_SDE_common}. Then $\mu$ is a weak solution of \eqref{eq: the.spde} in the sense of Definition \ref{def:weak_spde}.
\end{proposition}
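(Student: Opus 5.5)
The plan is to apply It\^o's formula to $\varphi(x(t))$ for a fixed $\varphi\in C^\infty(\S^{d-1})$, take conditional expectations given $\cF_t^{\mathsf W}$, and identify each term with the corresponding term in Definition~\ref{def:weak_spde}.

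\textbf{Step 1: It\^o's formula.} Since $x$ solves \eqref{eq:non_linear_SDE_common}, It\^o's formula on the sphere (the projections in $G$ keep the dynamics tangent; extend $\varphi$ smoothly to a neighborhood if needed) gives
\begin{equation*}
\varphi(x(t))=\varphi(x_0)+\int_0^t \mathsf{L}_{\mu(s)}\varphi(x(s))\,\de s+\int_0^t\int_{\Uptheta}\nabla\varphi(x(s))\cdot G_{\mu(s)}(x(s),\bm\theta)\,\mathsf W(\de s,\de\bm\theta).
\end{equation*}
The drift coefficient is bounded, because $\mathsf{L}_\mu\varphi(x)\le \tfrac12\|\varphi\|_{C^2}\upsigma^2$ by Lemma~\ref{lem:Kernel_regularity}. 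The integrand of the stochastic term is bounded in $L^2(\uprho_*)$ for the same reason, so every term is integrable.

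\textbf{Step 2: conditioning.} Take $\E[\,\cdot\mid\cF_t^{\mathsf W}]$ of both sides. For the left side and the initial term: $\mu(t)=\mathsf{Law}(x(t)\mid\cF_t^{\mathsf W})$ gives $\langle\mu(t),\varphi\rangle$, and independence of $x_0$ from $\mathsf W$ gives $\langle\mu_0,\varphi\rangle$. For the $\de s$ term, Fubini for conditional expectations reduces it to $\E[\mathsf{L}_{\mu(s)}\varphi(x(s))\mid\cF_t^{\mathsf W}]$. We then need to replace $\cF_t^{\mathsf W}$ by $\cF_s^{\mathsf W}$. This uses a compatibility (immersion) property: the increments of $\mathsf W$ after time $s$ are independent of $\cF_s\vee\cF_s^{\mathsf W}$, and $x(s)$ is $\cF_s$-measurable. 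Since $\mu(s)$ is $\cF_s^{\mathsf W}$-measurable, we get $\E[\mathsf{L}_{\mu(s)}\varphi(x(s))\mid\cF_s^{\mathsf W}]=\langle \mathsf{L}_{\mu(s)}\varphi,\mu(s)\rangle$.

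\textbf{Step 3: the stochastic integral (main obstacle).} We must show
\begin{equation*}
\E\Big[\int_0^t\!\!\int_{\Uptheta}\nabla\varphi(x(s))\cdot G_{\mu(s)}(x(s),\bm\theta)\,\mathsf W(\de s,\de\bm\theta)\,\Big|\,\cF_t^{\mathsf W}\Big]=\int_0^t\!\!\int_{\Uptheta}\langle\nabla\varphi\cdot G_{\mu(s)}(\cdot,\bm\theta),\mu(s)\rangle\,\mathsf W(\de s,\de\bm\theta).
\end{equation*}
First, expand the cylindrical integral in an orthonormal basis $(e_k)$ of $L^2(\uprho_*)$ as $\sum_k\int_0^t\langle H(s,\cdot),e_k\rangle\,\de\beta^k(s)$, where $H(s,\bm\theta)=\nabla\varphi(x(s))\cdot G_{\mu(s)}(x(s),\bm\theta)$. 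Second, prove the identity for simple predictable integrands of the form $\mathbf 1_{(a,b]}(s)\,Z\,e_k$ with $Z$ bounded and $\cF_a$-measurable. By the compatibility property, $\E[Z(\beta^k_b-\beta^k_a)\mid\cF_t^{\mathsf W}]=\E[Z\mid\cF_a^{\mathsf W}](\beta^k_b-\beta^k_a)$: condition first on $\cF_a\vee\cF_t^{\mathsf W}$, and note that $\cF_t^{\mathsf W}$ adds to $\cF_a$ only information independent of $Z$ given $\cF_a^{\mathsf W}$. Third, extend by $L^2$ density, using It\^o's isometry and the fact that conditional expectation is an $L^2$-contraction. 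This yields the integrand $\E[H(s,\bm\theta)\mid\cF_s^{\mathsf W}]=\langle \nabla\varphi\cdot G_{\mu(s)}(\cdot,\bm\theta),\mu(s)\rangle$. The series in $k$ converges by the $L^2(\uprho_*)$ bound on $G$ from Lemma~\ref{lem:Kernel_regularity}. Exchanging the infinite sum with the conditional expectation and checking joint measurability in $(s,\bm\theta,\omega)$ are the delicate points.

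\textbf{Step 4: conclusion.} Collecting the three identified terms gives the identity of Definition~\ref{def:weak_spde}. The process $\mu$ is $\cF_t^{\mathsf W}$-adapted by construction, so $\mu$ is a weak solution of \eqref{eq: the.spde}.
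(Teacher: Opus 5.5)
Your proposal is correct and follows the same route as the paper, which only states that the result is a routine application of It\^o's formula followed by conditioning on the common-noise filtration $\cF_t^{\mathsf W}$. You supply the details the paper omits: the compatibility of $(\cF_t)$ with $\mathsf W$, conditional Fubini for the $\de s$ term, and commuting $\E[\,\cdot\mid\cF_t^{\mathsf W}]$ with the cylindrical stochastic integral via simple integrands and It\^o's isometry.
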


The proof is a routine application of Itô’s formula followed by conditioning on the common-noise filtration.

\begin{proposition} \label{prop: poc}
Let $X^n = §(x^{n}_1,\ldots, x^{n}_1)$ solve 
\begin{equation*}
\de x^{n}_i(t)=\int_{\Uptheta}G_{\mu^{n}(t)}(x_{i}^n(t);\bm{\theta})\mathsf{W}(\de \bm{\theta}, \de t),
\end{equation*}
with $\mu^n = \mu_{X^n}$. 
Let $(\bar x_1,\ldots,\bar x_n)$ be conditionally i.i.d., given $(\mathscr{F}^{\mathsf W}_t)_{t\geq0}$, copies of the unique solution $x$ of \eqref{eq:non_linear_SDE_common}, driven by the same common noise $\mathsf{W}$, and let $\bar\mu$ be their empirical measure. Then for every $T>0$ there exists $C_T>0$ such that 
\begin{align*}
    \max_{i\in[n]} \E \sup_{t\in[0, T]} \left\|x^{n}_i(t)-\bar x_i(t)\right\|^2 &+ \E \sup_{t\in[0, T]} W_2^2(\mu^n(t), \bar\mu^n(t)) \\
    &\leq  \frac{C_T}{n} \sum_{i=1}^n \E \left\|x^{n,i}(0)-\bar x^i(0)\right\|^2.
\end{align*}
In particular, after choosing an optimal coupling of the initial data, the right hand side may be written as $C_T \E W_2^2(\mu^n(0), \bar \mu^n(0))$.
\end{proposition}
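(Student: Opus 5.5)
We argue by synchronous coupling. All processes are realized on one probability space and driven by the same cylindrical Wiener process $\mathsf W$. For each $i$, the coupled pair $(x^n_i,\bar x_i)$ is built from the given initial data, and the $\bar x_i(0)$ are conditionally i.i.d.\ with law $\mu_0$. By Proposition \ref{prop:mckean_vlasov_common_noise}, each $\bar x_i$ solves \eqref{eq:non_linear_SDE_common} with the \emph{same} conditional law $\mu(t)=\mathsf{Law}(\bar x_i(t)\mid\cF^{\mathsf W}_t)$, which is $\cF^{\mathsf W}_t$-measurable and common to all $i$.

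\textbf{Step 1 (BDG).} Subtracting the equations gives
\[
x^n_i(t)-\bar x_i(t)=x^n_i(0)-\bar x_i(0)+\int_0^t\int_\Uptheta\big(G_{\mu^n(s)}(x^n_i(s);\bm\theta)-G_{\mu(s)}(\bar x_i(s);\bm\theta)\big)\,\mathsf W(\de\bm\theta,\de s).
\]
Since $x^n_i,\bar x_i$ stay on $\S^{d-1}$, we may work in $\R^d$ without any Itô correction. Applying Burkholder--Davis--Gundy exactly as in the proof of Proposition \ref{prop:mckean_vlasov_common_noise}, $\E\sup_{s\le t}\|x^n_i(s)-\bar x_i(s)\|^2$ is bounded by $2\E\|x^n_i(0)-\bar x_i(0)\|^2$ plus $C\,\E\int_0^t\|G_{\mu^n(s)}(x^n_i(s);\cdot)-G_{\mu(s)}(\bar x_i(s);\cdot)\|^2_{L^2(\uprho_*)}\de s$.

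\textbf{Step 2 (splitting the integrand).} We insert the empirical measure $\bar\mu^n(s)$ of the $\bar x_j(s)$ and write the integrand as
\[
\big[G_{\mu^n}(x^n_i)-G_{\bar\mu^n}(\bar x_i)\big]+\big[G_{\bar\mu^n}(\bar x_i)-G_{\mu}(\bar x_i)\big].
\]
For the first bracket we use Proposition \ref{prop:satisfying_MF}. Together with the coupling bound $W_2^2(\mu^n,\bar\mu^n)\le\frac1n\sum_j\|x^n_j-\bar x_j\|^2$, it gives a term controlled by $\|x^n_i-\bar x_i\|^2+\frac1n\sum_j\|x^n_j-\bar x_j\|^2$. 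We then average over $i$ and apply Gronwall to $\Delta(t):=\frac1n\sum_i\E\sup_{s\le t}\|x^n_i-\bar x_i\|^2$. This closes the estimate up to the second bracket. Exchangeability then upgrades the bound from the average over $i$ to the maximum over $i$. Finally, $W_2^2(\mu^n,\bar\mu^n)\le\frac1n\sum_i\|x^n_i-\bar x_i\|^2$ handles the measure term in the statement.

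\textbf{Step 3 (main obstacle: the sampling term).} The difficulty is the term $\E\int_0^T\|G_{\bar\mu^n(s)}(\bar x_i(s);\cdot)-G_{\mu(s)}(\bar x_i(s);\cdot)\|^2_{L^2(\uprho_*)}\de s$. We will not pass through $W_2(\bar\mu^n,\mu)$, which decays too slowly in dimension $d$. Instead we exploit the softmax structure. For fixed $\bm\theta$ and $x$, $G_\mu(x,\bm\theta)$ is a smooth function of the two linear statistics $\mathsf n_{\bA}[\mu](x)$ and $\mathscr Z_{\bA}[\mu](x)$. The partition function is bounded below by $e^{-\upbeta\|\bA\|_{\op}}$. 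Conditionally on $\cF^{\mathsf W}_s$, the points $\bar x_j(s)$ for $j\neq i$ are i.i.d.\ with law $\mu(s)$ and independent of $\bar x_i(s)$. Hence the conditional variance of each of these linear statistics is $O(1/n)$, with the self term $j=i$ contributing another $O(1/n)$. The resulting constant involves $\E[\|\bV\|^2_{\op}e^{c\upbeta\|\bA\|_{\op}}]$, which is finite under Assumption \ref{ass:high_order_short} by the same subGaussian estimate used in Proposition \ref{prop:satisfying_MF}. Linearizing the ratio $\mathsf n/\mathscr Z$ then gives a sampling term of order $C_T/n$. Gronwall therefore yields $\Delta(T)\le C_T\big(\frac1n\sum_i\E\|x^n_i(0)-\bar x_i(0)\|^2+\frac1n\big)$.

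The displayed statement has no $1/n$ term. It is recovered only if this contribution can be absorbed, or is treated as part of the ``standard'' error. I regard this as the delicate point: with conditionally i.i.d.\ limiting copies, I do not see how to remove the $1/n$ sampling error, so I would present the bound with this additive $C_T/n$ and flag that the displayed estimate should be read modulo that term. With an optimal coupling of the initial data, the first term becomes $C_T\E W_2^2(\mu^n(0),\bar\mu^n(0))$.
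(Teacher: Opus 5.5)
Your diagnosis is correct: the displayed inequality is false as written, and some additive sampling term such as your $C_T/n$ is unavoidable.

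A quick check: take centered Gaussian $\bV$, $\upbeta=0$, $\mu_0=\upsigma_d$ and $x^n_i(0)=\bar x_i(0)$. Then $G_\mu(x,\bm\theta)=\proj_x\bV\int y\,\mu(\de y)$, so the McKean--Vlasov solution is stationary ($\mu(t)\equiv\upsigma_d$, $\bar x_i(t)\equiv\bar x_i(0)$). Each $x^n_i$, however, has quadratic-variation rate $\sigma_\bV^2(d-1)\|\frac1n\sum_j x^n_j(t)\|^2$. At $t=0$ this rate is a.s.\ positive, with mean $\sigma_\bV^2(d-1)/n$. So the right-hand side vanishes while the left-hand side is positive and of order $1/n$ for fixed $T$; your rate is even sharp here.

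The paper's proof is only a pointer to \cite[Theorem 2.12, Vol.~II]{carmona2018probabilistic}. That result is a Sznitman-type synchronous coupling with shared initial data, in which the whole error is the sampling discrepancy between $\bar\mu^n(t)$ and $\mu(t)$, measured in $W_2$. The display has dropped exactly that term.

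Your route differs from the cited one precisely in how this term is treated. Going through $W_2(\bar\mu^n,\mu)$ only gives dimension-dependent Fournier--Guillin rates, of order $n^{-2/(d-1)}$ for large $d$. You instead combine two facts:
\begin{itemize}
\item $\mu$ enters $G_\mu(x,\bm\theta)$ only through the linear statistics $\mathsf n_{\bA}[\mu](x)$ and $\mathscr Z_{\bA}[\mu](x)$. (The subtracted mean $b_{\uprho_*}[\mu]$ is handled by Jensen over $\bm\theta'$.)
\item The noise is purely common, so $\bar x_j(s)$ is the image of $\bar x_j(0)$ under an $\cF^{\mathsf W}_s$-measurable flow. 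Hence the $\bar x_j(s)$, $j\neq i$, are conditionally i.i.d.\ given $(\cF^{\mathsf W}_s,\bar x_i(s))$.
\end{itemize}
This yields a $1/n$ rate whose exponent does not degrade with $d$. The price is a constant involving exponential moments of $\|\bA\|_{\op}$.

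Three points need repair.

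(i) Working in $\R^d$ ``without any It\^o correction'' is wrong. These are It\^o SDEs on $(\S^{d-1})^n$, whose ambient form carries the drift $-\frac12\|G_\mu(x,\cdot)\|_{L^2(\uprho_*)}^2x\,\de t$, cf.\ \eqref{eq:ambient_form_clean}; without it $\|x\|$ would increase. The fix is harmless. The difference of these drifts is bounded by $C\|G_{\mu^n}(x^n_i,\cdot)-G_{\mu}(\bar x_i,\cdot)\|_{L^2(\uprho_*)}+C\|x^n_i-\bar x_i\|$ and splits into your same two brackets.

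(ii) Passing from the average over $i$ to $\max_i$ via exchangeability is an extra hypothesis on the initial pairs $(x^n_i(0),\bar x_i(0))_i$, not a consequence of the statement. Without it, the display fails for $n>C_T$ already at $t=0$ when a single pair differs. A per-index Gronwall then only gives $\max_i\E\sup_t\|x^n_i-\bar x_i\|^2\le C_T\big(\max_i\E\|x^n_i(0)-\bar x_i(0)\|^2+\frac1n\big)$.

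(iii) Finiteness of $\E[\|\bV\|_{\op}^2e^{c\upbeta\|\bA\|_{\op}}]$ does not follow from Assumption \ref{ass:high_order_short} for all $\upbeta$.
\begin{itemize}
\item Since $\bA-\E\bA=\bW\bW'^{\sT}$ is a product of subGaussian matrices, $\|\bA\|_{\op}$ is only subexponential.
\item Already for Gaussian entries, $\E e^{\lambda(\bW\bW'^{\sT})_{11}}=\infty$ once $\lambda\ge\sigma_\bA^{-2}$, so you need $\upbeta\sigma_\bA^2$ below an absolute constant.
\item The same restriction is hidden in Proposition \ref{prop:satisfying_MF}, which incorrectly treats $\|\bA\|_{\op}$ as subGaussian. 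Both your Step 2 and the paper's citation rely on that proposition.
\item For larger $\upbeta$, truncating with $\|\mathsf m_{\bA}\|\le 1$ still makes the sampling term vanish as $n\to\infty$, but not at rate $1/n$.
\end{itemize}
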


The proof follows directly from  \cite[Theorem 2.12, Volume II]{carmona2018probabilistic}.
We can now conclude with

\begin{proof}[Proof of Theorem \ref{thm:PoC_wellposedness}]
By Proposition \ref{prop:satisfying_MF}, the coefficient $G_\mu(x,\cdot)$
 is globally Lipschitz in $x$ and $\mu$. Proposition \ref{prop:mckean_vlasov_common_noise} therefore yields existence and pathwise uniqueness for the conditional McKean–Vlasov SDE \eqref{eq:non_linear_SDE_common}. Proposition \ref{prop: prop.3} shows that the conditional law of any solution is a weak solution of the SPDE \eqref{eq: the.spde}. Conversely, by a superposition principle for conditional McKean–Vlasov equations \cite{lacker2022superposition}, any weak solution of \eqref{eq: the.spde} can be lifted to a solution of \eqref{eq:non_linear_SDE_common}; uniqueness of the latter implies uniqueness of the former. 
\end{proof}

\section{The Gaussian case} \label{sec:EXAMPLES_APP}

\subsection{Preliminary lemmas} We begin with a couple of particularly useful lemmas.

\begin{lemma} \label{lem:lemma_app}
Under \eqref{eq: tformers.at.initialization}, we have  $b_{{\uprho_*}}[\mu]\equiv 0$ and
\begin{equation*}\label{eq:Kernel_Gaussian}
    \mathsf{K}[\mu](x_i,x_j)=\frac1d\proj_{x_i} \E_\bA \< \mathsf{m}_{\upbeta,\bA}[\mu](x_i),\mathsf{m}_{\upbeta,\bA}[\mu](x_j)\>  I_d\,\proj_{x_j},
\end{equation*} 
where 
\begin{equation*} \label{eq: mbetaA}
    \mathsf{m}_{\upbeta,\bA}[\mu](x):=\frac{1}{\mathscr{Z}_{\bA}[\mu](x)}\int e^{\upbeta\<\bA x,y\>}y\mu(\de y).
\end{equation*}
Here, $\mu=\mu_X$ is the empirical measure.
\end{lemma}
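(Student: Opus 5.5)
The plan is to compute the first and second moments of the attention field $B_{\bm\theta}[\mu](x)=\bV\,\mathsf{m}_{\upbeta,\bA}[\mu](x)$ directly. The key structural fact is that, under \eqref{eq: tformers.at.initialization}, $\bV$ is independent of $\bA$, and $\mathsf{m}_{\upbeta,\bA}[\mu](x)$ depends only on $\bA$ (and on $\mu,x$).

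For the drift, we condition on $\bA$. Since $\E\bV=0$ and $\bV$ is independent of $\bA$, we get
\[
b_{\uprho_*}[\mu](x)=\E_{\bA}\big[\E[\bV]\,\mathsf{m}_{\upbeta,\bA}[\mu](x)\big]=0.
\]
Integrability is automatic because $\|\mathsf{m}_{\upbeta,\bA}[\mu](x)\|\le 1$, as it is a convex combination of points on the sphere, and $\bV$ has finite first moment. Consequently $\xi_{\bm\theta}=B_{\bm\theta}$, and the projected fluctuation is $G_i=\proj_{x_i}\bV\mathsf{m}_{\upbeta,\bA}[\mu](x_i)$.

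For the covariance kernel, write $m_i\coloneqq \mathsf{m}_{\upbeta,\bA}[\mu](x_i)$. Again conditioning on $\bA$, we compute $\E[\bV m_i m_j^\sT \bV^\sT\mid \bA]$ entrywise:
\[
\E\Big[\sum_{k,l}\bV_{ak}(m_i)_k(m_j)_l\bV_{bl}\Big]=\sigma_{\bV}^2\,\delta_{ab}\,\langle m_i,m_j\rangle.
\]
This holds because the entries of $\bV$ are i.i.d.\ centered with variance $\sigma_{\bV}^2$, so that $\E[\bV_{ak}\bV_{bl}]=\sigma_{\bV}^2\delta_{ab}\delta_{kl}$. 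Thus the conditional covariance equals $\sigma_{\bV}^2\langle m_i,m_j\rangle I_d$. Taking $\E_{\bA}$ via the tower property, substituting the normalization $\sigma_{\bV}^2=1/d$ adopted after \eqref{eq: tformers.at.initialization}, and sandwiching between $\proj_{x_i}$ and $\proj_{x_j}$ (as in the definition of the cross-variation \eqref{eq:cross_variation_kernel}) yields the stated formula. Fubini is justified because $\|m_i\|\le 1$ and $\bV$ has finite second moments.

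There is no genuine obstacle here. The only point requiring care is bookkeeping: the statement builds the tangent projections into $\mathsf{K}$, whereas the earlier definition of $\mathsf{K}$ was unprojected. I would therefore state explicitly that the identity is read for the projected fluctuations $G_i,G_j$, and note that the scalar factor $\E_{\bA}\langle m_i,m_j\rangle$ commutes with the projections.
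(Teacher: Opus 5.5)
Your proposal is correct and follows essentially the same route as the paper: write $B_{\bm\theta}[\mu](x)=\bV\,\mathsf{m}_{\upbeta,\bA}[\mu](x)$, use independence of $\bV$ and $\bA$ together with $\E[\bV u v^{\sT}\bV^{\sT}]=\sigma_{\bV}^2\langle u,v\rangle I_d$ and $\sigma_{\bV}^2=1/d$, and sandwich by $\proj_{x_i},\proj_{x_j}$. The paper applies this identity pointwise in $(y,y')$ before integrating against the softmax weights, which is the same computation by bilinearity, and your explicit treatment of $b_{\uprho_*}\equiv 0$ and of the projected-versus-unprojected convention for $\mathsf{K}$ is accurate (the paper only records the vanishing drift in the proof of the subsequent lemma).
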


\begin{proof}[Proof of Lemma \ref{lem:lemma_app}]

We compute
\begin{align*}
    \mathsf{K}[\mu](x_i, x_j)&=\proj_{x_i}\,\E\iint\frac{e^{\upbeta\<\bA x_i,y\>}}{\mathscr{Z}_{\bA}[\mu](x_i)}\frac{e^{\upbeta\<\bA x_j,y'\>}}{\mathscr{Z}_{\bA}[\mu](x_j)}\bV yy'^{\sT}\bV^{\sT}\mu(\de y)\mu(\de y')\proj_{x_j}\\
    &=\proj_{x_i}\,\E\int\frac{e^{\upbeta\<\bA x_i,y\>}}{\mathscr{Z}_{\bA}[\mu](x_i)}\frac{e^{\upbeta\<\bA x_j,y'\>}}{\mathscr{Z}_{\bA}[\mu](x_j)}\E\left[\bV yy'^{\sT}\bV^{\sT}\right] \mu(\de y)\mu(\de y')\proj_{x_j}\\
&=\sigma_{\bV}^{2}\proj_{x_i}\,\E\int\frac{e^{\upbeta\<\bA x_i,y\>}}{\mathscr{Z}_{\bA}[\mu](x_i)}\frac{e^{\upbeta\<\bA x_j,y'\>}}{\mathscr{Z}_{\bA}[\mu](x_j)}\langle y,y'\rangle \mu(\de y)\mu(\de y') I_d\, \proj_{x_j}
\end{align*}
where we used $\E_{\bV}[\bV y y'^{\sT}\bV^{\sT}]=\sigma_{\bV}^{2}\<y,y'\>I_d$ and $\sigma_{\bV}^{2}=1/d$.
\end{proof}

\begin{lemma}\label{lem:Ito_formula}
Fix $i,j\in[n]^2$ and define
\(
R_{ij}(t)\coloneqq \langle x_i(t),x_j(t)\rangle.
\)
We have
\begin{equation}\label{eq:Rij_decomp_clean}
\de R_{ij}(t)
=
\de M_{ij}(t)
+
\cD_{ij}(t)\,\de t,
\end{equation}
where the local martingale part is
\begin{equation}\label{eq:martingale_Mij_clean}
\de M_{ij}(t)
=
\int_\Uptheta
\left(\langle G_i(t,\bm{\theta}),x_j(t)\rangle+\langle x_i(t),G_j(t,\bm{\theta})\rangle\right)
\,\mathsf{W}(\de\bm{\theta},\de t),
\end{equation}
and the drift is
\begin{align}\label{eq:Dij_explicit_clean}
\cD_{ij}(t)
&=
\frac1d\left(d-2+R_{ij}(t)^2\right)\,s_{\mu(t)}(x_i(t),x_j(t))\nonumber\\
&-
\frac{(d-1)}{2d}\,R_{ij}(t)\left(s_{\mu(t)}(x_i(t))+s_{\mu(t)}(x_j(t))\right),
\end{align} 
where $\mu(t) = \mu_{X(t)}$ and 
\begin{align*}\label{eq:s_mu_defs_clean}
s_\mu(x)&\coloneqq \E\left\|\mathsf{m}_{\upbeta,\bA}[\mu](x)\right\|^2,\nonumber\\
s_\mu(x,x')&\coloneqq \E\left\langle \mathsf{m}_{\upbeta,\bA}[\mu](x),\mathsf{m}_{\upbeta,\bA}[\mu](x')\right\rangle.
\end{align*}
\end{lemma}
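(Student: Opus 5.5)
Apply It\^o's formula to the bilinear function $(x,y)\mapsto\langle x,y\rangle$ along the solution of \eqref{eq:Diffusive_gaussian_case}, taking $\alpha=1$ as in the surrounding discussion. Write the dynamics of particle $i$ as $\de x_i=\int_\Uptheta G_i(t,\bm\theta)\,\mathsf W(\de\bm\theta,\de t)+c_i(t)\,\de t$, where $G_i(t,\bm\theta)=\proj_{x_i}B_{\bm\theta}[\mu(t)](x_i)$ and the It\^o-correction drift is $c_i=-\tfrac12 x_i\,\E\|\proj_{x_i}B_{\bm\theta}[\mu](x_i)\|^2$. Here the drift $b_{\uprho_*}$ is absent by Lemma~\ref{lem:lemma_app}, and $G_i$ coincides with the centered field since the mean vanishes. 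It\^o's product rule then gives
\[
\de R_{ij}=\langle \de x_i,x_j\rangle+\langle x_i,\de x_j\rangle+\de\langle x_i,x_j\rangle_t .
\]
The stochastic integrals in the first two terms assemble into \eqref{eq:martingale_Mij_clean}. This is a local martingale, and in fact a true one since the integrand is bounded in $L^2(\uprho_*)$ by Lemma~\ref{lem:Kernel_regularity}.

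\textbf{Drift from the It\^o correction.} Compute $\langle c_i,x_j\rangle+\langle x_i,c_j\rangle=-\tfrac12 R_{ij}\bigl(\E\|\proj_{x_i}B_{\bm\theta}(x_i)\|^2+\E\|\proj_{x_j}B_{\bm\theta}(x_j)\|^2\bigr)$. Condition on $\bA$ and use that $\bV$ is Gaussian with $\E_{\bV}[\bV y y'^{\sT}\bV^{\sT}]=\sigma_{\bV}^2\langle y,y'\rangle I_d$ and $\sigma_{\bV}^2=1/d$. Together with $B_{\bm\theta}[\mu](x)=\bV\mathsf m_{\upbeta,\bA}[\mu](x)$, this gives
\[
\E\|\proj_x\bV\mathsf m\|^2=\tfrac1d\Tr(\proj_x)\,\E\|\mathsf m\|^2=\tfrac{d-1}{d}s_\mu(x).
\]
This yields exactly the second line of \eqref{eq:Dij_explicit_clean}.

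\textbf{Drift from the cross-variation.} This is the main computation. By the covariance structure of the cylindrical Wiener process,
\[
\de\langle x_i,x_j\rangle_t=\E_{\bm\theta}\langle G_i,G_j\rangle\,\de t=\Tr\bigl(\mathsf K[\mu](x_i,x_j)\bigr)\,\de t.
\]
Lemma~\ref{lem:lemma_app} gives $\mathsf K=\tfrac1d s_\mu(x_i,x_j)\,\proj_{x_i}\proj_{x_j}$, so it remains to evaluate $\Tr(\proj_{x_i}\proj_{x_j})$. Expanding,
\[
\Tr\bigl((I-x_ix_i^{\sT})(I-x_jx_j^{\sT})\bigr)=d-1-1+\langle x_i,x_j\rangle^2=d-2+R_{ij}^2,
\]
which produces the first line of \eqref{eq:Dij_explicit_clean}.

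The only delicate points are bookkeeping rather than analysis. First, the sphere constraint means the It\^o correction must be kept separately from the quadratic-variation term; I would double-check this on the diagonal case $i=j$. There $d-2+1-(d-1)=0$, so $\de R_{ii}$ has zero drift, and the martingale term also vanishes since $\langle G_i,x_i\rangle=0$. This is consistent with $R_{ii}\equiv1$. Second, one must justify exchanging $\E_{\bV}$ and $\E_{\bA}$ in the trace formula, which follows from the independence of $\bV$ and $\bA$ under \eqref{eq: tformers.at.initialization}.
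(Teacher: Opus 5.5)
Your proposal is correct and follows essentially the same route as the paper: It\^o's product rule applied to the ambient form of \eqref{eq:Diffusive_gaussian_case}, with the sphere-preserving drift $-\tfrac12\E\|G_i\|^2x_i$ giving the second line via $\Tr(\proj_{x_i})=d-1$, and the cross-variation $\E\langle G_i,G_j\rangle$ giving the first line via the Gaussian covariance identity for $\bV$ and $\Tr(\proj_{x_i}\proj_{x_j})=d-2+R_{ij}^2$. The only cosmetic differences are that the paper re-justifies the ambient drift by checking $\de\|x_i\|^2=0$, and it applies $\E[\bV^{\sT}M\bV]=\frac1d\Tr(M)I_d$ directly instead of going through Lemma~\ref{lem:lemma_app}.
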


\begin{proof}[Proof of Lemma \ref{lem:Ito_formula}]
    Since $\E \bV=0$ entrywise and $\bV$ is independent of $\bA$, we have $b_{\uprho_*}[\mu](x)=\E B_{\bm{\theta}}[\mu](x)=0$ and hence
\(
\xi_{\bm{\theta}}[\mu](x)=\bV\,\mathsf{m}_{\upbeta,\bA}[\mu](x).
\)
Set
\[
G_i(X,\bm{\theta})
\coloneqq \proj_{x_i} \bV\,\mathsf{m}_{\upbeta,\bA}[\mu](x_i) 
\]
and $G_i(t,\bm{\theta})\coloneqq G_i(X(t),\bm{\theta})$.
Because \eqref{eq:Diffusive_gaussian_case} is a manifold It\^o SDE with diffusion
vector fields tangent to $\S^{d-1}$, each coordinate process admits the following ambient
$\R^d$ representation:
\begin{equation} \label{eq:ambient_form_clean}
\de x_i(t)
= \int_\Uptheta G_i(t,\bm{\theta})\,\mathsf{W}(\de\bm{\theta},\de t)
-
\frac{1}{2}\left(\int_\Uptheta \|G_i(t,\bm{\theta})\|^2\,{\uprho_*}(\de\bm{\theta})\right)x_i(t) \de t.
\end{equation}
Indeed, applying It\^o to $\|x_i(t)\|^2$ in $\R^d$ gives
\[
\de \|x_i(t)\|^2
=
2\langle x_i(t),\de x_i(t)\rangle + \de\langle x_i\rangle_t,
\]
and since $\langle x_i,G_i\rangle=0$, the stochastic part contributes no drift,
while $\de\langle x_i\rangle_t=\int_\Uptheta \|G_i\|^2{\uprho_*}(\de\bm{\theta})\,\de t$.
Choosing the drift in \eqref{eq:ambient_form_clean} cancels the quadratic variation and
ensures $\|x_i(t)\|\equiv 1$.

Fix $i,j\in[n]^2$ and define
\[
R_{ij}(t)\coloneqq \langle x_i(t),x_j(t)\rangle.
\]
Applying It\^o in $\R^d$ to $R_{ij}(t)$ using \eqref{eq:ambient_form_clean} yields
\begin{equation}\label{eq:Rij_decomp_clean}
\de R_{ij}(t)
=
\de M_{ij}(t)
+\cD_{ij}(t)\,\de t,
\end{equation}
where the local martingale part is
\begin{equation}\label{eq:martingale_Mij_clean}
\de M_{ij}(t)
=
\int_\Uptheta
\left(\langle G_i(t,\bm{\theta}),x_j(t)\rangle+\langle x_i(t),G_j(t,\bm{\theta})\rangle\right)
\,\mathsf{W}(\de\bm{\theta},\de t),
\end{equation}
and the drift is
\begin{equation}\label{eq:drift_Dij_clean}
\cD_{ij}(t)
=
\int_\Uptheta \langle G_i(t,\bm{\theta}),G_j(t,\bm{\theta})\rangle\,{\uprho_*}(\de\bm{\theta})
-
\frac{R_{ij}(t)}{2}\int_\Uptheta\left(\|G_i(t,\bm{\theta})\|^2+\|G_j(t,\bm{\theta})\|^2\right){\uprho_*}(\de\bm{\theta}).
\end{equation}
This is the standard identity
\[
\de\langle x_i,x_j\rangle
=
\langle \de x_i,x_j\rangle+\langle x_i,\de x_j\rangle+\de\langle x_i,x_j\rangle,
\]
with the additional linear drifts coming from the sphere-preserving correction in
\eqref{eq:ambient_form_clean}.
For fixed $u,v\in\R^d$, the Gaussian identity
\begin{equation*}\label{eq:gaussian_V_identity_clean}
\E[\bV^{\sT}M\bV]=\frac1d\,\Tr(M)\,I_d
\end{equation*}
for all $M\in\R^{d\times d}$ implies
\begin{align*}
\E\left\langle \proj_{x_i}\bV u,\proj_{x_j}\bV v\right\rangle
&=
\frac1d\,\Tr\left(\proj_{x_i}\proj_{x_j}\right)\,\langle u,v\rangle\\
\E\left\|\proj_{x_i}\bV u\right\|^2
&=
\frac1d\,\Tr\left(\proj_{x_i}\right)\,\|u\|^2.
\end{align*}
Since $\Tr(\proj_{x_i})=d-1$ and
\begin{equation}\label{eq:trace_proj_proj_clean}
\Tr\left(\proj_{x_i}\proj_{x_j}\right)
=\Tr\left((I-x_ix_i^{\sT})(I-x_jx_j^{\sT})\right)
=d-2+\langle x_i,x_j\rangle^2
=d-2+R_{ij}^2,
\end{equation}
we introduce the scalar quantities
\begin{align*}
s_\mu(x)&\coloneqq \E\|\mathsf{m}_{\upbeta,\bA}[\mu](x)\|^2,\nonumber\\
s_\mu(x,x')&\coloneqq \E\left\langle \mathsf{m}_{\upbeta,\bA}[\mu](x),\mathsf{m}_{\upbeta,\bA}[\mu](x')\right\rangle.
\end{align*}
(They satisfy $0\le s_\mu(\cdot)\le 1$ and $|s_\mu(\cdot,\cdot)|\le 1$ since
$\mathsf{m}_{\upbeta,\bA}[\mu](x)$ is a convex combination of unit vectors.) Conditioning on $X(t)$ and using \eqref{eq:drift_Dij_clean}--\eqref{eq:trace_proj_proj_clean},
we obtain the explicit drift formula
\begin{align}\label{eq:Dij_explicit_clean}
\cD_{ij}(t)
&=
\frac1d\left(d-2+R_{ij}(t)^2\right)\,s_{\mu(t)}(x_i(t),x_j(t))\\
&-
\frac{(d-1)}{2d}\,R_{ij}(t)\left(s_{\mu(t)}(x_i(t))+s_{\mu(t)}(x_j(t))\right).\nonumber\qedhere
\end{align}
\end{proof}

\subsection{Proof of Theorem \ref{thm:clustering_random_init}} \label{proof: simplex}

We briefly overload the notation for the drift $\mathscr{D}$ and write $\mathscr{D}_{ij}(t) = \mathscr{D}_{ij}(X(t))$. 
We begin with the following lemma.

\begin{lemma}\label{lem:drift_on_simplex_selfcontained}
For every configuration $X\in(\S^{d-1})^n$ with Gram matrix
$R\in\R^{n\times n}$, the quantities
$s_{\mu_X}(x_i)$ and $s_{\mu_X}(x_i,x_j)$, and hence $\mathscr D_{ij}(X)$ defined in
\eqref{eq:Dij_explicit_clean}, depend on $X$ only through $R$.
Equivalently, there exists a well-defined map $\mathscr D_{ij}(\cdot)$ on the set of realizable
Gram matrices such that $\mathscr D_{ij}(X)=\mathscr D_{ij}(R)$.

Furthermore, if $R(t)=S(\gamma)$ for some
$\gamma\in(-1/(n-1),1)$ and $t\geq0$, then for every $i\neq j$,
\(
\mathscr D_{ij}(R(t))=b(\gamma),
\)
where $b(\gamma)$ is the function defined in Theorem \ref{thm:clustering_random_init}.
\end{lemma}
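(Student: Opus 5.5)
The plan is to prove the first claim by rotational invariance of the Gaussian law of $\bA=\bW\bW'^{\sT}$. By definition $\mathsf{m}_{\upbeta,\bA}[\mu_X](x_i)=\sum_k\pi^{\bA}_{i\to k}x_k$. Hence
\[
\langle \mathsf{m}_{\upbeta,\bA}[\mu_X](x_i),\mathsf{m}_{\upbeta,\bA}[\mu_X](x_j)\rangle=\sum_{k,\ell}\pi^{\bA}_{i\to k}\pi^{\bA}_{j\to \ell}R_{k\ell},
\]
so the only remaining dependence on $X$ is through the joint law of the score matrix $(\langle \bA x_i,x_k\rangle)_{i,k}$. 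Now let $X$ and $X'$ have the same Gram matrix $R$. Then there exists $O\in O(d)$ with $x'_k=Ox_k$ for all $k$, obtained by extending the isometry between the two spans. Since $\bW$ and $\bW'$ have i.i.d.\ centered Gaussian entries, $O^{\sT}\bW$ has the same law as $\bW$ and $O^{\sT}\bW'$ has the same law as $\bW'$, independently. Therefore $O^{\sT}\bA O\overset{\mathsf{Law}}{=}\bA$. It follows that $\langle \bA x'_i,x'_k\rangle=\langle O^{\sT}\bA O x_i,x_k\rangle$ has, jointly in $(i,k)$, the same law as $\langle \bA x_i,x_k\rangle$. Taking expectations, $s_{\mu_X}(x_i)$ and $s_{\mu_X}(x_i,x_j)$ are functions of $R$ alone. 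Together with the explicit formula \eqref{eq:Dij_explicit_clean}, which involves $X$ otherwise only through $R_{ij}$, this shows $\mathscr D_{ij}$ factors through $R$.

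For the second claim, set $R=S(\gamma)=(1-\gamma)I_n+\gamma\mathbf 1\mathbf 1^\top$ and write $\pi_k=\pi^{\bA}_{i\to k}(\gamma)$, $\pi'_k=\pi^{\bA}_{j\to k}(\gamma)$, with $\sum_k\pi_k=\sum_k\pi'_k=1$. Then
\[
\|\mathsf m_i\|^2=\sum_{k,\ell}\pi_k\pi_\ell\bigl((1-\gamma)\delta_{k\ell}+\gamma\bigr)=\gamma+(1-\gamma)\sum_k\pi_k^2,
\]
and similarly $\langle\mathsf m_i,\mathsf m_j\rangle=\gamma+(1-\gamma)\sum_k\pi_k\pi'_k$. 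Taking expectations gives $s(x_i)=\gamma+(1-\gamma)f(\gamma)$ and $s(x_i,x_j)=\gamma+(1-\gamma)g(\gamma)$. That $f$ and $g$ do not depend on $i,j$ or on the particular configuration follows from the first part together with permutation invariance: $S(\gamma)$ is invariant under $P S(\gamma) P^{\sT}$ for every permutation $P$, so relabelling tokens is realized by an orthogonal map and does not change the law of the scores. The bounds $0\le f,g\le 1$ are immediate because the $\pi$'s are probability vectors.

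Substituting these values into \eqref{eq:Dij_explicit_clean} with $R_{ij}=\gamma$ gives
\[
\mathscr D_{ij}=\tfrac{d-2+\gamma^2}{d}\bigl(\gamma+(1-\gamma)g\bigr)-\tfrac{d-1}{d}\gamma\bigl(\gamma+(1-\gamma)f\bigr).
\]
This is $b(\gamma)$ up to $O(1/d)$ terms. I expect reconciling this with the exact identity $\mathscr D_{ij}=b(\gamma)$ to be the main obstacle. The cleanest route is to check whether the time normalization in \eqref{eq:Diffusive_gaussian_case} (the rescaling by $\upsigma^2=(d-1)/d$ and the choice $\alpha=1$) absorbs the prefactors; if it does not, the identity should be stated with the $1/d$ corrections and those corrections carried into the stability estimate of Theorem~\ref{thm:clustering_random_init}, where they are dominated by the $\sqrt{T/d}$ error anyway. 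The remaining algebra is routine.
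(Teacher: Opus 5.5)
Your argument is the paper's argument. You use conjugation invariance of the law of $\bA$ for the first claim, then expand $\|\mathsf m_i\|^2$ and $\langle \mathsf m_i,\mathsf m_j\rangle$ on $S(\gamma)$ for the second. Your first step is slightly more careful than the paper's, which only invokes equality in law of individual scores. Writing $O^{\sT}\bA O=(O^{\sT}\bW)(O^{\sT}\bW')^{\sT}$ gives equality in law of the whole score matrix, which is what the softmax weights require.

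The discrepancy you flag at the end is real, and the error there is in the paper's proof, not yours. The paper asserts that substituting into \eqref{eq:Dij_explicit_clean} yields \eqref{eq:Phi_def_selfcontained} exactly. In fact your formula is the correct outcome, and
\[
\mathscr D_{ij}(S(\gamma))-b(\gamma)=\frac1d\Bigl[(\gamma^2-2)\bigl(\gamma+(1-\gamma)g(\gamma)\bigr)+\gamma\bigl(\gamma+(1-\gamma)f(\gamma)\bigr)\Bigr].
\]
Your option (a) cannot remove this term, for two reasons:
\begin{itemize}
\item Lemma \ref{lem:Ito_formula} is computed directly from \eqref{eq:Diffusive_gaussian_case} with $\alpha=1$. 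So the rescaling by $\upsigma^2=(d-1)/d$ is already built into \eqref{eq:Dij_explicit_clean}.
\item Any further constant time change multiplies $\frac{d-2+\gamma^2}{d}$ and $\frac{d-1}{d}$ by the same factor. That cannot make both equal to $1$ unless $\gamma^2=1$.
\end{itemize}
Concretely, at $\upbeta=0$ one has $f=g=1/n$, and
\[
\mathscr D_{ij}(S(\gamma))=\Bigl(1-\frac{2+\gamma}{d}\Bigr)b(\gamma),\qquad b(\gamma)=(1-\gamma)\Bigl(\gamma+\frac{1-\gamma}{n}\Bigr)>0.
\]
So the exact identity fails for every admissible $\gamma$, by a $\gamma$-dependent factor. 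This is the same $O(1/d)$ term that appears in the paper's own $\upbeta=0$ computation in the introduction.

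Your option (b) is therefore the right conclusion. The second claim holds in the form $|\mathscr D_{ij}(S(\gamma))-b(\gamma)|\le 3/d$. Alternatively, it holds exactly if $b$ is replaced by the $d$-dependent function you wrote down. Two downstream changes follow:
\begin{itemize}
\item The remark $\mathscr D_{ij}(R(\gamma))=b(\gamma)$ in Lemma \ref{lem:Gram_stability} must be weakened accordingly.
\item The Gr\"onwall step in Theorem \ref{thm:clustering_random_init} becomes $E(T)\le e^{LT}\bigl(\sup_{[0,T]}\max_{i\neq j}|M_{ij}|+3T/d\bigr)$. For $T\le d$ the extra term is at most $3\sqrt{T/d}$, so it only changes the numerical constant, as you anticipated.
\end{itemize}
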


We can thus further overload notation and write $\mathscr{D}_{ij}(X(t)) = \mathscr{D}_{ij}(R(t))$.

\begin{proof}[Proof of Lemma \ref{lem:drift_on_simplex_selfcontained}]
 Write 
 $$m_i\coloneqq \mathsf{m}_{\upbeta,\bA}[\mu_{X(t)}](x_i(t))=\sum_{k=1}^n \pi_{i\to k}^{\bA}(\gamma)\,x_k(t).$$ 
 Let $X$ and $\Tilde{X}$ be two configurations of particles having the same Gram matrix. There exists an orthogonal $d\times d$ matrix $O$ such that $x_i=O\Tilde{x}_i$ for all $i\in [n]$. We then have 
 $$\left\<\bA x_i,x_j\right\>=\<O^{\sT}\bA O \Tilde{x}_i,\Tilde{x}_j\>,
 $$
and by orthogonal invariance of the Gaussian law,  $\<\bA x_i,x_j\>$ and $\<\bA \Tilde{x}_i,\Tilde{x}_j\>$ have the same law. 
We deduce that $\E[\pi_{i\to k}^\bA(\gamma)\pi_{i\to \ell}^\bA(\gamma)]$ is a function of $R(t)$ only. 
This implies that $\E\<m_i,m_j\>$ and $\E\|m_i\|^2$ are functions of $R(t)$ only as well. So, the dynamics depends on $R(t)$ only.

We now explicitly compute the drift. We have
\begin{align*}
\|m_i\|^2&=\sum_{k,\ell}\pi_{i\to k}^\bA(\gamma)\pi_{i\to \ell}^\bA(\gamma)\langle x_k(t),x_\ell(t)\rangle\\
&=
\gamma\left(\sum_k\pi_{i\to k}^\bA(\gamma)\right)^2
+(1-\gamma)\sum_{k}\left(\pi_{i\to k}^\bA(\gamma)\right)^2\\
&=
\gamma+(1-\gamma)\sum_k\left(\pi_{i\to k}^\bA(\gamma)\right)^2,
\end{align*}
and similarly, for $i\ne j$,
\begin{align*}
\langle m_i,m_j\rangle
&=\sum_{k,\ell}\pi_{i\to k}^\bA(\gamma)\pi_{j\to \ell}^\bA(\gamma)\langle x_k(t),x_\ell(t)\rangle\\
&=\gamma+(1-\gamma)\sum_{k}\pi_{i\to k}^\bA(\gamma)\pi_{j\to k}^\bA(\gamma).
\end{align*}
Taking expectations and using \eqref{eq:f_g_def_selfcontained}, we obtain $s_{\mu_{X(t)}}(x_i(t))=\gamma+(1-\gamma)f(\gamma)$ as well as $s_{\mu_{X(t)}}(x_i(t), x_j(t))=\gamma+(1-\gamma)g(\gamma).$
Plugging these expressions into the drift formula
\eqref{eq:Dij_explicit_clean} yields exactly  \eqref{eq:Phi_def_selfcontained}.
\end{proof}

To close the dynamics, we show that the drift is Lipschitz near the simplex line.

\begin{lemma} \label{lem:Gram_stability}
Let $X\in(\S^{d-1})^{n}$ and let $R\in\R^{n\times n}$ be its Gram matrix.
Set
\(
R(\gamma)\coloneqq (1-\gamma)I_n+\gamma\,\mathbf 1\mathbf 1^{\sT}.
\)
Then there exists $L>0$ such that for all $\gamma\in(-1/(n-1),1)$,
\begin{equation*}
\max_{1\leq i\neq j\leq n}|\cD_{ij}(R)-b(\gamma)|
\leq L\max_{(i,j)\in[n]}|R_{ij}-R(\gamma)_{ij}|.
\end{equation*}
(Note that $\mathscr{D}_{ij}(R(\gamma))= b(\gamma)$.)
Moreover one can take $L=O(1+d\sigma_{\bA}^{4}\upbeta^{2}n)$.
\end{lemma}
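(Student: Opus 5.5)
The plan is to write $\cD_{ij}(R)$ as an explicit smooth function of the Gram matrix and differentiate it. By Lemma~\ref{lem:drift_on_simplex_selfcontained}, $\cD_{ij}(R)$ depends on $X$ only through $R$. Writing $m_i=\sum_k\pi^{\bA}_{i\to k}x_k$, we have
\[
s(x_i,x_j)=\E\sum_{k,\ell}\pi^{\bA}_{i\to k}\pi^{\bA}_{j\to \ell}R_{k\ell},
\]
and similarly for $s(x_i)$. The weights $\pi^{\bA}_{i\to k}$ are softmaxes of the logits $Z_{ik}=\upbeta\langle \bA x_i,x_k\rangle$. The idea is to realize the joint law of all the $Z_{ik}$ as a function of $R$ alone, for instance by taking $X=R^{1/2}$ embedded in $\R^d$. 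Then
\[
\cD_{ij}(R)=\tfrac1d\bigl(d-2+R_{ij}^2\bigr)\,s_{ij}(R)-\tfrac{d-1}{2d}\,R_{ij}\bigl(s_{ii}(R)+s_{jj}(R)\bigr),
\]
where $s_{ij}(R)=\E\langle m_i,m_j\rangle$.

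The difference $\cD_{ij}(R)-\cD_{ij}(R(\gamma))$ splits into three pieces.
\begin{itemize}
\item \emph{Explicit polynomial prefactors.} Since $|s|\le 1$ and $|R_{ij}|\le1$, these are Lipschitz in $R_{ij}$ with an $O(1)$ constant.
\item \emph{Entries $R_{k\ell}$ appearing linearly inside $s$, with weights frozen.} Since $\sum_k\pi_{i\to k}=1$, changing $R_{k\ell}$ by at most $\varepsilon$ changes $\sum_{k,\ell}\pi_{i\to k}\pi_{j\to\ell}R_{k\ell}$ by at most $\varepsilon$. This gives an $O(1)$ constant.
\item \emph{Dependence of the law of the weights $\pi$ on $R$.} This is the main obstacle.
\end{itemize}

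For the third piece, I would avoid the square root of $R$, which is not Lipschitz near singular Gram matrices. Instead, I would condition on $\bW'$ and use Gaussianity. Given $\bW'$, $u=\bW'^{\sT}x_i$ is fixed and $\tilde g=\bW u$ is Gaussian with covariance $\sigma_{\bA}^2\|u\|^2 I_d$. The logits are then $Z_{ik}=\upbeta\langle \tilde g,x_k\rangle$, and conditionally on $\bW'$ they form a Gaussian vector with covariance
\[
\upbeta^2\sigma_{\bA}^2\|\bW'^{\sT}x_i\|^2\,R_{\cdot\cdot}.
\]
Taking $\E_{\bW'}$, whose distribution depends on $R$ only through $R_{ii}$ and the cross structure, reduces the problem to comparing $\E F(Z)$ for Gaussian vectors with covariances $\Sigma$ and $\Sigma'$. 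Here $F$ is a product of two softmax coordinates, bounded by $1$ and with bounded Hessian. Gaussian interpolation (Slepian/Price formula) gives
\[
|\E F(Z)-\E F(Z')|\le \tfrac12\sum_{k,\ell}|\Sigma_{k\ell}-\Sigma'_{k\ell}|\,\sup|\partial_{k\ell}F|.
\]
By Claim~\ref{claim:softmax_is_smooth}, the second derivatives are bounded by products of softmax weights. Summing over $k,\ell$ therefore costs a factor of $n$ at most, rather than $n^2$.

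The joint law of the logits for the pair $(i,j)$, which enter $s_{ij}$ together, is Gaussian only conditionally on $(\bW,\bW')$ in a bilinear way. I would handle this by conditioning on one factor at a time. Choosing the scale of $\sigma_{\bA}^2\E\|\bW'^{\sT}x\|^2\sim d\sigma_{\bA}^4$ gives a Lipschitz constant
\[
L=O\bigl(1+d\sigma_{\bA}^4\upbeta^2 n\bigr).
\]
The step I expect to be delicate is justifying the factor $n$ rather than $n^2$ from the sum of the softmax Hessian entries, together with this bilinear conditioning.
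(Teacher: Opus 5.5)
This is essentially the paper's proof: it makes the same three-way split and handles the $R$-dependence of the law of the weights by Gaussian interpolation, except that it interpolates jointly on $(\bW x_k,\bW' x_k)_{k\le n}$, a centered Gaussian vector with covariance $\sigma_{\bA}^2\,\mathrm{diag}(R\otimes I_d,R\otimes I_d)$ linear in $R$, so the pair $(i,j)$ needs no conditioning and the interpolation derivative splits into one Hessian term per factor, which is exactly your ``one factor at a time'' step. The one thing to fix is your displayed Price bound: with entrywise suprema $\sup|\partial_{k\ell}F|$ you discard the softmax-weight structure and pay $n^2$ per term, plus another $n^2$ from the sum defining $s_{ij}$; instead keep the sum inside, as $\tfrac12\int_0^1\E\bigl[\|\Sigma-\Sigma'\|_\infty\sum_{k,\ell}|\partial_{k\ell}F(Z_t)|\bigr]\,\de t$, and use that $\sum_{k,\ell}|\partial_{k\ell}F|$ is pointwise at most a constant times the two softmax weights, after which your logit-space step costs no power of $n$ and the factor $n$ in $L$ comes only from crudely summing the squared norms of all $n$ key vectors in the query-side Hessian, as in the paper.
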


We henceforth write $\|A\|_{\infty} = \max_{(i,j)\in[n]^{2}}|A_{ij}|$ for $A\in\R^{n\times n}$.

\begin{proof}[Proof of Lemma \ref{lem:Gram_stability}]
Since $\bA=\bW'^{\sT}\bW$, we have that $\bW X$ and $\bW' X$ are Gaussian variables. 
Denote $h_i=\bW x_i$, and $g_j=\bW' x_j$. We have
\[
\Cov(h_i,h_j)=\sigma_{\bA}^{2}R_{ij}I_d, \quad \Cov(g_i,g_j)=\sigma_{\bA}^{2}R_{ij}I_d,\quad \Cov(h_i,g_j)=0.
\]
Using tensor product notations, we can write $h\sim \cN(0,\sigma_{\bA}^{2}R\otimes I_d)$, and $(h,g)\sim \cN(0,\Sigma(R)),$ where 
\begin{equation}\label{eq:SIGMA_R}
\Sigma(R)=
\begin{bmatrix}
\sigma_{\bA}^{2}R\otimes I_d &0  \\
0 &\sigma_{\bA}^{2}R\otimes I_d 
\end{bmatrix}.
\end{equation}
Since $R$ is a Gram matrix, it is symmetric positive semi-definite with $\diag(R)=1$ (may be singular when $n>d$). Therefore $\Sigma(R)$ defined in \eqref{eq:SIGMA_R} is positive semi-definite and $(h,g)\sim \cN(0,\Sigma(R))$ is well-defined but possibly degenerate\footnote{If this were to arrive, one slight caveat is that Lemma \ref{lem:gaussian_interp} doesn't immediately apply. But one can use a standard trick and replace by $\Sigma_\varepsilon(R)=\Sigma(R)+\epsilon I_{2nd}$, then apply the lemma for $\Sigma_\varepsilon(R)$, and send $\epsilon\to0$ by using dominated convergence.}. 
Note that $\<\bA x_i,x_j\>=h_i^{\sT}g_j.$ In particular, we can rewrite $$\pi^{\bA}_{i\to j}(X)=\frac{e^{\upbeta h_i^{\sT}g_j}}{\sum_{k=1}^{n} e^{\upbeta h_i^{\sT}g_k}}.$$
Recall $s_{\mu_X}(x_i,x_j)=\E[\<\sum_{k}\pi_{i\to k}^{\bA}x_k,\sum_{\ell}\pi_{j\to \ell}^{\bA}x_{\ell}\>]$, hence
\begin{equation*}
    s_{\mu_X}(x_i,x_j)=\sum_{k,\ell=1}^{n}\E \left[\frac{e^{\upbeta h_i^{\sT}g_k}}{\sum_{k=1}^{n} e^{\upbeta h_i^{\sT}g_k}}\, \frac{e^{\upbeta h_j^{\sT}g_{\ell}}}{\sum_{k=1}^{n} e^{\upbeta h_j^{\sT}g_k}}\right]\<x_k,x_{\ell}\>,
\end{equation*}
and also 
\begin{equation*}
    s_{\mu_X}(x_i)=\sum_{k,\ell=1}^{n}\E\left[\frac{e^{\upbeta h_i^{\sT}g_k}}{\sum_{k=1}^{n} e^{\upbeta h_i^{\sT}g_k}}\, \frac{e^{\upbeta h_i^{\sT}g_{\ell}}}{\sum_{k=1}^{n} e^{\upbeta h_i^{\sT}g_k}}\right]\<x_k,x_{\ell}\>.
\end{equation*}
We can study all these quantities as functions of $(R,h,g).$ Since clearly
\begin{equation*}
    |\<x_k,x_{\ell}\>-\<x_{k}',x_{\ell}'\>|\leq \|R-R'\|_{\infty},
\end{equation*} 
we focus on the softmax weights.

Let $F(R)=\E f(h,g)$.
The idea in what follows is to leverage the Gaussianity of $h,g$ in order to compute the derivative of $F$ with respect to $R$. We recall the celebrated \emph{Gaussian interpolation} lemma.

\begin{lemma}[Lemma 7.2.7 from \cite{vershynin2018high}]\label{lem:gaussian_interp}
Consider two independent Gaussian random vectors $X\sim\cN(0,\Sigma^{X})$ and $Y\sim \cN(0,\Sigma^{Y})$. Define the interpolated Gaussian vector 
\begin{equation*}
    X_t=\sqrt{t}X+\sqrt{1-t}Y.
\end{equation*}
Then for all $\varphi\in C^2(\R^m)$, we have
\begin{equation*}
    \frac{\de}{\de t}\E \varphi(X_t)=\frac{1}{2}\E\left[\Tr\left((\Sigma^{X}-\Sigma^{Y})^{\sT}\; \nabla^{2}\varphi(X_t)\right)\right].
\end{equation*}
\end{lemma}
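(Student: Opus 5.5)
The plan is to differentiate under the expectation sign and then remove the resulting linear factors $X$ and $Y$ by Gaussian integration by parts (Stein's lemma). Throughout, $\varphi\in C^2(\R^m)$ is taken with $\varphi,\nabla\varphi,\nabla^2\varphi$ of at most polynomial growth, so that every expectation below is finite. This is the situation in which the lemma is used, since softmax-type test functions are smooth with bounded derivatives. For $t\in(0,1)$ the map $t\mapsto X_t$ is smooth, with
\[
\frac{\de}{\de t}X_t=\frac{1}{2\sqrt t}X-\frac{1}{2\sqrt{1-t}}Y.
\]

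\emph{Step 1: interchange derivative and expectation.} Fix a compact interval $[a,b]\subset(0,1)$. On it the derivative $\langle\nabla\varphi(X_t),\tfrac{\de}{\de t}X_t\rangle$ is dominated by $C(1+\|X\|+\|Y\|)^{k}$ for some $k$, which is integrable because $X$ and $Y$ are Gaussian. Dominated convergence therefore gives
\[
\frac{\de}{\de t}\E\varphi(X_t)=\frac{1}{2\sqrt t}\E\langle\nabla\varphi(X_t),X\rangle-\frac{1}{2\sqrt{1-t}}\E\langle\nabla\varphi(X_t),Y\rangle .
\]

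\emph{Step 2: apply Stein's identity to each term.} For a centered Gaussian $Z\sim\cN(0,\Sigma)$ and $F\in C^1$ of polynomial growth,
\[
\E[Z_iF(Z)]=\sum_j\Sigma_{ij}\E[\partial_jF(Z)] .
\]
This also holds when $\Sigma$ is degenerate. To see it, write $Z=\Sigma^{1/2}\zeta$ with $\zeta$ standard Gaussian, and apply the one-dimensional identity $\E[\zeta_kf(\zeta)]=\E[\partial_kf(\zeta)]$ coordinatewise; the latter follows from integrating by parts against the Gaussian density.

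Condition on $Y$ and apply Stein's identity in $X$ to $F(X)=\partial_i\varphi(\sqrt tX+\sqrt{1-t}Y)$. Since $\partial_{x_j}F=\sqrt t\,\partial_{ij}\varphi(X_t)$, this yields
\[
\E[X_i\partial_i\varphi(X_t)]=\sqrt t\sum_j\Sigma^X_{ij}\E\partial_{ij}\varphi(X_t).
\]
By independence of $X$ and $Y$, the same argument applied in $Y$ gives
\[
\E[Y_i\partial_i\varphi(X_t)]=\sqrt{1-t}\sum_j\Sigma^Y_{ij}\E\partial_{ij}\varphi(X_t).
\]

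\emph{Step 3: combine.} Substituting these into Step 1, the prefactors $\sqrt t$ and $\sqrt{1-t}$ cancel against $1/(2\sqrt t)$ and $1/(2\sqrt{1-t})$. What remains is
\[
\frac{\de}{\de t}\E\varphi(X_t)=\frac12\sum_{i,j}(\Sigma^X-\Sigma^Y)_{ij}\E\partial_{ij}\varphi(X_t)=\frac12\E\Tr\big((\Sigma^X-\Sigma^Y)^{\sT}\nabla^2\varphi(X_t)\big),
\]
where the last equality uses the symmetry of the covariance matrices.

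\emph{Main obstacle.} The only delicate point is justifying Step 1 near the endpoints $t=0$ and $t=1$, where the factors $1/\sqrt t$ and $1/\sqrt{1-t}$ blow up. The remedy is to prove the identity on the open interval $(0,1)$ only. The right-hand side is continuous on $[0,1]$ by dominated convergence, so $t\mapsto\E\varphi(X_t)$ is $C^1$ up to the boundary. This suffices for the integrated form $\E\varphi(X)-\E\varphi(Y)=\int_0^1\frac{\de}{\de t}\E\varphi(X_t)\,\de t$ used in applications.
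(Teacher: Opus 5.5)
Your proof is correct and is essentially the standard argument behind the result the paper cites from Vershynin (the paper gives no proof of its own): differentiate $t\mapsto\E\varphi(X_t)$ by the chain rule, then remove the linear factors $X$ and $Y$ by multivariate Gaussian integration by parts applied conditionally on the other, independent vector. Your polynomial-growth hypothesis is the right way to make ``for all $\varphi\in C^2(\R^m)$'' meaningful, since the statement as written needs some integrability. The softmax test function used in the paper actually has polynomially growing rather than bounded derivatives, but your hypothesis still covers it. Your representation $Z=\Sigma^{1/2}\zeta$ also handles degenerate covariances directly, so the $\varepsilon$-regularization mentioned in the paper's footnote is not needed.
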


Consider $(H_0,G_0)$ centered Gaussian random variables on $\reals^{2nd}$ with covariance $\Sigma(R(\gamma))$ and $(H_1,G_1)$ independent Gaussian variables with covariance $\Sigma(R)$.
Define $X_t$ as the interpolation between $(H_0,G_0)$ and $(H_1,G_1)$: 
\begin{equation}\label{eq:interpolation_gaussian}
X_t=\sqrt{t}(H_1,G_1)+\sqrt{1-t}(H_0,G_0).    
\end{equation}
Let $\psi(t)\coloneqq\E f(X_t)$. Applying Lemma \ref{lem:gaussian_interp} to \eqref{eq:interpolation_gaussian} yields
\begin{equation}\label{eq:gaussian_interp_0}
    \frac{\de}{\de t}\psi(t)=\frac{1}{2}\E\left[\Tr\left(\, \Sigma\left(R-R(\gamma)\right)^{\sT} \nabla^{2}f(X_t)\,\right)\right].
\end{equation}
Using the tensor product structure, we simplify \eqref{eq:gaussian_interp_0} in  
\begin{align}\label{eq:steins_lemma}
\frac{\de}{\de t}\psi(t)=\frac{\sigma_{\bA}^{2}}{2}\Big(&\E\left[\Tr\left(\nabla^{2}_g f(X_t) \, (\varepsilon(R) \otimes I_d)\right)\right]\nonumber\\
+&\E\left[\Tr\left(\nabla^{2}_h f(X_t) \, (\varepsilon(R) \otimes I_d)\right)\right]\Big),
\end{align}
where $\varepsilon(R)\coloneqq R-R(\gamma)$, and $(\nabla^{2}_h f,\nabla^{2}_{g} f)$ are the $(nd)\times (nd)$ Hessians with respect to the vectors $h,g \in \reals^{nd}$ respectively, with coefficients  
\begin{align*}
    (\nabla^{2}_h f)_{(i,k),(j,q)}&=\partial_{(h_i)_k,(h_{j})_q} f(h,g)\\
    (\nabla^{2}_g f)_{(i,k),(j,q)}&=\partial_{(g_i)_k,(g_{j})_q} f(h,g).
\end{align*}
We specialize \eqref{eq:steins_lemma} to $$
f(h,g)=\frac{e^{\upbeta h_i^{\sT}g_k}}{\sum_{k'=1}^{n} e^{\upbeta h_i^{\sT}g_{k'}}}\, \frac{ e^{\upbeta h_j^{\sT}g_\ell}}{\sum_{\ell'=1}^{n} e^{\upbeta h_j^{\sT}g_{\ell'}}}.
$$
Since $f$ is $C^{\infty}$ and its derivatives are bounded by polynomials times bounded softmax factors, Gaussian moments ensure that the differentiation under the expectation is justified. It yields the following bound
\begin{align}\label{eq:interpolation_interm}
\frac{\de}{\de t}\psi(t)&=\frac{\sigma_{\bA}^{2}}{2}\sum_{a,b=1}^{n}(\varepsilon(R))_{ab}\;\E\;\Tr \Big((\nabla_{h}^2f(X_t))_{(a,\cdot),(b,\cdot)}\Big)\nonumber\\
&+\frac{\sigma_{\bA}^{2}}{2}\sum_{a,b=1}^{n}(\varepsilon(R))_{ab}\;\E\;\Tr \Big((\nabla_{g}^2f(X_t))_{(a,\cdot),(b,\cdot)}\Big).
\end{align}
The computations are similar as the ones in the proof of Lemma \ref{lem:Kernel_regularity} (see {Section  \ref{proof:lem_regularity}}). For $i\in [n]$ let $u_i\coloneqq \upbeta G^{\sT}h_i$ where $G\coloneqq(g_1,\ldots,g_n)\in \reals^{d \times n}$. Let $\chi\in \reals^{n}\rightarrow \reals^{n}$ denote the softmax function, so $\pi_{i\to k}^{\bA}(u)=\chi_{k}(u_i).$ 
Direct computations show 
\begin{align*}
\nabla^{2}_{h_i,h_i}f(h,g)&=\upbeta^{2}\chi_{\ell}(u_j)\sum_{r,m=1}^{n}\chi_{k,rm}''(u_i)g_rg_{m}^{\sT},\\
\nabla^{2}_{h_j,h_j}f(h,g)&=\upbeta^{2}\chi_{k}(u_i)\sum_{r,m=1}^{n}\chi_{\ell,rm}''(u_{j})g_rg_{m}^{\sT},\\
\nabla^{2}_{h_i,h_j}f(h,g)&=\upbeta^{2}\sum_{r,m=1}^{n}\chi'_{\ell,r}(u_j)\chi'_{k,m}(u_i)g_rg_m^{\sT},\\
\nabla^{2}_{h_a,h_b}f(h,g)&=0,\qquad a\notin\{i,j\}\ \text{or}\ b\notin\{i,j\}.
\end{align*}
Taking the trace yields
\begin{align*}
\Tr(\nabla^{2}_{h_i,h_i}f(h,g))&=\upbeta^{2}\chi_{\ell}(u_j)\sum_{r,m=1}^{n}\chi_{k,rm}''(u_i)\<g_r,g_{m}\>,\\
\Tr(\nabla^{2}_{h_j,h_j}f(h,g))&=\upbeta^{2}\chi_{k}(u_i)\sum_{r,m=1}^{n}\chi_{\ell,rm}''(u_{j})\<g_r,g_{m}\>,\\
\Tr(\nabla^{2}_{h_i,h_j}f(h,g))&=\upbeta^{2}\sum_{r,m=1}^{n}\chi'_{\ell,r}(u_j)\chi'_{k,m}(u_i)\<g_r,g_m\>,\\
\Tr(\nabla^{2}_{h_a,h_b}f(h,g))&=0,\qquad a\notin\{i,j\}\ \text{or}\ b\notin\{i,j\}.
\end{align*}
We deduce that the first term in the right-hand side of \eqref{eq:interpolation_interm} is bounded by
\begin{align*}
    &(\varepsilon(R))_{ii}\upbeta^{2}\chi_{\ell}(u_j)\sum_{r,m=1}^{n}\chi_{k,rm}''(u_i)\<g_r,g_{m}\>\\
    &+(\varepsilon(R))_{jj}\upbeta^{2}\chi_{k}(u_i)\sum_{r,m=1}^{n}\chi_{\ell,rm}''(u_j)\<g_r,g_{m}\>\\
    &+(\varepsilon(R)_{ij}+\varepsilon(R)_{ji})\upbeta^{2}\sum_{r,m=1}^{n}\chi'_{\ell,r}(u_j)\chi'_{k,m}(u_i)\<g_r,g_m\>.
\end{align*}
Using $\chi'_{i,r}(u)=\chi_{i}(u)\bigl(1_{r=i}-\chi_{r}(u)\bigr)$ and 
\begin{align*}
   \chi''_{i,rm}(u)&=\chi_i(u)\Big((1_{r=i}-\chi_r(u))(1_{m=i}-\chi_m(u))-\chi_m(u)(1_{r=m}-\chi_r(u))\Big),
\end{align*}
Cauchy-Schwarz, and the fact that $\chi_k(u)\geq 0,$ and $\sum_{k=1}^{n}\chi_k(u)=1$, we find
\begin{equation}\label{eq:estimate_bound_h}
    \sum_{a,b=1}^{n}(\varepsilon(R))_{ab}\E\Tr \Big((\nabla_{h}^2f(X_t))_{(a,\cdot),(b,\cdot)}\Big)\leq C\upbeta^{2}\|\varepsilon(R)\|_{\infty}\chi_{k}(u_i)\chi_{\ell}(u_j)\sum_{m=1}^{n}\E\|g_m\|^{2}.
\end{equation}
The same computations for the second term in $g$ yields that the trace is given by
\begin{align*}
\Tr(\nabla^{2}_{g_a,g_b}f)&=\upbeta^{2}\chi_{\ell}(u_j)\chi''_{k,ab}(u_i)\|h_i\|^{2}+\upbeta^{2}\chi_{k}(u_i)\chi''_{\ell,ab}(u_j)\|h_j\|^{2}\\
    &+\upbeta^{2}\left(\chi'_{k,a}(u_i)\chi'_{\ell,b}(u_{j})+\chi'_{k,b}(u_i)\chi'_{\ell,a}(u_{j})\right)\<h_j, h_i\>.
\end{align*}
Using the above estimates on the softmax derivatives, and summing as before, we obtain 
\begin{align} \label{eq:estimate_bound_g}
&\sum_{a,b=1}^{n}(\varepsilon(R))_{ab}\,\E\,\Tr \left(\left(\nabla_{g}^2f(X_t)\right)_{(a,\cdot),(b,\cdot)}\right)\nonumber\\
&\hspace{2cm}\leq C \upbeta^{2}\|\varepsilon(R)\|_{\infty}\chi_{k}(u_i)\chi_{\ell}(u_j)\left(\E\|h_i\|^{2}+\E\|h_j\|^{2}\right).
\end{align}
Plugging \eqref{eq:estimate_bound_h} and \eqref{eq:estimate_bound_g} into \eqref{eq:interpolation_interm}, we obtain 
\begin{equation*}
\left|\frac{\de}{\de t}\psi(t)\right|\leq C \sigma_{\bA}^{2}\upbeta^{2}\chi_{k}(u_i)\chi_{\ell}(u_j)\|\varepsilon(R)\|_{\infty}\left(\E\|h_i\|^{2}+\E\|h_j\|^{2}+\sum_{m=1}^{n}\E\|g_m\|^{2}\right).
\end{equation*}
Recall that $\psi(t)=\E f(X_t)$ and that $\psi(1)=F(R)$ and $\psi(0)=F(R(\gamma))$. Using the expression of the moment of a Gaussian variable and integrating \eqref{eq:steins_lemma}, we find
\begin{equation}\label{eq:Lispchitz_bound_softmax}
|F(R)-F(R(\gamma))|\leq C\sigma_{\bA}^{4}d\upbeta^{2}n \chi_{k}(u_i)\chi_{\ell}(u_j)\|\varepsilon(R)\|_{\infty}.
\end{equation}
Denote $g(R)\coloneqq s_{\mu_X}(x_i,x_j)$. Using the estimate \eqref{eq:Lispchitz_bound_softmax}, we obtain
\begin{align*}
    |g(R+\varepsilon(R))-g(R)|&\leq \sum_{k,\ell}|\varepsilon(R)_{k\ell}||F_{k,\ell}(R)|\\
    &+\sum_{k,\ell}\;|(R+\varepsilon(R))_{k\ell}| \;|F_{k,\ell}(R)-F_{k,\ell}(R+\varepsilon(R))|\\
    &\leq  \left(1+C\sigma_{\bA}^{4}d\upbeta^{2}n\sum_{k,\ell} \chi_{k}(u_i)\chi_{\ell}(u_j)\right)\|\varepsilon(R)\|_{\infty}\\
    &\leq \left(1+C\sigma_{\bA}^{4}d\upbeta^{2}n\right)\|\varepsilon(R)\|_{\infty}.
\end{align*}
Since the drift is a function of $s_{\mu_X}(x_i),$ $s_{\mu_X}(x_i,x_j)$ and of polynomials of $R$, we can conclude. 
\end{proof}

\begin{proof}[Proof of Theorem~\ref{thm:clustering_random_init}]  By Lemma \ref{lem:Ito_formula}, for $i\ne j$,
\[
R_{ij}(t)
=
R_{ij}(0)+M_{ij}(t)+\int_0^t \mathscr D_{ij}(R(s))\,\de s.
\]
Subtract the ODE \eqref{eq:gamma_ODE_selfcontained}:
\[
R_{ij}(t)-\gamma(t)
=
R_{ij}(0)-\gamma_0+M_{ij}(t)
+\int_0^t (\cD_{ij}(R(s))-b(\gamma(s)))\de s.
\]
Since the initial overlaps satisfy $R_{ij}(0)=\gamma_0$, the initial error vanishes.
By Lemma \ref{lem:drift_on_simplex_selfcontained} and
Lemma \ref{lem:Gram_stability}, we obtain
\[
\max_{i\ne j}|\mathscr D_{ij}(R(s))-b(\gamma(s))|
\le
L\max_{i\ne j}|R_{ij}(s)-\gamma(s)|.
\]
Define 
\(
E(t)\coloneqq \max_{i\ne j}|R_{ij}(t)-\gamma(t)|.
\)
Then for all $t\in[0,T]$,
\[
E(t)
\le
\max_{i\ne j}\sup_{s\in[0,t]}|M_{ij}(s)|
+
\alpha L\int_0^t E(s)\,\de s.
\]
By Gr\"onwall's inequality,
\begin{equation}\label{eq:Gronwall_E_selfcontained}
E(t)
\le e^{L t}
\max_{i\ne j}\sup_{s\in[0,t]}|M_{ij}(s)|.
\end{equation}
From \eqref{eq:martingale_Mij_clean} and the It\^o isometry for cylindrical Wiener integrands \cite{da2014stochastic},
the quadratic variation satisfies
\[
\de\langle M_{ij}\rangle_t
= \int_\Uptheta
\left(\langle G_i(t,\bm{\theta}),x_j(t)\rangle+\langle x_i(t),G_j(t,\bm{\theta})\rangle\right)^2
{\uprho_*}(\de\bm{\theta})\,\de t.
\]
Using $\E[\<\bV u,a\>\<\bV v,b\>]=\<u,v\>\<a,b\>/d$, we compute all the terms:
\begin{equation*}
    \E\left\<\bV \mathsf{m}_{\upbeta,\bA}[\mu](x_i),\proj_{x_i(t)}x_j(t)\right\>^{2}=\frac1d\left\|\mathsf{m}_{\upbeta,\bA}[\mu](x_i)\right\|^{2}\left\|\proj_{x_i(t)}x_j(t)\right\|^{2},
\end{equation*}
and 
\begin{align*}
&\E\left[\left\<\bV \mathsf{m}_{\upbeta,\bA}[\mu](x_i),\proj_{x_i(t)}x_j(t)\right\>\left\<\bV \mathsf{m}_{\upbeta,\bA}[\mu](x_j),\proj_{x_j(t)}x_i(t)\right\>\right]\\
    &=\frac1d\left\<\mathsf{m}_{\upbeta,\bA}[\mu](x_i),\mathsf{m}_{\upbeta,\bA}[\mu](x_j)\right\> \left<\proj_{x_i(t)}x_j(t),\proj_{x_j(t)}x_i(t)\right\>.
\end{align*}
Then,
\begin{align*}
\de\langle M_{ij}\rangle_t
&=\frac{2}{d}\left\<\mathsf{m}_{\upbeta,\bA}[\mu](x_i),\mathsf{m}_{\upbeta,\bA}[\mu](x_j)\right\> \left\<\proj_{x_i(t)}x_j(t),\proj_{x_j(t)}x_i(t)\right\> \\
&+\frac1d\left(\|\mathsf{m}_{\upbeta,\bA}[\mu](x_i)\|^{2}\left\|\proj_{x_i(t)}x_j(t)\right\|^{2}+\|\mathsf{m}_{\upbeta,\bA}[\mu](x_j)\|^{2}\left\|\proj_{x_j(t)}x_i(t)\right\|^{2} \right)
\end{align*}
Bounding the above expression, we find
\[
\<M_{ij}\>_T
=
\frac{4T}{d}.
\]
By a standard maximal inequality for continuous martingales \cite{dzhaparidze2001bernstein},
 for each fixed pair $(i,j)$,
\[
\P\left(\sup_{t\in[0,T]}|M_{ij}(t)|\ge r\right)
\le 2\exp\left(-\frac{d r^2}{8T}\right).
\]
Applying a union bound over $(i,j)\in[n]^2$ and taking
$r=C\sqrt{4T\log(2n^2/\delta)/d}$ gives, with probability at least $1-\delta$,
\[
\sup_{t\in[0,T]}|M_{ij}(t)|
\le
C\sqrt{\frac{4T}{d}\log\left(\frac{2n^2}{\delta}\right)}
\qquad
\text{for all }i,j.
\]
Since $\sup|\widetilde M|\le \sup_{i,j}\sup|M_{ij}|$, we deduce
\begin{equation}\label{eq:martingale_sup_bound_clean}
\sup_{t\in[0,T]}|\widetilde M(t)|
\le
C\sqrt{\frac{4T}{d}\log\left(\frac{2n^2}{\delta}\right)}
\end{equation}
with probability at least $1-\delta$. We insert \eqref{eq:martingale_sup_bound_clean} into \eqref{eq:Gronwall_E_selfcontained}
(with $t=T$) to conclude.
\end{proof}

\subsection{Proof of Theorem \ref{thm:clustering_small_beta}} \label{proof: small.beta}

\begin{proof}[Proof of Theorem \ref{thm:clustering_small_beta}] 
By definition,
\[
m(t)=\frac{1}{n^2}\sum_{i,j}R_{ij}(t),
\]
so summing \eqref{eq:Rij_decomp_clean} over $i,j$ and applying Lemma \ref{lem:Ito_formula} gives
\begin{equation}\label{eq:m_decomp_clean}
m(t)=m(0)+\int_0^t g(X(s))\,\de s+\widetilde M(t),
\end{equation}
where
\begin{equation}\label{eq:F_def_clean}
g
\coloneqq
\frac{\alpha}{n^2}\sum_{i,j}\cD_{ij},
\end{equation}
with $\cD_{ij}$ given by \eqref{eq:Dij_explicit_clean}, and
\[
\widetilde M(t)\coloneqq \frac{1}{n^2}\sum_{i,j} M_{ij}(t)
\]
is a continuous local martingale.
Set $f(u)\coloneqq r(1-u)$ for $u\in[0, 1]$. We show that
\begin{equation}\label{eq:drift_mismatch_target_clean}
|g(X)-f(m(X))|
\le
C\left(\frac{1}{d}+\upbeta^2\,\E\|\bA\|^2\right).
\end{equation}
This is done in two parts.

\smallskip
\noindent\emph{(a).}
For $\upbeta=0$, we have $\mathsf m_{0,\bA}[\mu](x)=\int y\,\mu(\de y)=:\mathsf{m}[\mu]$ is
independent of $x$ and $\bA$, hence
\begin{align*}
    s_{\mu}(x)&=\|\mathsf m[\mu]\|^2=m(X),\\
s_{\mu}(x_i,x_j)&=\|\mathsf m[\mu]\|^2=m(X).
\end{align*}
Plugging this into \eqref{eq:Dij_explicit_clean} and then into \eqref{eq:F_def_clean}, one finds
\begin{align*}
g_0(X)&=\frac1d m(X)\left((d-2)+q(X)-(d-1)m(X)\right),\\
q(X)&\coloneqq \frac{1}{n^2}\sum_{i,j}R_{ij}^2\in[0,1].
\end{align*}
Therefore
\[
g_0(X)-f(m(X))
=
\frac1d m(X)\left(q(X)-2+m(X)\right),
\]
and since $m(X)\in[0,1]$ and $q(X)\in[0,1]$,
\[
|g_0(X)-f(m(X))|
\le\frac{2}{d}.
\]
\noindent\emph{(b).}
We claim that for $\upbeta\in[0,1]$, uniformly over all empirical measures $\mu$ on $\S^{d-1}$
and $x,x'\in\S^{d-1}$,
\begin{equation}\label{eq:s_mu_small_beta_clean}
\big|s_\mu(x)-\|\mathsf m[\mu]\|^2\big|
+
\big|s_\mu(x,x')-\|\mathsf m[\mu]\|^2\big|
\le C\,\upbeta^2\,\E\|\bA\|^2.
\end{equation}

\noindent
\emph{Proof of \eqref{eq:s_mu_small_beta_clean}.}
Write $\mu=\frac1n\sum_{k=1}^n\delta_{y_k}$.
For fixed $\bA$ and $x$, define 
\[
w_k(\upbeta)\coloneqq
\frac{e^{\upbeta\langle \bA x,y_k\rangle}}{\sum_{\ell=1}^n e^{\upbeta\langle \bA x,y_\ell\rangle}}.
\]
A direct differentiation gives
\[
w_k'(\upbeta)=w_k(\upbeta)\left(\langle \bA x,y_k\rangle-\sum_{\ell=1}^n w_\ell(\upbeta)\langle \bA x,y_\ell\rangle\right),
\]
hence $|w_k'(\upbeta)|\le 2\|\bA\|\,w_k(\upbeta)$, and therefore
\[
\left\|\partial_\upbeta \mathsf{m}_{\upbeta,\bA}[\mu](x)\right\|
=
\left\|\sum_{k=1}^n w_k'(\upbeta)\,y_k\right\|
\le \sum_{k=1}^n |w_k'(\upbeta)|
\le 2\|\bA\|.
\]
Differentiating once more and using $|\langle \bA x,y\rangle|\le \|\bA\|$ and $\sum_k w_k=1$
shows that 
$$
\left\|\partial_\upbeta^2 \mathsf{m}_{\upbeta,\bA}[\mu](x)\right\|\le C\|\bA\|^2$$ 
for $\upbeta\in[0,1]$.
Thus Taylor's theorem with remainder yields
\[
\mathsf{m}_{\upbeta,\bA}[\mu](x)=\mathsf m_{0,\bA}[\mu](x)+\upbeta\,\partial_\upbeta \mathsf m_{0,\bA}[\mu](x)+R_{\upbeta,\bA}(x),
\]
and 
\[
\|R_{\upbeta,\bA}(x)\|\le C\upbeta^2\|\bA\|^2,
\]
and similarly for $x'$.
Since $\partial_\upbeta m_{0,\bA}[\mu](x)$ is linear in $\bA$ (equals $\Sigma[\mu]\bA^{\sT}x$ with $\Sigma[\mu]$ the empirical covariance), it has mean zero under
$\bA\mapsto -\bA$ symmetry, hence $$\E \partial_\upbeta \mathsf m_{0,\bA}[\mu](x)=0.$$
Expanding $\|\mathsf{m}_{\upbeta,\bA}[\mu](x)\|^2$ and
$\langle \mathsf{m}_{\upbeta,\bA}[\mu](x),\mathsf{m}_{\upbeta,\bA}[\mu](x')\rangle$, taking expectations, and using
$\|\mathsf m_{0,\bA}[\mu](x)\|=\|\mathsf m[\mu]\|\le 1$ gives \eqref{eq:s_mu_small_beta_clean}.
\hfill$\square$

\smallskip
Now combine \eqref{eq:s_mu_small_beta_clean} with \eqref{eq:Dij_explicit_clean}: since
$|R_{ij}|\le 1$ and $d-2+R_{ij}^2\le d-1\le d$, we obtain
\[
\left|\cD_{ij}(t)-\cD_{ij}^{(0)}(t)\right|
\le
C\upbeta^2\,\E\|\bA\|^2,
\]
where $\cD_{ij}^{(0)}$ denotes the drift with $\upbeta=0$ (hence $s_\mu(\cdot)=\|\mathsf m[\mu]\|^2$).
Averaging over $i,j$ as in \eqref{eq:F_def_clean} gives
\[
|g(X)-g_0(X)|
\le
C\upbeta^2 \E\|\bA\|^2.
\]
Together with the $\upbeta=0$ bound in part (a), this proves
\eqref{eq:drift_mismatch_target_clean}.

Let $u\in C^1(\R_{\ge 0})$ be the solution to the logistic ODE
\begin{equation}\label{eq:logistic_u_def_clean}
\begin{cases}
\dot u(t)=f(u(t))\\
u(0)=m(0).
\end{cases}
\end{equation}
Let $e(t)\coloneqq m(t)-u(t)$. From \eqref{eq:m_decomp_clean} and \eqref{eq:logistic_u_def_clean}
\[
e(t)=\int_0^t\left(g(X(s))-f(u(s))\right)\,\de s+\widetilde M(t).
\]
Add and subtract $f(m(s))$ to obtain
\[
e(t)
=
\int_0^t\left(f(m(s))-f(u(s))\right)\,\de s
+
\int_0^t\left(g(X(s))-f(m(s))\right)\,\de s
+
\widetilde M(t).
\]
Since $f'(u)=(1-2u)$, we have $|f(u)-f(u')|\le |u-u'|$ on $[0,1]$, hence
\[
|e(t)|
\le \int_0^t |e(s)|\,\de s
+
\int_0^t |g(X(s))-f(m(s))|\,\de s
+
\sup_{s\in[0,t]}|\widetilde M(s)|.
\]
Using \eqref{eq:drift_mismatch_target_clean}, we get
\[
|e(t)|
\le
\int_0^t |e(s)|\,\de s
+
Ct\left(\frac{1}{d}+\upbeta^2\E\|\bA\|^2\right)
+
\sup_{s\in[0,t]}|\widetilde M(s)|.
\]
Gr\"onwall's inequality yields
\begin{equation}\label{eq:gronwall_e_clean}
\sup_{t\in[0,T]}|e(t)|
\le
\mathrm{e}^{T}\,
\left(
C T\left(\frac{1}{d}+\upbeta^2\,\E\|\bA\|^2\right)
+
\sup_{t\in[0,T]}|\widetilde M(t)|
\right).
\end{equation}
From \eqref{eq:martingale_Mij_clean} and the expression of $\Tilde{M}$, we have 
\begin{equation*}
    \<\widetilde{M}\>_t=\frac{1}{n^{4}}\sum_{i,j,k,\ell}\<M_{ij},M_{k\ell}\>_t.
\end{equation*}
Using the It\^o isometry for cylindrical Wiener integrals,
the quadratic variation satisfies
\begin{align*}
\de\langle M_{ij},M_{k\ell}\rangle_t
=\int_\Uptheta
&\left(\langle G_i(t,\bm{\theta}),x_j(t)\rangle+\langle x_i(t),G_j(t,\bm{\theta})\rangle\right)\\
&\left(\langle G_k(t,\bm{\theta}),x_\ell(t)\rangle+\langle x_k(t),G_\ell(t,\bm{\theta})\rangle \right)
{\uprho_*}(\de\bm{\theta})\,\de t.
\end{align*}
Using $\E[\<\bV u,a\>\<\bV v,b\>]=\<u,v\>\<a,b\>/d$, we compute all the terms
\begin{align*}
&\E\left[\left\<\bV \mathsf{m}_{\upbeta,\bA}[\mu](x_i),\proj_{x_i(t)}x_j(t)\>\,\<\bV \mathsf{m}_{\upbeta,\bA}[\mu](x_k),\proj_{x_k(t)}x_{\ell}(t)\right\>\right]\\
=\frac1d&\E\left[\left\<\mathsf{m}_{\upbeta,\bA}[\mu](x_i),\mathsf{m}_{\upbeta,\bA}[\mu](x_k)\right\>\right]\left\<\proj_{x_i(t)}x_j(t),\proj_{x_k}x_{\ell}\right\>.
\end{align*}
Combining all terms, we end up with
\begin{align*}
\de\langle M_{ij},M_{k\ell}\rangle_t
&=\frac1d\E\left[\left\<\mathsf{m}_{\upbeta,\bA}[\mu](x_i),\mathsf{m}_{\upbeta,\bA}[\mu](x_k)\right\>\right]\left\<\proj_{x_i(t)}x_j(t),\proj_{x_k}x_{\ell}\right\>\\
&+\frac1d\E\left[\left\<\mathsf{m}_{\upbeta,\bA}[\mu](x_i),\mathsf{m}_{\upbeta,\bA}[\mu](x_\ell)\right\>\right]\left\<\proj_{x_i(t)}x_j(t),\proj_{x_\ell}x_{k}\right\>\\
&+\frac1d\E\left[\left\<\mathsf{m}_{\upbeta,\bA}[\mu](x_j),\mathsf{m}_{\upbeta,\bA}[\mu](x_k)\right\>\right]\left\<\proj_{x_j(t)}x_i(t),\proj_{x_k}x_{\ell}\right\>\\
&+\frac1d\E\left[\left\<\mathsf{m}_{\upbeta,\bA}[\mu](x_j),\mathsf{m}_{\upbeta,\bA}[\mu](x_\ell)\right\>\right]\left\<\proj_{x_j(t)}x_i(t),\proj_{x_\ell}x_{k}\right\>.
\end{align*}
Using $\|\proj_{x}\|\leq 1$ and $\|\mathsf{m}_{\upbeta,\bA}[\mu](x)\|\leq 1$, we have
\[
\langle M_{ij},M_{k\ell}\>_T
\leq  \frac{4T}{d}.
\]
Summing the above bound, we have
\begin{equation*}
\<\widetilde{M}\>_t\leq \frac{4T}{d}.
\end{equation*}
By a standard maximal inequality for continuous martingales,
\[
\P\left(\sup_{t\in[0,T]}|\widetilde{M}(t)|\ge r\right)
\le 2\exp\left(-\frac{r^2 d}{8T}\right).
\]
Thus, with probability at least $1-\delta$, we have 
\begin{equation*}
\sup_{t\in[0,T]}|\widetilde M(t)|
\le
C\sqrt{\frac{4T}{d}\log\left(\frac{1}{\delta}\right)}.
\end{equation*}
Combining \eqref{eq:gronwall_e_clean} and \eqref{eq:martingale_sup_bound_clean} yields
\eqref{eq:smallbeta_main_bound}.
Finally, for $\bA=\bW\bW'^{\sT}$ we have $\|\bA\|\le \|\bW\|\|\bW'\|$, hence
$$
\E\|\bA\|^2\le \sqrt{\E\|\bW\|^4\E\|\bW'\|^4}\le C d^2\sigma_{\bA}^4.
$$
(See \cite[4.4.2]{vershynin2018high}.)
\end{proof}

\subsection{Proof of Theorem \ref{thm:large_beta_meta}}

The proof relies strongly on the Laplace method which can be applied in the setting where $\mu(t)$ has a density with respect to the Lebesgue measure.

\begin{proof}[Proof of Theorem \ref{thm:large_beta_meta}] \label{proof: mean.field} 

We proceed by a coupling argument. Consider two iid copies $(x^{(1)}(t),x^{(2)}(t))$ driven by 
\begin{equation*}
    \de x(t)=\int_{\Uptheta}G_{\bm{\theta}}[\mu(t)](x(t)) \mathsf{W}(\de \bm{\theta},\de t)-\frac{1}{2}\left(\int_{\Uptheta} \|G_{\bm{\theta}}[\mu(t)](x(t))\|^{2}{\uprho_*}(\de \bm{\theta})\right)x(t)\de t.
\end{equation*}
Set $R(t)\coloneqq\<x^{(1)}(t),x^{(2)}(t)\>$.  
Arguing as in the proof of Lemma \ref{lem:Ito_formula},
\begin{align*}
d\cdot \de R(t)&=-\frac{d-1}{2}
     \left[\E\left\|\mathsf{m}_{\upbeta,\bA}[\mu(t)](x^{1}(t))\right\|^{2}+\E\left\| \mathsf{m}_{\upbeta,\bA}[\mu(t)](x^{2}(t))\right\|^{2}\right]R(t)\\
     &+\E\left[\left\langle \mathsf{m}_{\upbeta,\bA}[\mu(t)](x^{1}(t)),\mathsf{m}_{\upbeta,\bA}[\mu(t)](x^{2}(t))\right\rangle\right]
     \left(d-2+R(t)^{2}\right)+\de M(t).
\end{align*}
The main observation is that in the regime $\upbeta\to\infty$, the linear terms in $d$ cancel. This is due to the following.

\begin{lemma}\label{lemma:Laplace_method}
For $\mu$ as in Assumption \ref{ass:low-temperature},
\begin{equation*}
   \E\left\<\mathsf{m}_{\upbeta,\bA}[\mu](x),\mathsf{m}_{\upbeta,\bA}[\mu](y)\right\>=\E\left\<\frac{\bA x}{\|\bA x\|},\frac{\bA y}{\|\bA y\|}\right\>+O\left(d^{-\frac32}+\upbeta^{-\frac12}\right)
\end{equation*}
for all $x,y\in\S^{d-1}$.
\end{lemma}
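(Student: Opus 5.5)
We will work conditionally on $\bA$, apply the Laplace method on the sphere to $\mathsf{m}_{\upbeta,\bA}[\mu](x)$, and only then take expectations over $\bA$. Fix $x$ and write $a\coloneqq \bA x$, $r\coloneqq\|a\|$, $e\coloneqq a/r$. Then
\[
\mathsf{m}_{\upbeta,\bA}[\mu](x)=\frac{\int e^{\upbeta r\langle e,y\rangle}\,y\,\rho(y)\,\upsigma_d(\de y)}{\int e^{\upbeta r\langle e,y\rangle}\,\rho(y)\,\upsigma_d(\de y)},
\]
which is the mean of a tilted von Mises--Fisher type measure with concentration $\kappa=\upbeta r$ and base density $\rho$. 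The whole proof rests on the claim
\[
\left\|\mathsf{m}_{\upbeta,\bA}[\mu](x)-A_d(\kappa)\,e\right\|\lesssim \frac{L}{\rho_{\min}}\cdot\frac{\sqrt d}{\kappa}\wedge 1,
\]
where $A_d(\kappa)=I_{d/2}(\kappa)/I_{d/2-1}(\kappa)$ is the mean resultant length of the uniform vMF law.

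To prove this claim, use polar coordinates $y=te+\sqrt{1-t^2}\,\omega$ with $\omega\in\S^{d-2}\cap e^\perp$. Under the tilted measure, $1-t$ has size of order $d/\kappa$ when $\kappa\gg d$. Replacing $\rho(y)$ by $\rho(e)$ then produces an error controlled by $\|\nabla\rho\|_\infty\,\|y-e\|/\rho_{\min}\lesssim L\sqrt{d/\kappa}/\rho_{\min}$. With $\rho$ constant, symmetry in $\omega$ gives exactly $A_d(\kappa)e$, and $1-A_d(\kappa)\approx (d-1)/(2\kappa)$.

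Next, take expectations of the inner product. Write
\[
\langle \mathsf{m}(x),\mathsf{m}(y)\rangle=A_d(\kappa_x)A_d(\kappa_y)\left\langle \frac{\bA x}{\|\bA x\|},\frac{\bA y}{\|\bA y\|}\right\rangle+\text{error}.
\]
The error is bounded by the claim on both factors, using $\|\mathsf{m}\|\le1$. In expectation we need control of $\E[\min(1,\sqrt d/(\upbeta\|\bA x\|))]$ and of $\E[1-A_d(\upbeta\|\bA x\|)]$. Under \eqref{eq: tformers.at.initialization}, $\|\bA x\|=\|\bW\bW'^{\sT}x\|$ concentrates around $\sigma_{\bA}^2 d$ (for $\sigma_{\bA}^2=1/d$ it is of order one), with subexponential tails. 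A small-ball bound for $\|\bA x\|$ handles the event where it is small, and elsewhere the error is of order $\sqrt{d}/\upbeta+d/\upbeta$ times constants. To reach the stated rate $\upbeta^{-1/2}$ we would instead interpolate via the crude bound $\min(1,\sqrt{d/\kappa})$ from the Lipschitz step. This gives $O(\sqrt{d/\upbeta})$, which must then be organized so that, for fixed-$d$ constants coming from Assumption \ref{ass:low-temperature}, it reads as $\upbeta^{-1/2}$ plus lower order terms.

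The $d^{-3/2}$ term should arise from the Gaussian concentration of $\|\bA x\|$ and $\|\bA y\|$ when we replace $A_d(\kappa_x)A_d(\kappa_y)$ and the inner product by their decoupled expectations. Alternatively, it may come from the residual dependence between the direction $\bA x/\|\bA x\|$ and the norm, which after rotation invariance contributes only at higher order in $1/d$.

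I expect the main obstacle to be the Laplace step with constants uniform in $d$. The naive Laplace expansion on $\S^{d-1}$ has dimension-dependent constants, and the concentration scale $d/\kappa$ competes with $\upbeta$. The first approach I would try is to prove the claim through a one-dimensional reduction to the marginal density of $t=\langle e,y\rangle$, namely $\propto(1-t^2)^{(d-3)/2}e^{\kappa t}$. I would bound the $\rho$-perturbation through $\|y-e\|$ moments computed explicitly from Bessel ratios, and use the bounds $\rho_{\min},\rho_{\max}$ only through the ratio $\rho(y)/\rho(e)=1+O(L\|y-e\|/\rho_{\min})$.
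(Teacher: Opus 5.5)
Your plan has the same skeleton as the paper's proof. You condition on $\bA$, show that the tilted measure concentrates near $u_x=\bA x/\|\bA x\|$ so that $\mathsf{m}_{\upbeta,\bA}[\mu](x)\approx u_x$, pay $\P(\|\bA x\|<\varepsilon)$ for the bad event, and only then take expectations.

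The difference is the Laplace step. The paper works in exponential coordinates at $u_x$, rescales by $\sqrt{\lambda_x}$, compares with a Gaussian on $\R^{d-1}$, and uses the Lipschitz bound on $\rho$ directly. This gives $\mathsf{m}_{\upbeta,\bA}[\mu](x)=u_x+O(\lambda_x^{-1/2})$, with constants stated to depend only on $\rho$ and $\nabla\rho$. You instead compare with the exact von Mises--Fisher mean $A_d(\kappa)e$ through the one-dimensional marginal of $\langle e,y\rangle$. That separates the $\rho$-independent bias $1-A_d(\kappa)\approx (d-1)/(2\kappa)$ from the effect of $\nabla\rho$, and keeps track of $d$.

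This exposes something the paper's write-up absorbs. Its Jacobian error is $O(d\|w\|^2/\lambda_x)$, and $\int\|w\|^2e^{-\|w\|^2/2}\,\de w$ carries a factor $d-1$. For $\rho\equiv1$ and $x=y$, the left-hand side equals $\E A_d(\kappa_x)^2\approx 1-\E[(d-1)/\kappa_x]$. So with $\|\bA x\|$ of order one and, say, $\upbeta=d^{3/2}$, the discrepancy is of order $d^{-1/2}$, not $O(d^{-3/2}+\upbeta^{-1/2})$. Fixed-$d$ constants, which you settle for, are therefore what either argument really proves. In the regime $d\upbeta^{-1/2}=o(1)$ of Theorem~\ref{thm:large_beta_meta}, your bias $d/\upbeta$ is $o(\upbeta^{-1/2})$ anyway.

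Two corrections to your sketch.

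First, the displayed claim with $\sqrt d/\kappa$ does not follow from the step you describe. That step only gives $L\sqrt{d/\kappa}/\rho_{\min}$. This suffices for fixed $d$, but $\sqrt{d/\upbeta}$ is not $O(\upbeta^{-1/2})$ uniformly in $d$. To get $\sqrt d/\kappa$, use centering. With $\nu_\kappa$ the von Mises--Fisher law,
$\mathsf{m}_{\upbeta,\bA}[\mu](x)-A_d(\kappa)e=\frac{\int (y-A_d(\kappa)e)(\rho(y)-\rho(e))\,\nu_\kappa(\de y)}{\int\rho\,\de\nu_\kappa}$.
Then apply Cauchy--Schwarz using two facts: the covariance of $\nu_\kappa$ has operator norm $\lesssim1/\kappa$ when $\kappa\gtrsim d$, and $\int \mathrm{dist}(y,e)^2\,\nu_\kappa(\de y)\lesssim d/\kappa$. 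Relatedly, there is no need to degrade to the crude bound to ``reach'' $\upbeta^{-1/2}$: an $O(1/\upbeta)$ bound already is $O(\upbeta^{-1/2})$.

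Second, the $d^{-3/2}$ term has nothing to do with decoupling or with dependence between norm and direction. The right-hand side keeps the expectation outside the inner product, so nothing needs decoupling. In the paper it is just the small-ball term $\P(\|\bA x\|<\varepsilon)\le e^{-c(\varepsilon)d}$ for a fixed threshold $\varepsilon$. Your own small-ball step already produces it.
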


\begin{proof}[Proof of Lemma \ref{lemma:Laplace_method}]
Denote 
\[
C_{\upbeta,\mu}(x,y)\coloneqq\E\left\<\mathsf{m}_{\upbeta,\bA}[\mu](x),\mathsf{m}_{\upbeta,\bA}[\mu](y)\right\>.
\]
We have
\begin{equation*}
    \mathsf{m}_{\upbeta,\bA}[\mu](x_i)
    = \int \frac{\displaystyle e^{\|\bA x_i\|\upbeta\;\<\frac{\bA x_i}{\|\bA x_i\|},y\>}}{\int e^{\|\bA x_i\|\upbeta\; \<\frac{\bA x_i}{\|\bA x_i\|},y\>}\mu(\de y)} y\mu(\de y).
\end{equation*}
Fix $x\in\S^{d-1}$ and write
\[
u_x:=\frac{\bA x}{\|\bA x\|},\qquad
\lambda_x:=\upbeta\|\bA x\|.
\]

\subsubsection*{Step 1. Laplace method}

Let $0<\delta\leq 1$ and denote by $U_\delta(u_x)$ the geodesic ball of radius $\delta$ around $u_x$:
\[
U_\delta(u_x):=\{z\in\S^{d-1}\colon d(z,u_x)\le\delta\}.
\]
Let $\varepsilon>0$ and work under the event $\{\|\bA x\|\geq \varepsilon \}.$ 
For  $y\in \S^{d-1}\setminus U_{\delta}$, $\<u_x,y\>\geq \delta$, so $\<u_x,y\>\leq \lambda_x(1-\delta^{2}/2)$, which implies 
\[
\int_{\S^{d-1}\setminus U_{\delta}(u_x)}e^{\upbeta \<\bA x,y\>}\mu(\de y)\leq e^{\lambda_x\left(1-\frac{\delta^{2}}{2}\right)}.
\]
It suffices to estimate the integral on $U_{\delta}(u_x)$ by the Laplace method. 
Let $\Exp_{u_x}\colon \mathsf{T}_{u_x}\S^{d-1}\to\S^{d-1}$ denote the exponential map. For $\delta>0$ small enough, there exists $\delta'>0$ such that
\[
\Exp_{u_x}\colon V_{\delta'}\coloneqq\{v\in \mathsf{T}_{u_x}\S^{d-1}\colon \|v\|\le\delta'\}
\to U_\delta(u_x)
\]
is a $C^1$ diffeomorphism. In these coordinates,
\[
\Exp_{u_x}(v)=\cos(\|v\|)u_x+\sin(\|v\|)\frac{v}{\|v\|},
\]
and we have
\[
\langle u_x,\Exp_{u_x}(v)\rangle=\cos(\|v\|)
=1-\frac{\|v\|^2}{2}+O(\|v\|^4).
\]
The volume element satisfies
\[
\mathrm d\upsigma_d(\Exp_{u_x}(v))
=\left(\frac{\sin\|v\|}{\|v\|}\right)^{d-2}\mathrm dv.
\]
Thus
\begin{align*}
\mathrm{Den}_\lambda(x)&\coloneqq\int_{U_\delta(u_x)}
e^{\lambda_x\langle u_x,z\rangle}\,\mathrm \mu(\de z)\\
&=\int_{V_{\delta'}}
e^{\lambda_x\cos\|v\|}\rho(\Exp_{u_x}(v))
\left(\frac{\sin\|v\|}{\|v\|}\right)^{d-2}\mathrm dv.
\end{align*}
Now set $w\coloneqq\sqrt{\lambda_x}\,v$ so that $v=w/\sqrt{\lambda_x}$. Then
\[
e^{\lambda_x\cos\|v\|}
=e^{\lambda_x-\frac{\lambda_x\|v\|^2}{2}}\,e^{\lambda_x\left(\cos\|v\|-1+\frac12\|v\|^2\right)}
=e^{\lambda_x-\frac{\|w\|^2}{2}}\,f(\lambda_x,\|w\|),
\]
where $f(\lambda_x,\|w\|)$ is uniformly bounded and converges to $1$ as $\lambda_x\to\infty$ for each fixed $w$. Similarly,
\[
\rho(\Exp_{u_x}(v))=\rho(u_x)+O\left(\frac{\|w\|}{\sqrt{\lambda_x}}\right),\qquad
\left(\frac{\sin\|v\|}{\|v\|}\right)^{d-2}
=1+O\left(d \frac{\|w\|^2}{\lambda_x}\right),
\]
with constants depending only on bounds on $\rho$ and $\nabla \rho$. Hence,
\begin{align*}
\mathrm{Den}_\lambda(x)
&= e^{\lambda_x}\lambda_x^{-\frac{d-1}{2}}
\int_{V_{\delta'\sqrt{\lambda_x}}}
e^{-\frac{\|w\|^2}{2}}
\rho(u_x)\left(1+O(\lambda_x^{-1/2}(1+\|w\|^2))\right)\,\mathrm dw\\
&= e^{\lambda_x}\lambda_x^{-\frac{d-1}{2}}\rho(u_x)
\left((2\pi)^{\frac{d-1}{2}}+O(\lambda_x^{-\frac12})\right),
\end{align*}
where $2\pi$ comes from computing the Gaussian integral, and we used that the Gaussian tail outside $V_{\delta'\sqrt{\lambda_x}}$ is exponentially small in $\lambda_x$ and can be absorbed into the same $O(\lambda_x^{-1/2})$. Thus we can write
\[
\mathrm{Den}_\lambda(x) = I_\lambda(x)\left(1+O(\lambda_x^{-\frac12})\right),
\]
where
\[
I_\lambda(x)
\coloneqq e^{\lambda_x}\lambda_x^{-\frac{d-1}{2}}\rho(u_x)(2\pi)^{\frac{d-1}{2}}.
\]
The numerator is
\[
\mathrm{Num}_\lambda(x)
\coloneqq\int e^{\lambda_x\langle u_x,z\rangle} z\,\mathrm \mu(\de z).
\]
Repeating the same argument in exponential coordinates and using the symmetry of the Gaussian in $w$ (the odd part integrates to zero), one obtains
\[
\mathrm{Num}_\lambda(x)
= u_x\,I_\lambda(x)\left(1+O(\lambda_x^{-\frac12})\right),
\]
so that
\[
\mathsf{m}_{\upbeta,\bA}[\mu](x)
=\frac{\mathrm{Num}_\lambda(x)}{\mathrm{Den}_\lambda(x)}
= u_x + O\left(\lambda_x^{-\frac12}\right)
= \frac{\bA x}{\|\bA x\|} + O\left((\upbeta\|\bA x\|)^{-\frac12}\right).
\]
All constants here depend only on bounds on $\rho$ and $\nabla \rho$, not on $x$. On the event $\{\|\bA x\|\ge\varepsilon\}$, we thus have
\begin{equation*}
\left\|\mathsf{m}_{\upbeta,\bA}[\mu](x)-u_x \right\|
\le C(\upbeta\varepsilon)^{-\frac12}.
\end{equation*}
Furthermore, under the event $\{ \|\bA x\|<\varepsilon \}$, we have 
\[
\|\mathsf{m}_{\upbeta,\bA}[\mu](x)-u_x\|\leq 2.
\]
We then rewrite $C_{\upbeta,\mu}(x,y)$ as  
\begin{align*}
C_{\upbeta,\mu}(x,y)=\E\Big[\Big\< &\mathsf{m}_{\upbeta,\bA}[\mu](x)\Big(1_{\|\bA x\|<\varepsilon}+1_{\|\bA x\|\geq \varepsilon}\Big),\\
&\mathsf{m}_{\upbeta,\bA}[\mu](y)\Big(1_{\|\bA y\|<\varepsilon}+1_{\|\bA y\|\geq \varepsilon}\Big)\Big\>\Big].
\end{align*}
Putting all together, we gather
\begin{equation*}
\left|C_{\upbeta,\mu}(x,y)-\E\<u_x,u_y\>\right|\leq 4\,\P(\|\bA x\|\leq \varepsilon)+C(\upbeta\varepsilon)^{-\frac12}.
\end{equation*}
Using the Gaussian concentration inequality \cite[pp. 77]{van2014probability}, we upper bound the last term and obtain
\[
C_{\upbeta,\mu}(x,y)=\E\left\<\frac{\bA x}{\|\bA x\|},\frac{\bA y}{\|\bA y\|}\right\>+r(\upbeta,d),
\]
where $r(\upbeta,d)=O((\upbeta \varepsilon)^{-\frac12}+e^{-c(\varepsilon)d})$.
\end{proof}

\subsubsection*{Step 2. Expanding the expectation}

We use

\begin{lemma}\label{lem:delta_method}
For $\bA=\frac{1}{d\sigma_{\bA}^{2}}\bW^{\sT}\bW'$, we have 
\begin{equation*}
\E\left\<\frac{\bA x}{\|\bA x\|},\frac{\bA y}{\|\bA y\|}\right\>= \langle x,y\rangle\,+\frac{\<x,y\>^{3}-\<x,y\>}{2d}+O(d^{-\frac32}).
\end{equation*}
\end{lemma}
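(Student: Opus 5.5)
The plan is a delta-method expansion, as the name of the lemma suggests. The normalization $\frac{1}{d\sigma_{\bA}^2}$ plays no role, since it cancels in $\bA x/\|\bA x\|$, so I may take $\bW,\bW'$ with i.i.d.\ $\cN(0,1/d)$ entries. Set $r\coloneqq\<x,y\>$ and
\[
N\coloneqq\<\bA x,\bA y\>,\qquad D_1\coloneqq\|\bA x\|^2,\qquad D_2\coloneqq\|\bA y\|^2,
\]
so that the quantity to compute is $\E\,\Phi(N,D_1,D_2)$ with $\Phi(n,a,b)=n/\sqrt{ab}$.

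\emph{Step 1 (moments).} I first compute the means and covariances of $(N,D_1,D_2)$. The structure to use is $\bA x=\bW^{\sT}u$ and $\bA y=\bW^{\sT}v$, where $u=\bW' x$ and $v=\bW' y$. Conditionally on $\bW'$, the pair $(\bW^{\sT}u,\bW^{\sT}v)$ is Gaussian with covariance $\frac1d\begin{bmatrix}\|u\|^2&\<u,v\>\\ \<u,v\>&\|v\|^2\end{bmatrix}\otimes I_d$. The pair $(u,v)$ is itself Gaussian with covariance $\frac1d\begin{bmatrix}1&r\\ r&1\end{bmatrix}\otimes I_d$. Iterated Wick/Isserlis computations then give
\[
\E D_i=1,\qquad \E N=r,
\]
together with all second moments, which are of order $1/d$ and explicit polynomials in $r$. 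Third and fourth centered moments are $O(d^{-2})$ by hypercontractivity of Gaussian chaos of bounded degree.

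\emph{Step 2 (expansion).} Next I Taylor-expand $\Phi$ to second order around $(r,1,1)$:
\[
\Phi\approx r+(N-r)-\tfrac r2(D_1-1)-\tfrac r2(D_2-1)-\tfrac12(N-r)(D_1+D_2-2)+\tfrac{3r}{8}\big((D_1-1)^2+(D_2-1)^2\big)+\tfrac r4(D_1-1)(D_2-1).
\]
The first-order terms have zero mean. The second-order terms produce
\[
-\tfrac12\Cov(N,D_1+D_2)+\tfrac{3r}{8}\big(\Var D_1+\Var D_2\big)+\tfrac r4\Cov(D_1,D_2),
\]
into which the covariances from Step 1 are inserted. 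Collecting everything should give a polynomial in $r$ vanishing at $r=0$ and at $r=\pm1$ (because $\Phi\equiv\pm1$ when $y=\pm x$), hence of the form $c\,(r^3-r)/d$; the claim is that $c=\tfrac12$.

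\emph{Step 3 (remainder).} To control the remainder I localize on the event $\{|D_i-1|\le\tfrac12\}$. Its complement has probability $e^{-cd}$ by Gaussian chaos concentration, and $|\Phi|\le1$ everywhere, so the complement contributes negligibly. On the event, the third-order Taylor remainder is bounded by third absolute moments, which are $O(d^{-3/2})$.

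\emph{Main obstacle.} The delicate step is the exact bookkeeping of the $1/d$ coefficient. Each factor $\bW,\bW'$ contributes its own $O(1/d)$ fluctuation to the covariances in Step 1, and the constant $\tfrac12$ in the statement must come out of their combination. I would cross-check against the exact single-layer formula. For a matrix $G$ with i.i.d.\ $\cN(0,1/d)$ entries and inputs of correlation $\rho$,
\[
\E\Big\<\tfrac{Gx}{\|Gx\|},\tfrac{Gy}{\|Gy\|}\Big\>=\rho\,\tfrac2d\Big(\tfrac{\Gamma((d+1)/2)}{\Gamma(d/2)}\Big)^2\,{}_2F_1\big(\tfrac12,\tfrac12;\tfrac d2+1;\rho^2\big)=\rho+\tfrac{\rho^3-\rho}{2d}+O(d^{-2}).
\]
Conditioning on $\bW'$ and applying this with $\rho=\<u,v\>/(\|u\|\|v\|)$ gives an independent route to the answer. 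Carrying this route through leads to $\E\rho=r+\frac{r^3-r}{2d}+O(d^{-3/2})$, and hence a total coefficient of $1/d$, i.e.\ $c=1$ rather than the stated $\tfrac12$. So this cross-check does not support the constant as written. I expect the Step 2 bookkeeping to reproduce $c=1$. If it does, the lemma's constant must be corrected, or $\bA$ must be reinterpreted (for instance, if only one Gaussian factor effectively acts) before the statement can be proved as worded.
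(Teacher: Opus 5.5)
You are right to distrust the constant. Your Steps 1--3 follow the paper's own route: the same function $n/\sqrt{ab}$, the same Hessian at $(r,1,1)$, and a cubic remainder of order $O(d^{-3/2})$. Your cross-check is also correct. The lemma is false as stated for $\bA\propto\bW^{\sT}\bW'$, and the paper's proof goes wrong exactly at the covariance input.

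The paper uses $\Var D_i=\tfrac2d$, $\Cov(N,D_i)=\tfrac{2r}{d}$ and $\Cov(D_1,D_2)=\tfrac{2r^2}{d}$. These are the second moments for a \emph{single} matrix with i.i.d.\ $\cN(0,1/d)$ entries. Inserted into your Step 2 expression they give $-\tfrac{2r}{d}+\tfrac{3r}{2d}+\tfrac{r^3}{2d}=\tfrac{r^3-r}{2d}$, which is the stated constant.

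For the product, condition on $u=\bW'x$ and $v=\bW'y$ and use the law of total covariance. Each entry is the sum of two pieces:
\begin{itemize}
\item the expected conditional (single-layer) covariance, which equals the values above up to $O(d^{-2})$;
\item the covariance of the conditional means $(\<u,v\>,\|u\|^2,\|v\|^2)$, which is again exactly the single-layer matrix.
\end{itemize}
Hence, up to $O(d^{-2})$,
\[
\Var D_i=\tfrac4d,\qquad \Cov(N,D_i)=\tfrac{4r}{d},\qquad \Cov(D_1,D_2)=\tfrac{4r^2}{d}.
\]
Your Step 2 then yields $-\tfrac{4r}{d}+\tfrac{3r}{d}+\tfrac{r^3}{d}=\tfrac{r^3-r}{d}$, i.e.\ $c=1$.

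This agrees with your conditioning route. Let $\phi_d(\rho)=\rho+\tfrac{\rho^3-\rho}{2d}+O(d^{-2})$ be the single-layer map; the expansion is uniform on $[-1,1]$ and $\phi_d''=O(1/d)$. Let $\rho_{uv}=\<u,v\>/(\|u\|\|v\|)$. Then $\E\rho_{uv}=\phi_d(r)$ and $\Var\rho_{uv}=O(1/d)$, so the expectation in the lemma equals $\phi_d(\phi_d(r))+O(d^{-2})$. Each of the two independent Gaussian layers contributes $\tfrac{r^3-r}{2d}$.

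So your proposal is not missing an idea. No argument can produce the constant $\tfrac12$, because the discrepancy $\tfrac{r^3-r}{2d}$ is not $O(d^{-3/2})$ when $r\notin\{0,\pm1\}$. With the doubled covariances, your Steps 1--3 prove
\[
\E\left\<\frac{\bA x}{\|\bA x\|},\frac{\bA y}{\|\bA y\|}\right\>=\<x,y\>+\frac{\<x,y\>^{3}-\<x,y\>}{d}+O(d^{-\frac32}).
\]
The stated factor $\tfrac{1}{2d}$ is correct only when $\bA$ is a single Gaussian matrix.

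The analytic parts of your plan are fine. $D_i$ is a product of two independent $\chi^2_d/d$ variables, so the localization event has probability $1-e^{-cd}$. Step 3 only needs third absolute moments of order $d^{-3/2}$, which hypercontractivity gives directly.

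The constant is not cosmetic. In the proof of Theorem~\ref{thm:large_beta_meta}, inserting $C_{\upbeta,\mu}(x,y)\approx r+c\,\tfrac{r^3-r}{d}$ into the drift gives $(1+c)\tfrac{R^3-R}{d}$ at order $1/d$. This constant therefore fixes the drift of the limiting SDE.
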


\begin{proof}[Proof of Lemma \ref{lem:delta_method}]
The proof is an application of the "delta method" to the dimensional parameters.

Define the random variables $X_d=(\<\bA x,\bA y\>,\|\bA x\|^{2},\|\bA y\|^{2})$, whose mean $m$ is $(\<x,y\>,1,1)$, and covariance $\Sigma$ is given by
\[
\Sigma
=
\begin{bmatrix}
\bullet & \frac{2\<x,y\>}{d} & \frac{2\<x,y\>}{d} \\[0.7em]
\frac{2\<x,y\>}{d} & \frac{2}{d}&\frac{2\<x,y\>^{2}}{d} \\[0.7em]
\frac{2\<x,y\>}{d} &\frac{2\<x,y\>^{2}}{d}  & \frac{2}{d}
\end{bmatrix}+O(d^{-2}).
\]
Define $g(x,y,z)\coloneqq\frac{x}{\sqrt{yz}}$. A Taylor-Lagrange expansion shows
\begin{equation*}
    \E[g(X_d)]=g(m)+\frac{1}{2}\nabla^{2}g(m) : \Sigma +O(R_d),
\end{equation*}
where $R_d$ is a third order error satisfying $|R_d|=O(d^{-\frac32})$. The Hessian evaluated in $m$ gives 
\[
\nabla^2 g(m)
=
\begin{bmatrix}
0 &\quad  -\dfrac{1}{2} & \quad -\dfrac{1}{2} \\[1em]
-\dfrac{1}{2} & \quad \dfrac{3r}{4} &\quad  \dfrac{r}{4} \\[1em]
-\dfrac{1}{2} & \quad \dfrac{r}{4} &\quad  \dfrac{3r}{4}
\end{bmatrix},
\]
where $r=\<x,y\>$.
So it leads to the expansion
\[
\E\left\<\frac{\bA x}{\|\bA x\|},\frac{\bA y}{\|\bA y\|}\right\>=\<x,y\>-\frac{1}{2d}\left( \<x,y\>-\<x,y\>^{3}\right)+\Delta_d,
\]
where $|\Delta_d|=O(d^{-\frac32})$, as desired.
\end{proof} 

Using Lemma \ref{lemma:Laplace_method}, and Lemma \ref{lem:delta_method}, we simplify the kernel expression as
\begin{equation}\label{eq:kernel_expansion}
C_{\upbeta,\mu}(x,y)=\<x,y\>-\frac{1}{2d}\left( \<x,y\>-\<x,y\>^{3}\right)+r(\upbeta,d),
\end{equation}
and plugging \eqref{eq:kernel_expansion} into the expression of the drift, we get
\begin{equation}\label{eq:mean_field_ito}
 \de R(t)= -\frac{1}{d}(R(t)-R(t)^3)\,\de t + \de M(t) + \varepsilon_{d,\upbeta}(t)\,\de t,
\end{equation}
where $|\varepsilon_{d,\upbeta}(t)| = O(d^{-\frac32}+\upbeta^{-\frac12}).$

\subsubsection*{Step 3. Martingale term}
\label{step2:martingale_sec8}

By the same reasoning as in the previous proof, we have
\begin{align*}
\de\langle M\rangle_t
&=\E\left[\left\<\mathsf{m}_{\upbeta,\bA}[\mu](x^{(1)}(t)),\mathsf{m}_{\upbeta,\bA}[\mu](x^{(2)}(t)\right\>\right]\left\<\proj_{x^{(1)}(t)}x^{(2)}(t),\proj_{x^{(2)}(t)}x^{(1)}(t)\right\> \\
&+\E\left[\left\|\mathsf{m}_{\upbeta,\bA}[\mu](x^{(1)}(t))\right\|^{2}\right] \left\|\proj_{x^{(1)}(t)}x^{(2)}(t)\right\|^{2}\\
&+\E\left[\left\|\mathsf{m}_{\upbeta,\bA}[\mu](x^{(2)}(t))\right\|^{2}\right] \left\|\proj_{x^{(2)}(t)}x^{(1)}(t)\right\|^{2},
\end{align*}
thus we have $\<M\>_t\leq Ct/d$.
This implies, with probability at least $1-\delta$, that
\begin{equation}\label{eq:martingale_sup_bound_clean_thm4}
\sup_{t\in[0,T]}|M_t|
\le
C\sqrt{\frac{4T}{d}\log\left(\frac{1}{\delta}\right)}
\end{equation}

\subsubsection*{Step 4. Combining all bounds}

Integrating  \eqref{eq:mean_field_ito}, and using \eqref{eq:martingale_sup_bound_clean_thm4} gives that with probability at least $1-\delta$, we have
\begin{equation*}
    R(t)-R(0)\leq-\frac1d\int_{0}^{t}R(s)(1-R(s)^{2})\de s+\int_0^t \varepsilon_{d,\upbeta}(s)\,\de s+C\sqrt{\frac{4T}{d}\log\left(\frac{1}{\delta}\right)}.
\end{equation*}
Taking the supremum, and using the crude bound $-1\leq R(s) \leq 1$, we have with probability at least $1-\delta,$
\begin{equation*}
    \sup_{t\in[0,T]}|R(t)-R(0)|\leq \frac{T}{d} +C\sqrt{\frac{4T}{d} \log\left(\frac{1}{\delta}\right)}+T\sup_{t\in[0,T]}|\varepsilon_{d,\upbeta}(t)|.
\end{equation*}
This concludes the proof of the first part of the statement.

\subsubsection*{Step 5. Rescaled dynamics}

Let $\hat{R}_d(t)\coloneqq R(dt).$ The evolution of $\hat{R}_d(t)$ is given by 
\begin{equation*}
    \de \hat{R}_d(t)=-\hat R_d(t)(1-\hat R_d(t)^{2})\de t+d\,\varepsilon_{d,\upbeta}(dt)\,\de t+\de \hat M_d(t),
\end{equation*}
where $\hat M_d(t)\coloneqq M(dt)$.
We compute the quadratic variation of the martingale $\hat M_d.$ Using Lemma \ref{lemma:Laplace_method}, we have 
\[
\E\left[\left\<\mathsf{m}_{\upbeta,\bA}[\mu](x^{(1)}(t)),\mathsf{m}_{\upbeta,\bA}[\mu](x^{(2)}(t))\right\>\right]=\left\<x^{(1)}(t),x^{(2)}(t)\right\>+O(d^{-1}).
\]
Using 
\[
\left\<\proj_{x^{(1)}(t)}x^{(2)}(t),\proj_{x^{(2)}(t)}x^{(1)}(t)\right\>=-\left\<x^{(1)}(t),x^{(2)}(t)\right\>+\left\<x^{(1)}(t),x^{(2)}(t)\right\>^{3}
\]
and $\|\proj_{x^{(1)}(t)} x^{(2)}(t)\|^{2}=1-\<x^{(1)}(t),x^{(2)}(t)\>^{2},$ we find that the quadratic variation (given \ref{step2:martingale_sec8}) is 
\begin{equation*}
\de\<\hat M_d\>_t=2(\hat R_d(t)^{2}-1)^{2}\de t+\epsilon(d)\,\de t,
\end{equation*}
where $|\epsilon(d)|=O(d^{-1})$. 

We proceed by a classical martingale argument per Stroock-Varadhan \cite{stroock2007multidimensional, ethier2009markov}.  
We first prove the tightness of the sequence of random variables $(\hat{R}_d)_{d\in \naturals}$. Consider
\[
\de R_d(t)=-R_d(t)(1-R_d(t)^{2})\de t+\de \hat M_d(t).
\]
We have that 
\[
\|R_d(t)-\hat{R}_d(t)\|\underset{d\rightarrow\infty}{\rightarrow} 0.
\]
Let $t,s \in \reals$, and $d\in \naturals$; by triangle inequality, convexity of $x\mapsto x^{4}$ and Burkholder-Davis-Gundy inequality, we have 
\begin{align*}
    \E\|R_d(t)-R_d(s)\|^{4}&\leq C\E\left\|\int_{s}^{t}b(R_d(u))\de u \right\|^{4} +C\E\left\|\int_{s}^{t}\sigma(u)\de B(u)\right\|^{4}\\
    &\leq C(t-s)^{4}+C(t-s)^{2},
\end{align*}
where $b(x)\coloneqq -x(1-x^2)$ and, by Dambis-Dubins-Schwarz, there exists a one-dimensional Brownian motion $B$ such that $\hat M_d(t)=\int_0^t \sigma(u)\,\de B(u)$ with $$\sigma(u)^2=\frac{\de}{\de u}\langle \hat M_d\rangle_u,$$ 
and we have used that $\de \<\hat M_d\>_t\leq 3\,\de t$ for $d$ large enough.
By the Kolmogorov continuity criterion, it ensues that that $(R_{d}(\cdot))_{d\in \naturals}$ is tight.

Define $\mathsf{L}_d$ on smooth functions by
\begin{equation*}
    \mathsf{L}_df(x)=-x(1-x^{2})f'(x)+\,(1-x^{2})^{2}f''(x)+\mathrm{err}(d,\upbeta),
\end{equation*}
where $\mathrm{err}(d,\upbeta)$ are vanishing terms when $d,\upbeta\to \infty.$
By the Itô formula, for all $f$ smooth enough, $\cM_{d,f}$ defined as 
\begin{equation*}
    \cM_{d,f}(t)=f(\hat{R}_{d}(t))-f(\hat{R}_{d}(0))-\int_{0}^{t}\mathsf{L}_{d}f(\hat{R}_d(s))\de s 
\end{equation*}
is a martingale. For any subsequential limit $\hat{R}$, we then have that $\hat{R}$ solves the following martingale problem
\begin{equation*}
\cM_{d,f}(\hat{R})=f(\hat{R}(t))-f(\hat{R}(0))-\int_{0}^{t}\mathsf{L}_{d}f(\hat{R}(s))\de s.
\end{equation*}
Since the martingale problem is uniquely defined \cite{stroock2007multidimensional}, we deduce that the solution converges to 
\begin{equation*}
    \de u(t)=-u(t)(1-u(t)^{2})+\sqrt{2}(1-u(t)^{2})\de B(t),
\end{equation*}
which yields the desired SDE.
\end{proof}

\subsection{On the behavior of \eqref{eq: sde.logistic}} \label{sec: sde.logistic}

We now discuss the long-time behavior of \eqref{eq: sde.logistic}. Define $\theta(t) = \sin u(t)$. One has $\theta(t)\in[-\pi/2,\pi/2]$ and then finds
\begin{equation*}
    \de\theta(t) = \sqrt{2} \cos \theta(t) \,\de B(t).
\end{equation*}
We write $\theta(t) = \theta(0) + \int_0^t \sqrt{2} \cos\theta(s)\, \de B(s)$ so $\theta(t)$ is a bounded martingale with quadratic variation $\langle \theta\rangle_t = \int_0^t 2\cos^2\theta(s)\de s$. 
By the Dambis-Dubins-Schwarz theorem, there exists a Brownian motion $W(t)$ such that 
\begin{equation*}
 \theta(t) = \theta(0)+W(\langle \theta\rangle_t).   
\end{equation*}
By classical results \cite[Theorem 3.19]{le2016brownian} it follows that there exists a random variable $\theta_\infty\in[-\pi/2,\pi/2]$ such that $\theta(t)\to\theta_\infty$ a.s. as $t\to\infty$. 
If $|\theta_\infty|<\pi/2$, then eventually $\cos\theta(t)$ would eventually stay bounded away from $0$, which would force $\langle\theta\rangle_t\to0$. But then $\theta(t) = \theta(0)+ W({\langle \theta\rangle_t})$ would keep fluctuating and not converge. Thus $\cos\theta_\infty = 0$ and so $\theta_\infty=\pm\pi/2$.
Consequently $u(t)\to\pm1$ a.s. as $t\to\infty$. We can also compute, by hitting time arguments, 
\begin{equation*}
\mathbb{P}_{\theta_0}\left(\theta_\infty = \frac\pi2\right) = \frac{\theta(0)}{\pi}+\frac12,
\end{equation*}
and so for $u(0)=0$, 
\begin{equation*}
    \mathbb{P}(u_\infty=-1) = \mathbb{P}(u_\infty=1) = \frac12, 
\end{equation*}
which is illustrated in Figure \ref{fig: random.sde}.

\begin{figure}[h!]
    \centering
    \includegraphics[scale=0.55]{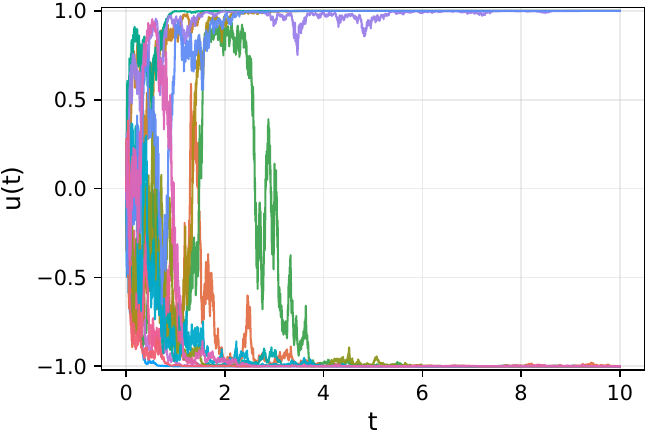}
    \includegraphics[scale=0.55]{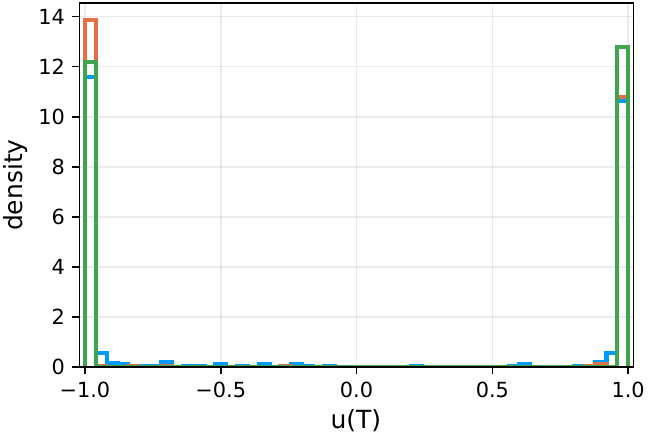}
    \caption{Realizations of solutions to \eqref{eq: sde.logistic} in Theorem \ref{thm:large_beta_meta}. }
    \label{fig: random.sde}
\end{figure}

\appendix

\section{Toolkit}\label{sec:Toolkit}

\subsection{Differentiation of the sphere}

We collect standard expressions for the gradient and Hessian on the product
manifold $(\S^{d-1})^n$, viewed as an embedded submanifold of $\reals^{d\times n}$ (or $\reals^{dn}$).
For $X=(x_1,\dots,x_n)\in(\S^{d-1})^n$ and $U=(u_1,\dots,u_n)\in \reals^{d\times n}$, recall that
the orthogonal projection onto the tangent space is given componentwise by
\[
(\proj_X U)_i = \proj_{x_i}u_i = (I-x_i x_i^{\sT})u_i.
\]

\begin{lemma}\label{lem:toolkit_geo_riem}
Let $f:(\S^{d-1})^n\to \reals$ be $C^2$, and let $\tilde f$ be a $C^2$ extension of $f$ to a
neighborhood of $(\S^{d-1})^n$ in $\reals^{d\times n}$.
Then the Riemannian gradient of $f$ on $(\S^{d-1})^n$ is
\begin{equation*}
\grad f(X)=\proj_X \nabla \tilde f(X),
\end{equation*}
where $\nabla \tilde{f}$ denotes the Euclidean gradient in $\reals^{d\times n}$ of $\tilde f$.

For any tangent vector $V\in \mathsf{T}_X(\S^{d-1})^n$, the Riemannian Hessian satisfies
\begin{equation*}
\Hess f(X)[V]
=\proj_X\left(\nabla^2 \tilde f(X)[V] - V\,\Diag\left(\diag(X^{\sT}\nabla \tilde f(X))\right)\right).
\end{equation*}
Equivalently, component-wise,
\[
\left(\Hess f(X)[V]\right)_i
=\proj_{x_i}\left(\nabla^2_{x_i}\tilde f(X)[V]_i - \langle x_i,\nabla_{x_i}\tilde f(X)\rangle\,v_i\right).
\]
If $V$ is a $C^1$ vector field on $(\S^{d-1})^n$ and $f$ is smooth, then the second-order operator
$V^2$ is
\[
(V^2 f)(X)= V(V f)(X)
= \Hess f(X)[V(X),V(X)] + \langle \grad f(X),(\nabla_V V)(X)\rangle,
\]
where $\nabla_V$ is the Levi--Civita connection on $(\S^{d-1})^n$.
\end{lemma}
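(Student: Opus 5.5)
The plan is to pass to an ambient Euclidean representation and verify the three formulas componentwise. The product manifold $(\S^{d-1})^n$ is a Riemannian submanifold of $\reals^{d\times n}$ with the induced metric. The tangent space at $X$ is $\{U:\langle x_i,u_i\rangle=0\ \forall i\}$, and the orthogonal projection onto it is $\proj_X$, acting blockwise as $\proj_{x_i}=I-x_ix_i^{\sT}$.

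\textbf{Gradient.} We use the defining property $\langle \grad f(X),V\rangle=Df(X)[V]$ for all tangent $V$. Since $\tilde f$ extends $f$, differentiating along a curve $c\subset(\S^{d-1})^n$ with $c(0)=X$ and $c'(0)=V$ gives $Df(X)[V]=\langle\nabla\tilde f(X),V\rangle=\langle \proj_X\nabla\tilde f(X),V\rangle$, because $V$ is tangent. Since $\proj_X\nabla\tilde f(X)$ is itself tangent, uniqueness of the Riesz representative yields $\grad f=\proj_X\nabla\tilde f$. This is the standard fact for Riemannian submanifolds (as in \cite{boumal2023introduction}).

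\textbf{Hessian.} For a Riemannian submanifold, the Levi--Civita connection is the projected ambient derivative: $\nabla_V Z=\proj_X(D\bar Z(X)[V])$ for any smooth extension $\bar Z$ of a tangent field $Z$. Take $Z=\grad f$ and the extension $\bar Z(Y)=\proj_Y\nabla\tilde f(Y)$. The product rule gives
\[
D\bar Z(X)[V]=\proj_X\nabla^2\tilde f(X)[V]+\bigl(D_Y\proj_Y\bigr)[V]\,\nabla\tilde f(X).
\]
In block $i$, $D(\proj_{x})[v]=-(vx^{\sT}+xv^{\sT})$, so the second term equals $-v_i\langle x_i,\nabla_{x_i}\tilde f\rangle-x_i\langle v_i,\nabla_{x_i}\tilde f\rangle$. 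Applying $\proj_{x_i}$ kills the component along $x_i$, and $\proj_{x_i}\proj_{x_i}=\proj_{x_i}$. This produces the componentwise formula, and collecting the scalars $\langle x_i,\nabla_{x_i}\tilde f\rangle=\diag(X^{\sT}\nabla\tilde f(X))_i$ into a diagonal matrix gives the matrix form. The main point to be careful about is exactly this differentiation of the projector: one must remember that the $x_i$-direction term vanishes only after projecting.

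\textbf{Second-order operator.} Write $Vf=\langle\grad f,V\rangle$. Differentiating along $V$ with metric compatibility of the Levi--Civita connection gives
\[
V(Vf)=\langle\nabla_V\grad f,V\rangle+\langle\grad f,\nabla_VV\rangle.
\]
By definition $\nabla_V\grad f=\Hess f[V]$, so the first term is $\Hess f(X)[V,V]$. This gives the stated identity.
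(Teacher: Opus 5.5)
Your proposal is correct and follows the same route as the paper, which gives no argument of its own and simply cites \cite{boumal2023introduction}. You reproduce that standard Riemannian-submanifold derivation: the projected Euclidean gradient, the Levi--Civita connection computed as the projected ambient derivative of the extension $Y\mapsto\proj_Y\nabla\tilde f(Y)$, and metric compatibility to obtain $V^2f=\Hess f[V,V]+\langle\grad f,\nabla_VV\rangle$.
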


Recall that if $U,V$ are smooth tangent vector fields on an embedded submanifold, and $\bar V$
is any smooth extension of $V$ to a neighborhood in the ambient Euclidean space, then the
Levi--Civita connection satisfies
\[
\nabla_U V = \proj_X(D\bar V(X)[U]).
\]

\begin{proof}
See, e.g., \cite{boumal2023introduction}.
\end{proof}

\begin{definition} \label{def: derivatives}
    For all $k\in \naturals$, we denote $\mathfrak{X}^{k}((\S^{d-1})^n)$ the set of vector fields admitting a derivative of order $k$, endowed with the norm 
    \begin{equation*}
        \|V\|_{\mathfrak{X}^{k}((\S^{d-1})^n)}\coloneqq \max_{X\in (\S^{d-1})^n}\left(\|V(X)\|+\max_{j\in[k]}\|\nabla^{j}V(X)\|\right)
    \end{equation*}
\end{definition}

\subsection{Cylindrical Wiener process} \label{sec: cylindrical.wiener}

Let $(\Uptheta,\mathscr G,{\uprho_*})$ be a probability space such that
$L^{2}({\uprho_*})$ is separable. 
Let $(\Omega, \mathscr{F}, (\mathscr{F}_t)_{t\geq0}, \mathbb{P})$ be a filtered, complete probability space with right-continuous filtration $\mathscr{F}_t$.

A cylindrical Wiener process on $L^2(\uprho_*)$ defined on $(\Omega, \mathscr{F}, (\mathscr{F}_t)_{t\geq0}, \mathbb{P})$ is a family $(\mathsf{W}(t))_{t\ge 0}$ such that
\begin{enumerate}
    \item for each $t\ge 0$, $\mathsf{W}(t): L^2(\uprho_*)\to L^2(\Omega; \mathbb{P})$ is a linear operator;
    \item for each $h\in L^2(\uprho_*)$,
the scalar-valued stochastic process $(\mathsf{W}(t)[h])_{t\ge 0}$ is an $(\mathscr{F}_t)_{t\geq0}$-Brownian motion with variance
\[
\Var(\mathsf{W}(t)[h])=t\|h\|_{L^2(\uprho_*)}^2.
\]
Equivalently, $\E[\mathsf{W}(t)[h] \mathsf{W}(s)[g]]=(s\wedge t)\langle h,g\rangle_{L^2(\uprho_*)}$.
\end{enumerate}

For a $(\mathscr{F}_t)_{t\geq0}$-progressively measurable process $(G(t,\cdot))_{t\geq0}$ with values in $L^2(\uprho_*)$ that almost surely satisfies $G\in L^2_{\mathrm{loc}}(\R_{\geq0}; L^2(\uprho_*))$, we define 
\begin{equation*}
    \int_0^t \int_{\Uptheta} G(s,\theta)\mathsf{W}(\de \theta,\de t) =: \int_0^t \ell(s)\mathsf{W}(\de s),
\end{equation*}
where $\ell(s)f = \langle G(s,\cdot), f\rangle_{L^2(\uprho_*)}$ for all $f\in L^2(\uprho_*)$. See \cite{da2014stochastic,prevot2007concise}.
Furthermore, let $(e_k)_{k\ge 1}$ be an orthonormal basis of $L^2(\uprho_*)$. Then there exist i.i.d. real-valued Brownian
motions $(\beta^k_t)_{k\ge 1}$ such that, for suitable $G$,
\begin{equation*}
\int_{0}^{t}\int_{\Uptheta} G(s,\theta)\,\mathsf{W}(\de \theta,\de s)
=\sum_{k=1}^{\infty}\int_{0}^{t}\langle G(s,\cdot),e_k\rangle_{L^2({\uprho_*})}\,\de \beta_{s}^{k},
\end{equation*}
where the series converges in $L^2(\Omega; \mathbb{P})$ (and hence in probability); see, e.g.,
\cite[Section 4.2.2]{da2014stochastic}.

\subsection{It\^o formula}

We also give the definition of the Itô SDE on $(\S^{d-1})^n$.

\begin{definition} \label{def: ito.formula}
Let $B\in C^{1}((\S^{d-1})^{n};\mathsf{T}(\S^{d-1})^{n})$ and
$G:(\S^{d-1})^n\times\Uptheta\to \mathsf{T}(\S^{d-1})^n$ be such that for ${\uprho_*}$-a.e.\ $\theta$,
the map $X\mapsto G(X,\theta)$ is $C^2$.
A continuous $(\S^{d-1})^{n}$-valued adapted process $(X(t))_{t\ge 0}$ with $X(0)=x$ is called a solution to the Itô SDE 
    \begin{equation} \label{eq: ito.sde.final}
        \de X(t)=B(X(t))\de t+\int_{\Uptheta}G(X(t),\bm{\theta})\mathsf{W}(\de \bm{\theta},\de t),
    \end{equation}
if for all $g\in C^{\infty}((\S^{d-1})^{n})$, we have
\begin{align*}
g(X(t))
= g(x)
&+ \int_{0}^{t} (Bg)(X(s))\,\de s \\
&+ \frac{1}{2}\int_{0}^{t}\int_{\Uptheta} \Hess g(X(s))[G(X(s),\theta), G(X(s),\theta)]\,{\uprho_*}(\de \theta)\,\de s \\
&+ \int_{0}^{t}\int_{\Uptheta} (G(\cdot,\theta)g)(X(s))\,\mathsf{W}(\de \theta,\de s),
\end{align*}
where $Bg=\langle \grad g,B\rangle$, $G(\cdot,\theta)g=\langle \grad g, G(\cdot,\theta)\rangle$.
\end{definition}

\begin{proposition} \label{prop:existence_SDE_sphere}
Assume $B\in C^{1}((\S^{d-1})^{n};\mathsf{T}(\S^{d-1})^{n})$ and that for ${\uprho_*}$-a.e.\ $\theta$,
$G(\cdot,\theta)\in C^{2}\left((\S^{d-1})^{n};\mathsf{T}(\S^{d-1})^{n}\right)$, with the integrability needed to
make the stochastic integral well-defined.
Then for every $x\in (\S^{d-1})^n$, there exists a unique global solution $(X(t))_{t\ge 0}$ to
\eqref{eq: ito.sde.final} with $X(0)=x$.
\end{proposition}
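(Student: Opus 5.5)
The plan is to reduce the statement to a classical strong existence and uniqueness result for SDEs in the ambient Euclidean space $\R^{dn}$ driven by countably many independent Brownian motions. The key device is the expansion of the cylindrical Wiener process along an orthonormal basis $(e_k)_{k\ge1}$ of $L^2(\uprho_*)$ recalled in Section~\ref{sec: cylindrical.wiener}. Setting $\sigma_k(X)\coloneqq\langle G(X,\cdot),e_k\rangle_{L^2(\uprho_*)}\in\R^{dn}$, the noise term becomes $\sum_k\sigma_k(X)\,\de\beta^k_t$. Parseval gives
\[
\sum_k\|\sigma_k(X)-\sigma_k(Y)\|^2=\|G(X,\cdot)-G(Y,\cdot)\|_{L^2(\uprho_*)}^2 .
\]
Hence the Lipschitz requirement for the diffusion coefficient, viewed as a Hilbert--Schmidt operator $L^2(\uprho_*)\to\R^{dn}$, reduces to the condition $\big\|\|G(\cdot,\bm\theta)\|_{C^1}\big\|_{L^2(\uprho_*)}<\infty$. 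I take this to be part of the ``integrability needed'' in the hypothesis; it is in any case provided in our setting by Lemma~\ref{lem:Kernel_regularity}(3).

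\textbf{Step 1 (extension).} Extend $B$ and $G(\cdot,\bm\theta)$ to all of $\R^{dn}$. Compose with the componentwise normalization $\mathsf N$ on a tubular neighbourhood $\{1/2<\|x_i\|<2\}$, project with $\proj_{x_i}$, and multiply by a smooth cutoff equal to $1$ near $(\S^{d-1})^n$. This produces globally Lipschitz, bounded coefficients $\tilde B$ and $\tilde G(\cdot,\bm\theta)$, still with $L^2(\uprho_*)$-integrable $C^1$ norms (and $C^2$ norms for $\tilde G$).

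\textbf{Step 2 (ambient SDE).} Consider the ambient It\^o SDE
\[
\de x_i=\tilde B_i\,\de t+\int_\Uptheta \tilde G_i(X,\bm\theta)\,\mathsf W(\de\bm\theta,\de t)-\tfrac12\Big(\int_\Uptheta\|\tilde G_i(X,\bm\theta)\|^2\uprho_*(\de\bm\theta)\Big)x_i\,\de t,
\]
as in \eqref{eq:ambient_form_clean}. The correction drift is the normal (second fundamental form) component that converts the Riemannian It\^o equation into an ambient one. Its coefficient is Lipschitz because $\tilde G$ is bounded and Lipschitz in $L^2(\uprho_*)$; after the cutoff it also has linear growth. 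Picard iteration, or \cite[Thm.~7.2]{da2014stochastic} for Hilbert-space noise, then yields a unique global strong solution.

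\textbf{Step 3 (invariance of the sphere).} Apply It\^o's formula to $r_i(t)\coloneqq\|x_i(t)\|^2-1$. On the region where the cutoff equals $1$, $\tilde G_i$ is tangent, so $\langle x_i,\tilde G_i\rangle=0$ and the martingale term vanishes. The drift there is $\big(1-\|x_i\|^2\big)\int\|\tilde G_i\|^2\,\de\uprho_*$, i.e.\ linear in $r_i$ with a bounded coefficient. Since $r_i(0)=0$, Gr\"onwall's inequality (up to the exit time of the tube) gives $r_i\equiv0$. Consequently the process never reaches the region where the extension differs from the original coefficients, so it solves the untruncated equation on $(\S^{d-1})^n$.

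\textbf{Step 4 (matching Definition~\ref{def: ito.formula}, and uniqueness).} For $g\in C^\infty$, take an extension $\tilde g$ and apply the ambient It\^o formula. Lemma~\ref{lem:toolkit_geo_riem} gives $\nabla^2\tilde g[V,V]=\Hess g[V,V]+\langle x_i,\nabla_{x_i}\tilde g\rangle\|v_i\|^2$ for tangent $V$, and this extra term is exactly cancelled by the normal correction drift. Hence the Riemannian formula of Definition~\ref{def: ito.formula} holds. Conversely, any solution in the sense of Definition~\ref{def: ito.formula} can be tested against the coordinate functions (suitably extended). Running the same computation backwards shows it solves the ambient SDE, so ambient pathwise uniqueness gives uniqueness. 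I expect this identification between the Riemannian It\^o formulation and the ambient one, carried through the cylindrical noise, to be the main delicate step. The remaining Lipschitz and integrability bookkeeping is routine once the basis expansion and Parseval are in place.
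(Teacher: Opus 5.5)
Your argument is correct, but it does not follow the paper's route. The paper's proof is one sentence: it declares the statement a special case of \cite[Proposition 4.2]{gess2024rsgd} and uses compactness of $(\S^{d-1})^n$ to rule out explosion. You instead give a self-contained embedding proof:
\begin{itemize}
\item a cutoff extension of the coefficients to $\R^{dn}$;
\item a globally Lipschitz ambient SDE carrying the normal correction $-\frac12\|G_i\|_{L^2(\uprho_*)}^2x_i$;
\item invariance of the sphere via the closed linear equation $\de r_i=-r_i\int\|\tilde G_i\|^2\,\de\uprho_*\,\de t$;
\item uniqueness by testing Definition~\ref{def: ito.formula} on coordinate functions. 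For $g=x_i^k$ one gets $\Hess g[V,V]=-x_i^k\|v_i\|^2$, which reproduces the ambient equation exactly.
\end{itemize}

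The citation buys brevity. Your route buys three things. First, it makes the vague ``integrability needed'' explicit as square-integrability of $\bm\theta\mapsto\|G(\cdot,\bm\theta)\|_{C^1}$. This is what makes the diffusion coefficient Lipschitz in Hilbert--Schmidt norm, and Lemma~\ref{lem:Kernel_regularity}(3) supplies it. Second, it shows that Lipschitz regularity of $G(\cdot,\bm\theta)$, rather than $C^2$, suffices for well-posedness. Third, it proves that the intrinsic formulation is equivalent to the ambient representation \eqref{eq:ambient_form_clean}. The paper later uses that representation in the proof of Lemma~\ref{lem:Ito_formula}, justifying it only by checking that the correction drift preserves $\|x_i\|$.

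A minor point on Step~1: $G_i(\mathsf{N}(X),\bm\theta)$ is already orthogonal to $x_i$, so the extra projection is redundant. It is also not a projector off the sphere when written as $I-x_ix_i^\top$. For the same reason tangency holds on all of $\R^{dn}$, so Step~3 needs no stopping at the exit of the tube.
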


\begin{proof}
This is a special case of \cite[Proposition 4.2]{gess2024rsgd}. Since $(\S^{d-1})^n$ is compact,
the explosion time is infinite.
\end{proof}

\bibliography{bibliography}
\bibliographystyle{alpha}

\vspace{1cm}

\noindent
\contactcard{Hugo Koubbi}{%
CEREMADE, UMR 7534\\
Université Paris Dauphine PSL\\
75775 Paris Cedex 16, France%
}{hugo.koubbi@dauphine.psl.eu}
\hfill
\contactcard{Borjan Geshkovski}{%
Laboratoire Jacques-Louis Lions\\
Inria \& Sorbonne Université\\
75005 Paris, France%
}{borjan.geshkovski@inria.fr}

\vspace{0.6cm}

\noindent\hspace*{\fill}%
\contactcard{Philippe Rigollet}{%
Department of Mathematics\\
Massachusetts Institute of Technology\\
77 Massachusetts Ave\\
Cambridge, MA 02139, USA%
}{rigollet@mit.edu}%
\hspace*{\fill}

\end{document}